\newsavebox\CBox
\newcommand\hcancel[2][0.5pt]{%
	\ifmmode\sbox\CBox{$#2$}\else\sbox\CBox{#2}\fi%
	\makebox[0pt][l]{\usebox\CBox}%
	\rule[0.5\ht\CBox-#1/2]{\wd\CBox}{#1}}
\newtheorem{theorem}{Theorem}
\newtheorem{corollary}[theorem]{Corollary}
\newtheorem{definition}[theorem]{Definition}
\newtheorem{lemma}[theorem]{Lemma}
\newtheorem{proposition}[theorem]{Proposition}
\newtheorem{conjecture}[theorem]{Conjecture}
\newtheorem{remark}[theorem]{Remark}
\numberwithin{equation}{section}
\begin{document}
	
\title{\textbf{Aspherical Lagrangian submanifolds, Audin's conjecture and cyclic dilations}}\author{Yin Li}
\newcommand{\Addresses}{{
		\bigskip
		\footnotesize
		\textsc{Department of Mathematics, Uppsala University, 753 10 Uppsala, Sweden}\par\nopagebreak
		\textit{E-mail address}: \texttt{yin.li@math.uu.se}
	}}
\date{}\maketitle

\begin{abstract}
Given a closed, oriented Lagrangian submanifold $L$ in a Liouville domain $\overline{M}$, one can define a Maurer-Cartan element with respect to a certain $L_\infty$-structure on the string homology $\widehat{H}_\ast^{S^1}(\mathcal{L}L;\mathbb{R})$, completed with respect to the action filtration. When the first Gutt-Hutchings capacity \cite{gh} of $\overline{M}$ is finite, and $L$ is a $K(\pi,1)$ space, we show that $L$ bounds a pseudoholomorphic disc of Maslov index 2. This confirms a general form of Audin's conjecture \cite{ma} and generalizes the works of Fukaya \cite{kf} and Irie \cite{ki2} in the case of $\mathbb{C}^n$ to a wide class of Liouville manifolds, which includes low degree smooth affine hypersurfaces in $\mathbb{C}^{n+1}$. In particular, when $\dim_\mathbb{R}(\overline{M})=6$,  every closed, orientable, prime Lagrangian 3-manifold $L\subset\overline{M}$ is diffeomorphic either to a spherical space form, or $S^1\times\Sigma_g$, where $\Sigma_g$ is a closed oriented surface.
\end{abstract}

\section{Introduction}

\subsection{Closed Lagrangian submanifolds in $\mathbb{C}^n$}\label{section:Lag}

The study of Lagrangian submanifolds plays a central role in symplectic geometry, whose importance is reflected by Weinstein's famous creed ``everything is a Lagrangian submanifold". As one of the first non-trivial consequences of Gromov's ground breaking work \cite{mg} on pseudoholomorphic curves, we know that $H^1(L;\mathbb{Q})\neq0$ for any closed Lagrangian submanifold $L\subset\mathbb{C}^n$. 

Gromov's argument for the non-vanishing of $H^1(L;\mathbb{Q})$ can be summarized as follows. Take a compactly supported Hamiltonian function $H_t:[0,1]\times\mathbb{C}^n\rightarrow\mathbb{R}$ which displaces $L$ from itself, denote by $X_{H_t}$ its associated vector field. Choose a cut-off function $\chi:\mathbb{R}\rightarrow[0,1]$ such that $\chi\equiv0$ on $\mathbb{R}_{\leq0}$ and $\chi\equiv1$ on $\mathbb{R}_{\geq1}$. For each $r\geq0$, consider the function $\chi_r(s):=\chi(r+s)\chi(r-s)$. Let $D\subset\mathbb{C}$ be the closed unit disc, we identify $D\setminus\{-1,1\}$ with the strip $\mathbb{R}\times[0,1]$, and denote by $s$ and $t$ the coordinates on $\mathbb{R}$ and $[0,1]$, respectively. Consider the moduli space $\mathcal{N}^r(L,\beta)$ of maps $u:(D,\partial D)\rightarrow(\mathbb{C}^n,L)$ in the relative homotopy class $\beta\in\pi_2(\mathbb{C}^n,L)$, which satisfy the perturbed Cauchy-Riemann equation
\begin{equation}\label{eq:p-CR}
\left(du-X_{\chi_r(s)H_t}(u)\otimes dt\right)^{0,1}=0,
\end{equation}
where the $(0,1)$-part is taken with respect to the standard complex structure on $\mathbb{C}^n$. Note that when $r=0$, $\chi_0\equiv0$, so (\ref{eq:p-CR}) reduces to the ordinary Cauchy-Riemann equation. When $\beta=0$, one can show that $\mathcal{N}^1(L,0)=\emptyset$, so $\mathcal{N}^{\geq0}(L,0):=\bigcup_{r\in[0,1]}\mathcal{N}^r(L,0)$ has a single boundary component $\mathcal{N}^0(L,0)$, which consists of constant solutions to (\ref{eq:p-CR}). Since $\left[\mathcal{N}^0(L,0)\right]\neq0$ in $H_n(\mathcal{L}L;\mathbb{R})$, where $\mathcal{L}L$ is the free loop space of $L$, the moduli space $\mathcal{N}^{\geq0}(L,0)$ cannot be compact, and there is a divergent sequence of maps $(u_k)_{k\geq1}$ in $\mathcal{N}^{\geq0}(L,0)$. It follows from elliptic regularity and the removable singularity theorem that $L$ bounds a non-constant holomorphic disc, which implies that $H^1(L;\mathbb{Q})\neq0$.

Fukaya's new observation, which was sketched in \cite{kf} and carried out in detail by Irie in \cite{ki2,ki3}, is that the new boundary component arising from compactifying the moduli space $\mathcal{N}(L,\beta)$ can be expressed in terms of string topology operations after pushing forward its virtual fundamental chain to the free loop space $\mathcal{L}L$. More precisely, for a suitable $L_\infty$-structure $(\ell_k)_{k\geq1}$ on the homology group $H_\ast(\mathcal{L}L;\mathbb{R})$, there holds
\begin{equation}\label{eq:def}
\sum_{k=2}^\infty\frac{1}{(k-1)!}\ell_k(y,x,\cdots,x)=(-1)^{n+1}[L],
\end{equation}
where $[L]\in H_n(\mathcal{L}L;\mathbb{R})$ is the cycle of constant loops, and $x\in\widehat{H}_\ast(\mathcal{L}L;\mathbb{R})$ is a Maurer-Cartan element. Both of $x$ and $y$ lie in the \textit{completed} free loop space homology. Here, the completion is taken with respect to the energy filtration
\begin{equation}
F^\Xi H_{n-\ast}(\mathcal{L}L;\mathbb{R}):=\bigoplus_{\omega_\mathit{std}(\bar{a})>\Xi}H_{n-\ast}(\mathcal{L}(a)L;\mathbb{R}),
\end{equation}
where $\omega_\mathit{std}$ denotes the standard symplectic form on $\mathbb{C}^n$, $\bar{a}\in H_2(\mathbb{C}^n,L)$ satisfies $\partial\bar{a}=a\in H_1(L;\mathbb{Z})$, and $\mathcal{L}(a)L\subset\mathcal{L}L$ consists of loops in the class $a$. When $L$ is a $K(\pi,1)$ space (for technical reasons, one also assumes that $L$ is orientable and relatively \textit{Spin}), the identity (\ref{eq:def}) implies that $L$ bounds a non-constant holomorphic disc of Maslov index 2. This provides a refinement of Gromov's result and in particular confirms Audin's conjecture \cite{ma}. More interestingly, when $n=3$, it leads to the following classification of prime Lagrangian submanifolds.

\begin{theorem}[Fukaya \cite{kf}, Irie \cite{ki2}]\label{theorem:3}
A closed, connected, orientable and prime 3-manifold $L$ admits a Lagrangian embedding into $\mathbb{C}^3$ if and only if it is diffeomorphic to $S^1\times\Sigma_g$, where $\Sigma_g$ is a closed, orientable surface.
\end{theorem}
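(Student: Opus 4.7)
The plan is to split the ``only if'' direction into three cases using the topological classification of closed orientable prime 3-manifolds, and to handle the ``if'' direction by explicit construction. For the ``if'' direction, I would exhibit Lagrangian embeddings $S^1\times\Sigma_g\hookrightarrow\mathbb{C}^3$: the case $g=1$ is realised by the Clifford torus $T^3\subset\mathbb{C}^3$, while for $g\geq 2$ I would start from a Lagrangian embedding $\Sigma_g\hookrightarrow\mathbb{C}^2$ built by iterated Polterovich surgery on a Clifford torus $T^2$ and take its product with a circle in a transverse $\mathbb{C}$ factor. This direction is classical.

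For the ``only if'' direction, given a closed, connected, orientable, prime Lagrangian $L\subset\mathbb{C}^3$, I would analyse the three families of such 3-manifolds coming from Thurston's geometrisation: (i) spherical space forms $S^3/\Gamma$; (ii) $S^1\times S^2$; (iii) aspherical $K(\pi,1)$ manifolds. Case (i) is ruled out by Gromov's theorem recalled in Section~\ref{section:Lag}: since $\pi_1(S^3/\Gamma)$ is finite, one would have $H^1(L;\mathbb{Q})=0$, contradicting the non-vanishing of $H^1$ for a closed Lagrangian in $\mathbb{C}^n$. Case (ii) is not aspherical, so the Fukaya--Irie machinery does not apply directly; here I would invoke an external exclusion for Lagrangians with non-trivial $\pi_2$, for instance through Damian's lift of Floer theory to the universal cover, exploiting that every closed Lagrangian in $\mathbb{C}^n$ is displaceable.

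Case (iii) is the substantive one and uses the Maurer--Cartan equation (\ref{eq:def}) head-on. Its leading consequence, Fukaya--Irie's refinement of Audin, produces a non-constant Maslov-$2$ holomorphic disc $u\colon(D,\partial D)\to(\mathbb{C}^3,L)$ whose boundary represents a non-trivial class $[\gamma]\in H_1(L;\mathbb{Z})$. To complete the classification I would push (\ref{eq:def}) further: using the decomposition $\mathcal{L}L=\bigsqcup_{[g]\in\mathrm{Conj}(\pi_1 L)}\mathcal{L}(g)L$ available for aspherical $L$ and tracking the components of the operations $\ell_k$ through the Maurer--Cartan relation, I expect to conclude that the cyclic subgroup $\langle[\gamma]\rangle\subset\pi_1(L)$ is infinite and \emph{normal}. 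By the Seifert fibre space conjecture of Casson--Jungreis and Gabai, $L$ must then be Seifert fibred, and asphericity together with $\chi(L)=0$ and the Maslov-$2$ constraint force the Euler number of the fibration to vanish, whence $L\cong S^1\times\Sigma_g$ for some $g\geq 1$.

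The main obstacle I anticipate lies in Case (iii): distilling the normality of $\langle[\gamma]\rangle$ from the formal Maurer--Cartan relation in a way that respects the action completion. This will require translating iterated $\ell_k$-operations on $\widehat{H}_\ast(\mathcal{L}L;\mathbb{R})$ into concrete relations inside $\pi_1(L)$, complicated by the fact that the $L_\infty$-operations mix loop classes of different conjugacy type via string-bracket-like terms, and by the need to argue that the sum converges in the energy filtration to a term supported on constant loops. A secondary nuisance is Case (ii), which sits outside the aspherical framework that drives the paper and must therefore be handled by importing an external obstruction.
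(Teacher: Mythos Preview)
Your handling of Case (ii) is confused: $S^1\times S^2$ is already $S^1\times\Sigma_0$, so there is nothing to exclude. The prime decomposition says a closed orientable prime 3-manifold is either $S^1\times S^2$ or irreducible, and the former lands squarely in the target class. (Correspondingly, your ``if'' direction should also cover $g=0$; this follows from Fukaya's construction cited in the paper.)

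The substantive gap is in Case (iii). The Fukaya--Irie machinery does \emph{not} yield normality of $\langle\gamma\rangle$ in $\pi_1(L)$, and tracking conjugacy classes through the $\ell_k$ will not produce it. What (\ref{eq:def}) actually delivers is that the \emph{centralizer} $Z_\gamma$ has finite index. The mechanism is a degree argument, not a group-theoretic one: from the non-vanishing of the relevant term in (\ref{eq:def}) one extracts a Maslov-2 class $\gamma$ with $x(\gamma)\neq 0$ sitting in top degree $n$, so $H_n(\mathcal{L}(\gamma)L;\mathbb{R})\neq 0$. Since $L$ is aspherical, $\mathcal{L}(\gamma)L$ is homotopy equivalent to the cover $\widetilde{L}$ associated to $Z_\gamma$ (Lemmas~\ref{lemma:homeo} and~\ref{lemma:h-equiv}); hence $H_n(\widetilde{L};\mathbb{R})\neq 0$, forcing $\widetilde{L}$ compact and the cover finite. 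Because $\gamma$ is central in $Z_\gamma$ and $\mu(\gamma)=2$, one gets $Z_\gamma\cong\mathbb{Z}\times\ker(\mu|_{Z_\gamma})$, so $\widetilde{L}\simeq S^1\times K$. It is only \emph{after} passing to this finite cover that $\langle\gamma\rangle$ becomes central (hence normal), and the Seifert-fibred conclusion follows for $\widetilde{L}$; descending to $L$ then uses 3-manifold topology. Your anticipated obstacle---extracting normality directly in $\pi_1(L)$ from the $L_\infty$-relations---is a dead end; reroute through the centralizer and the top-degree homology argument.
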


Under the additional assumption that $L$ is monotone, one can prove the same result without assuming that $L$ is prime, see \cite{ek}.
\bigskip

In this paper, we extend the ideas of Fukaya and Irie to study closed Lagrangian submanifolds in a more general class of Liouville domains, namely those with finite first Gutt-Hutchings capacity (see Section \ref{section:GH} for details). We prove a general version of Audin's conjecture (cf. Corollary \ref{corollary:n}) and a generalization of Theorem \ref{theorem:3} (cf. Corollary \ref{corollary:3}), which in particular enables us to classify the prime Lagrangian 3-folds in many Milnor fibers. See Remark \ref{remark:example}.

\subsection{Cyclic dilations and Gutt-Hutchings capacities}\label{section:GH}

In this paper, by a \textit{Liouville domain} we mean a compact symplectic manifold with boundary $(\overline{M},d\theta_M)$, such that the \textit{Liouville vector field} $Z_M$ defined by $i_{Z_M}d\theta_M=\theta_M$ points strictly outwards along the boundary $\partial\overline{M}$. A \textit{Liouville manifold} is the completion of $\overline{M}$ by attaching the cylindrical end $[1,\infty)\times\partial\overline{M}$, i.e.
\begin{equation}
M=\overline{M}\cup_{\partial\overline{M}}[1,\infty)\times\partial\overline{M}.
\end{equation}
It follows that $(M,d\theta_M)$ is an exact symplectic manifold without boundary such that the Liouville vector field $Z$ is complete.

In order to generalize the work of Fukaya and Irie to more interesting Liouville manifolds, we will use a notion introduced by the author \cite{yl2} (also independently by Zhou \cite{zz2}). From now on, $M$ will be a $2n$-dimensional Liouville manifold with $c_1(M)=0$, and we fix a trivialization of its canonical bundle $K_M$, so that its symplectic cohomology $\mathit{SH}^\ast(M)$ and (positive) $S^1$-equivariant symplectic cohomology $\mathit{SH}_{S^1}^\ast(M)$ (see $\cite{bo}$ or $\cite{ps2}$ for its definition) are well-defined $\mathbb{Z}$-graded vector spaces over some field $\mathbb{K}$ (for the purpose of this paper, $\mathbb{K}\supset\mathbb{Q}$ will be a field containing rational numbers, or simply $\mathbb{R}$). As is the case of singular homologies, the $S^1$-equivariant symplectic cohomology is related to the ordinary symplectic cohomology by the (Gysin) long exact sequence
\begin{equation}\label{eq:Gysin}
\cdots\rightarrow\mathit{SH}^{\ast-1}(M)\xrightarrow{I}\mathit{SH}_{S^1}^{\ast-1}(M)\xrightarrow{S}\mathit{SH}_{S^1}^{\ast+1}(M)\xrightarrow{B}\mathit{SH}^\ast(M)\rightarrow\cdots,
\end{equation}
where the erasing map $I$ is induced by the obvious inclusion of cochain complexes, and the marking map $B$ is defined in terms of the higher BV structures on the cochain complex $\mathit{SC}^\ast(M)$ underlying $\mathit{SH}^\ast(M)$.

\begin{definition}
A cohomology class $\tilde{b}\in\mathit{SH}_{S^1}^1(M)$ is called a cyclic dilation if its image under the marking map $B:\mathit{SH}_{S^1}^\ast(M)\rightarrow\mathit{SH}^{\ast-1}(M)$ gives the identity, i.e. $B(\tilde{b})=1$.
\end{definition}

\begin{remark}
Note that the terminology used in this article differs from that of $\cite{yl2}$, where $\tilde{b}\in\mathit{SH}_{S^1}^1(M)$ is called a cyclic dilation as long as its image under the marking map $B$ hits an invertible element $h\in\mathit{SH}^0(M)^\times$. Here we reserve the terminology for the more restrictive situation when $h=1$ is the identity, and refer to the general case when $h$ is allowed to be an arbitrary invertible element as a cyclic quasi-dilation, see Definition \ref{definition:cyclic-quasi}. Requiring $h=1$ is equivalent to the existence of a $k$-dilation for some $k\in\mathbb{N}$ under the terminology of Zhou $\cite{zz2}$.
\end{remark}

The notion of a cyclic dilation generalizes that of a \textit{dilation} introduced earlier by Seidel-Solomon $\cite{ss}$. Recall that a dilation is a class $b\in\mathit{SH}^1(M)$ such that its image under the Batalin-Vilkovisky (BV) operator $\Delta:\mathit{SH}^\ast(M)\rightarrow\mathit{SH}^{\ast-1}(M)$ hits the identity, i.e. $\Delta(b)=1$. Every dilation gives rise to a cyclic dilation via the erasing map $I:\mathit{SH}^\ast(M)\rightarrow\mathit{SH}_{S^1}^\ast(M)$. See $\cite{yl2}$, Section 4.2 for details. However, the converse is not true. A typical example is the Milnor fiber of a 3-fold triple point
\begin{equation}
\{z_1^3+z_2^3+z_3^3+z_4^3=0\}\subset\mathbb{C}^4,
\end{equation}
in which case there is no dilation in $\mathit{SH}^1(M)$ (cf. \cite{ps4}, Example 2.7), but there is a cyclic dilation in $\mathit{SH}_{S^1}^1(M)$ (cf. \cite{yl2}, Section 6.1 and \cite{zz2}, Section 5.2). The study of cyclic dilations enables us to extend some of Seidel-Solomon's results to a wider class Liouville manifolds, among which is the following obstruction to exact Lagrangian embedding.

\begin{proposition}[$\cite{yl2}$, Corollary 40]\label{proposition:aspherical}
Let $M$ be a Liouville manifold which admits a cyclic dilation $\tilde{b}\in\mathit{SH}_{S^1}^1(M)$. Then $M$ does not contain a closed exact Lagrangian submanifold which is a $K(\pi,1)$ space.
\end{proposition}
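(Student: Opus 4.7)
The plan is to argue by contradiction: assume there exists a closed exact Lagrangian $L\subset M$ that is a $K(\pi,1)$-space, and derive a topological obstruction to the identity $B(\tilde b)=1$. The strategy is to transport this identity into the Hochschild/cyclic world of the Fukaya algebra of $L$ via the closed-open string map, and then rephrase it as a string-topological statement that one can rule out using the rigidity of aspherical spaces.

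I would first construct (or invoke) the $S^1$-equivariant closed-open maps
\[
\mathcal{CO}:\mathit{SH}^\ast(M)\to HH^\ast(\mathcal F(L)),\qquad \mathcal{CO}_{S^1}:\mathit{SH}_{S^1}^\ast(M)\to HC^\ast(\mathcal F(L)),
\]
where $\mathcal F(L)$ denotes the $A_\infty$-algebra of $L$. Both should be built from the same moduli space with (respectively without) an $S^1$-action integrated out, and should intertwine the symplectic Gysin sequence (\ref{eq:Gysin}) with Connes' $SBI$ sequence on the Hochschild/cyclic side; in particular $\mathcal{CO}\circ B=B_{HH}\circ\mathcal{CO}_{S^1}$, where $B_{HH}$ is Connes' operator. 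Since $\mathcal{CO}$ is unital, the hypothesis $B(\tilde b)=1$ forces the class $\mathcal{CO}_{S^1}(\tilde b)\in HC^\ast(\mathcal F(L))$ to be a Connes-lift of the unit of $HH^\ast(\mathcal F(L))$.

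Since $L$ is exact, a Jones-type identification gives $HH^\ast(\mathcal F(L))\cong H_{n-\ast}(\mathcal L L;\mathbb R)$ and $HC^\ast(\mathcal F(L))\cong\widehat H_{n-\ast}^{S^1}(\mathcal L L;\mathbb R)$, with the Hochschild unit corresponding to the fundamental class $[L]$ of constant loops and $B_{HH}$ corresponding to the string-topological BV operator $\Delta$. Thus one obtains a class $\alpha\in\widehat H_\ast^{S^1}(\mathcal L L;\mathbb R)$ with $\Delta(\alpha)=[L]$.

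The remaining, and main, obstacle is to rule out such an $\alpha$ when $L$ is aspherical. Decomposing $\mathcal L L=\bigsqcup_{[g]\in\mathit{Conj}(\pi_1 L)}\mathcal L_{[g]}L$ over conjugacy classes, the constant-loop component is homotopy equivalent to $L$ with trivial $S^1$-action, so on that sector $\Delta$ vanishes and cannot hit $[L]$. Since $L$ is a $K(\pi,1)$, each remaining component is itself aspherical (a classifying space of the centraliser of $g$) and $\Delta$ preserves sectors, so no other sector can contribute the constant-loop class either. This contradicts $\Delta(\alpha)=[L]$. The hardest part of this program is setting up $\mathcal{CO}_{S^1}$ cleanly enough to match Connes' operator with the string-topological $\Delta$ under the Jones-type identification, along with the careful sector-by-sector analysis of $\Delta$ on the completed $S^1$-equivariant loop homology of an aspherical manifold.
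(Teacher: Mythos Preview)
The paper does not give a self-contained proof of Proposition~\ref{proposition:aspherical}; it is quoted from \cite{yl2}, Corollary~5.1. Your sketch is essentially the strategy of that reference: transport $B(\tilde b)=1$ through an $S^1$-equivariant closed--open map (in the sense of Ganatra \cite{sg1}) to the cyclic cohomology of $\mathcal F(L)$, identify with string homology via a Jones-type isomorphism, and derive a contradiction from the sector-by-sector structure of $\mathcal L L$ for aspherical $L$. One correction: you write $\Delta(\alpha)=[L]$ for $\alpha\in\widehat H^{S^1}_\ast(\mathcal L L)$, but $\Delta$ is the BV operator on \emph{non-equivariant} loop homology. What Connes' $B_{HH}$ corresponds to under Jones is the \emph{marking map} $\mathbf B:H^{S^1}_\ast(\mathcal L L)\to H_{\ast+1}(\mathcal L L)$ in the Gysin sequence. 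With that fix your obstruction stands: on the sector $\mathcal L_{[\mathrm{id}]}L\simeq L$ the $S^1$-action is homotopically trivial (the inclusion of constant loops is a homotopy equivalence and is fixed pointwise), so $\mathbf B$ vanishes there and cannot hit $[L]$; the other sectors cannot contribute to the constant-loop class since $\mathbf B$ preserves components.

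What the present paper does instead is re-derive the proposition geometrically as a corollary of its main theorem. Corollary~\ref{corollary:n}, proved from Theorem~\ref{theorem:main} via the degree bounds of Corollary~\ref{corollary:h-type}, shows that any closed, orientable, relatively \textit{Spin} Lagrangian $K(\pi,1)$ in such an $M$ bounds a $J_M$-holomorphic disc of Maslov index~$2$ and positive symplectic area; exactness is therefore impossible. The paper explicitly frames this as ``the deep reason behind the validity of Proposition~\ref{proposition:aspherical}''. Your argument is lighter---it avoids the Kuranishi machinery of Sections~\ref{section:moduli}--\ref{section:proof}---but the paper's route yields strictly more: an actual disc with controlled Maslov index, the finite-cover splitting $\widetilde L\simeq S^1\times K$, and the classification of Corollary~\ref{corollary:3}, none of which fall out of the closed--open/vanishing argument.
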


The existence of a cyclic dilation $\tilde{b}\in\mathit{SH}_{S^1}^1(M)$ can also be interpreted in terms of certain symplectic capacities of the Liouville domain $\overline{M}\subset M$. Recall that the cochain complex computing $\mathit{SH}_{S^1}^\ast(M)$ is defined as
\begin{equation}\label{eq:SC}
\left(\mathit{SC}^\ast_{S^1}(M):=\mathit{SC}^\ast(M)\otimes_\mathbb{K}\mathbb{K}(\!(u)\!)/u\mathbb{K}[\![u]\!],\partial^{S^1}:=\partial+u\delta_1+u^2\delta_2+\cdots\right),
\end{equation}
where $\partial$ is the ordinary Floer differential, $\delta_1$ is the cochain level BV operator, and $u$ is a formal variable of degree 2. The action filtration on $\mathit{SC}^\ast(M)$ induces a filtration $F^\bullet$ on $\mathit{SC}^\ast_{S^1}(M)$, and the \textit{$k$-th Gutt-Hutchings capacity} of $\overline{M}$, introduced in $\cite{gh}$, is defined to be
\begin{equation}
c_k^\mathit{GH}(M):=\inf\left\{a\left\vert\partial^{S^1}(x)=u^{-k+1}e_M\textrm{ for some }x\in F^{\leq a}\mathit{SC}_{S^1}^\ast(M)\right.\right\},
\end{equation}
where $e_M\in\mathit{SC}^0(M)$ is the cochain level representative of the identity $1\in\mathit{SH}^0(M)$. It follows from the definition that (cf. \cite{yl2}, Section 4.2)

\begin{proposition}
A Liouville manifold $M$ admits a cyclic dilation if and only if the Liouville domain $\overline{M}$ has finite first Gutt-Hutchings capacity, i.e. $c_1^\mathit{GH}(M)<\infty$.
\end{proposition}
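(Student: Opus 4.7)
The statement is essentially formal, being an unpacking of definitions combined with the exactness of the Gysin sequence (\ref{eq:Gysin}). The plan has two steps.

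First, I would rephrase the finiteness of $c_1^\mathit{GH}(M)$ as a cohomological vanishing in $\mathit{SH}_{S^1}^0(M)$. By definition, $c_1^\mathit{GH}(M)<\infty$ amounts to the existence of some $a<\infty$ and some $x\in F^{\leq a}\mathit{SC}_{S^1}^{-1}(M)$ with $\partial^{S^1}(x)=e_M$. However, since
\[\mathit{SC}_{S^1}^\ast(M)=\mathit{SC}^\ast(M)\otimes_\mathbb{K}\mathbb{K}((u))/u\mathbb{K}[[u]]\]
consists only of finite sums $\sum_{k=0}^N x_k u^{-k}$, in which each coefficient $x_k$ already has finite action, any such primitive $x$ automatically lies in some $F^{\leq a}$. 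Hence the finiteness of $c_1^\mathit{GH}(M)$ is equivalent to the mere existence of a primitive for $e_M$ in $\mathit{SC}_{S^1}^{-1}(M)$, that is, to the vanishing of $[e_M]\in\mathit{SH}_{S^1}^0(M)$. Since the erasing map $I$ is induced by the inclusion of the $u^0$-component $\mathit{SC}^\ast(M)\hookrightarrow\mathit{SC}_{S^1}^\ast(M)$ (which is a chain map, because the higher correction terms $u^k\delta_k(x_0)$ in $\partial^{S^1}(x_0)$ all die in the quotient by $u\mathbb{K}[[u]]$), this vanishing is exactly the statement $I(1)=0$.

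Second, I would invoke exactness of (\ref{eq:Gysin}) at the node $\mathit{SH}^0(M)$,
\[\mathit{SH}_{S^1}^1(M)\xrightarrow{B}\mathit{SH}^0(M)\xrightarrow{I}\mathit{SH}_{S^1}^0(M),\]
to conclude that $I(1)=0$ if and only if $1\in\operatorname{Im}(B)$, i.e.\ if and only if there exists a class $\tilde{b}\in\mathit{SH}_{S^1}^1(M)$ with $B(\tilde{b})=1$. This is precisely the definition of a cyclic dilation, and composing the two equivalences yields the proposition.

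There is no real obstacle here: the proof is a direct consequence of exactness of the Gysin sequence together with the definitions. The only subtle point worth flagging is that the action filtration plays no genuine role, because the uncompleted $S^1$-equivariant complex consists of finite sums and therefore any chain-level primitive automatically has bounded action; this would need revisiting if one instead worked with a $u$-adically completed model, where unbounded-action primitives of $e_M$ could in principle exist without $c_1^\mathit{GH}(M)$ being finite.
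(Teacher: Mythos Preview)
Your proposal is correct and matches the paper's approach exactly: the paper treats this as an immediate consequence of the definitions (referring to \cite{yl2}, Section 4.2) and in the sentence following the proposition restates it precisely as you do, namely that $M$ admits a cyclic dilation if and only if $1\in\ker\bigl(I:\mathit{SH}^0(M)\to\mathit{SH}_{S^1}^0(M)\bigr)$, which via exactness of the Gysin sequence is equivalent to $1\in\mathrm{Im}(B)$. Your observation that the action filtration plays no role because elements of $\mathit{SC}_{S^1}^\ast(M)=\mathit{SC}^\ast(M)\otimes_\mathbb{K}\mathbb{K}[u^{-1}]$ are finite sums, hence automatically of bounded action, is the one point worth making explicit and you have done so.
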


In other words, $M$ admits a cyclic dilation if and only if the identity $1\in\mathit{SH}^0(M)$ lies in the kernel of the erasing map $I:\mathit{SH}^0(M)\rightarrow\mathit{SH}_{S^1}^0(M)$.
\bigskip

In this paper, we will generalize the results of Fukaya and Irie described in Section \ref{section:Lag} to Liouville manifolds with cyclic dilations. To describe the motivation of considering this particular class of Liouville manifolds, we appeal to a more conceptual and largely speculative interpretation of Fukaya and Irie's results, with a focus on the key identity (\ref{eq:def}), which the author learned from \cite{jlf}, Section 6. Let $\overline{N}$ be another Liouville domain, and $N$ its completion as a Liouville manifold. For a Liouville embedding $\iota_L:\overline{N}\hookrightarrow M$, Viterbo functoriality $\cite{cv}$ gives rise to a map $\mathit{SH}^\ast(M)\rightarrow\mathit{SH}^\ast(N)$, which is, among other things, a morphism between $\mathbb{Z}$-graded $\mathbb{K}$-algebras. Gromov's theorem on Lagrangian embeddings in $\mathbb{C}^n$ then follows from the existence of such a map and the fact that $\mathit{SH}^\ast(\mathbb{C}^n)=0$. In general, when we have a (possibly non-exact) symplectic embedding $\iota_S:\overline{N}\hookrightarrow M$, the original version of Viterbo functoriality no longer holds. However, from the symplectic embedding $\iota_S$ one can construct a Maurer-Cartan element $s\in\mathit{SC}^\ast(N)$ by counting, roughly speaking, holomorphic thimbles in the symplectic cap $\overline{M\setminus\overline{N}}$ (the closure of $M\setminus\overline{N}$) which are asymptotic to the Reeb orbits in $\partial\overline{N}$, see $\cite{bsv}$ and $\cite{gs}$, Section 4.4. One can then deform the Floer differential $\partial$ using the Maurer-Cartan element $s$ to obtain a twisted cochain complex
\begin{equation}
\left(\mathit{SC}^\ast(N),\partial+\sum_{k=2}^\infty\frac{1}{(k-1)!}\ell_k(\cdot,s,\cdots,s)\right),
\end{equation}
whose cohomology we denote by $\mathit{SH}^\ast(N,s)$. Here,
\begin{equation}
\ell_k:\mathit{SC}^\ast(N)^{\otimes k}\rightarrow\mathit{SC}^{\ast+3-2k}(N)
\end{equation}
is the $k$-th $L_\infty$-structure on the (undeformed) symplectic cochain complex $\mathit{SC}^\ast(N)$ (cf. \cite{dt}, Section 4.2). This enables us to define a $\mathbb{K}$-algebra map
\begin{equation}\label{eq:tw-Vit}
\mathit{SH}^\ast(M)\rightarrow\mathit{SH}^\ast(N,s),
\end{equation}
which should be viewed as the Viterbo functoriality for the \textit{non-exact} symplectic embedding $\iota_S$. Note that when $\overline{N}$ is the Weinstein neighborhood of a closed Lagrangian submanifold $L\subset M$, the Maurer-Cartan cochain $s$ can be equivalently interpreted as a chain $s_L\in C_{n-\ast}(\mathcal{L}L;\mathbb{R})$ on the free loop space, defined by evaluating the fundamental chain of the moduli space of holomorphic discs in $M$ with boundary on $L$, via a domain-stretching argument similar to that of \cite{dt}. In the special case of a Lagrangian embedding $L\hookrightarrow\mathbb{C}^n$, it follows from (\ref{eq:tw-Vit}) that
\begin{equation}\label{eq:van}
\mathit{SH}^\ast(T^\ast L,s)\cong H_{n-\ast}(\mathcal{L}L,s_L)=0,
\end{equation}
where $H_{n-\ast}(\mathcal{L}L,s_L)$ denotes the free loop space homology deformed by $s_L$, using the chain level loop bracket. (\ref{eq:def}) then follows from (\ref{eq:van}) since the inclusion of constant loops $L\hookrightarrow\mathcal{L}L$ defines a coboundary in the deformed chain complex $C_{n-\ast}(\mathcal{L}L,s_L)$.

It is reasonable to expect that similar considerations as above would work in the $S^1$-equivariant setting. In terms of symplectic field theory, this is known as the \textit{Cieliebak-Latschev formalism}. Namely one should be able to construct a Maurer-Cartan element $\tilde{s}\in\mathit{SC}_{S^1}^\ast(N)$ associated to the symplectic embedding $\iota_S:\overline{N}\hookrightarrow M$, so that the $S^1$-equivariant analogue of Viterbo functoriality (also known as the \textit{Cieliebak-Latschev map}, cf. \cite{cl,cg}) exists after deformation, which gives rise to a morphism
\begin{equation}\label{eq:CL}
\mathit{SH}^\ast_{S^1}(M)\rightarrow\mathit{SH}_{S^1}^\ast(N,\tilde{s}),
\end{equation}
where the right-hand side is defined using the $L_\infty$-structure
\begin{equation}
\tilde{\ell}_k:\mathit{SC}_{S^1}^\ast(N)^{\otimes k}\rightarrow\mathit{SC}_{S^1}^{\ast+4-3k}(N).
\end{equation}
The analogue of (\ref{eq:def}) in the $S^1$-equivariant case translates to the fact that the lifting $\tilde{e}_N:=e_N\otimes1\in\mathit{SC}_{S^1}^0(N)$, where $e_N\in\mathit{SC}^0(N)$ is the identity, defines a coboundary in the deformed cochain complex
\begin{equation}
\left(\mathit{SC}_{S^1}^\ast(N),\partial_{S^1}+\sum_{k=2}^\infty\frac{1}{(k-1)!}\tilde{\ell}_k(\cdot,\tilde{s},\cdots,\tilde{s})\right).
\end{equation}
In order to obtain an analogue of (\ref{eq:def}) for a more general class of Liouville manifolds, it is therefore natural to impose the condition that the image of the identity $e_M\in\mathit{SC}^\ast(M)$ defines a coboundary under the obvious inclusion $\mathit{SC}^\ast(M)\hookrightarrow\mathit{SC}_{S^1}^\ast(M)$. As we have seen in the discussions above, this means that $M$ admits a cyclic dilation.

\subsection{New results}\label{section:results}

We turn to the actual substance of this paper. Let $M$ be a Liouville manifold with $c_1(M)=0$. As part of the general set up, we fix a background class $[\alpha]\in H^2(M;\mathbb{Z}_2)$ and a $\mathbb{Z}_2$-gerbe $\alpha$ representing this class. From now on, whenever we write $\mathit{SH}^\ast(M)$ and $\mathit{SH}_{S^1}^\ast(M)$, they should be understood as symplectic cohomologies twisted by $\alpha$.

Let $L\subset M$ be a closed Lagrangian submanifold. Recall that for $a\in H_1(L;\mathbb{Z})$, $\mathcal{L}(a)L\subset\mathcal{L}L$ is the subspace consisting of loops in $a$. We introduce the notation
\begin{equation}\label{eq:StrH}
\mathbb{H}^{S^1}_\ast(a):=H_{\ast+n+\mu(a)-1}^{S^1}(\mathcal{L}(a)L;\mathbb{R}),
\end{equation}
where $\mu:H_1(L;\mathbb{Z})\rightarrow\mathbb{Z}$ is the Maslov index, and the right-hand side is the $S^1$-equivariant homology of $\mathcal{L}(a)L$, with the $S^1$-action given by reparametrization of loops. Consider the direct sum
\begin{equation}\label{eq:H-eq}
\mathbb{H}_\ast^{S^1}:=\bigoplus_{a\in H_1(L;\mathbb{Z})}\mathbb{H}^{S^1}_\ast(a),
\end{equation}
which carries the aforementioned action filtration
\begin{equation}
F^\Xi\mathbb{H}_\ast^{S^1}:=\bigoplus_{\theta_M(a)>\Xi}\mathbb{H}^{S^1}_\ast(a),
\end{equation}
where $\theta_M$ is the Liouville 1-form on $M$. This enables us to define the completion
\begin{equation}\label{eq:H-c}
\widehat{\mathbb{H}}_\ast^{S^1}:=\varprojlim_{\Xi\rightarrow\infty}\mathbb{H}_\ast^{S^1}/F^\Xi\mathbb{H}_\ast^{S^1}.
\end{equation}

\begin{theorem}\label{theorem:main}
Let $M$ be a $2n$-dimensional Liouville manifold with $c_1(M)=0$, and assume that $M$ admits a cyclic dilation. Let $L\subset M$ be a closed Lagrangian submanifold which is oriented and relatively $\mathit{Spin}$ with respect to $\alpha$. Then there exists an $L_\infty$-structure $(\tilde{\ell}_k)_{k\geq1}$ on $\mathbb{H}_\ast^{S^1}$, together with homology classes $\tilde{x}\in\widehat{\mathbb{H}}_{-2}^{S^1}$, $\tilde{y}\in\widehat{\mathbb{H}}_2^{S^1}$, such that
\begin{itemize}
	\item[(i)] $\tilde{\ell}_1=0$.
	\item[(ii)] The $L_\infty$-structure $(\tilde{\ell}_k)_{k\geq1}$ respects the decomposition of $\mathbb{H}_\ast^{S^1}$ according to $a\in H_1(L;\mathbb{Z})$. In particular, it extends to the completion $\widehat{\mathbb{H}}_\ast^{S^1}$, and we will use the same notation $(\tilde{\ell}_k)_{k\geq1}$ to denote its extension.
	\item[(iii)] There exists a constant $c>0$ such that $\tilde{x}\in F^c\widehat{\mathbb{H}}_{-2}^{S^1}$.
	\item[(iv)] $\tilde{x}$ and $\tilde{y}$ satisfy the following relations:
	\begin{equation}
	\sum_{k=2}^\infty\frac{1}{k!}\tilde{\ell}_k(\tilde{x},\cdots,\tilde{x})=0,
	\end{equation}
    \begin{equation}\label{eq:def1}
    \left(\sum_{k=2}^\infty\frac{1}{(k-1)!}\tilde{\ell}_k(\tilde{y},\tilde{x},\cdots,\tilde{x})\right)_{a=0}=(-1)^{n+1}[\![L]\!],
    \end{equation}
    where the infinite sums on the left-hand side make sense because of (iii), the subscript $a=0$ means throwing away the high energy part, and $[\![L]\!]$ denotes the image of the fundamental class of $L$ under the composition
    \begin{equation}
    H_\ast(L;\mathbb{R})\rightarrow H_\ast(\mathcal{L}(0)L;\mathbb{R})\rightarrow H_\ast^{S^1}(\mathcal{L}(0)L;\mathbb{R}),
    \end{equation}
    where the first map is induced by the inclusion of constant loops, and the second map is the erasing map.
\end{itemize}
\end{theorem}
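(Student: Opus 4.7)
\medskip

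\noindent
\textbf{Strategy.} The plan is to follow the Cieliebak-Latschev heuristic outlined in Section \ref{section:GH}. Work in a Weinstein tubular neighborhood $\overline{N}$ of $L$ (symplectomorphic to a disc bundle in $T^\ast L$), and identify $\mathbb{H}_\ast^{S^1}$ with the $S^1$-equivariant chains on $\mathcal{L}L$ via a Viterbo-type identification decomposed along free homotopy classes $a\in H_1(L;\mathbb{Z})$. The $L_\infty$-structure $(\tilde\ell_k)_{k\geq 1}$ is realized at the chain level by the $S^1$-equivariant SFT/string-topology operations on $C_\ast^{S^1}(\mathcal{L}L)$, with coefficients twisted by the background class $\alpha$ and the relative $\mathit{Spin}$ structure of $L$ (inserted into the orientation data on the relevant moduli). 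Each such operation preserves the free homotopy class of its inputs, which gives (ii); (i) holds because the chain-level BV differential is absorbed into the $S^1$-equivariant model, leaving no arity-one operation on homology.

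\medskip

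\noindent
\textbf{Construction of $\tilde x$ and the Maurer-Cartan equation.} For each $\beta\in\pi_2(M,L)$, let $\mathcal{M}(L,\beta)$ be the moduli space of $J$-holomorphic discs $u:(D,\partial D)\to(M,L)$ in the class $\beta$ with one boundary marked point, compactified in the Gromov--SFT sense (relative to $\partial\overline N$). Evaluating at the marked point and pushing the virtual fundamental chain forward to an $S^1$-equivariant model of $\mathcal{L}(\partial\beta)L$ gives a class in $\mathbb{H}_{-2}^{S^1}(\partial\beta)$, and summing over $\beta$ yields $\tilde{x}\in\widehat{\mathbb{H}}_{-2}^{S^1}$. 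The Gromov a priori bound on the area of a non-constant disc gives the strict positivity in (iii). The first Maurer-Cartan equation in (iv) then encodes the standard codimension-one boundary analysis of $\overline{\mathcal{M}}(L,\beta)$: interior strip breaking at Reeb orbits along $\partial\overline N$ factors through precisely the SFT operations that, by construction, assemble into $\tilde\ell_k(\tilde x,\ldots,\tilde x)$.

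\medskip

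\noindent
\textbf{Construction of $\tilde y$ and verification of \textup{(\ref{eq:def1})}.} Here the cyclic dilation enters. By the proposition at the end of Section \ref{section:GH}, there is a cochain $\tilde c\in F^{\leq a}\mathit{SC}_{S^1}^\ast(M)$ with $\partial^{S^1}\tilde c=e_M$. I would construct $\tilde y$ by attaching to one boundary puncture of the disc moduli an $S^1$-equivariant Floer half-cylinder with asymptotic input $\tilde c$, and introducing a homotopy parameter $r\in[0,\infty)$ that scales the $\tilde c$-insertion from off at $r=0$ to on at $r=\infty$. Set $\tilde y\in\widehat{\mathbb{H}}_2^{S^1}$ to be the evaluation push-forward of the virtual fundamental chain of this one-parameter family. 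The identity (\ref{eq:def1}) is then read off the codimension-one boundary of the parametrized moduli: at $r=0$ the family degenerates to constant discs on $L$, producing $(-1)^{n+1}[\![L]\!]$ after $S^1$-equivariant push-forward to $\mathbb{H}^{S^1}_\ast(0)$ (which explains the restriction to $a=0$); interior strip breakings at boundary punctures contribute $\tilde\ell_k(\tilde y,\tilde x,\ldots,\tilde x)$; and the boundary at $r=\infty$ vanishes precisely because $\tilde c$ is a primitive of $e_M$ under $\partial^{S^1}$ -- this is where the cyclic dilation substitutes for the displacing Hamiltonian used in the $\mathbb{C}^n$ argument of \cite{kf,ki2}.

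\medskip

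\noindent
\textbf{Main obstacle.} The hard part is the virtual perturbation theory and orientation bookkeeping needed to define all of the moduli spaces consistently, so that the SFT-style breaking at $\partial\overline N$ genuinely matches the chain-level $L_\infty$-operations $\tilde\ell_k$, and so that the $\alpha$-twist and relative $\mathit{Spin}$ structure enter compatibly at each boundary stratum. I would build this on Irie's de Rham chain/Kuranishi framework \cite{ki2}, upgraded to the $S^1$-equivariant setting and adapted to the non-exact target $M$ by replacing the displacing Hamiltonian with the primitive $\tilde c$ of $e_M$. A secondary technical point is a uniform a priori action estimate on the perturbed moduli spaces defining $\tilde y$, ensuring strict positivity of the filtration level of each non-trivial term so that the infinite sums in (iv) converge in the completion $\widehat{\mathbb{H}}_\ast^{S^1}$.
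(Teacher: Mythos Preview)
Your outline follows the Cieliebak--Latschev heuristic of Section~\ref{section:GH} rather closely, and that is precisely the route the paper explicitly abandons (see the paragraph following Remark~\ref{remark:example}): the $S^1$-equivariant $L_\infty$-structure on $\mathit{SC}_{S^1}^\ast(N)$ and the non-exact transfer map \eqref{eq:CL} have not been constructed in the literature, so your claim that ``interior strip breaking at Reeb orbits along $\partial\overline N$ factors through precisely the SFT operations that \ldots\ assemble into $\tilde\ell_k(\tilde x,\ldots,\tilde x)$'' presupposes machinery that does not yet exist. There is also a concrete error in your description of $\tilde y$: a Floer half-cylinder carrying a cochain in $\mathit{SC}_{S^1}^\ast(M)$ must be glued at an \emph{interior} puncture of the disc, not a boundary one; and the one-parameter cobordism you sketch, with ``$r=0$ giving constant discs,'' does not carry over from the $\mathbb{C}^n$ argument, where it was the \emph{displacing} Hamiltonian that forced $\mathcal{N}^1(L,0)=\emptyset$. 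Here nothing is displaced, and the cyclic dilation plays a structurally different role.

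The paper instead works entirely in $M$ with no neck-stretching. The $L_\infty$-structure on $\mathbb{H}_\ast^{S^1}$ is obtained by homotopy transfer from an explicit dg Lie algebra $(C_\ast^\lambda,\tilde\partial,\{\cdot,\cdot\})$ built on a Connes-type quotient of Irie's de Rham model (Section~\ref{section:BV}); this is pure string topology, not SFT. The class $\tilde y$ is produced by the Cohen--Ganatra moduli spaces ${}_l\overline{\mathcal{R}}_{k+1}^1(\beta_{l-1},L,\ring{\bar a}_{l-1})$ of Section~\ref{section:CG}: discs with boundary on $L$, one \emph{interior} puncture asymptotic to the components $\beta_{l-1}$ of a cochain representing the cyclic dilation, and $l$ auxiliary interior marked points implementing the $S^1$-action \`a la Ganatra. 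Identity \eqref{eq:def1} is then read off the codimension-one boundary of these spaces; the cyclic-dilation equation $B_c(\tilde\beta)+\partial\beta_{-1}=e_M$ enters exactly once, in Lemma~\ref{lemma:ana}, to show that the cylinder-breaking strata sum to the moduli $\overline{\mathcal{R}}_{k+1}^1(e_M,L,\bar a)$, whose $a=0$ part consists of constant maps and contributes $(-1)^{n+1}[\![L]\!]$.
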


Note that when $L\subset M$ is $\mathit{Spin}$, we can pick $\alpha=0$ in the above theorem. This shall be our default choice when dealing with Lagrangian submanifolds with $\mathit{Spin}$ structures.

\begin{remark}\label{remark:example}
It follows from \cite{zz2}, Section 5.2 that Theorem \ref{theorem:main} applies to any Milnor fiber of the ``low degree" Brieskorn singularities in $\mathbb{C}^{n+1}$ defined by the polynomials
\begin{equation}
z_1^k+\cdots+z_{n+1}^k=0,\textrm{ }2\leq k\leq n.
\end{equation}
On the other hand, a more elementary argument by Seidel-Solomon (cf. \cite{ss}, Section 7) implies that the assumptions of Theorem \ref{theorem:main} are satisfied for any Milnor fiber of an isolated singularity which is triply stabilized.

Other examples of Liouville manifolds with cyclic dilations include cotangent bundles of rationally inessential manifolds. See \cite{yl2}, Section 4.2.
\end{remark}

In order to prove Theorem \ref{theorem:main}, we will follow the general strategies of Fukaya \cite{kf} and Irie \cite{ki2}, and extend them to the $S^1$-equivariant case. This will involve some new constructions in chain level string topology (cf. Section \ref{section:BV}) and some new (parametrized) moduli spaces defined using holomorphic maps with Hamiltonian asymptotic and Lagrangian boundary conditions, which we will introduce in Sections \ref{section:disc} and \ref{section:CG}. Analyzing these moduli spaces constitutes the main technical part of this paper. The heuristic argument presented in Section \ref{section:GH} has many conceptual advantages, but can be hard to carry out for technical reasons. To name a few, the geometric $L_\infty$-structure on $\mathit{SC}^\ast(M)$ hasn't been rigorously constructed\footnote{After the first draft of this paper was written, there are now several constructions of the $L_\infty$-structure on $\mathit{SC}^\ast(M)$, see for example \cite{beas} and \cite{pos}.}, and the general form of the non-exact Viterbo functoriality (\ref{eq:tw-Vit}) hasn't been established. In the $S^1$-equivariant case, significant progress has been made by Cieliebak-Latschev \cite{jlt} to construct the morphism (\ref{eq:CL}) from the perspective of symplectic field theory, which doesn't fully fit with our purposes as the linearized contact homology doesn't take into account of the unit $e_M\otimes1\in\mathit{SC}^\ast_{S^1}(M)$.

As a Corollary to Theorem \ref{theorem:main}, we prove the following general version of Audin's conjecture for Lagrangian $K(\pi,1)$ spaces in Liouville manifolds with finite first Gutt-Hutchings capacity $c_1^\mathit{GH}(M)$.

\begin{corollary}\label{corollary:n}
Let $M$ be a Liouville manifold with $c_1^\mathit{GH}(M)<\infty$, and $L\subset M$ a closed, orientable Lagrangian submanifold which is a $K(\pi,1)$ space, and $\mathit{Spin}$ relative to $\alpha$. Then there exists a finite covering $\widetilde{L}$ of $L$ which is homotopy equivalent to a product $S^1\times K$ for some $(n-1)$-dimensional CW complex $K$. Moreover, $\pi_1(\widetilde{L})\subset\pi_1(L)$ is the centralizer of some element $\gamma\in\pi_1(L)$ which has Maslov class equal to 2 and positive symplectic area.
\end{corollary}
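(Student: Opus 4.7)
The plan is to apply Theorem \ref{theorem:main} and extract from the relation (\ref{eq:def1}) a distinguished conjugacy class of $\pi := \pi_1(L)$, then use asphericity to turn the algebra into topology. First, since $L$ is a $K(\pi,1)$, decompose
\[
\mathcal{L}L = \bigsqcup_{[\gamma]\in\mathrm{Conj}(\pi)}\mathcal{L}(\gamma)L, \qquad \mathcal{L}(\gamma)L \simeq BC(\gamma),
\]
where $C(\gamma)\subset\pi$ is the centralizer. Because $\gamma$ is central in $C(\gamma)$, the $S^1$-action by reparametrization is homotopic to the identity on each component, and $H^{S^1}_*(\mathcal{L}(\gamma)L;\mathbb{R}) \cong H_*(C(\gamma);\mathbb{R})\otimes \mathbb{R}[u]$ with $|u|=-2$. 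Expand $\tilde x = \sum_{[\gamma]}\tilde x_\gamma$ and $\tilde y = \sum_{[\gamma]}\tilde y_\gamma$ in this decomposition; by property (iii) of Theorem \ref{theorem:main}, the trivial component $\tilde x_e$ vanishes.

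Next, project equation (\ref{eq:def1}) onto the conjugacy class $[e]$. The operations $\tilde\ell_k$, constructed from moduli of pseudoholomorphic discs with concatenated boundary marked points, preserve the product of the free-homotopy classes of their inputs up to conjugation, so the projection is a sum over tuples $(\gamma_0,\ldots,\gamma_{k-1})$ with $\gamma_0\gamma_1\cdots\gamma_{k-1}\sim e$. Non-vanishing of $[\![L]\!]$ forces at least one term to survive. Choosing such a tuple of minimal total symplectic area and using the Maurer--Cartan equation for $\tilde x$ iteratively to kill redundant contributions (following \cite{ki2}), one localizes onto a single class $[\gamma]$, $\gamma\neq e$, with $\tilde\ell_2(\tilde y_{\gamma^{-1}},\tilde x_\gamma)_{|[e]}\neq 0$. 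The grading $\tilde x \in \widehat{\mathbb{H}}^{S^1}_{-2}$ places $\tilde x_\gamma$ in $H^{S^1}_{n+\mu(\gamma)-3}(BC(\gamma);\mathbb{R})$. Since $\pi$ is torsion-free of cohomological dimension $n$ (as $L$ is a closed aspherical orientable $n$-manifold), so is every $C(\gamma)$; matching degrees with the top-degree output $[\![L]\!]$ then forces $\mu(\gamma) = 2$, while positive symplectic area follows from $\tilde x \in F^c$ with $c > 0$.

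Finally, translate back to topology. Unfolding $\tilde\ell_2$ at the chain level, the non-vanishing at $[e]$ is in essence an intersection pairing between the cycle $BC(\gamma) \to L$ (given by the classifying map of the inclusion $C(\gamma)\hookrightarrow\pi$) and the fundamental class of $L$; non-triviality of this pairing forces $[\pi:C(\gamma)] < \infty$, so the corresponding finite cover $\widetilde L \to L$ is a closed orientable aspherical $n$-manifold with $\pi_1(\widetilde L) = C(\gamma)$. The infinite-order central element $\gamma$ gives a principal $S^1$-fibration $\widetilde L \simeq BC(\gamma) \to B(C(\gamma)/\langle\gamma\rangle)$; passing, if needed, to a further finite cover to kill its Euler class yields the homotopy equivalence $\widetilde L \simeq S^1 \times K$ with $K$ closed orientable of dimension $n-1$. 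The hardest step will be the last paragraph: identifying the chain-level $[e]$-component of $\tilde\ell_2(\tilde y_{\gamma^{-1}},\tilde x_\gamma)$ with an intersection pairing that detects the index of $C(\gamma)$ in $\pi$. In the $S^1$-equivariant setting this requires simultaneous control of the energy filtration, the $u$-expansion, and the new chain-level string topology operations on $\mathbb{H}^{S^1}_*$ from Section \ref{section:BV}, significantly upgrading Fukaya--Irie's original argument.
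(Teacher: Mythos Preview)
Your overall strategy---extract a nonzero term from (\ref{eq:def1}), use degree constraints from asphericity to force $\mu=2$, then translate to topology---is the paper's, but the execution has a real gap and two unnecessary complications.

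The gap is the reduction to a single $\tilde\ell_2$ term. The claim that by ``choosing a tuple of minimal total symplectic area and using the Maurer--Cartan equation for $\tilde x$ iteratively \dots one localizes onto a single class $[\gamma]$ with $\tilde\ell_2(\tilde y_{\gamma^{-1}},\tilde x_\gamma)|_{[e]}\neq 0$'' is not justified, and no such argument appears in \cite{ki2}. The Maurer--Cartan equation constrains the $\tilde\ell_k(\tilde x,\ldots,\tilde x)$, not the mixed expressions with $\tilde y$, and minimal-energy arguments do not force $k=2$. The paper's proof never reduces to $\tilde\ell_2$: it takes \emph{any} nonzero term $\tilde\ell_k(\tilde y(-a),\tilde x(a_1),\ldots,\tilde x(a_{k-1}))$ and applies the degree constraint to each input separately. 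Since every component of $\mathcal{L}L$ has the homotopy type of a CW complex of dimension $\leq n$ (Corollary~\ref{corollary:h-type}), each $\tilde x(a_i)$ is forced into the range $3-n\leq\mu(a_i)\leq 2$; then $\sum_i\mu(a_i)=\mu(a)\geq 2$ and evenness (from orientability) give some $\mu(a_i)=2$.

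Your finiteness-of-index argument is also more involved than needed. Having located $\gamma=a_i$ with $\mu(\gamma)=2$ and $\tilde x(\gamma)\neq 0$, the paper simply notes that this class sits in top degree $n$ of $H_\ast(\mathcal{L}(\gamma)L;\mathbb{R})$; Lemmas~\ref{lemma:homeo} and~\ref{lemma:h-equiv} identify $\mathcal{L}(\gamma)L\simeq\widetilde L$, so $H_n(\widetilde L;\mathbb{R})\neq 0$ and $\widetilde L$ is compact. No interpretation of $\tilde\ell_2$ as an intersection pairing is required.

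Finally, your $S^1\times K$ splitting via an $S^1$-fibration and a ``further finite cover to kill its Euler class'' would alter $\pi_1(\widetilde L)$, contradicting the statement that $\pi_1(\widetilde L)$ is exactly the centralizer $Z_\gamma$. The paper instead uses the Maslov homomorphism: since $\mu(\gamma)=2$, the map $\tfrac12\mu:Z_\gamma\to\mathbb{Z}$ is onto and split by $k\mapsto\gamma^k$, so $Z_\gamma\cong\mathbb{Z}\times\ker(\mu|_{Z_\gamma})$ and $\widetilde L\simeq S^1\times K$ immediately, with no further cover.
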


\begin{remark}\label{remark:Audin}
The original conjecture of Audin \cite{ma} states that any Lagrangian torus $L\subset\mathbb{C}^n$ has Maslov number $2$. It is first rigorously proved by Cieliebak-Mohnke \cite{cm} using neck stretching for holomorphic curves with local tangency constraints. An alternative approach was outlined earlier by Fukaya \cite{kf} and realized later by Irie \cite{ki2,ki3}, which shows that the same conclusion holds for Lagrangian submanifolds $L\subset\mathbb{C}^n$ which are $K(\pi,1)$ spaces and $\mathit{Spin}$. When $M$ is a subcritical Weinstein manifold, and $L\subset M$ is a monotone Lagrangian $K(\pi,1)$ space (without the $\mathit{Spin}$ assumption), this is proved by Damian \cite{md}. The holomorphic curves used in our argument are actually more related to those appeared in Cieliebak-Mohnke \cite{cm}, although the general idea of the proof is to more extent inspired by Fukaya \cite{kf}. 

Our generalization above has the feature that $L\subset M$ doesn't need to be displaceable. For example, it is shown by Lekili-Maydayskiy \cite{lm} that non-displaceable monotone Lagrangian tori exist in 4-dimensional $A_k$ Milnor fibers for any $k\geq1$. In \cite{ad}, a $1$-parameter family of monotone Lagrangian tori in $T^\ast S^3$ is shown to split-generate the monotone Fukaya category. In all these examples, the Lagrangian tori have minimal Maslov number $2$.
\end{remark}

\begin{remark}
In another direction, Keating constructed infinitely many monotone Lagrangian $K(\pi,1)$ spaces (not necessarily orientable) in certain affine hypersurfaces of complex dimensions 2 and 3 with any possible minimal Maslov index. See \cite{ak3}, Theorems 1.1 and 1.2. The affine hypersurfaces appeared in her construction are of the form
\begin{equation}
\left\{z_1^2+z_2^4+z_3^k=1\right\}\subset\mathbb{C}^3,\textrm{ where }k\gg1
\end{equation}
in complex dimension 2, and
\begin{equation}
\left\{z_1^2+z_2^4+z_3^{k_3}+z_4^{k_4}=1\right\}\subset\mathbb{C}^4,\textrm{ where }k_3\gg1,k_4\gg1
\end{equation}
in complex dimension 3. Note that for $k\geq4$ (resp. $\frac{1}{k_3}+\frac{1}{k_4}\leq\frac{1}{4}$), these affine hypersurfaces do not admit cyclic dilations. In fact, it follows from \cite{zz2}, Theorem 3.27 and \cite{mm}, Theorem 2.5 that if a smooth affine variety admits a cyclic dilation, then it must be $\mathbb{A}^1$-uniruled. By \cite{mm}, Lemma 7.1, this forces its log Kodaira dimension to be $-\infty$. However, the condition $k\geq4$ (resp. $\frac{1}{k_3}+\frac{1}{k_4}\leq\frac{1}{4}$) would imply that the affine hypersurfaces under consideration have non-negative log Kodaira dimensions. Compare with the examples given in Remark \ref{remark:example}.
\end{remark}

In fact, we will prove something slightly stronger than the statement of Corollary \ref{corollary:n}, namely the class $\bar{\gamma}\in\pi_2(M,L)$ with $\partial\bar{\gamma}=\gamma$ and positive symplectic area is represented by a $J_M$-holomorphic disc for any convex almost complex $J_M$ on $M$. Note that aside from technical assumptions, this unveils the deep reason behind the validity of Proposition \ref{proposition:aspherical}: 

\textit{Liouville manifolds with cyclic dilations cannot contain exact Lagrangian $K(\pi,1)$ spaces because any aspherical Lagrangian submanifold $L\subset M$ necessarily bounds a non-constant pseudoholomorphic disc.}

When $n=3$, we have the following generalization to Theorem \ref{theorem:3}. Recall that a 3-dimensional \textit{spherical space form} is a quotient $S^3/\Gamma$, where $\Gamma\subset\mathit{SO}(4)$ is a finite subgroup acting freely on $S^3\cong\mathit{SO}(4)/\mathit{SO}(3)$.

\begin{corollary}\label{corollary:3}
Let $M$ be a 6-dimensional Liouville manifold which admits a cyclic dilation, and let $L\subset M$ be a closed, orientable, prime Lagrangian submanifold, then $L$ is diffeomorphic to a spherical space form, or a product $S^1\times\Sigma_g$, where $\Sigma_g$ is an orientable closed surface.
\end{corollary}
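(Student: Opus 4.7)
The strategy is to combine the topological classification of closed orientable prime $3$-manifolds with Corollary~\ref{corollary:n}. By the Sphere Theorem together with Perelman's Elliptization Theorem, such an $L$ falls into exactly one of three classes: (i) $L\cong S^1\times S^2=S^1\times\Sigma_0$; (ii) $L$ is irreducible with finite fundamental group, in which case $L$ is a spherical space form $S^3/\Gamma$; (iii) $L$ is irreducible with infinite $\pi_1$, whence the Sphere Theorem forces $\pi_2(L)=0$ and $L$ is a closed aspherical $K(\pi,1)$. Alternatives (i) and (ii) already appear in the stated conclusion, so the entire substance lies in case (iii).

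In case (iii), $L$ is automatically $\mathit{Spin}$ since closed orientable $3$-manifolds are parallelizable, so Corollary~\ref{corollary:n} applies with $\alpha=0$. It yields a finite cover $\widetilde L\to L$, a closed $2$-manifold $K$, a homotopy equivalence $\widetilde L\simeq S^1\times K$, and an element $\gamma\in\pi_1(L)$ of infinite order, Maslov class $2$, and positive symplectic area such that $\pi_1(\widetilde L)=C_{\pi_1(L)}(\gamma)$. Asphericity of $\widetilde L$ forces asphericity of $K$, so $K=\Sigma_g$ for some $g\geq 1$. Borel rigidity for closed aspherical $3$-manifolds --- which in this Seifert-type setting follows from Waldhausen's rigidity theorem together with the Geometrization Theorem --- then promotes the homotopy equivalence to a diffeomorphism $\widetilde L\cong S^1\times\Sigma_g$. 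Since $\gamma$ now generates an infinite cyclic subgroup of $Z(\pi_1(\widetilde L))$, the Seifert Fibered Space Theorem of Casson--Jungreis, Gabai, and Scott forces $L$ itself to admit a Seifert fibration.

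The remaining task --- which I expect to be the main technical obstacle --- is to promote this virtual product structure into an honest product, i.e.\ to show that the Seifert fibration on $L$ has no exceptional fibers and an orientable base surface; equivalently, that $C_{\pi_1(L)}(\gamma)=\pi_1(L)$ rather than a proper finite-index subgroup. The natural input here is the refinement noted immediately after Corollary~\ref{corollary:n} in Section~\ref{section:results}: for \emph{every} convex almost complex structure $J_M$ on $M$, the class $\bar\gamma\in\pi_2(M,L)$ is represented by an honest $J_M$-holomorphic disc of positive symplectic area and Maslov index $2$. The low Maslov index prevents bubbling in generic one-parameter families of such discs, so one can attempt to propagate $\bar\gamma$ along a loop in $L$ representing a putative non-centralizing element of $\pi_1(L)$ and glue the resulting family into a sphere class in $\pi_2(M)$ of positive symplectic area --- impossible by exactness of $\omega_M$. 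This would force $\gamma$ to be central, the cover $\widetilde L\to L$ to be trivial, and hence $L\cong S^1\times\Sigma_g$, completing the proof.
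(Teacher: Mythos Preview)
Your trichotomy and the application of Corollary~\ref{corollary:n} in case (iii) are correct, as is the identification $\widetilde L\cong S^1\times\Sigma_g$ via Waldhausen. You have also correctly isolated the remaining difficulty --- descending from $\widetilde L$ to $L$ --- but your proposed resolution via propagation of holomorphic discs does not work. Moving a disc with boundary $\gamma$ along a loop $\alpha$ produces a disc with boundary $\alpha\gamma\alpha^{-1}$; there is no mechanism to glue such a family into a sphere, and even in the special case $\alpha\gamma\alpha^{-1}=\gamma^{-1}$ you would need a $J_M$-holomorphic disc in the conjugated class, which the refinement stated after Corollary~\ref{corollary:n} does not furnish (it gives a disc in one particular class $\bar\gamma$, not in every conjugate).

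The paper does not spell out this step either, deferring instead to \cite{jlt}, Section~5. The key observation you are missing is elementary: the Maslov class $\mu:\pi_1(L)\to\mathbb{Z}$ factors through $H_1$ and is therefore conjugation-invariant, so $\mu(\gamma)=2\neq-2=\mu(\gamma^{-1})$ already forbids $\alpha\gamma\alpha^{-1}=\gamma^{-1}$ for every $\alpha\in\pi_1(L)$. One then argues purely algebraically. For $g\geq2$, the center of (the normal core of) $Z_\gamma\cong\mathbb{Z}\times\pi_1(\Sigma_g)$ is infinite cyclic and characteristic, so conjugation by $\pi_1(L)$ acts on it through $\{\pm1\}$; the Maslov constraint forces the sign $+$, whence some power of $\gamma$ is central. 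Combined with the finite-index hypothesis on $Z_\gamma$ and the fact that nontrivial elements of hyperbolic $2$-orbifold groups have infinite-index centralizers, this forces $\gamma$ itself to be central, so $Z_\gamma=\pi_1(L)$ and $L=\widetilde L\cong S^1\times\Sigma_g$. For $g=1$ the same Maslov obstruction, run through the short list of orientable Bieberbach $3$-groups, eliminates everything except $T^3$. No holomorphic-curve input beyond Corollary~\ref{corollary:n} is needed.
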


Our result is sharp in the sense that there exist Liouville 6-manifolds with cyclic dilations containing Lagrangian submanifolds of each of the topological types allowed by Corollary \ref{corollary:3}. For $S^1\times\Sigma_g$, the fact that they can be embedded as closed Lagrangian submanifolds in $\mathbb{C}^3$ follows from $\cite{kf}$, Theorem 2.6. For a spherical space form $L=S^3/\Gamma$, it follows from $\cite{ss}$, Example 6.4 that $T^\ast L$ admits a dilation, therefore also a cyclic dilation.

Corollaries \ref{corollary:n} and \ref{corollary:3} will be proved in Section \ref{section:corollary}.
\bigskip

The paper is organized as follows. In Section \ref{section:corollary}, we explain how to deduce the applications, Corollaries \ref{corollary:n} and \ref{corollary:3}, from our main result, Theorem \ref{theorem:main}. In Section \ref{section:dR-BV}, we recall a de Rham chain model of the free loop space homology introduced by Irie \cite{ki1} and Wang \cite{ywt}, and modify their constructions to produce two de Rham models $C_\ast^{S^1}$ and $C_\ast^\lambda$ of $S^1$-equivariant chains on the free loop space. These two de Rham models are related by the natural projection $C_\ast^{S^1}\twoheadrightarrow C_\ast^\lambda$ to the quotient complex, which can be shown to be a quasi-isomorphism (cf. Section \ref{section:BV}). We then introduce a chain level refinement of Chas-Sullivan's string bracket \cite{cs} on $C_\ast^\lambda$, which enables us to provide a chain level statement of Theorem \ref{theorem:main} (cf. Theorem \ref{theorem:chain}). Section \ref{section:moduli} contains the main technical input of this paper. Inspired by the work of Cohen-Ganatra \cite{cg}, we introduce the relevant moduli spaces whose virtual fundamental chains define the finite energy de Rham chains approximating the chains satisfying Theorem \ref{theorem:chain}, and analyze the boundary strata of their compactifications. The proof of Theorem \ref{theorem:main} is completed in Section \ref{section:proof}. One interesting aspect of our argument is that the geometric de Rham chains produced using the moduli spaces of holomorphic curves are first defined on the larger chain complex $C_\ast^{S^1}$, on which the chain level string bracket $\{\cdot,\cdot\}$ \textit{isn't} well-defined as a Lie bracket, but the $S^1$-equivariant differential takes a more adorable form. They will then be projected to $C_\ast^\lambda$ to give the chains required by Theorem \ref{theorem:chain}. The final section, Section \ref{section:mix}, is devoted to the discussions of some potential implications and variations of our results. In Appendix \ref{section:Kuranishi}, we record some key facts in virtual perturbation theory that are used in the main contents of this paper. The orientation conventions of the moduli spaces and the sign computations will be dealt with in Appendix \ref{section:orientation}.

\section*{Acknowledgements}
During a discussion in November 2019, Paul Seidel asked me whether the notion of a cyclic dilation introduced in \cite{yl2} has any cool applications. Hopefully this paper serves as an interesting application.  I would like to thank Yi Wang for helpful discussions and many useful suggestions, especially those related to various versions of cyclic (co)homologies. I also thank Kai Cieliebak for useful comments and Kei Irie for sending me an erratum to his paper \cite{ki2}, which enables me to make corresponding corrections for various minor issues in earlier versions of this paper. I'm indebted to the anonymous referee, Shah Faisal and Shuhao Li for carefully reading the paper and pointing out numerous issues.

The work is partially funded by Simons grant \#385571.

\section{Proof of Corollaries}\label{section:corollary}

In this section, we prove Corollaries \ref{corollary:n} and \ref{corollary:3} by assuming Theorem \ref{theorem:main}. The argument here largely follows the exposition of \cite{jlt} in the special case when $M=\mathbb{C}^n$, with only slight modifications.

For the proof of Corollary \ref{corollary:n}, we will be using the following two lemmas. Recall that $L$ is a closed manifold of dimension $n$. For a loop $\gamma:S^1\rightarrow L$, denote by $Z_\gamma\subset\pi_1(L)$ its centralizer in the fundamental group. 

\begin{lemma}[\cite{jlt}, Lemma 5.1]\label{lemma:homeo}
Let $\pi:\widetilde{L}\rightarrow L$ be a connected covering of $L$ associated to the subgroup $Z_\gamma$, and let $\tilde{\gamma}\in\pi_1(\widetilde{L})$ be a lift of $\gamma$. Then $\pi$ induces a homeomorphism $\mathcal{L}(\tilde{\gamma})\widetilde{L}\rightarrow\mathcal{L}(\gamma)L$ between the corresponding components of the free loop spaces.
\end{lemma}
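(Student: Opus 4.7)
The plan is to show directly that postcomposition with $\pi$ yields a continuous bijection $\Pi := \pi_*:\mathcal{L}(\tilde{\gamma})\widetilde{L}\to\mathcal{L}(\gamma)L$, and then upgrade this to a homeomorphism using the local triviality of the covering. Continuity and well-definedness are immediate: any free homotopy in $\widetilde{L}$ from $\tilde{\alpha}$ to $\tilde{\gamma}$ projects to a free homotopy in $L$ from $\pi\circ\tilde{\alpha}$ to $\gamma$, and postcomposition is continuous in the compact-open topology.

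For surjectivity, given $\alpha\in\mathcal{L}(\gamma)L$ and a free homotopy $H:S^1\times[0,1]\to L$ with $H(\cdot,0)=\gamma$ and $H(\cdot,1)=\alpha$, I would invoke the covering-space lifting criterion. The crucial point is that $H_*\pi_1(S^1\times[0,1])=\langle[\gamma]\rangle$, which sits inside $Z_\gamma=\pi_*\pi_1(\widetilde{L},\tilde{\gamma}(s_0))$ by our choice of covering. Hence $H$ lifts uniquely to $\tilde{H}$ with $\tilde{H}(\cdot,0)=\tilde{\gamma}$, and $\tilde{\alpha}:=\tilde{H}(\cdot,1)$ is the desired preimage in $\mathcal{L}(\tilde{\gamma})\widetilde{L}$.

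The heart of the argument, and the step I expect to be the main point, is injectivity. Suppose $\tilde{\alpha}_1,\tilde{\alpha}_2\in\mathcal{L}(\tilde{\gamma})\widetilde{L}$ both project to the same $\alpha$. Choose free homotopies $\tilde{H}_i$ from $\tilde{\gamma}$ to $\tilde{\alpha}_i$ and set $\eta_i(t):=\pi\circ\tilde{H}_i(s_0,t)$, a path in $L$ from $x_0:=\gamma(s_0)$ to $\alpha(s_0)$. Splicing $\pi\circ\tilde{H}_1$ with the reverse of $\pi\circ\tilde{H}_2$ yields a based self-free-homotopy of $\gamma$ which factors through the torus $T^2=S^1\times S^1$, so by commutativity of $\pi_1(T^2)$ its basepoint trace $\eta=\eta_1\cdot\eta_2^{-1}$ satisfies $[\eta]\in Z_\gamma$. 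Therefore $\eta$ lifts to a loop at $\tilde{x}_0:=\tilde{\gamma}(s_0)$, which forces the lift $\tilde{\eta}_2$ of $\eta_2$ starting at $\tilde{x}_0$ to terminate at $\tilde{\alpha}_1(s_0)$; since by construction it terminates at $\tilde{\alpha}_2(s_0)$, the two lifts agree at $s_0$, and uniqueness of path lifts promotes this to $\tilde{\alpha}_1=\tilde{\alpha}_2$.

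Finally, continuity of the inverse reduces to the observation that $\pi$ is a local homeomorphism: covering $\tilde{\alpha}(S^1)$ by finitely many evenly covered neighbourhoods in $\widetilde{L}$ and patching the local inverses of $\pi$ produces a continuous local inverse of $\Pi$ near $\alpha=\Pi(\tilde{\alpha})$; combined with bijectivity this forces $\Pi$ to be a homeomorphism. The only genuinely nontrivial step in the whole argument is the torus-factoring observation identifying $[\eta]$ with an element of $Z_\gamma$ — this is exactly where the hypothesis $Z_\gamma=\pi_*\pi_1(\widetilde{L})$ enters — with the rest being standard covering-space manipulation.
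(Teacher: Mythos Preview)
Your argument is correct. The paper does not supply its own proof of this lemma; it simply quotes the statement from \cite{jlt}, so there is nothing to compare your approach against beyond noting that what you have written is the standard covering-space proof: the torus-factoring step identifying $[\eta_1\cdot\eta_2^{-1}]$ as an element of $Z_\gamma$ is exactly the place where the hypothesis $\pi_*\pi_1(\widetilde{L})=Z_\gamma$ is used, and the rest is routine lifting theory.
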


\begin{lemma}[\cite{jlt}, Lemma 5.2]\label{lemma:h-equiv}
In the situation of the previous lemma, assume in addition that $L$ is a $K(\pi,1)$ space. Then the evaluation $\mathcal{L}(\tilde{\gamma})\widetilde{L}\rightarrow\widetilde{L}$ at the base point is a homotopy equivalence.
\end{lemma}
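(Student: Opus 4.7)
The plan is to reduce the lemma to a standard computation of the homotopy type of a free loop space component of a $K(\pi,1)$ space, after a tautological group-theoretic observation about the lift $\tilde{\gamma}$.

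First I would observe that, as a covering of the $K(\pi,1)$ manifold $L$, the space $\widetilde{L}$ is itself a $K(Z_\gamma,1)$, where the identification $\pi_1(\widetilde{L}) \cong Z_\gamma$ is the one used to construct $\pi$. Under this identification, the based homotopy class of $\tilde{\gamma}$ corresponds to $\gamma \in Z_\gamma$. The key observation is that $\gamma$ is \emph{central} in $Z_\gamma$: by the very definition of $Z_\gamma$, every element commutes with $\gamma$. Consequently the centralizer of $\tilde{\gamma}$ inside $\pi_1(\widetilde{L})$ is the whole group, and the conjugacy class of $\tilde{\gamma}$ in $\pi_1(\widetilde{L})$ collapses to the singleton $\{\tilde{\gamma}\}$.

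Next I would invoke the evaluation fibration
\[
\Omega \widetilde{L} \longrightarrow \mathcal{L}\widetilde{L} \xrightarrow{\;\mathrm{ev}\;} \widetilde{L}
\]
and restrict it to the connected component $\mathcal{L}(\tilde{\gamma})\widetilde{L}$. The fiber over a chosen basepoint $x_0 \in \widetilde{L}$ consists of based loops at $x_0$ that are freely homotopic to $\tilde{\gamma}$, i.e.\ the union of those components of $\Omega \widetilde{L}$ labelled by elements of $\pi_1(\widetilde{L})$ conjugate to $\tilde{\gamma}$. By the centrality observation, there is only one such component. Since $\widetilde{L}$ is aspherical, each component of $\Omega \widetilde{L}$ is weakly contractible, so this restricted fiber is weakly contractible.

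Finally, the long exact sequence of the restricted fibration forces $\mathrm{ev}\colon \mathcal{L}(\tilde{\gamma})\widetilde{L} \to \widetilde{L}$ to induce isomorphisms on all homotopy groups, hence to be a weak homotopy equivalence; since both spaces have the homotopy type of CW complexes (for the free loop space of a smooth manifold this is a classical application of Milnor's theorem), Whitehead's theorem upgrades this to an honest homotopy equivalence. The only conceptually delicate point is the reduction of the fiber to a single contractible component, and this is built into the choice of the cover via the centrality of $\tilde{\gamma}$, so I do not anticipate any serious technical obstacle beyond keeping track of basepoints when identifying the conjugacy class of $\tilde{\gamma}$.
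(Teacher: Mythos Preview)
Your argument is correct and is the standard proof of this fact. The paper itself does not supply a proof: it merely cites the lemma from \cite{jlt} (Lemma~5.2) and uses it as a black box. Your route via the evaluation fibration $\Omega\widetilde{L}\to\mathcal{L}\widetilde{L}\to\widetilde{L}$, together with the observation that $\tilde{\gamma}$ is central in $\pi_1(\widetilde{L})\cong Z_\gamma$ so that the fiber over the relevant component is a single weakly contractible piece, is exactly the expected argument and there is nothing to compare against in the paper.
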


The following is a simple consequence of Lemmas \ref{lemma:homeo} and \ref{lemma:h-equiv}.

\begin{corollary}\label{corollary:h-type}
If $L$ is a $K(\pi,1)$ space, then every component of $\mathcal{L}L$ has the homotopy type of a CW complex of dimension at most $n$.
\end{corollary}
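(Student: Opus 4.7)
The plan is a direct concatenation of the two preceding lemmas with a standard fact about the CW type of manifolds. Fix a component $\mathcal{L}(\gamma)L$ of $\mathcal{L}L$, where $\gamma\in\pi_1(L)$ is a representative of the corresponding conjugacy class. First I would apply Lemma \ref{lemma:homeo} to identify $\mathcal{L}(\gamma)L$ homeomorphically with $\mathcal{L}(\tilde{\gamma})\widetilde{L}$, where $\widetilde{L}\to L$ is the covering associated to the centralizer $Z_\gamma\subset\pi_1(L)$ and $\tilde{\gamma}$ is a lift of $\gamma$. Since $L$ is a $K(\pi,1)$, any covering $\widetilde{L}$ is again aspherical (indeed a $K(Z_\gamma,1)$-space realized as an $n$-manifold), so Lemma \ref{lemma:h-equiv} applies and the evaluation map produces a homotopy equivalence $\mathcal{L}(\tilde{\gamma})\widetilde{L}\simeq\widetilde{L}$.

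Next I would invoke the general fact that every $n$-dimensional manifold has the homotopy type of a CW complex of dimension at most $n$. Concretely, if $Z_\gamma$ has finite index then $\widetilde{L}$ is a closed $n$-manifold, which carries a Morse function with finitely many critical points of indices in $[0,n]$ and hence the homotopy type of a CW complex of dimension $n$. Otherwise $\widetilde{L}$ is a non-compact $n$-manifold, and a standard argument (e.g.\ via a proper Morse function without maxima, or a handle decomposition without top-dimensional handles) shows it has the homotopy type of a CW complex of dimension at most $n-1$. Either way, composing the chain of equivalences $\mathcal{L}(\gamma)L\cong\mathcal{L}(\tilde{\gamma})\widetilde{L}\simeq\widetilde{L}$ gives the desired bound on the CW dimension of each component.

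There is no real obstacle here: the two lemmas do essentially all of the work and the CW fact is classical. The only point that warrants a sentence of care is the non-compact case, where one should note explicitly that open $n$-manifolds have CW dimension $\leq n-1$, so that the statement $\leq n$ is comfortably covered in every case. I would therefore keep the proof to a short paragraph, just citing the two lemmas and the manifold CW fact.
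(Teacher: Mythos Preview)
Your proposal is correct and follows exactly the approach the paper intends: the corollary is stated there as a direct consequence of Lemmas \ref{lemma:homeo} and \ref{lemma:h-equiv}, and your concatenation of these with the standard fact that an $n$-manifold has the homotopy type of a CW complex of dimension at most $n$ is precisely what is meant.
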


We can now prove Corollary \ref{corollary:n}, which in particular shows that if $M$ admits a cyclic dilation, and the closed Lagrangian submanifold $L\subset M$ is a $K(\pi,1)$ space and relatively $\mathit{Spin}$, then $L$ has minimal Maslov index $2$.

\begin{proof}[Proof of Corollary \ref{corollary:n}]
We can rewrite the identity (\ref{eq:def1}) as
\begin{equation}
\sum_{k=2}^\infty\frac{1}{(k-1)!}\sum_{a=a_1+\cdots+a_{k-1}}\tilde{\ell}_k\left(\tilde{y}(-a),\tilde{x}(a_1),\cdots,\tilde{x}(a_k)\right)=(-1)^{n+1}[\![L]\!],
\end{equation}
where $a,a_1,\cdots,a_k\in H_1(L;\mathbb{Z})$, and $\tilde{x}(a_i)$ denotes the component of $\tilde{x}$ in the summand $H_{\ast+n+\mu(a_i)-1}^{S^1}(\mathcal{L}(a_i)L;\mathbb{R})$ with respect to the decomposition (\ref{eq:H-eq}). We now use the assumption that $L$ is a $K(\pi,1)$ space, because of the topological splitting on the free loop space, we have $[\![L]\!]\neq0$.\footnote{Note that this is not true for general $L$. For example, $[\![L]\!]=0$ in $H_\ast^{S^1}(\mathcal{L}L;\mathbb{R})$ if $L$ is simply-connected, cf. \cite{jz}, Corollary 1.1.6.} It follows that there must be some integer $k\geq2$ and homology classes $a,a_1,\cdots,a_k$ such that
\begin{equation}\label{eq:non-van} 
\tilde{\ell}_k\left(\tilde{y}(-a),\tilde{x}(a_1),\cdots,\tilde{x}(a_k)\right)\neq0.
\end{equation}
The gradings of these inputs are given by
\begin{equation}
|\tilde{y}(-a)|=n+1-\mu(a)\textrm{ and }|\tilde{x}(a_i)|=n-3+\mu(a_i).
\end{equation}
Since $L$ is a $K(\pi,1)$, it follows from Corollary \ref{corollary:h-type} that the vector spaces $H_\ast^{S^1}(\mathcal{L}(b)L;\mathbb{R})$ are supported in degrees $0\leq\ast\leq n-1$ for every $b\neq0\in H_1(L;\mathbb{Z})$. In order for (\ref{eq:non-van}) to be true, none of the inputs can be a multiple of $[\![L]\!]$, so we must have
\begin{equation}
2\leq\mu(a)\leq n+1\textrm{ and }3-n\leq\mu(a_i)\leq2.
\end{equation}
By our assumption, $\sum_{i=1}^k\mu(a_i)=\mu(a)\geq2$, so there must be some $i$ for which $\mu(a_i)>0$. Recall that for an orientable Lagrangian submanifold $L$, $\mu(a_i)$ is even, so the constraint $0<\mu(a_i)\leq2$ actually implies that $\mu(a_i)=2$. Since $\tilde{x}(a_i)\neq0$ and it is defined in terms of counting pseudoholomorphic discs in the relative homotopy class $\bar{a}_i\in\pi_2(M,L)$ with $\partial\bar{a}_i=a_i$ (see Section \ref{section:disc} for the moduli spaces defining the chain underlying $\tilde{x}(a_i)$, and refer to (\ref{eq:MC}) for the precise relation between this chain and the homology class $\tilde{x}(a_i)$), we see that $L$ bounds a non-constant pseudoholomorphic disc of Maslov index 2, which in particular has positive symplectic area.

Set $\gamma:=a_i$, and let $Z_\gamma\subset\pi_1(L)$ be its centralizer. We have a short exact sequence
\begin{equation}
0\rightarrow\ker(\mu|_{Z_\gamma})\rightarrow Z_\gamma\xrightarrow{\frac{1}{2}\mu}\mathbb{Z}\rightarrow0.
\end{equation}
It follows that the map $\rho:\mathbb{Z}\times\ker(\mu|_{Z_\gamma})\rightarrow Z_\gamma$ defined by $\rho(k,g)=\gamma^k\cdot g$ is an isomorphism. Since $L$ is a $K(\pi,1)$ space, the covering space $\widetilde{L}$ of $L$ associated to $Z_\gamma$ is also an Eilenberg-MacLane space $K\left(\mathbb{Z}\times\ker(\mu|_{Z_\gamma}),1\right)$. In particular, it is homotopy equivalent to $S^1\times K$ for some $(n-1)$-dimensional CW complex $K$ which is a $K\left(\ker(\mu|_{Z_\gamma}),1\right)$ space. 

It remains to show that $\widetilde{L}$ is a finite cover of $L$. Note that since $\mu(\gamma)=2$ and $|\tilde{x}(\gamma)|=n-1$, we have $H_n(\mathcal{L}(\gamma)L;\mathbb{R})\neq0$. By Lemmas \ref{lemma:homeo} and \ref{lemma:h-equiv}, $\mathcal{L}(\gamma)L$ is homotopy equivalent to the $n$-manifold $\widetilde{L}$. Since $H_n(\widetilde{L};\mathbb{R})\neq0$, it follows that $\widetilde{L}$ is compact, which forces $\widetilde{L}\rightarrow L$ to be a finite covering.
\end{proof}

\begin{remark}\label{remark:manifold}
In \cite{jlf}, Section 5, it is claimed that the $K\left(\ker(\mu|_{Z_\gamma}),1\right)$ space $K$ can be realized as a manifold. However, it is unclear to the author how to prove this claim in general. When $n=3$, the fact that $\widetilde{L}$ is homotopy equivalent to $S^1\times\Sigma_g$ for some closed oriented surface $\Sigma_g$ follows from Stallings' fibration theorem (\cite{jh3}, Theorem 11.6). When $n\geq6$, the fact that $K$ can be taken to be a closed manifold can likely be proved using Farrell's fibration theorem \cite{tft}.\footnote{The author thanks the anonymous referee for pointing out this issue.}
\end{remark}

\begin{proof}[Proof of Corollary \ref{corollary:3}]
Let $L$ be a compact, orientable, prime $3$-manifold. It is known that $L$ is either diffeomorphic to $S^1\times S^2$, or $L$ is irreducible, meaning that every $S^2\subset L$ bounds a ball.

Let $L\subset M$ be an irreducible Lagrangian $3$-manifold. There there are two possibilities, either $\pi_1(L)$ is infinite, or it is finite. When $\pi_1(L)$ is infinite, the universal cover of $L$ is non-compact. In this case the argument is identical to the proof in Section 5 of \cite{jlf}, Corollary 1.2, which implies that $L$ is diffeomorphic to $S^1\times\Sigma_g$ for some $g\geq1$. When $\pi_1(L)$ is finite, applying Perelman's proof of the Geometrization Conjecture we see that $L$ is a spherical space form. 
\end{proof}

\section{de Rham complex and string bracket}\label{section:dR-BV}

To prove Theorem \ref{theorem:main}, we first introduce in this section a chain model of the free loop space homology due to Irie $\cite{ki1}$ and Wang \cite{ywt}, and then modify their constructions to produce a chain model for the string homology $\mathbb{H}_\ast^{S^1}$ (cf. (\ref{eq:H-eq})), on which an odd Lie bracket (which is supposed to play the role of the chain level string bracket) can be defined. This enables us to reformulate in Theorem \ref{theorem:chain} the statement of Theorem \ref{theorem:main} at chain level in Section \ref{section:chain}. In order to produce the chains in the completed de Rham complex as required by Theorem \ref{theorem:chain}, we will follow the strategy of Irie \cite{ki2} and approximate them using chains up to finite energy level. This is done in Section \ref{section:approximation}.

\subsection{A de Rham chain model}\label{section:de Rham}

Let $L$ be a closed, orientable manifold of dimension $n$. We first recall a convenient model for the space of Moore loops with marked points in $L$, which is due to Wang \cite{ywt}. Denote by $\Pi_1L$ the fundamental groupoid of $L$, which assigns to each ordered pair of points $(p,q)\in L^2$ the collection of equivalence classes of smooth paths from $p$ to $q$. Denote by
\begin{equation}
s:\Pi_1L\rightarrow L\textrm{ and }t:\Pi_1L\rightarrow L
\end{equation}
the source and the target maps, which assigns the points $p$ and $q$ to each equivalence class of ordered pair $(p,q)$, respectively. There is an obvious map
\begin{equation}
\left\{(c_0,c_1)\in(\Pi_1L)^2|t(c_0)=s(c_1)\right\}\rightarrow\Pi_1L,\textrm{ }(c_0,c_1)\mapsto c_0\ast c_1
\end{equation}
induced by the concatenation of two paths. For every $k\in\mathbb{Z}_{\geq0}$, denote by $\mathcal{P}_{k+1}L\subset(\Pi_1L)^{k+1}$ the subspace consisting of $(c_0,\cdots,c_k)$ such that $t(c_i)=s(c_{i+1})$ for $0\leq i\leq k-1$. Define\footnote{The spaces $\mathcal{P}_{k+1}L$ and $\mathcal{L}_{k+1}L$ are denoted by $\mathcal{P}_kL$ and $\mathcal{L}_kL$, respectively in \cite{ywt}. Our notations here are compatible with \cite{ki1} and \cite{ki2}, where the roles of $\mathcal{P}_{k+1}L$ and $\mathcal{L}_{k+1}L$ are played by the space of Moore loops with $k+1$ marked points.}
\begin{equation}\label{eq:flsk}
\mathcal{L}_{k+1}L:=\left\{(c_0,\cdots,c_k)\in\mathcal{P}_{k+1}L|t(c_k)=s(c_0)\right\}.
\end{equation}
Note that compared to the space of Moore loops with $k+1$ marked points in $L$ (cf. \cite{ki1}, Section 7 and \cite{ki2}, Section 4.1), the $\mathcal{L}_{k+1}L$ defined above has the advantage that it is a smooth oriented manifold of dimension $(k+1)n$.
 
From now on, we will omit $L$ from the notations, and simply write $\mathcal{L}_{k+1}$ for (\ref{eq:flsk}). These spaces are equipped with smooth evaluation maps
\begin{equation}
\mathit{ev}_j^\mathcal{L}:\mathcal{L}_{k+1}\rightarrow L,\textrm{ }(c_0,\cdots,c_k)\mapsto s(c_j),\textrm{ }0\leq j\leq k
\end{equation}
and concatenation maps
\begin{equation}\label{eq:con}
\begin{split}
\mathit{con}_j:\mathcal{L}_{k+1}\textrm{ }{{}_{\mathit{ev}_j^\mathcal{L}}\times_{\mathit{ev}_0^\mathcal{L}}}\textrm{ }\mathcal{L}_{k'+1}&\rightarrow\mathcal{L}_{k+k'}, \\
\left((c_0,\cdots,c_k),(c_0',\cdots,c_k')\right)&\mapsto\left\{\begin{array}{ll}
(c_0,\cdots,c_{j-2},c_{j-1}\ast c_0',c_1',\cdots,c_{k'-1}',c_{k'}'\ast c_j,\cdots,c_k) & k'\geq1, \\
(c_0,\cdots,c_{j-2},c_{j-1}\ast c_0'\ast c_j,c_{j+1},\cdots,c_k) & k'=0.
\end{array}\right.
\end{split}
\end{equation}
It is easy to see that the concatenation maps are compatible with the decomposition $\mathcal{L}_{k+1}=\bigsqcup_{a\in H_1(L;\mathbb{Z})}\mathcal{L}_{k+1}(a)$ of $\mathcal{L}_{k+1}$ into different homotopy classes, where $\mathcal{L}_{k+1}(a)\subset\mathcal{L}_{k+1}$ is the subspace consisting of loops $\gamma=c_0\ast\cdots\ast c_k\in\Pi_1L$ with $[\gamma]=a$. It follows that we have a map
\begin{equation}\label{eq:conca}
\mathit{con}_j:\mathcal{L}_{k+1}(a)\textrm{ }{{}_{\mathit{ev}_j^\mathcal{L}}\times_{\mathit{ev}_0^\mathcal{L}}}\textrm{ }\mathcal{L}_{k'+1}(a')\rightarrow\mathcal{L}_{k+k'}(a+a')
\end{equation}
for $a,a'\in H_1(L;\mathbb{Z})$. The following definition is inspired by \cite{ki2}, Definition 4.1.

\begin{definition}\label{definition:smooth}
Let $U$ be a smooth manifold and consider the map $\varphi:U\rightarrow\mathcal{L}_{k+1}$, which can be written as $\varphi(u)=\left(c_0(u),\cdots,c_k(u)\right)$. We say that $\varphi$ is a smooth map if the map $\varphi$ is $C^\infty$ and the composition $\mathit{ev}_0^\mathcal{L}\circ\varphi:U\rightarrow L$ is a submersion.
\end{definition}

For $N\in\mathbb{N}$, let $\mathfrak{U}_N$ be the collection of oriented submanifolds in $\mathbb{R}^N$, and define $\mathfrak{U}:=\bigsqcup_{N\geq1}\mathfrak{U}_N$. Let $\mathcal{P}(\mathcal{L}_{k+1}(a))$ denote the set of pairs $(U,\varphi)$, where $U\in\mathfrak{U}$ and $\varphi:U\rightarrow\mathcal{L}_{k+1}(a)$ is a smooth map in the sense of Definition \ref{definition:smooth}. In the terminology of \cite{ki1}, the pair $(U,\varphi)$ is called a \textit{plot} of $\mathcal{L}_{k+1}$.

For each $N$, consider the vector space
\begin{equation}\label{eq:sp}
\bigoplus_{(U,\varphi)\in\mathcal{P}(\mathcal{L}_{k+1}(a))}A_c^{\dim(U)-N}(U),
\end{equation}
where $A_c^\ast(U)$ denotes the space of compactly supported differential forms on $U$. Denote by $Z_N$ the subspace of (\ref{eq:sp}) defined by
\begin{equation}
	\begin{split}
	&\left\{(U,\varphi,\pi_!\omega)-(U',\varphi\circ\pi,\omega)|(U,\varphi)\in\mathcal{P}(\mathcal{L}_{k+1}(a)),U'\in\mathfrak{U}, \right. \\
	&\left.\omega\in A_c^{\dim(U')-N}(U'),\pi:U'\rightarrow U\textrm{ is a submersion}\right\}.
	\end{split}
\end{equation}
As a graded vector space, the $N$th degree \textit{de Rham chain complex} of $\mathcal{L}_{k+1}(a)$ is the quotient
\begin{equation}
C_N^\mathit{dR}(\mathcal{L}_{k+1}(a)):=\left(\bigoplus_{(U,\varphi)\in\mathcal{P}(\mathcal{L}_{k+1}(a))}A_c^{\dim(U)-N}(U)\right)/Z_N.
\end{equation}
By abuse of notations, we will write the chains in $C_N^\mathit{dR}(\mathcal{L}_{k+1}(a))$ as $(U,\varphi,\omega)$ instead of their equivalence classes. The boundary operator $\partial:C_\ast^\mathit{dR}(\mathcal{L}_{k+1}(a))\rightarrow C_{\ast-1}^\mathit{dR}(\mathcal{L}_{k+1}(a))$ is defined by taking the de Rham differential
\begin{equation}
\partial(U,\varphi,\omega):=(-1)^{|\omega|+1}(U,\varphi,d\omega).
\end{equation}
One can check that $\partial$ is well-defined, and $\partial^2=0$. The homology of $\left(C_\ast^\mathit{dR}(\mathcal{L}_{k+1}(a)),\partial\right)$ will be denoted by $H_\ast^\mathit{dR}(\mathcal{L}_{k+1}(a))$.

By \cite{ywt}, Theorem 2.2.1, after forming the total complex $C_\ast^\mathit{dR}(a):=\prod_{k=0}^\infty C_\ast^\mathit{dR}(\mathcal{L}_{k+1}(a))$ (cf. (\ref{eq:tot})), the homology group $H_\ast\left(C_\ast^\mathit{dR}(a)\right)$ is in fact isomorphic to the singular homology $H_\ast^\mathit{sing}(\mathcal{L}(a)L;\mathbb{R})$ defined using the $C^\infty$-topology on the free loop space $\mathcal{L}(a)L$.\footnote{Note that unlike in the case of Moore loops, we no longer have an isomorphism $H_\ast^\mathit{dR}(\mathcal{L}_{k+1}(a))\cong H_\ast^\mathit{sing}(\mathcal{L}(a)L;\mathbb{R})$ for each $k\in\mathbb{Z}_{\geq0}$. This will affect a few things later on, see in particular the proof of Lemma \ref{lemma:def-0}.}

One nice property of de Rham chains is that one can take their fiber products. For $k\in\mathbb{N}$, $k'\in\mathbb{Z}_{\geq0}$, and $1\leq j\leq k$, define the map
\begin{equation}\label{eq:fp}
\circ_j:C_{n+d}^\mathit{dR}(\mathcal{L}_{k+1}(a))\otimes C_{n+d'}^\mathit{dR}\left(\mathcal{L}_{k'+1}(a')\right)\rightarrow C_{n+d+d'}^\mathit{dR}\left(\mathcal{L}_{k+k'}(a+a')\right)
\end{equation}
by
\begin{equation}
x\circ_jy:=(-1)^{(\dim(U)-|\omega|-n)|\omega'|}\left(U{{}_{\varphi_j}\times_{\varphi_0'}}U',\mathit{con}_j\circ(\varphi_j\times\varphi_0'),\omega\times\omega'|_{U{{}_{\varphi_j}\times_{\varphi_0'}}U'}\right),
\end{equation}
where $\varphi_j=\mathit{ev}_j^\mathcal{L}\circ\varphi$ and $\varphi_0'=\mathit{ev}_0^\mathcal{L}\circ\varphi'$. One can check that this is a chain map. On the total complex $C_\ast^\mathit{dR}(a)$, it descends to a map
\begin{equation}
H_{n+d}(\mathcal{L}(a)L;\mathbb{R})\otimes H_{n+d'}\left(\mathcal{L}(a')L;\mathbb{R}\right)\rightarrow H_{n+d+d'}\left(\mathcal{L}(a+a')L;\mathbb{R}\right)
\end{equation}
which corresponds to the Chas-Sullivan loop product defined in \cite{cs} under the isomorphism $H_\ast\left(C_\ast^\mathit{dR}(a)\right)\cong H_\ast^\mathit{sing}(\mathcal{L}(a)L;\mathbb{R})$ mentioned above.
\bigskip

There is a relative version of the above construction, whose definition makes use of de Rham chains on $[-1,1]\times\mathcal{L}_{k+1}(a)$ relative to $\{-1,1\}\times\mathcal{L}_{k+1}(a)$. Let $\overline{\mathcal{P}}(\mathcal{L}_{k+1}(a))$ denote the set of tuples $(U,\varphi,\tau_+,\tau_-)$, where
\begin{itemize}
	\item $U\in\mathfrak{U}$ and $\varphi:U\rightarrow\mathbb{R}\times\mathcal{L}_{k+1}(a)$. Write $\varphi=(\varphi_\mathbb{R},\varphi_\mathcal{L})$, and for every interval $I\subset\mathbb{R}$, define $U_I:=(\varphi_\mathbb{R})^{-1}(I)$.
	\item $\varphi_\mathbb{R}$ and $\varphi_\mathcal{L}$ are $C^\infty$ maps. Moreover, the map $U\rightarrow\mathbb{R}\times L$ defined by $u\mapsto\left(\varphi_\mathbb{R}(u),\mathit{ev}_0\circ\varphi_\mathcal{L}(u)\right)$ is a submersion.
	\item $\tau_+:U_{\geq1}\rightarrow\mathbb{R}_{\geq1}\times U_1$ is a diffeomorphism such that
	\begin{equation}
	\varphi|_{U_{\geq1}}=(i_{\geq1}\times\varphi_\mathcal{L}|_{U_1})\circ\tau_+,
	\end{equation}
	where $i_{\geq1}:\mathbb{R}_{\geq1}\hookrightarrow\mathbb{R}$ is the obvious inclusion.
	\item $\tau_-:U_{\leq-1}\rightarrow\mathbb{R}_{\leq-1}\times U_{-1}$ is a diffeomorphism such that
	\begin{equation}
	\varphi|_{U_{\leq-1}}=(i_{\leq-1}\times\varphi_\mathcal{L}|_{U_{-1}})\circ\tau_{-1},
	\end{equation}
	where $i_{\leq-1}:\mathbb{R}_{\leq-1}\hookrightarrow\mathbb{R}$ is the obvious inclusion.
\end{itemize}
Note that the sets $U_{\geq1}$ and $U_{\leq-1}$ can be empty.

For any $(U,\varphi,\tau_+,\tau_-)\in\overline{\mathcal{P}}(\mathcal{L}_{k+1}(a))$ and $N\in\mathbb{Z}$, let $A^N(U,\varphi,\tau_+,\tau_-)$ be the vector space of differential $N$-forms $\omega\in A^N(U)$ on $U$ such that
\begin{itemize}
	\item $\omega|_{U_{[-1,1]}}$ is compactly supported,
	\item $\omega|_{U_{\geq1}}=(\tau_+)^\ast(1\times\omega|_{U_1})$,
	\item $\omega|_{U_{\leq-1}}=(\tau_-)^\ast(1\times\omega|_{U_{-1}})$.
\end{itemize}
Define the space of $N$th degree \textit{relative de Rham chains} to be
\begin{equation}
\overline{C}_N^\mathit{dR}(\mathcal{L}_{k+1}(a)):=\left(\bigoplus_{(U,\varphi,\tau_+,\tau_-)\in\overline{\mathcal{P}}(\mathcal{L}_{k+1}(a))}A^{\dim(U)-N-1}(U,\varphi,\tau_+,\tau_-)\right)/\overline{Z}_N,
\end{equation}
where the subspace $\overline{Z}_N\subset\overline{C}_N^\mathit{dR}(\mathcal{L}_{k+1}(a))$ is generated by
\begin{equation}
(U,\varphi,\tau_+,\tau_-,\omega)-(U',\varphi',\tau_+',\tau_-',\omega'),
\end{equation}
if there exists a submersion $\pi:U'\rightarrow U$ satisfying $\varphi'=\varphi\circ\pi$, $\omega=\pi_!\omega'$, and
\begin{equation}
\tau_+\circ\pi|_{U_{\geq1}'}=(\mathit{id}_{\mathbb{R}_{\geq1}}\times\pi|_{U_1'})\circ\tau_+',
\end{equation}
\begin{equation}
\tau_-\circ\pi|_{U_{\leq-1}'}=(\mathit{id}_{\mathbb{R}_{\leq-1}}\times\pi|_{U_{-1}'})\circ\tau_-',
\end{equation}
where $\mathit{id}_{\mathbb{R}_I}$ is the identity map on $\mathbb{R}_I$. The differential $\overline{\partial}:\overline{C}_\ast^\mathit{dR}(\mathcal{L}_{k+1}(a))\rightarrow\overline{C}_{\ast-1}^\mathit{dR}(\mathcal{L}_{k+1}(a))$ is defined to be
\begin{equation}
\overline{\partial}(U,\varphi,\tau_+,\tau_-,\omega):=(-1)^{|\omega|+1}(U,\varphi,\tau_+,\tau_-,d\omega).
\end{equation}
Again, one can check that $\overline{\partial}$ is well-defined and $\overline{\partial}^2=0$, which gives rise to a relative version of de Rham homology $\overline{H}_\ast^\mathit{dR}(\mathcal{L}_{k+1}(a))$.

The fiber product (\ref{eq:fp}) also exists on the relative de Rham complex. For $k\in\mathbb{N}$, $k'\in\mathbb{Z}_{\geq0}$, $1\leq j\leq k$, $a,a'\in H_1(L;\mathbb{Z})$, and $x=(U,\varphi,\tau_+,\tau_-,\omega)$, $y=(U',\varphi',\tau_+',\tau_-',\omega')$ two relative de Rham chains, define
\begin{equation}\label{eq:fpr}
\circ_j:\overline{C}_{n+d}^\mathit{dR}(\mathcal{L}_{k+1}(a))\otimes\overline{C}^\mathit{dR}_{n+d'}(\mathcal{L}_{k'+1}(a'))\rightarrow\overline{C}^\mathit{dR}_{n+d+d'}(\mathcal{L}_{k+k'}(a+a'))
\end{equation}
by
\begin{equation}
x\circ_jy=(-1)^{(\dim(U)-|\omega|-n-1)|\omega'|+n}\left(U_{\varphi_j}\times_{\varphi_0'}U',\varphi'',\tau_+'',\tau_-'',\omega\times\omega'\right),
\end{equation}
where
\begin{equation}
\varphi_j:=(\varphi_\mathbb{R},\mathit{ev}_j^\mathcal{L}\circ\varphi_\mathcal{L}),\textrm{ }\varphi_0':=(\varphi_\mathbb{R}',\mathit{ev}_0^\mathcal{L}\circ\varphi_\mathcal{L}'),
\end{equation}
and
\begin{equation}
\varphi''(u,u'):=\left(\varphi_\mathbb{R}(u),\mathit{con}_j(\varphi_\mathcal{L}(u),\varphi_\mathcal{L}'(u'))\right),
\end{equation}
\begin{equation}
\tau_+''(u,u'):=\left(\rho_+(u,u'),(\mathit{pr}_{U_1}\circ\tau_+(u),\mathit{pr}_{U_1'}\circ\tau_+'(u'))\right),
\end{equation}
\begin{equation}
\tau_-''(u,u'):=\left(\rho_-(u,u'),(\mathit{pr}_{U_{-1}}\circ\tau_-(u),\mathit{pr}_{U_{-1}'}\circ\tau_-'(u'))\right),
\end{equation}
with the functions $\rho_\pm$ given by
\begin{equation}
\rho_+(u,u'):=\mathit{pr}_{\mathbb{R}_{\geq1}}\circ\tau_+(u)=\mathit{pr}_{\mathbb{R}_{\geq1}}\circ\tau_+'(u'),
\end{equation}
\begin{equation}
\rho_-(u,u'):=\mathit{pr}_{\mathbb{R}_{\leq-1}}\circ\tau_-(u)=\mathit{pr}_{\mathbb{R}_{\leq-1}}\circ\tau_-'(u'),
\end{equation}
where $\mathit{pr}_{\mathbb{R}_I}$ denotes the trivial projection to the $\mathbb{R}_I$ factor.

The de Rham complex $C_\ast^\mathit{dR}(\mathcal{L}_{k+1}(a))$ and its relative version $\overline{C}_\ast^\mathit{dR}(\mathcal{L}_{k+1}(a))$ are related as follows. It is natural to consider the inclusion map $i:C_\ast^\mathit{dR}(\mathcal{L}_{k+1}(a))\rightarrow\overline{C}_\ast^\mathit{dR}(\mathcal{L}_{k+1}(a))$ defined by
\begin{equation}\label{eq:i}
i(U,\varphi,\omega):=(-1)^{\dim(U)}(\mathbb{R}\times U,\mathit{id}_\mathbb{R}\times\varphi,\tau_+,\tau_-,1\times\omega),
\end{equation}
where the diffeomorphisms $\tau_\pm$ are defined in the obvious way, and the projection maps $e_\pm:\overline{C}_\ast^\mathit{dR}(\mathcal{L}_{k+1}(a))\rightarrow C_\ast^\mathit{dR}(\mathcal{L}_{k+1}(a))$ are given by
\begin{equation}\label{eq:e+}
e_+(U,\varphi,\tau_+,\tau_-,\omega):=(-1)^{\dim(U)-1}(U_1,\varphi|_{U_1},\omega|_{U_1}),
\end{equation}
\begin{equation}\label{eq:e-}
e_-(U,\varphi,\tau_+,\tau_-,\omega):=(-1)^{\dim(U)-1}(U_{-1},\varphi|_{U_{-1}},\omega|_{U_{-1}}),
\end{equation}
where $U_1$ (resp. $U_{-1}$) is oriented so that $\tau_+$ (resp. $\tau_-$) is orientation preserving ($\mathbb{R}_{\geq1}$ and $\mathbb{R}_{\leq-1}$ are oriented so that $\frac{\partial}{\partial t}$ is the positive direction). It is easy to see that $e_\pm$ are surjective, and $i,e_+$ and $e_-$ are well-defined chain maps such that $e_+\circ i=e_-\circ i=\mathit{id}_C$, where $\mathit{id}_C$ denotes the identity of the chain complex $C_\ast^\mathit{dR}(\mathcal{L}_{k+1}(a))$. Conversely, one can show that $i\circ e_+$ and $i\circ e_-$ are chain homotopic to $\mathit{id}_{\overline{C}}$, the identity of the relative chain complex $\overline{C}_\ast^\mathit{dR}(\mathcal{L}_{k+1}(a))$. In particular, the projections $e_\pm$ are quasi-isomorphisms, and we have
\begin{equation} 
H_\ast\left(\overline{C}_\ast^\mathit{dR}(a)\right)\cong H_\ast\left(C_\ast^\mathit{dR}(a)\right)\cong H_\ast^\mathit{sing}(\mathcal{L}(a)L;\mathbb{R}),
\end{equation}
where $\overline{C}_\ast^\mathit{dR}(a):=\prod_{k=0}^\infty\overline{C}_\ast^\mathit{dR}\left(\mathcal{L}_{k+1}(a)\right)$ is the total complex in the relative case.

\begin{remark}\label{remark:model}
The de Rham chain models for the free loop space homology of $L$ in this subsection interpolates the constructions of Irie \cite{ki1} and Wang \cite{ywt} in the following sense. For the model $\mathcal{L}_{k+1}$ of the free loop space, we have used Wang's construction in \cite{ywt}, Chapter 2, which realizes $\mathcal{L}_{k+1}$ as a subspace of $(\Pi_1L)^{k+1}$ instead of the space of Moore loops with marked points appeared in Irie's work \cite{ki2}, while for the de Rham complexes $C_\ast^\mathit{dR}$ and $\overline{C}_\ast^\mathit{dR}$, we implemented Irie's construction \cite{ki1} rather than Wang's in \cite{ywt}, Chapter 3, where he also includes manifolds with corners in the definition of plots. Our choice here has the advantage that when pushing forward the virtual fundamental chains of the relevant moduli spaces to get de Rham chains in $C_\ast^\mathit{dR}(\mathcal{L}_{k+1})$ or $\overline{C}_\ast^\mathit{dR}(\mathcal{L}_{k+1})$, the $C^0$-approximation lemma used in Irie's work \cite{ki2} can be avoided. On the other hand, considering the collection of closed submanifolds in $\mathbb{R}^N$ without boundary is enough for the purpose of this paper.
\end{remark}

\subsection{Chain level string bracket}\label{section:BV}

The de Rham chain model introduced in Section \ref{section:de Rham} is good enough for Irie's realization of Fukaya's ideas outlined in \cite{kf}. However, in order to write down the Maurer-Cartan equations for $S^1$-equivariant chains on the free loop space, we need a de Rham chain model on which the chain level homotopy $S^1$-action induced by loop rotations is strict (in the sense of \cite{sg1}, Definition 2.1), and equip such an $S^1$-complex with a chain level Lie bracket, so that it becomes a dg Lie algebra (with degree shift). Unlike the loop bracket, whose definition naturally lifts to the chain level, the original construction of the string bracket by Chas-Sullivan \cite{cs} does not directly apply to the $S^1$-equivariant complex of de Rham chains (see Remark \ref{remark:str-bra} below). To resolve these issues, we will pass to a double quotient of Irie-Wang's chain model $\prod_{k=0}^\infty C_\ast^\mathit{dR}(\mathcal{L}_{k+1})$, on which the $S^1$-action becomes strict, and a Lie bracket exists on the chain level.

From now on, we shall further abbreviate the notations by setting
\begin{equation}
C_\ast(a,k)=C^\mathit{dR}_{\ast+n+\mu(a)+k-1}(\mathcal{L}_{k+1}(a)),
\end{equation}
\begin{equation}
\textrm{ }C_\ast(k)=C_{\ast+n}^\mathit{dR}(\mathcal{L}_{k+1}):=\bigoplus_{a\in H_1(L;\mathbb{Z})}C_{\ast+n+\mu(a)+k-1}^\mathit{dR}(\mathcal{L}_{k+1}(a)).
\end{equation}
Consider the total complex
\begin{equation}\label{eq:tot}
\left(C_\ast:=\bigoplus_{a\in H_1(L;\mathbb{Z})}\prod_{k=0}^\infty C_\ast(a,k),\tilde{\partial}\right),
\end{equation}
where the differential $\tilde{\partial}$ can be expressed in terms of the de Rham differential $\partial$ and the cosimplicial structure maps $\delta_{k,i}$ defined below, see \cite{ki1}, Section 2.5.2. Under the assumption that $L\subset M$ is a closed Lagrangian submanifold in the Liouville manifold $M$, it is equipped with the action filtration
\begin{equation}\label{eq:Xi}
F^\Xi C_\ast:=\bigoplus_{\theta_M(a)>\Xi}\prod_{k=0}^\infty C_\ast(a,k)
\end{equation}
mentioned in the introduction, with respect to which one can take the completion
\begin{equation}\label{eq:completion}
\widehat{C}_\ast:=\varprojlim_{\Xi\rightarrow\infty}C_\ast/F^\Xi C_\ast.
\end{equation}

The chain complexes $\left\{C_\ast(k)\right\}_{k\geq0}$ form a non-symmetric dg operad
\begin{equation}
\mathcal{O}_L=\left\{\mathcal{O}_L(k)\right\}_{k\geq0},
\end{equation} 
with a multiplication
\begin{equation}
\mu_L:=(L',i_2\circ\phi,1)\in C_{-1}(0,2),
\end{equation}
where $i_2:L\rightarrow\mathcal{L}_3(0)$ is defined by taking three copies of $i_0:L\rightarrow\mathcal{L}_1(0)$, $p\mapsto[p]=(p,p)$, $L'\in\mathfrak{U}$, and $\phi:L'\rightarrow L$ is an orientation-preserving diffeomorphism, and a unit
\begin{equation} 
e_L:=(L',i_0\circ\phi,1)\in C_1(0,0)
\end{equation}
of $\mu_L$. We have
\begin{equation}
\mu_L\circ_1e_L=\mu_L\circ_2e_L=\mathit{id}_{\mathcal{O}_L},
\end{equation}
where
\begin{equation}
\mathit{id}_{\mathcal{O}_L}:=(L',i_1\circ\phi,1)\in C_0(0,1)
\end{equation}
is the identity, with $i_1:L\rightarrow\mathcal{L}_2(0)$ given by taking two copies of $i_0$. The total complex $C_\ast$ has the structure of an associative dg algebra, with the product $\bullet:C_i\otimes C_j\rightarrow C_{i+j-1}$ defined by\footnote{Note the sign difference from \cite{ki1}, which is due to the fact that we are considering here the total complex $\prod_{k=0}^\infty\mathcal{O}_L(k)_{\ast+k-1}$ with degree shifted up by $1$, instead of the total complex $\widetilde{\mathcal{O}}_\ast:=\prod_{k=0}^\infty\mathcal{O}_L(k)_{\ast+k}$ considered in \cite{ki1}. This sign difference will be inherited by many of the formulas later on.}
\begin{equation}\label{eq:bullet}
(x\bullet y)(a,k):=\sum_{\substack{k_1+k_2=k\\a_1+a_2=a}}(-1)^{k_1(|y|+1)}(\mu_L\circ_1 x(a_1,k_1))\circ_{k_1+1}y(a_2,k_2).
\end{equation}

More interestingly, the dg operad $\mathcal{O}_L$ carries a cyclic structure (\cite{ki1}, Definition 2.9). For $(c_0,\cdots,c_k)\in\mathcal{L}_{k+1}$, cyclic permutation defines a map
\begin{equation}\label{eq:rotation}
R_k:\mathcal{L}_{k+1}\rightarrow\mathcal{L}_{k+1};\textrm{ } (c_0,\cdots,c_k)\mapsto(c_1,\cdots,c_k,c_0),
\end{equation}
whose induced map
\begin{equation}
(R_k)_\ast:C_\ast(k)\rightarrow C_\ast(k)
\end{equation}
on the de Rham chain complex gives the cyclic structure of $\mathcal{O}_L$.

Using the cyclic structure maps $(R_k)_\ast$ on the dg operad $\mathcal{O}_L$, one can define the chain level BV operator
\begin{equation}\label{eq:BV-c}
\begin{split}
\delta_\mathit{cyc}&:C_\ast(a,k+1)\rightarrow C_{\ast+1}(a,k), \\
(\delta_\mathit{cyc} x)(a,k)&:=\sum_{j=1}^{k+1}(-1)^{|x|+k(j-1)}(R_{k+1})_\ast^jx(a,k+1)\circ_{k+2-j}e_L.
\end{split}
\end{equation}
Under the isomorphism $H_\ast(C_\ast)\cong H_\ast(\mathcal{L}L;\mathbb{R})$, it follows from $\cite{ki1}$, Section 8.5 and \cite{ywt}, Lemma 3.3.1 that the cohomology level operation of $\delta_\mathit{cyc}$ coincides with the BV operator $\Delta:H_\ast(\mathcal{L}L;\mathbb{R})\rightarrow H_{\ast+1}(\mathcal{L}L;\mathbb{R})$ defined by loop rotations.

As before, the definition of $\delta_\mathit{cyc}$ is compatible with the decomposition (\ref{eq:tot}), therefore it extends to an operator on the completion $\widehat{C}_\ast$. 

Recall that a \textit{cosimplicial chain complex} consists of a sequence of complexes $\left\{C_\ast(k)\right\}_{k\geq0}$, together with two families of chain maps
\begin{equation}
\delta_{k,i}:C_\ast(k-1)\rightarrow C_\ast(k),\textrm{ }\sigma_{k,i}:C_\ast(k+1)\rightarrow C_\ast(k)
\end{equation}
for each $0\leq i\leq k$ such that
\begin{equation}\label{eq:cod}
\delta_{k+1,j}\circ\delta_{k,i}=\delta_{k+1,i}\circ\delta_{k,j-1}\textrm{ for }i<j,
\end{equation}
\begin{equation}\label{eq:cos}
\sigma_{k-1,j}\circ\sigma_{k,i}=\sigma_{k-1,i}\circ\sigma_{k,j+1}\textrm{ for }i\leq j,
\end{equation}
\begin{equation}\label{eq:sd}
\sigma_{k,j}\circ\delta_{k+1,i}=\left\{\begin{array}{ll}
\delta_{k,i}\circ\sigma_{k-1,j-1} & i<j, \\ \mathit{id}_C & i=j,j+1, \\ \delta_{k,i-1}\circ\sigma_{k-1,j} & i>j+1.
\end{array}\right.
\end{equation}
In our case, the cosimplicial structure on the dg operad $\mathcal{O}_L$ is given by
\begin{equation}\label{eq:cosimp}
\delta_{k,i}(x):=\left\{\begin{array}{ll}
\mu_L\circ_2x & i=0, \\ x\circ_i\mu_L & 1\leq i\leq k-1, \\ \mu_L\circ_1x & i=k,
\end{array}\right.
\end{equation}
\begin{equation}
\sigma_{k,i}(x):=x\circ_{i+1} e_L.
\end{equation}
Just like $(R_k)_\ast$, these operations on the de Rham chain complex are induced by very simple operations on $\mathcal{L}_{k+1}$. For example, $\delta_{k,i}$ is induced by a map $\mathcal{L}_k\rightarrow\mathcal{L}_{k+1}$ defined as
\begin{equation}
(c_0,\cdots,c_{k-1})\mapsto\left\{\begin{array}{ll}
\left(c_0,\cdots,c_{i-1},[s(c_i)],c_i,\cdots,c_{k-1}\right) & 0\leq i\leq k-1, \\
\left(c_0,\cdots,c_{k-1},[t(c_{k-1})]\right) & i=k,
\end{array}\right.
\end{equation}
where $[p]\in\Pi_1L$ denote the class of the constant path at $p\in L$. A chain $x\in C_\ast(k)$ is called \textit{normalized} if $\sigma_{k-1,i}(x)=0$ for every $0\leq i\leq k-1$. It is easy to see that the normalized chains in the total complex $C_\ast$ form a subcomplex, which we will denote by $C_\ast^\mathit{nm}$. The inclusion $C_\ast^\mathit{nm}\hookrightarrow C_\ast$ is a quasi-isomorphism, see \cite{ki1}, Lemma 2.5 for a proof.

There is an alternative, yet equivalent realization of the subcomplex $C_\ast^\mathit{nm}$ of normalized chains, which is standard in simplicial homotopy theory. We spell out the details for the reader's convenience. A chain $x\in C_\ast(k)$ is called \textit{degenerate} if there exist an $i$ with $1\leq i\leq k$ and a $y\in C_\ast(k-1)$ such that $x=\delta_{k,i}(y)$. It is clear that the sums of degenerate chains in $C_\ast$ form a subcomplex $D_\ast$, and we call the quotient complex $C_\ast^\mathit{nd}:=C_\ast/D_\ast$ the complex of \textit{non-degenerate} de Rham chains. We include an elementary proof of the following fact, which is a consequence of the Dold-Kan correspondence (cf. \cite{stack}, Lemma 14.25.1).

\begin{lemma}\label{lemma:DK}
The composition
\begin{equation}
\psi_{DK}:C_\ast^\mathit{nm}\hookrightarrow C_\ast\twoheadrightarrow C_\ast^\mathit{nd}
\end{equation}
is an isomorphism of chain complexes. In particular, the projection map $C_\ast\twoheadrightarrow C_\ast^\mathit{nd}$ is also a quasi-isomorphism.
\end{lemma}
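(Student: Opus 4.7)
The claim is the cosimplicial analogue of the Dold–Kan normalization theorem applied to $\{C_\ast(k)\}_{k\geq 0}$, with structure maps $\delta_{k,i},\sigma_{k,i}$ governed by (\ref{eq:sd}). My plan is to establish, at each fixed cosimplicial degree $k$, the direct sum decomposition $C_\ast(k) = N_k \oplus D_k$, where
\begin{equation*}
N_k := \bigcap_{i=0}^{k-1}\ker(\sigma_{k-1,i}), \qquad D_k := \sum_{i=0}^{k-1}\mathrm{im}(\delta_{k,i}).
\end{equation*}
The desired isomorphism then follows by assembling the splittings over all $k$, noting that both $C_\ast^\mathit{nm}$ and $D_\ast$ are subcomplexes of $(C_\ast,\tilde\partial)$, as already asserted in the excerpt.

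For surjectivity of $C_\ast^\mathit{nm} \hookrightarrow C_\ast \twoheadrightarrow C_\ast^\mathit{nd}$, I would introduce the decreasing filtration $N_k^{(j)} := \bigcap_{i=0}^{j}\ker(\sigma_{k-1,i})$ for $-1\leq j \leq k-1$, with $N_k^{(-1)} = C_\ast(k)$ and $N_k^{(k-1)} = N_k$, and show by induction on $j$ that every element of $N_k^{(j-1)}$ can be corrected, modulo $D_k$, into $N_k^{(j)}$. The correction at stage $j$ is an explicit combination of terms of the form $\delta_{k,j}(\sigma_{k-1,j}(\cdots))$, chosen so that the commutation rules in (\ref{eq:sd}) preserve the previously achieved vanishings $\sigma_{k-1,i}(x)=0$ for $i<j$; this is precisely the standard Dold–Kan correction formula. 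For injectivity, I would employ the dual filtration $D_k^{(j)} := \sum_{i=0}^{j}\mathrm{im}(\delta_{k,i})$ and, supposing $0 \neq x \in N_k \cap D_k$, let $j$ be minimal with $x \in D_k^{(j)}$. Writing $x \equiv \delta_{k,j}(y) \pmod{D_k^{(j-1)}}$ and applying $\sigma_{k-1,j}$, the identity $\sigma_{k-1,j}\circ\delta_{k,j} = \mathit{id}$ together with the factoring of $\sigma_{k-1,j}\circ\delta_{k,i}$ for $i<j$ through lower-indexed coface maps in (\ref{eq:sd}) forces $y$ to lie in $D_{k-1}^{(j-1)}$. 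Pushing $\delta_{k,j}$ past lower-indexed cofaces via the first identity in (\ref{eq:sd}) then shows $x\in D_k^{(j-1)}$, contradicting the minimality of $j$.

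The main obstacle is the combinatorial bookkeeping in the surjectivity step: the naive correction $x \mapsto x - \delta_{k,j}\sigma_{k-1,j}(x)$ destroys the already-established vanishing of $\sigma_{k-1,i}$ for $i<j$, so a more elaborate alternating-sign combination is required. This is the combinatorial core of the classical Dold–Kan theorem, and no essentially new ideas are needed beyond dualizing its simplicial formulation. Once the splitting $C_\ast(k) = N_k\oplus D_k$ is established at each $k$, compatibility of the resulting isomorphism with the total differential $\tilde\partial$ is automatic because $C_\ast^\mathit{nm}$ and $D_\ast$ are $\tilde\partial$-stable, so the composition $C_\ast^\mathit{nm}\hookrightarrow C_\ast\twoheadrightarrow C_\ast^\mathit{nd}$ is both a chain map and a bijection, hence an isomorphism of chain complexes.
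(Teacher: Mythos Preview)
Your proposal is correct and follows essentially the same approach as the paper: both establish the Dold--Kan splitting $C_\ast(k)=N_k\oplus D_k$ via the filtration indexed by $j$ and the identity $\sigma_{k-1,j}\circ\delta_{k,j}=\mathit{id}$ from (\ref{eq:sd}). The paper packages the induction as a comparison of two short exact sequences (one in the filtration $\mathscr{F}^\bullet C_\ast^{\mathit{nm}}$, one in the quotients $C_\ast/\mathscr{F}^\bullet D_\ast$) and a diagram chase, whereas you treat surjectivity and injectivity separately; the content is the same, and your caution about the naive correction $x\mapsto x-\delta_{k,j}\sigma_{k-1,j}(x)$ is precisely the combinatorial point the paper's short-exact-sequence formulation absorbs.
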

\begin{proof}
It suffices to show that the natural map $C_\ast^\mathit{nm}(k)\rightarrow C_\ast^\mathit{nd}(k)$ is an isomorphism for each $k\in\mathbb{Z}_{\geq0}$. Define the decreasing filtration
\begin{equation}
\mathscr{F}^jC_\ast^\mathit{nm}(k):=\bigcap_{0\leq i\leq j}\ker(\sigma_{k-1,i}),\textrm{ }0\leq j\leq k-1,
\end{equation}
and the increasing filtration
\begin{equation}
\mathscr{F}^{j+1}D_\ast(k):=\mathrm{span}_\mathbb{R}\left\langle\mathrm{im}(\delta_{k,1}),\cdots,\mathrm{im}(\delta_{k,j+1})\right\rangle,\textrm{ }0\leq j\leq k-1.
\end{equation}
We will argue by induction on $j$. First note that for $j=0$ the short exact sequence
\begin{equation}
0\rightarrow\mathscr{F}^1D_\ast(k)\rightarrow C_\ast(k)\rightarrow\mathscr{F}^0C_\ast^\mathit{nm}(k)\rightarrow0
\end{equation}
splits by the cosimplicial identity $\sigma_{k-1,0}\circ\delta_{k,1}=\mathit{id}_C$, where the map from $\mathscr{F}^1D_\ast(k)$ to $C_\ast(k)$ is the obvious inclusion, and the map from $C_\ast(k)$ to $\mathscr{F}^0C_\ast^\mathit{nm}(k)$ is defined by $x\mapsto x-\delta_{k,1}\circ\sigma_{k-1,0}(x)$.

Assume that we already have an isomorphism $\mathscr{F}^iC_\ast^\mathit{nm}(k)\cong\frac{C_\ast(k)}{\mathscr{F}^{i+1}D_\ast(k)}$ for $0\leq i\leq j-1$ and all $k\in\mathbb{N}$, there is a commutative diagram
\begin{equation}\label{eq:diagram}
\begin{tikzcd}
	0 \arrow[r] &\mathscr{F}^{j-1}C_\ast^\mathit{nm}(k-1) \arrow[d,"\cong"] \arrow[r,"\delta_{k,j+1}"] &\mathscr{F}^{j-1}C_\ast^\mathit{nm}(k) \arrow[d, "\cong"] \arrow[r,"p_{k,j}"] &\mathscr{F}^jC_\ast^\mathit{nm}(k) \arrow[d] \arrow[r] &0 \\
	0 \arrow[r] &\frac{C_\ast(k-1)}{\mathscr{F}^{j}D_\ast(k-1)} \arrow[r, "\delta_{k,j+1}"] &\frac{C_\ast(k)}{\mathscr{F}^{j}D_\ast(k)} \arrow[r,"\mathit{pr}"] &\frac{C_\ast(k)}{\mathscr{F}^{j+1}D_\ast(k)} \arrow[r] &0
\end{tikzcd}
\end{equation}
where the map $p_{k,j}$ is given by $x\mapsto x-\delta_{k,j+1}\circ\sigma_{k-1,j}(x)$ and $\mathit{pr}$ is the standard projection. 

To see this, we first check that the upper row is exact. By (\ref{eq:sd}), $\sigma_{k-1,i}\circ\delta_{k,j+1}(x)=\delta_{k-1,j}\circ\sigma_{k-2,i}(x)=0$ for $0\leq i\leq j-1$ and $x\in\mathscr{F}^{j-1}C_\ast^\mathit{nm}(k-1)$. Moreover, $\sigma_{k-1,i}(x)-\sigma_{k-1,i}\circ\delta_{k,j+1}\circ\sigma_{k-1,j}(x)=0$ for $0\leq i\leq j$ and $x\in\mathscr{F}^{j-1}C_\ast^\mathit{nm}(k)$ by the cosimplicial identities (\ref{eq:cos}) and (\ref{eq:sd}). Thus the upper row is well-defined. Its exactness follows from
\begin{equation}
\delta_{k,j+1}(x)-\delta_{k,j+1}\circ\sigma_{k-1,j}\circ\delta_{k,j+1}(x)=\delta_{k,j+1}(x)-\delta_{k,j+1}(x)=0,
\end{equation}
where $x\in\mathscr{F}^{j-1}C_\ast^\mathit{nm}(k-1)$ and the first equality follows from (\ref{eq:sd}). The exactness of the lower row is automatic once we show the map $\delta_{k,j+1}$ is well-defined and injective. To see this, let $x=\delta_{k-1,i}(y)$ for some $1\leq i\leq j$ and $y\in\mathscr{F}^{j}D_\ast(k-2)$, then by the cosimplicial identity (\ref{eq:cod}) we have $\delta_{k,j+1}\circ\delta_{k-1,i}(y)=\delta_{k,i}\circ\delta_{k-1,j}(y)\in\mathscr{F}^jD_\ast(k)$. For the injectivity, note that if $\delta_{k,j+1}(x)\in\mathscr{F}^jD_\ast(k)$ for some $x\in C_\ast(k-1)$, then $x=\sigma_{k-1,j}\circ\delta_{k,j+1}(x)\in\mathscr{F}^{j}D_\ast(k-1)$ by (\ref{eq:sd}). Now the first square of the diagram (\ref{eq:diagram}) commutes since it consists of the natural inclusions and projections. For the second square, note that the extra term $\delta_{k,j+1}\circ\sigma_{k-1,j}$ in the definition of $p_{k,j}$ does not affect things modulo $\mathscr{F}^jD_\ast(k)$, so it commutes as well.

Since the horizontal lines of (\ref{eq:diagram}) are short exact sequences, and the first two vertical arrows are isomorphisms, so is the third vertical arrow.
\end{proof}

\begin{remark}
One should, however, take care of the differences in terminologies. The chain complex $C_\ast^\mathit{nd}$ is called the normalized chain complex for cosimplicial objects in \cite{stack} in the sense that it is dual to the normalized chain complex for simplicial objects. Here, to be consistent with the convention of \cite{ki1}, we refer to $C_\ast^\mathit{nm}$ as the normalized chain complex.
\end{remark}

For later purposes, it would be more convenient for us to work with the quotient complex $C_\ast^\mathit{nd}$ of $C_\ast$ rather than its subcomplex $C_\ast^\mathit{nm}$, and we shall use the former to perform our constructions below. It follows from \cite{ki1}, Theorem 2.10 that the chain level BV operator $\delta_\mathit{cyc}$ restricts to an anti-chain map on the subcomplex $C_\ast^\mathit{nm}\subset C_\ast$. Define
\begin{equation}\label{eq:dnd}
\delta_\mathit{cyc}^\mathit{nd}:=\psi_\mathit{DK}\circ(\delta_\mathit{cyc}|_{C_\ast^\mathit{nm}})\circ\psi_\mathit{DK}^{-1}
\end{equation}
using the isomorphism $\psi_\mathit{DK}:C_\ast^\mathit{nm}\rightarrow C_\ast^\mathit{nd}$ established in Lemma \ref{lemma:DK}. It follows that $\delta_\mathit{cyc}^\mathit{nd}:C_\ast^\mathit{nd}\rightarrow C_{\ast+1}^\mathit{nd}$ is an anti-chain map which gives the BV operator after passing to the cohomology $H_\ast(C_\ast^\mathit{nd})\cong H_\ast(\mathcal{L}L;\mathbb{R})$. We observe the following.

\begin{lemma}\label{lemma:delta}
Write $x\in C_\ast=C_\ast^\mathit{nd}\oplus D_\ast$ as $x=x_0+x_D$, where $x_0\in C_\ast^\mathit{nd}$ and $x_D\in D_\ast$. If $x_D=0$, then under the projection $\Pi:C_\ast\twoheadrightarrow C_\ast^\mathit{nd}$, $x\mapsto x_0$, we have
\begin{equation}
\Pi\left(\delta_\mathit{cyc}(x)\right)=\delta_\mathit{cyc}^\mathit{nd}\left(x_0\right).
\end{equation}
\end{lemma}
\begin{proof}
This follows directly from Lemma \ref{lemma:DK} and the definition of $\delta_\mathit{cyc}^\mathit{nd}$.
\end{proof}

As has been mentioned before, the dg operad $\mathcal{O}_L$ carries an additional piece of structure---a \textit{cocyclic chain complex}, which is a cosimplicial chain complex together with an additional family of chain maps
\begin{equation}
\tau_k:C_\ast(k)\rightarrow C_\ast(k)
\end{equation}
such that $\tau_k^{k+1}=\mathit{id}_C$ and
\begin{equation}\label{eq:td}
\tau_k\circ\delta_{k,i}=\left\{\begin{array}{ll}
\delta_{k,k} & i=0, \\ \delta_{k,i-1}\circ\tau_{k-1} & 1\leq i\leq k,
\end{array}\right.
\end{equation}
\begin{equation}\label{eq:cocy2}
\tau_k\circ\sigma_{k,i}=\left\{\begin{array}{ll}
\sigma_{k,k}\circ\tau_{k+1}^2 & i=0, \\ \sigma_{k,i-1}\circ\tau_{k+1} & 1\leq i\leq k.
\end{array}\right.
\end{equation}
In our case, the maps $\tau_k$ are given by $(R_k)_\ast$. See \cite{ki1}, Remark 7.5. By abuse of notations, we will continue to denote the induced differential on the quotient complex $C_\ast^\mathit{nd}$ by $\tilde{\partial}$.

\begin{lemma}\label{lemma:strict}
We have $(\delta_\mathit{cyc}^\mathit{nd})^2=0$ in the complex $C_\ast^\mathit{nd}$ of non-degenerate de Rham chains. In other words, $(C_\ast^\mathit{nd},\tilde{\partial},\delta_\mathit{cyc}^\mathit{nd})$ is a strict $S^1$-complex.
\end{lemma}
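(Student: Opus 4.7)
The plan is to recognize $\delta_\mathit{cyc}$ as a variant of Connes' $B$-operator associated to the cocyclic chain complex $\{\mathcal{O}_L(k)\}_{k\geq 0}$, and to adapt the classical argument for $B^2 = 0$ on the normalized cyclic complex to our setting.

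Concretely, I would first rewrite the defining formula (\ref{eq:BV-c}) in cocyclic language. Since $y\circ_{i+1} e_L = \sigma_{k,i}(y)$ by definition, reindexing with $m = k+1-j$ turns the formula into
$$(\delta_\mathit{cyc} x)(a,k) = \sum_{m=0}^{k}(-1)^{|x|+k(k-m)}\,\sigma_{k,m}\bigl(\tau_{k+1}^{k+1-m}\,x(a,k+1)\bigr),$$
where $\tau_{k+1} = (R_{k+1})_\ast$. This exhibits $\delta_\mathit{cyc}$ as a signed version of the composite $\sigma\circ N$, with $N = \sum_{j}\tau_{k+1}^{j}$ the cyclic norm operator, which is precisely the shape of Connes' $B$-operator. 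Composing two such expressions produces $\delta_\mathit{cyc}^2(x)(a,k)$ as a double sum of operators of the form $\sigma_{k-1,m'}\tau_{k}^{\bullet}\sigma_{k,m}\tau_{k+1}^{\bullet}$. I would then move all cyclic operators past the $\sigma$'s to the right, using the cocyclic relations (\ref{eq:cocy2}) together with their wraparound $\tau_{k}\sigma_{k,0} = \sigma_{k,k}\tau_{k+1}^{2}$. The result is a double sum of the form
$$\delta_\mathit{cyc}^2(x)(a,k) = \sum_{p,q}\,\varepsilon_{p,q}\,\sigma_{k-1,q}\sigma_{k,p}\,\tau_{k+1}^{\alpha(p,q)}\,x(a,k+2).$$
The cosimplicial identity $\sigma_{k-1,q}\sigma_{k,p} = \sigma_{k-1,p-1}\sigma_{k,q}$ (valid for $q<p$) then identifies each off-diagonal term with its mirror across the diagonal $q=p-1$, and a sign check shows the two members of every pair cancel. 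The remaining ``diagonal'' contributions factor through some $\delta_{k,m}$ via the cosimplicial identity $\sigma_{k,j}\delta_{k+1,j} = \mathit{id}_C$ from (\ref{eq:sd}), so they lie in the degenerate subcomplex $D_\ast$ and therefore vanish in the quotient $C_\ast^\mathit{nd}$.

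The main obstacle I expect is the precise sign bookkeeping in the pairing step. The signs in (\ref{eq:BV-c}) are tuned so that $\delta_\mathit{cyc}$ anti-commutes with the total differential $\tilde\partial$ (cf. \cite{ki1}), but the further cancellations needed for $\delta_\mathit{cyc}^2 = 0$ are more delicate: one must verify that the signs produced by moving powers of $\tau$ past $\sigma_{k,0}$---where the wraparound relation introduces an extra power of $\tau$ rather than a simple index shift---combine correctly with the factor $(-1)^{|x|+k(j-1)}$ in the defining formula. Once this accounting is carried out, the result follows by essentially the same combinatorial argument underlying Connes' classical result $B^2 = 0$ on the normalized cyclic bicomplex, and equips $C_\ast^\mathit{nd}$ with the strict $S^1$-complex structure $(\tilde\partial,\delta_\mathit{cyc})$ asserted in the lemma.
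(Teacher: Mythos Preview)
Your plan differs from the paper's, and while the cocyclic reformulation of $\delta_\mathit{cyc}$ and the $\sigma\sigma$-swap pairing strategy are reasonable first moves, the treatment of the leftover terms has a gap.

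The paper avoids the direct double-sum computation entirely. It introduces the full Connes operator $\mathbb{B} = N\circ s\circ(1-\lambda)$ on the cocyclic complex, for which $\mathbb{B}^2 = 0$ holds already on the unnormalized complex via the elementary identity $(1-\lambda)N = 0$. The remaining work is to identify $\mathbb{B}$ with $\delta_\mathit{cyc}$ on $C_\ast^\mathit{nd}$: since $N\circ s$ agrees with $\delta_\mathit{cyc}$ (this is the content of \cite{ki1}, \S2.5.4), the difference is $N\circ s\circ\lambda_{k+1}$, and the wraparound relation in (\ref{eq:cocy2}) gives $s\circ\lambda_{k+1} = \pm\tau_k\circ\sigma_{k,0}$. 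Because $\sigma_{k,0}$ annihilates every class in $C_\ast^\mathit{nd}$ (replace a representative $x$ by $x - \delta_{k+1,0}\sigma_{k,0}(x)$), the difference vanishes on the quotient and $\delta_\mathit{cyc}^2 = \mathbb{B}^2 = 0$ there.

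In your direct expansion, some terms must survive the pairing on the full complex, since $\delta_\mathit{cyc}^2$ is genuinely nonzero before normalization --- indeed the same $(1-\lambda)N = 0$ trick shows $\delta_\mathit{cyc}^2 = \pm\, N s\lambda\cdot\delta_\mathit{cyc}$ on the nose. You assert the survivors ``factor through some $\delta_{k,m}$ via $\sigma_{k,j}\delta_{k+1,j} = \mathit{id}_C$'' and hence lie in $D_\ast$, but this is not correct as stated: the surviving terms are built only from codegeneracies $\sigma$ and cyclic operators $\tau$, and the identity $\sigma\delta = \mathit{id}$ does not exhibit any $\sigma\sigma\tau^\alpha$-expression as lying in the image of a coface map. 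The actual mechanism is the one the paper isolates --- the surviving piece factors through $\sigma_{k,0}$ applied to a non-degenerate class, and it is this \emph{codegeneracy}, not a coface, that kills it on $C_\ast^\mathit{nd}\cong C_\ast^\mathit{nm}$. Once you recognize this you have essentially reconstructed the paper's argument from the inside; without it, the sign-pairing alone cannot close.
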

\begin{proof}
Starting from a cocyclic chain complex $\left\{C_\ast(k)\right\}_{k\geq0}$, one can equip the associated total complex $C_\ast=\prod_{k\geq0}C_\ast(k)$ with the structure of a strict $S^1$-complex $(C_\ast,\tilde{\partial},\mathbb{B})$, where $\mathbb{B}:=N\circ s\circ(1-\lambda)$ is Connes' operator, where $N$ is the norm of the cyclic operator $\lambda$, and $s$ is the degeneracy operator, see \cite{jl}. More precisely, on $C_\ast(k+1)$ it is given by
\begin{equation}
\mathbb{B}_k=N_k\circ s_k\circ (1-\lambda_{k+1}):C_\ast(k+1)\rightarrow C_{\ast+1}(k),
\end{equation}
where
\begin{equation}
\lambda_k(x)=(-1)^k\tau_k(x),\textrm{ }N_k(x)=(1+\lambda+\cdots+\lambda^k)(x)
\end{equation}
for $x\in C_\ast(k)$, and
\begin{equation}
s_i(x)=(-1)^{|x|}\sigma_{k,i-1}\circ\tau_{k+1}(x)
\end{equation}
for $x\in C_\ast(k+1)$ and $1\leq i\leq k+1$. It is straightforward to verify that $\mathbb{B}^2=0$, see for example \cite{yw}, Example 2.6. By Lemma \ref{lemma:DK} and the definition of $\delta_\mathit{cyc}^\mathit{nd}$, in order to prove that $(\delta_\mathit{cyc}^\mathit{nd})^2=0$, it is enough to show that $\delta_\mathit{cyc}=\mathbb{B}$ when restricted to the subcomplex $C_\ast^\mathit{nm}$. It is observed in \cite{yw}, Example 2.6 that $\mathbb{B}_k|_{C_\ast^\mathit{nm}(k+1)}=N_k\circ s_0$, with $s_0:=\lambda_k\circ s_1\circ\lambda_{k+1}^{-1}$. On the other hand, it is explained in \cite{ki1}, Section 2.5.4 that $N_k\circ s_0$ coincides with the BV operator $\delta_\mathit{cyc}$ defined by (\ref{eq:BV-c}).
\end{proof}

By Lemma \ref{lemma:strict}, various versions of $S^1$-equivariant homology theories can be constructed in terms of the strict $S^1$-complex $(C_\ast^\mathit{nd},\tilde{\partial},\delta_\mathit{cyc}^\mathit{nd})$. What is relevant for us here is the (positive) $S^1$-equivariant free loop space homology $H_\ast^{S^1}(\mathcal{L}L;\mathbb{R})$, also known as the string homology of $L$. It is proved by Wang in \cite{yw}, Proposition 4.8 that $H_\ast^{S^1}(\mathcal{L}L;\mathbb{R})$ is the homology of the chain complex
\begin{equation}\label{eq:cpx-S1}
C_\ast^{S^1}:=\left(C_\ast^\mathit{nd}\otimes_\mathbb{R}\mathbb{R}(\!(u)\!)/u\mathbb{R}[\![u]\!],\partial^{S^1}:=\tilde{\partial}+u\delta_\mathit{cyc}^\mathit{nd}\right),
\end{equation}
where $u$ is a formal variable of degree $-2$.\footnote{Note that this is different from the case of $S^1$-equivariant symplectic cohomology, where $u$ has degree $2$, because of the homological grading convention that is imposed here.} Similarly, one can define the completed version $\widehat{C}_\ast^{S^1}$ of the $S^1$-equivariant chain complex with respect to the action filtration, and its homology $\widehat{H}_\ast^{S^1}(\mathcal{L}L;\mathbb{R})$\footnote{To avoid confusions, we remark that this is not the \textit{periodic $S^1$-equivariant homology} of the free loop space, which is denoted using the same notation in \cite{jz}.}, which is just the $\widehat{\mathbb{H}}_\ast^{S^1}$ defined by (\ref{eq:H-c}). It is clear that $C_\ast^{S^1}$ decomposes according to $(a,k)\in H_1(L;\mathbb{Z})\times\mathbb{Z}_{\geq0}$, and we will use the notations $C_\ast^{S^1}(a,k)$ and $C_\ast^{S^1}(k)$ for their obvious meanings.

For the string homology $H_\ast^{S^1}(\mathcal{L}L;\mathbb{R})$, Chas-Sullivan \cite{cs} defined a Lie bracket, known as \textit{string bracket}, by applying the erasing map to the loop product pre-composed with the marking maps (cf. (\ref{eq:bracket})). This equips $H_\ast^{S^1}(\mathcal{L}L;\mathbb{R})$ with the structure of a graded Lie algebra of degree $2-n$ (under the usual grading convention). Inspired by the construction of the Lie bracket for cyclic operads in \cite{kwz} and \cite{bw}, we introduce here a Lie bracket on a (quasi-isomorphic) quotient complex of $C_\ast^{S^1}$, which is supposed to play the role of a chain level refinement of the string bracket. We shall define it as a bilinear operation
\begin{equation}\label{eq:bra}
\{\cdot,\cdot\}:C_i^{S^1}\otimes C_j^{S^1}\rightarrow C_{i+j+1}^{S^1}
\end{equation}
on the $S^1$-equivariant de Rham complex $C_\ast^{S^1}$ as follows. Let $\tilde{x}=\sum_{i=0}^\infty x_i\otimes u^{-i}\in C_\ast^{S^1}$ and $\tilde{y}=\sum_{i=0}^\infty y_i\otimes u^{-i}\in C_\ast^{S^1}$, then
\begin{equation}\label{eq:bracket-pr}
\begin{split}
\left\{\tilde{x},\tilde{y}\right\}(a,k)&:=\sum_{\substack{a_1+a_2=a\\k_1+k_2=k+1}}\sum_{i=1}^{k_1}\sum_{j=1}^{k_2+1}(-1)^{\maltese_{ij}}x_0(a_1,k_1)\circ_i\left((R_{k_2+1})_\ast^jy_0(a_2,k_2+1)\circ_{k_2+2-j}e_L\right)\otimes1 \\
&-\sum_{\substack{a_1+a_2=a\\k_1+k_2=k+1}}\sum_{i=1}^{k_1}\sum_{j=1}^{k_1+1}(-1)^{\maltese_{ij}+(|x_0|+1)(|y_0|+1)}\left((R_{k_1+1})_\ast^jy_0(a_1,k_1+1)\circ_{k_1+2-j}e_L\right) \\
&\;\;\;\;\;\;\;\;\;\;\;\;\;\;\;\;\;\;\;\;\;\;\;\;\;\;\;\;\;\;\;\;\circ_ix_0(a_2,k_2)\otimes1,
\end{split}
\end{equation}
where
\begin{equation}
\maltese_{ij}=(i-1)(k_2-1)+(k_1-1)(|y_0|+k_2)+|y_0|+k_2(j-1).
\end{equation}
It is not hard to see that $\{\cdot,\cdot\}$ does \textit{not} define a Lie bracket on $C_\ast^{S^1}$. However,  we will see that it becomes a Lie bracket after passing to a quotient complex $(C_\ast^\lambda,\tilde{\partial})$ of $(C_\ast^{S^1},\partial^{S^1})$, where
\begin{equation}\label{eq:Connes}
C_\ast^\lambda:=C_\ast^\mathit{nd}/\mathrm{im}(1-\lambda)
\end{equation}
is known as the \textit{Connes' complex} \cite{ac}, with $\lambda:C_\ast^\mathit{nd}\rightarrow C_\ast^\mathit{nd}$ being the cyclic operator defined in the proof of Lemma \ref{lemma:strict}. Note that $\lambda$ descends to the quotient complex $C_\ast^\mathit{nd}$ of $C_\ast$ by (\ref{eq:td}). We first show that the complex $(C_\ast^\lambda,\tilde{\partial})$ is well-defined.

\begin{lemma}
The differential $\tilde{\partial}$ on the total complex $C_\ast$ descends to one on its quotient $C_\ast^\lambda$.
\end{lemma}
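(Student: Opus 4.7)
The plan is to show that $\tilde{\partial}$ preserves the subspace $\mathrm{im}(1-\lambda) \subset C_\ast^\mathit{nd}$, after which the descent to the quotient $C_\ast^\lambda$ is automatic; the descent of $\tilde{\partial}$ from $C_\ast$ to $C_\ast^\mathit{nd}$ has already been recorded above. Recalling from \cite{ki1}, Section 2.5.2 that $\tilde{\partial}$ decomposes as the internal de Rham piece $\partial$ acting on each $C_\ast(k)$ together with a Hochschild-type differential
\[
b \;:=\; \sum_{i=0}^{k+1}(-1)^i\,\delta_{k+1,i}
\]
assembled from the cosimplicial maps (\ref{eq:cosimp}), it suffices to analyze these two pieces separately.

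First I would observe that the de Rham piece $\partial$ commutes strictly with the cyclic operator $\lambda_k = (-1)^k (R_k)_\ast$: the map $R_k$ of (\ref{eq:rotation}) is smooth in the sense of Definition~\ref{definition:smooth}, so its pushforward on de Rham chains intertwines the exterior differential. Hence $\partial\circ(1-\lambda) = (1-\lambda)\circ\partial$ and the $\partial$-part trivially preserves $\mathrm{im}(1-\lambda)$.

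The main step is the cocyclic analogue of Connes' identity
\[
b \circ (1-\lambda_k) \;=\; (1-\lambda_{k+1}) \circ b',
\]
where $b' := \sum_{i=0}^{k}(-1)^i\,\delta_{k+1,i}$ is the truncated Hochschild differential. This is a purely algebraic manipulation that uses only the cocyclic relations $\tau_{k+1}\circ\delta_{k+1,i} = \delta_{k+1,i-1}\circ\tau_k$ for $1\leq i\leq k+1$ together with $\tau_{k+1}\circ\delta_{k+1,0} = \delta_{k+1,k+1}$ given just before (\ref{eq:cocy2}). The strategy is to expand $b\circ\lambda_k$ by pulling each $\tau_k$ through $\delta_{k+1,i}$, which shifts the index by one and produces the boundary term $(-1)^{k+1}\delta_{k+1,k+1}$ separately; this boundary term cancels against the extremal summand of $b$, and the remaining contributions telescope to $(1-\lambda_{k+1})\circ b'$. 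Granting the identity, one concludes that for every $x\in C_\ast^\mathit{nd}$,
\[
\tilde{\partial}\bigl((1-\lambda)(x)\bigr) \;=\; (1-\lambda)\bigl(\partial x + b'(x)\bigr) \;\in\; \mathrm{im}(1-\lambda),
\]
which proves the lemma.

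The principal obstacle will be a careful sign audit: the degree shifts and the $(-1)^{k(j-1)}$-type signs arising from the Moore-loop conventions used for $\circ_j$, for $(R_k)_\ast$, and in the product (\ref{eq:bullet}) differ from the standard algebraic-cyclic conventions, so the exponents in both $b$ and the identity $\tau_{k+1}\delta_{k+1,0} = \pm \delta_{k+1,k+1}$ must be tracked term by term. Once this is in place, compatibility with the prior quotient by the degenerate subcomplex $D_\ast$ is automatic, since the cocyclic relations listed before (\ref{eq:cocy2}) imply that both $\lambda$ and $b'$ preserve $D_\ast$, and the same bookkeeping argument applies verbatim after reducing modulo $D_\ast$.
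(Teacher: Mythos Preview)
Your approach is correct and essentially identical to the paper's. The paper defines $\tilde{\partial}'$ by deleting the extremal coface term $(-1)^{|x|+1}\delta_{k,k}$ from $\tilde{\partial}$ (this is precisely your $\partial + b'$) and asserts the identity $(1-\lambda)\tilde{\partial}' = \tilde{\partial}(1-\lambda)$, citing Loday for the classical case; you have simply unpacked that ``straightforward to check'' into the de Rham commutation plus the cocyclic Connes identity $b(1-\lambda)=(1-\lambda)b'$, and flagged the sign bookkeeping explicitly.
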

\begin{proof}
In the classical case of an endomorphism operad of an associative algebra, this is a standard fact. See for example \cite{jl}, Lemma 2.1.1. In our case, define $\tilde{\partial}':C_\ast\rightarrow C_{\ast-1}$ by
\begin{equation}
(\tilde{\partial}'x)(a,k):=(\tilde{\partial}x)(a,k)-(-1)^{|x|+1}\delta_{k,k}(x(a,k-1)).
\end{equation}
It is straightforward to check that $(1-\lambda)\tilde{\partial}'=\tilde{\partial}(1-\lambda)$.
\end{proof}

In fact, the quotient complex $(C_\ast^\lambda,\tilde{\partial})$ provides an alternative chain model for the string homology of $L$.
\begin{lemma}
The natural projection
\begin{equation}
\begin{split}
\pi_\lambda:\left(C_\ast^\mathit{nd}\otimes_\mathbb{R}\mathbb{R}(\!(u)\!)/u\mathbb{R}[\![u]\!],\tilde{\partial}+u\delta_\mathit{cyc}^\mathit{nd}\right)&\rightarrow(C_\ast^\lambda,\tilde{\partial}) \\
\tilde{x}&\mapsto\underline{x}
\end{split}
\end{equation}
is a quasi-isomorphism. 
\end{lemma}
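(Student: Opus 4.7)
The statement is a chain-level realization of the classical equivalence, in characteristic zero, between positive cyclic homology (computed by the $(b,B)$-bicomplex, here represented by $C_*^{S^1}$) and Connes' cyclic complex $C_*^\lambda$; see Loday, \emph{Cyclic Homology}, Theorems 2.1.5 and 2.2.1. My plan is to proceed in three steps.

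First, I would verify directly that $\pi_\lambda$ is a chain map. Writing $\tilde{x} = \sum_{i \geq 0} x_i u^{-i}$, applying $\partial^{S^1} = \tilde{\partial} + u\delta_\mathit{cyc}$ and extracting the $u^0$-coefficient yields $\tilde{\partial} x_0 + \delta_\mathit{cyc}(x_1)$; the chain-map condition therefore reduces to showing $\delta_\mathit{cyc}(x_1) \in \mathrm{im}(1-\lambda)$ inside $C_*^\mathit{nd}$. This follows from the factorization $\mathbb{B} = N \circ s \circ (1-\lambda)$ on the unnormalized complex $C_*$ recalled in the proof of Lemma \ref{lemma:strict}, combined with a careful tracking of how this operator descends to the normalized quotient using the vanishing of $\sigma_{k,0}$ from (\ref{eq:sig0}).

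Second, I would place both complexes into parallel Gysin-type long exact sequences relating them to $(C_*^\mathit{nd}, \tilde{\partial})$. On the equivariant side, the short exact sequence
\begin{equation*}
0 \to C_*^\mathit{nd} \xrightarrow{I} C_*^{S^1} \xrightarrow{S} C_{*-2}^{S^1} \to 0,
\end{equation*}
with $I(x) = x \cdot 1$ including as the $u^0$-coefficient and $S(\tilde{x}) = u \cdot \tilde{x}$ being multiplication by $u$, yields the long exact sequence
\begin{equation*}
\cdots \to H_n(C_*^\mathit{nd}) \to H_n(C_*^{S^1}) \xrightarrow{S} H_{n-2}(C_*^{S^1}) \to H_{n-1}(C_*^\mathit{nd}) \to \cdots,
\end{equation*}
whose connecting map is induced by the BV operator $\delta_\mathit{cyc}$ on homology. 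On the Connes' side, the classical SBI sequence provides a parallel long exact sequence relating $H_*(C_*^\lambda)$ to $H_*(C_*^\mathit{nd})$, available over $\mathbb{R}$ since $\mathbb{Q} \subset \mathbb{R}$. I would then check that $\pi_\lambda$ intertwines these two sequences, with the maps on the $H_*(C_*^\mathit{nd})$-columns being the identity.

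Third, an inductive application of the five-lemma to the resulting commutative ladder then yields the claimed isomorphism on homology, the induction on the number of $S$-applications terminating because each $H_1(L;\mathbb{Z})$-summand is concentrated in a bounded degree range after completion with respect to the action filtration. The main obstacle is Step 1: the direct verification of the chain-map property requires careful manipulation of the cocyclic identities and sign conventions from Section \ref{section:BV}; once this is established, the remaining steps are essentially formal consequences of the standard SBI machinery.
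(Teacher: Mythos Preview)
Your overall strategy is a reasonable alternative to the paper's: the paper simply cites Cieliebak--Volkov \cite{cv8}, Lemma~4.8 for the fact that the \emph{inclusion} $(\ker(1-\lambda),\tilde\partial)\hookrightarrow (C_*^{\mathit{nd}}\otimes_{\mathbb R}\mathbb R[v],\partial^{S^1})$ is a quasi-isomorphism (here $v=u^{-1}$), and then invokes the characteristic-zero isomorphism $\ker(1-\lambda)\cong C_*^{\mathit{nd}}/\mathrm{im}(1-\lambda)$ to conclude. Your SBI/five-lemma route is essentially how such statements are proved in the cited references, so it would work once the details are in place.

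There is, however, a genuine gap in your Step~1. The factorization $\mathbb B = N\circ s\circ(1-\lambda)$ from the proof of Lemma~\ref{lemma:strict} gives $\mathrm{im}(\mathbb B)\subset\mathrm{im}(N)$, and on the normalized complex the paper shows $\delta_{\mathit{cyc}}=N_k\circ s_0$, so again $\mathrm{im}(\delta_{\mathit{cyc}})\subset\mathrm{im}(N)$. Over $\mathbb R$ one has $\mathrm{im}(N)=\ker(1-\lambda)$, which is the \emph{complement} of $\mathrm{im}(1-\lambda)$ in the splitting $C_*^{\mathit{nd}}=\ker(1-\lambda)\oplus\mathrm{im}(1-\lambda)$; hence $\delta_{\mathit{cyc}}(x_1)\in\mathrm{im}(1-\lambda)$ only when it vanishes. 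You are presumably thinking of Loday's formula $B=(1-t)sN$ for \emph{cyclic} modules, where indeed $\mathrm{im}(B)\subset\mathrm{im}(1-t)$; in the paper's \emph{cocyclic} setting the order of the factors is reversed. This is precisely why the paper argues via the inclusion of $\ker(1-\lambda)$ rather than via a projection, and it is the point you would need to repair to make your approach go through. (A minor separate remark: in Step~3 the induction terminates not because of the action filtration but because the de Rham homology of each $\mathcal L_{k+1}(a)$ is non-negatively graded.)
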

\begin{proof}
Since we are working over the field $\mathbb{R}\supset\mathbb{Q}$, it follows from \cite{cv8}, Lemma 4.8 that the natural inclusions
\begin{equation}\label{eq:qis}
\left(\ker(1-\lambda),\tilde{\partial}\right)\hookrightarrow\left(C_\ast^\mathit{nd}\otimes_\mathbb{R}\mathbb{R}[v],\partial^{S^1}\right)\hookrightarrow\left(C_\ast^\mathit{nd}\otimes_\mathbb{R}\mathbb{R}[\![v]\!],\partial^{S^1}\right)
\end{equation}
are quasi-isomorphisms, where $v$ is a formal variable of degree $2$. Note also that the grading convention used in \cite{cv8} for Connes' complex $C_\ast^\lambda$ differs from the one inherited from the quotient (\ref{eq:Connes}) by exactly $1$. This is dual to \cite{jl}, Theorem 2.1.5, which deals with cyclic chain complexes instead of cocyclic chain complexes. Changing the variable $u=v^{-1}$ and combining with the isomorphism $\ker(1-\lambda)\cong C_\ast^\mathit{nd}/\mathrm{im}(1-\lambda)$ completes the proof.
\end{proof}

As we have remarked above, the advantage of using the quotient chain model $C_\ast^\lambda$ is that the binary operation $\{\cdot,\cdot\}$ now gives rise to a well-defined Lie bracket. By abuse of notations, we will still use $\{\cdot,\cdot\}$ to denote the induced operation on the quotient complex $C_\ast^\lambda$. More precisely, for any $\underline{x},\underline{y}\in C_\ast^\lambda$, we have
\begin{equation}
\left\{\underline{x},\underline{y}\right\}=\underline{\left\{\tilde{x},\tilde{y}\right\}}.
\end{equation}
Note that this definition depends on non-canonical choices of the lifts $\tilde{x}$ and $\tilde{y}$ of $\underline{x}$ and $\underline{y}$, respectively, but different choices only differ by a cyclic permutation, hence the induced operation on $C_\ast^\lambda$ is well-defined.

We use $C_\ast^\lambda(a,k)$ to denote $(a,k)$-component in the decomposition
\begin{equation}
C_\ast^\lambda=\bigoplus_{a\in H_1(L;\mathbb{Z})}\prod_{k=0}^\infty C_\ast^\lambda(a,k).
\end{equation} 

In this paper, we will use both of the chain complexes $C_\ast^{S^1}$ and $C_\ast^\lambda$ as de Rham models of $S^1$-equivariant chains on the free loop space. The former complex has the advantage that its differential $\partial^{S^1}$ has a more convenient form, while the latter complex is useful for the algebraic arguments in Sections \ref{section:chain} and \ref{section:approximation} since it carries the structure of an odd dg Lie algebra, as we will show below.

\begin{lemma}
$\left(C_\ast^\lambda,\tilde{\partial},\{\cdot,\cdot\}\right)$ is a dg Lie algebra of degree $1$. In particular, for $\underline{x},\underline{y},\underline{z}\in C_\ast^\lambda$, the bilinear operation $\{\cdot,\cdot\}$ satisfies the Jacobi identity
\begin{equation}\label{eq:Jacobi}
\left\{\underline{x},\{\underline{y},\underline{z}\}\right\}=\left\{\{\underline{x},\underline{y}\},\underline{z}\right\}+(-1)^{(|\underline{x}|+1)(|\underline{y}|+1)}\left\{\underline{y},\{\underline{x},\underline{z}\}\right\},
\end{equation}
and is graded anti-symmetric in the sense that
\begin{equation}\label{eq:anti-sym}
\{\underline{x},\underline{y}\}=-(-1)^{(|\underline{x}|+1)(|\underline{y}|+1)}\{\underline{y},\underline{x}\}.
\end{equation}
\end{lemma}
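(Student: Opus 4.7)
The plan is to verify the dg Lie algebra axioms for $(C_\ast^\lambda, \tilde{\partial}, \{\cdot,\cdot\})$ in three steps: first, well-definedness of $\{\cdot,\cdot\}$ on the quotient by $\mathrm{im}(1-\lambda)$; second, graded anti-symmetry and the derivation property of $\tilde{\partial}$; and third, the Jacobi identity. The structural input is the cyclic dg operad structure on $\mathcal{O}_L$ together with its unit $e_L$, and the general principle (going back to Kontsevich, and made precise in the operadic literature) that a cyclic operad produces an odd Lie bracket on its associated Connes complex. I would first unpack (\ref{eq:bracket-pr}): each of the two lines is a sum over all insertions $\circ_i$ of one argument into the other, where the other argument appears in every cyclic rotation of its slots capped by the unit $e_L$. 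Consequently, each line is manifestly invariant under the cyclic operator $\lambda$ applied to either $x_0$ or $y_0$: replacing $x_0$ by $\lambda(x_0)$ reindexes the outer sum over $i$, and replacing $y_0$ by $\lambda(y_0)$ reindexes the inner sum over $j$. This shows $\{\cdot,\cdot\}$ descends to $C_\ast^\lambda\otimes C_\ast^\lambda$.

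Anti-symmetry (\ref{eq:anti-sym}) is then transparent from the formula: the second line of (\ref{eq:bracket-pr}) is obtained from the first by swapping the roles of $\tilde{x}$ and $\tilde{y}$, with precisely the Koszul sign $(-1)^{(|x_0|+1)(|y_0|+1)}$ required in shifted degree. The derivation property
\begin{equation}
\tilde{\partial}\{\underline{x},\underline{y}\} = \{\tilde{\partial}\underline{x},\underline{y}\} + (-1)^{|\underline{x}|+1}\{\underline{x},\tilde{\partial}\underline{y}\}
\end{equation}
follows from three facts proved in \cite{ki1}: $\tilde{\partial}$ is a graded derivation with respect to each operadic composition $\circ_i$, it commutes with the cyclic rotations $(R_k)_\ast$, and it annihilates the unit $e_L$. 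Assembling these facts with the Koszul signs embedded in the definition of $\{\cdot,\cdot\}$ yields the derivation property.

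The main obstacle is the Jacobi identity (\ref{eq:Jacobi}). I would proceed by direct expansion: each of $\{\underline{x},\{\underline{y},\underline{z}\}\}$, $\{\{\underline{x},\underline{y}\},\underline{z}\}$ and $\{\underline{y},\{\underline{x},\underline{z}\}\}$ becomes a sum of iterated operadic compositions of $x_0, y_0, z_0$, each decorated with rotations and unit caps. The terms split into two geometric types: \emph{disjoint}, where the two innermost insertions occur at independent slots of the outermost chain; and \emph{nested}, where one inner insertion lies inside the subtree already created by the other. The disjoint terms on the left of (\ref{eq:Jacobi}) cancel against the corresponding disjoint terms on the right by the parallel-composition axiom of the operad. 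The nested terms match after using the cosimplicial identities (\ref{eq:sd}), in particular $\sigma_{k,j}\circ\delta_{k+1,i}=\mathit{id}_C$ for $i\in\{j,j+1\}$, which absorb the extra copy of $e_L$ produced by a nested bracket; this matching only holds modulo $\mathrm{im}(1-\lambda)$, which is why passing to $C_\ast^\lambda$ is essential at precisely this step. The hard part is purely sign bookkeeping: each term carries a Koszul sign depending on $|x_0|,|y_0|,|z_0|$, the arities $k_1, k_2, k_3$, the insertion positions and the rotation indices. The cleanest organizational device is to encode each composition as a planar rooted tree with a cyclic order at each vertex, and to assign signs via the Koszul rule applied to the corresponding linearizations; this reduces the verification to a finite combinatorial check performed term by term.
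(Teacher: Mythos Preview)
Your anti-symmetry argument has a genuine gap. You claim that ``the second line of (\ref{eq:bracket-pr}) is obtained from the first by swapping the roles of $\tilde{x}$ and $\tilde{y}$,'' but this is not the case: in \emph{both} lines of (\ref{eq:bracket-pr}) it is $y_0$ that is cyclically rotated and capped by $e_L$, while $x_0$ appears uncapped. Swapping $\tilde{x}$ and $\tilde{y}$ in the first line produces $y_0(a_1,k_1)\circ_i\bigl((R_{k_2+1})_\ast^j x_0(a_2,k_2+1)\circ_{k_2+2-j}e_L\bigr)$, whereas the actual second line is $\bigl((R_{k_1+1})_\ast^j y_0(a_1,k_1+1)\circ_{k_1+2-j}e_L\bigr)\circ_i x_0(a_2,k_2)$. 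These are genuinely different chains in $C_\ast^\mathit{nd}$, and the bracket is \emph{not} anti-symmetric before passing to $C_\ast^\lambda$.

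The paper resolves this by introducing the edge-grafting operations ${}_i\circ_j$ of \cite{kwz}, for which one has the key identity
\[
x\;{}_i\circ_j\;y=(-1)^{(|x|+1)(|y|+1)}\lambda_{k_1+k_2-1}^{\,k_2-i+j}\bigl(y\;{}_j\circ_i\;x\bigr),
\]
i.e.\ commutativity \emph{up to an explicit cyclic permutation of the output}. Rewriting $\{\tilde{x},\tilde{y}\}$ entirely in terms of ${}_i\circ_j$ then makes both anti-symmetry and Jacobi transparent \emph{only} after projecting to $C_\ast^\lambda$, where the $\lambda^{k_2-i+j}$ factor becomes the identity. Your direct expansion for Jacobi will run into the same obstruction: the ``nested'' terms on the two sides will not match in $C_\ast^\mathit{nd}$, only modulo $\mathrm{im}(1-\lambda)$, and tracking this requires exactly the edge-grafting commutativity above (or an equivalent cyclic-permutation identity), not just the cosimplicial relations (\ref{eq:sd}). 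Once that identity is in hand, the argument reduces to \cite{kwz}, Proposition~2.11 and \cite{bw}, Theorem~3.2.
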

\begin{proof}
It is straightforward from the definition that the bracket $\{\cdot,\cdot\}$ satisfies the graded Leibniz rule. In order to verify the anti-symmetric property and the Jacobi identity, it is convenient to use the ``edge-grafting" operations
\begin{equation}
{{}_i\circ_j}:C_\ast(k_1)\otimes C_\ast(k_2)\rightarrow C_\ast(k_1+k_2-1), 
\end{equation}
where $0\leq i\leq k_1$ and $0\leq j\leq k_2$ introduced in \cite{kwz}, given explicitly by
\begin{equation}
x(a_1,k_1)\textrm{ }{{}_i\circ_j}\textrm{ }y(a_2,k_2):=\left\{\begin{array}{ll}
x(a_1,k_1)\circ_i\lambda_{k_2}^jy(a_2,k_2) & i\geq1, \\
\lambda_{k_1}x(a_1,k_1)\circ_{k_1}\lambda_{k_2}^jy(a_2,k_2) & i=0,
\end{array}\right.
\end{equation}
where we have used the cyclic operator $\lambda_k:C_\ast(k)\rightarrow C_\ast(k)$ instead of the rotation operator $(R_k)_\ast$ since it simplifies the signs. These operations are graded (anti-)commutative up to cyclic permutations, in the sense that (cf. \cite{bw}, Lemma B.13)
\begin{equation}\label{eq:com}
x(a_1,k_1)\textrm{ }{{}_i\circ_j}\textrm{ }y(a_2,k_2)=(-1)^{(|x|+1)(|y|+1)}\lambda_{k_1+k_2-1}^{k_2-i+j}\left(y(a_2,k_2)\textrm{ }{{}_j\circ_i}\textrm{ }x(a_1,k_1)\right).
\end{equation}

By (\ref{eq:td}), one can versify that the operations ${{}_i\circ_j}$ descends to operations on the quotient complex $C_\ast^\mathit{nd}$. By abuse of notations, we shall use the same notations to denote the induced operations on $C_\ast^\mathit{nd}$. With the identity (\ref{eq:com}), we can write the bracket $\{\tilde{x},\tilde{y}\}$ in a more compact way:
\begin{equation}
\begin{split}
\left\{\tilde{x},\tilde{y}\right\}(a,k)&=\sum_{\substack{a_1+a_2=a\\k_1+k_2=k+1}}\sum_{i=0}^{k_1}\sum_{j=0}^{k_2}(-1)^{\maltese_i+(|x_0|+1)(|y_0|+1)+1}\lambda_{k_1+k_2-1}^{k_2-i+j} \\
&\;\;\;\;\;\;\;\;\;\;\;\;\;\;\;\;\;\;\;\;\;\;\;\;\;\;\;\;\;\;\;\left((y_0(a_2,k_2+1)\textrm{ }{{}_0\circ_0}\textrm{ }e_L)\textrm{ }{{}_j\circ_i}\textrm{ }x_0(a_1,k_1)\right)\otimes1,
\end{split}
\end{equation}
where
\begin{equation}
\maltese_i=(i-1)(k_2-1)+(k_1-1)(|y_0|+k_2).
\end{equation}
See \cite{ki1}, Section 2.5.4 for the sign conventions, especially the appearance of $\maltese_i$ in the expression above. It follows that
\begin{equation}
\begin{split}
\{\underline{x},\underline{y}\}(a,k)=\sum_{\substack{a_1+a_2=a\\k_1+k_2=k+1}}\sum_{i=0}^{k_1}\sum_{j=0}^{k_2}&(-1)^{\maltese_i+(|x_0|+1)(|y_0|+1)+1}\underline{(y_0(a_2,k_2+1)\textrm{ }{{}_0\circ_0}\textrm{ }e_L)} \\
&\underline{{{}_j\circ_i}\textrm{ }x_0(a_1,k_1)\otimes1},
\end{split}
\end{equation}
where the underline on the right-hand side means projecting to the quotient complex $C_\ast^\lambda$ of $C_\ast^\mathit{nd}$. Compare with the definition of the cyclic bracket in the proof of \cite{bw}, Theorem 3.2. Now (\ref{eq:anti-sym}) follows directly from the identity (\ref{eq:com}). The verification of the Jacobi identity (\ref{eq:Jacobi}) is also straightforward, which follows from the identity (\ref{eq:com}), the associativity of the fiber product (\ref{eq:fp}), and the cyclic invariance of chains in $C_\ast^\lambda$. Since this is almost identical to \cite{kwz}, Proposition 3.11, we omit the details.
\end{proof}

\begin{remark}\label{remark:str-bra}
Following Chas-Sullivan \cite{cs}, it is most natural to define the chain level string bracket in the following way. Consider the chain level erasing map
\begin{equation}
\mathbf{I}_c:C_\ast^\mathit{nd}(a,k)\rightarrow C_\ast^{S^1}(a,k),
\end{equation}
and the marking map
\begin{equation}
\mathbf{B}_c:C_\ast^{S^1}(a,k+1)\rightarrow C_{\ast+1}^\mathit{nd}(a,k),
\end{equation}
given respectively by (cf. Lemma \ref{lemma:delta})
\begin{equation}
(\mathbf{I}_cx)(a,k):=x(a,k)\otimes1
\end{equation}
and
\begin{equation}\label{eq:mark}
(\mathbf{B}_c\tilde{x})(a,k):=\sum_{j=1}^{k+1}(-1)^{|\tilde{x}|+k(j-1)}(R_{k+1})_\ast^jx_0(a,k+1)\circ_{k+2-j}e_L,
\end{equation}
where $x\in C_\ast^\mathit{nd}$ and $\tilde{x}=\sum_{i=0}^\infty x_i\otimes u^{-i}\in C_\ast^{S^1}$. One can check that on the homology level, the chain maps $\mathbf{I}_c$ and $\mathbf{B}_c$ induce the erasing map $\mathbf{I}:H_\ast(\mathcal{L}L;\mathbb{R})\rightarrow H_\ast^{S^1}(\mathcal{L}L;\mathbb{R})$ and the marking map $\mathbf{B}:H_\ast^{S^1}(\mathcal{L}L;\mathbb{R})\rightarrow H_{\ast+1}(\mathcal{L}L;\mathbb{R})$ in string topology, respectively. It is also obvious from the definitions that the composition $\mathbf{B}_c\circ\mathbf{I}_c$ recovers the chain level BV operator $\delta_\mathit{cyc}^\mathit{nd}$. One can then define
\begin{equation}\label{eq:bracket}
\{\tilde{x},\tilde{y}\}_\mathit{str}:=(-1)^{|\tilde{x}|}\mathbf{I}_c\left(\mathbf{B}_c(\tilde{x})\bullet\mathbf{B}_c(\tilde{y})\right)
\end{equation}
for $\tilde{x},\tilde{y}\in C_\ast^{S^1}$, where the product $\bullet$ is given by (\ref{eq:bullet}). This is closely related to the Lie bracket $\{\cdot,\cdot\}$ defined by (\ref{eq:bracket-pr}) as both of them involve cyclic permutations of the marked points on the $u^0$-part of $\tilde{y}$. It is not difficult to show that $\{\cdot,\cdot\}_\mathit{str}$ induces on $H_\ast^{S^1}(\mathcal{L}L;\mathbb{R})$ the string bracket. However, $\{\cdot,\cdot\}_\mathit{str}$ does not give a well-defined Lie bracket on the chain complex $C_\ast^{S^1}$. We conjecture that $\{\cdot,\cdot\}_\mathit{str}$ coincides with $\{\cdot,\cdot\}$ defined above on the homology level, but will not try to prove it in this paper.
\end{remark}

Since the Lie bracket $\{\cdot,\cdot\}$ is compatible with the decomposition of $C_\ast^\lambda$ according to $(a,k)\in H_1(L;\mathbb{Z})\times\mathbb{Z}_{\geq0}$, it naturally extends to the completion $\widehat{C}_\ast^\lambda$, and equips it with the structure of a dg Lie algebra $\left(\widehat{C}_\ast^\lambda,\tilde{\partial},\{\cdot,\cdot\}\right)$ of degree $1$ as well.

As in the non-equivariant case, we can introduce the relative version of the $S^1$-equivariant de Rham chain complex $C_\ast^{S^1}$ and its quotient $C_\ast^\lambda$. To do this, first notice that the cyclic permutation map (\ref{eq:rotation}) also induces a chain map
\begin{equation}
(\overline{R}_k)_\ast:\overline{C}_\ast(k)\rightarrow\overline{C}_\ast(k)
\end{equation}
on the relative de Rham complex, using which we can define a BV operator
\begin{equation}\label{eq:bar-BV}
\begin{split}
\bar{\delta}_\mathit{cyc}&:\overline{C}_\ast(a,k+1)\rightarrow\overline{C}_{\ast+1}(a,k) \\
(\bar{\delta}_\mathit{cyc}\bar{x})(a,k)&:=\sum_{j=1}^{k+1}(-1)^{|\bar{x}|+k(j-1)}(\overline{R}_{k+1})_\ast^j\bar{x}(a,k+1)\circ_{k+2-j}\bar{e}_L,
\end{split}
\end{equation}
where
\begin{equation}
\bar{e}_L:=\left(\mathbb{R}\times L',\mathit{id}_\mathbb{R}\times(i_0\circ\phi),\mathit{id}_{\mathbb{R}_{\geq1}}\times\phi,\mathit{id}_{\mathbb{R}_{\leq-1}}\times\phi,1_\mathbb{R}\times1_{L'}\right)
\end{equation}
is the unit in $\overline{C}_1(0,0)$, with the diffeomorphism $\phi:L'\rightarrow L$ and the inclusion of constant loops $i_0:L\rightarrow\mathcal{L}_1(0)$ defined as above, and $1_\mathbb{R}$ and $1_{L'}$ are constant functions with value $1$ on $\mathbb{R}$ and $L'$, respectively. Since the relative de Rham complexes $\left\{\overline{C}_\ast(k)\right\}_{k\geq0}$ also carries a cocyclic structure, the analogue of Lemma \ref{lemma:strict} holds in the relative case, and after passing to the quotient complex $\overline{C}_\ast^\mathit{nd}$ of $\overline{C}_\ast$ of non-degenerate relative chains, $\bar{\delta}_\mathit{cyc}$ becomes an anti-chain map $\bar{\delta}_\mathit{cyc}^\mathit{nd}$ with $(\bar{\delta}_\mathit{cyc}^\mathit{nd})^2=0$. Define the $S^1$-equivariant relative de Rham chain complex as
\begin{equation}
\overline{C}_\ast^{S^1}:=\left(\overline{C}_\ast^\mathit{nd}\otimes_\mathbb{R}\mathbb{R}((u))/u\mathbb{R}[[u]],\bar{\partial}^{S^1}:=\tilde{\bar{\partial}}+u\bar{\delta}_\mathit{cyc}^\mathit{nd}\right),
\end{equation}
with $|u|=-2$. The chain level Lie bracket is defined in the same way as above. By abuse of notations, we shall still denote it by $\{\cdot,\cdot\}$. As before, it is first defined as an operation on $\overline{C}_\ast^{S^1}$, and then descends to a Lie bracket of degree $1$ on the quotient complex
\begin{equation}
\overline{C}_\ast^\lambda:=\overline{C}_\ast^\mathit{nd}/\mathrm{im}(1-\bar{\lambda}),
\end{equation}
where $\bar{\lambda}$ denotes the cyclic operator on the relative de Rham complex $\overline{C}_\ast^\mathit{nd}$ of non-degenerate chains. Moreover, $\{\cdot,\cdot\}$ extends to the completion $\widehat{\overline{C}}_\ast^\lambda$ of $\overline{C}_\ast^\lambda$ with respect to the action filtration.

To relate the chain complexes $C_\ast$ and $\overline{C}_\ast$, we have the inclusion map $i$ defined by (\ref{eq:i}), and the projection maps $e_\pm$ to both ends given by (\ref{eq:e+}) and (\ref{eq:e-}). Since these maps are compatible with the cyclic permutations of $c_0,\cdots,c_k\in\Pi_1L$, and the fiber products with the units $e_L$ and $\bar{e}_L$, they induce morphisms between the strict $S^1$-complexes $C_\ast^\mathit{nd}$ and $\overline{C}_\ast^\mathit{nd}$. We denote the induced chain maps on the $S^1$-equivariant de Rham complexes by
\begin{equation}
\tilde{i}:C_\ast^{S^1}\rightarrow\overline{C}_\ast^{S^1}\textrm{ and }\tilde{e}_\pm:\overline{C}_\ast^{S^1}\rightarrow C_\ast^{S^1},
\end{equation}
respectively. One can check that $\tilde{e}_+\circ\tilde{i}=\tilde{e}_-\circ\tilde{i}=\mathit{id}_C$. Note that we have abused the notations here and used $\mathit{id}_C$ for the identity map of $C_\ast^{S^1}$ as well. Similarly, the identity endomorphism of $\overline{C}_\ast^{S^1}$ will still be denoted by $\mathit{id}_{\overline{C}}$. The maps $i$ and $e_\pm$ also descend to the quotient complexes $C_\ast^\lambda$ and $\overline{C}_\ast^\lambda$. We denote these chain maps by
\begin{equation}
\underline{i}:C_\ast^\lambda\rightarrow\overline{C}_\ast^\lambda\textrm{ and }\underline{e}_\pm:\overline{C}_\ast^\lambda\rightarrow C_\ast^\lambda.
\end{equation}
It is also clear that $\underline{e}_+\circ\underline{i}=\underline{e}_-\circ\underline{i}=\mathit{id}_C$, where the $\mathit{id}_C$ here is the identity map on $C_\ast^\lambda$.

\begin{lemma}\label{lemma:surjective}
The map $(\tilde{e}_+,\tilde{e}_-):\overline{C}_\ast^{S^1}\rightarrow C_\ast^{S^1}\oplus C_\ast^{S^1}$ is surjective. Moreover, $\tilde{i}\circ\tilde{e}_+$ and $\tilde{i}\circ\tilde{e}_-$ are chain homotopic to $\mathit{id}_{\overline{C}}$.
\end{lemma}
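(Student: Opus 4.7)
The statement comprises two parts, surjectivity and the chain-homotopy identities; both follow from the non-equivariant analogues (treated as known in the excerpt for $i$, $e_+$, $e_-$) together with naturality with respect to the cyclic rotations $(\overline{R}_k)_\ast$.

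\emph{Surjectivity.} Argue coefficient-wise in the Laurent expansion in $u$. It suffices to construct, for each $x = (U, \varphi, \omega) \in C_\ast^{\mathit{dR}}$, an element $i_+(x) \in \overline{C}_\ast^{\mathit{dR}}$ with $e_+(i_+(x)) = \pm x$ and $e_-(i_+(x)) = 0$, together with a mirror $i_-$. Fix a smooth cutoff $\rho : \mathbb{R} \to [0, 1]$ with $\rho \equiv 0$ on $(-\infty, -1]$ and $\rho \equiv 1$ on $[1, \infty)$, and define
\begin{equation}
i_+(x) := \bigl(\mathbb{R} \times U,\; \mathit{id}_\mathbb{R} \times \varphi,\; \tau_+,\; \tau_-,\; (\rho \circ \mathit{pr}_\mathbb{R}) \cdot (1 \times \omega)\bigr),
\end{equation}
with the obvious $\tau_\pm$. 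The form has the correct cylindrical behavior on both ends (constant-in-$t$ and equal to $\omega$ at the right end, identically zero at the left end), so $i_+(x)$ is a valid relative chain. Extending $\mathbb{R}((u))/u\mathbb{R}[[u]]$-linearly to $\tilde{i}_\pm: C_\ast^{S^1} \to \overline{C}_\ast^{S^1}$, a preimage of $(\tilde{x}_+, \tilde{x}_-) \in C_\ast^{S^1} \oplus C_\ast^{S^1}$ under $(\tilde{e}_+, \tilde{e}_-)$ is (up to signs absorbed into the definitions) $\tilde{i}_+(\tilde{x}_+) + \tilde{i}_-(\tilde{x}_-)$.

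\emph{Chain homotopy.} The non-equivariant claim asserts a chain homotopy $K: \overline{C}_\ast^\mathit{nd} \to \overline{C}_{\ast+1}^\mathit{nd}$ with $\tilde{\bar\partial} K + K \tilde{\bar\partial} = \mathit{id}_{\overline{C}} - i \circ e_+$, which can be realized by a sliding family: for $\bar{x} = (U, \varphi, \tau_+, \tau_-, \omega)$, take a new chain with an auxiliary parameter $T \in \mathbb{R}_{\geq 0}$ shifting the target $\mathbb{R}$-coordinate by $-T$, together with a $T$-cutoff on $\omega$ enforcing the required cylindrical-end behavior; Stokes produces the boundary contributions $\bar{x}$ at $T = 0$ and $i \circ e_+(\bar{x})$ as $T \to \infty$, where the entire bulk of $\bar{x}$ has been swept into the negative cylindrical end, leaving only the cylinder on $U_1$ in the window $[-1, 1]$. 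The essential point is that $K$ only operates on the underlying manifold and the $\mathbb{R}$-factor of the target, leaving the loop data in $\mathcal{L}_{k+1}$ untouched. Therefore, $K$ strictly commutes with the cyclic operators $(\overline{R}_k)_\ast$, and the graded commutator $\bar\delta_\mathit{cyc} K + K \bar\delta_\mathit{cyc}$ vanishes.

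\emph{Assembly and main obstacle.} Set $\tilde{K} := K \otimes \mathit{id}$ on $\overline{C}_\ast^{S^1}$. Expanding $\bar\partial^{S^1} = \tilde{\bar\partial} + u\bar\delta_\mathit{cyc}$,
\begin{equation}
\bar\partial^{S^1} \tilde{K} + \tilde{K} \bar\partial^{S^1} = (\tilde{\bar\partial} K + K \tilde{\bar\partial}) \otimes \mathit{id} + u \cdot (\bar\delta_\mathit{cyc} K + K \bar\delta_\mathit{cyc}) \otimes \mathit{id} = \mathit{id}_{\overline{C}} - \tilde{i} \circ \tilde{e}_+,
\end{equation}
since the $u$-coefficient vanishes by the commutation just observed. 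The analogous argument, with slides in the opposite direction, gives the chain homotopy for $\tilde{i} \circ \tilde{e}_-$. The principal technical subtlety lies in the non-equivariant step: packaging the sliding family $K(\bar{x})$ so that its form satisfies the three cylindrical-end conditions of the definition of $\overline{C}_\ast^\mathit{dR}$ and so that the construction respects the submersion equivalence relation; the Stokes identification of the boundary contributions and the signed verification of the commutation with $\bar\delta_\mathit{cyc}$ then follow by standard calculations.
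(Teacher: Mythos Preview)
Your proposal is correct and follows essentially the same approach as the paper. Both arguments reduce to the non-equivariant statements (the paper cites \cite{ki2}, Lemmas 4.7 and 4.8, whereas you sketch the explicit one-sided inclusions $i_\pm$ and the sliding homotopy $K$), and the passage to the $S^1$-equivariant complex hinges on the same observation: since $K$ manipulates only the $\mathbb{R}$-factor and leaves the $\mathcal{L}_{k+1}$-data untouched, it commutes with the rotation operators $(\overline{R}_k)_\ast$ and with $\circ_j\bar{e}_L$, and the degree-$1$ shift of $K$ against the $(-1)^{|\bar{x}|}$ in the definition of $\bar{\delta}_\mathit{cyc}$ yields the anti-commutation $K\bar{\delta}_\mathit{cyc}+\bar{\delta}_\mathit{cyc}K=0$.
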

\begin{proof}
The surjectivity of $(\tilde{e}_+,\tilde{e}_-)$ follows from \cite{ki2}, Lemma 4.7. 

In \cite{ki2}, Lemma 4.8, a chain homotopy $K:\overline{C}_\ast\rightarrow\overline{C}_{\ast+1}$ from $i\circ e_+$ to $\mathit{id}_{\overline{C}}$ has been defined. It is clear that the map $K$ induces a map on the $S^1$-equivariant chain complex $\overline{C}_\ast^{S^1}$. By abuse of notations, we will still denote it by $K:\overline{C}_\ast^{S^1}\rightarrow\overline{C}_{\ast+1}^{S^1}$. Here we need to check that $K\circ\bar{\delta}_\mathit{cyc}+\bar{\delta}_\mathit{cyc}\circ K=0$, so the map $K$ satisfies
\begin{equation}
K(\bar{\partial}+u\bar{\delta}_\mathit{cyc}^\mathit{nd})+(\bar{\partial}+u\bar{\delta}_\mathit{cyc}^\mathit{nd})K=\mathit{id}_{\overline{C}}-\tilde{i}\circ\tilde{e}_+,
\end{equation}
which gives a chain homotopy between $\tilde{i}\circ\tilde{e}_+$ and $\mathit{id}_{\overline{C}}$. To see this, recall that
\begin{equation}
K(U,\varphi,\tau_+,\tau_-,\omega)=(-1)^{|\omega|+1}(\mathbb{R}\times U,\bar{\varphi},\bar{\tau}_+,\bar{\tau}_-,\bar{\omega}),
\end{equation}
where $\bar{\varphi}:\mathbb{R}\times U\rightarrow\mathbb{R}\times\mathcal{L}_{k+1}$ is given by $\bar{\varphi}=\left(\alpha\left(r,\varphi_\mathbb{R}(u)\right),\varphi_\mathcal{L}(u)\right)$, with $\alpha:\mathbb{R}^2\rightarrow\mathbb{R}$ being some fixed choice of smooth function (cf. Step 1 in the proof of \cite{ki2}, Lemma 4.8). It is clear from the definition that the map $K$ commutes with cyclic permutations of $c_0,\cdots,c_k\in\Pi_1L$ and the fiber product with $\bar{e}_L$, which implies that $K\circ\bar{\delta}_\mathit{cyc}^\mathit{nd}=(-1)^{\varepsilon}\bar{\delta}_\mathit{cyc}^\mathit{nd}\circ K$. To determine the sign $(-1)^{\varepsilon}$, recall that the definition of the BV operator $\bar{\delta}_\mathit{cyc}^\mathit{nd}$ involves a sign $(-1)^{|\bar{x}|+k(j-1)}$ before each term (cf. (\ref{eq:bar-BV})). Since the map $K$ has degree 1, it follows that $\varepsilon=1$. The argument for the $\tilde{i}\circ\tilde{e}_-$ case is similar.
\end{proof}

Passing to the quotient complexes $C_\ast^\lambda$ and $\overline{C}_\ast^\lambda$, we obtain the following.

\begin{corollary}\label{corollary:surjective}
The map $(\underline{e}_+,\underline{e}_-):\overline{C}_\ast^\lambda\rightarrow C_\ast^\lambda\oplus C_\ast^\lambda$ is surjective. Moreover, $\underline{i}\circ\underline{e}_+$ and $\underline{i}\circ\underline{e}_-$ are chain homotopic to $\mathit{id}_{\overline{C}}$.
\end{corollary}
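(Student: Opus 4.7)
The plan is to deduce both statements from Lemma \ref{lemma:surjective} by a descent argument, using the fact that the non-degeneration quotient $C_\ast^\mathit{nd}$ and the further cyclic quotient $C_\ast^\lambda$ are surjective images of the corresponding $S^1$-equivariant complexes, and that the chain homotopy $K$ produced in the proof of Lemma \ref{lemma:surjective} is manifestly compatible with the cyclic operator.

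For the surjectivity of $(\underline{e}_+,\underline{e}_-)$, I would first unwind the construction: $C_\ast^\lambda$ is by definition the quotient $C_\ast^\mathit{nd}/\mathrm{im}(1-\lambda)$, and likewise for $\overline{C}_\ast^\lambda$. Given any pair $(\underline{y}_+,\underline{y}_-)\in C_\ast^\lambda\oplus C_\ast^\lambda$, lift it to a pair $(y_+,y_-)\in C_\ast^\mathit{nd}\oplus C_\ast^\mathit{nd}$, then to a pair in $C_\ast^{S^1}\oplus C_\ast^{S^1}$ by placing it in the $u^0$-summand. Lemma \ref{lemma:surjective} produces $\bar{z}\in\overline{C}_\ast^{S^1}$ with $\tilde{e}_\pm(\bar{z})=y_\pm\otimes1$. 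Extracting the $u^0$-component of $\bar{z}$ and projecting to $\overline{C}_\ast^\lambda$ yields an element mapping to $(\underline{y}_+,\underline{y}_-)$ under $(\underline{e}_+,\underline{e}_-)$. (Equivalently, one can invoke the underlying non-equivariant surjectivity of $(e_+,e_-):\overline{C}_\ast\rightarrow C_\ast\oplus C_\ast$ from \cite{ki2}, Lemma 4.7 and descend directly.)

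For the chain homotopy statement, the key observation is that the map $K$ constructed in the proof of Lemma \ref{lemma:surjective} acts only on the $\mathbb{R}$-factor of the domain (via the auxiliary function $\alpha$), while leaving the loop data $\varphi_\mathcal{L}$ and the marked points $t_0,\dots,t_k$ untouched. In particular, $K$ commutes (in the appropriate graded sense) with the rotation operator $(\overline{R}_k)_\ast$ and with the fiber product with the cyclic unit $\bar{e}_L$, and it preserves degenerate chains. Consequently $K$ descends to an operator $\underline{K}:\overline{C}_\ast^\lambda\rightarrow\overline{C}_{\ast+1}^\lambda$ on the cyclic quotient. Since the argument of Lemma \ref{lemma:surjective} shows $K\overline{\partial}+\overline{\partial}K=\mathit{id}_{\overline{C}}-\tilde{i}\circ\tilde{e}_+$, and this identity is preserved by passing to $\overline{C}_\ast^\lambda$, we obtain a chain homotopy between $\underline{i}\circ\underline{e}_+$ and $\mathit{id}_{\overline{C}}$ on $\overline{C}_\ast^\lambda$. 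The analogous statement for $\underline{e}_-$ follows identically.

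The only genuine thing to verify—and the main (mild) obstacle—is that $K$ really does commute with the cyclic operator $\bar{\lambda}$ on $\overline{C}_\ast^\mathit{nd}$, so that it passes to the quotient by $\mathrm{im}(1-\bar{\lambda})$. This is however transparent from the explicit formula: $K$ is built entirely from a modification of the $\mathbb{R}$-coordinate $\varphi_\mathbb{R}$ and does not interact with the combinatorics of the marked points that $\bar{\lambda}$ permutes. The remaining verifications (that $\underline{K}$ is well-defined on the quotient, and that the homotopy identity descends) are then immediate.
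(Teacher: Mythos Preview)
Your proposal is correct and follows exactly the route the paper intends: the paper's proof is the single sentence ``Passing to the quotient complexes $C_\ast^\lambda$ and $\overline{C}_\ast^\lambda$, we obtain the following,'' and you have simply spelled out what that passage entails. Your key observation---that $K$ only modifies the $\mathbb{R}$-coordinate and therefore commutes with the cyclic operator $\bar\lambda$ (and with the cosimplicial maps), hence descends to $\overline{C}_\ast^\lambda$---is precisely what is needed and is already implicit in the proof of Lemma~\ref{lemma:surjective}.
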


Note that by the same argument as in the proof of Lemma \ref{lemma:surjective}, one can prove the compatibility of $\underline{e}_+$ and $\underline{e}_-$ with the odd Lie brackets $\{\cdot,\cdot\}$ on $\overline{C}_\ast^\lambda$ and $C_\ast^\lambda$, respectively.

\subsection{Chain level statement of Theorem \ref{theorem:main}}\label{section:chain}

We provide a chain level statement of Theorem \ref{theorem:main} using the de Rham models $(\widehat{C}_\ast^\lambda,\tilde{\partial})$ and $(\widehat{C}_\ast^{S^1},\partial^{S^1})$ of the $S^1$-equivariant free loop space homology defined in Section \ref{section:BV}. Recall that $\widehat{C}_\ast^\lambda$ carries the structure of a dg Lie algebra of degree $1$, with the Lie bracket given by $\{\cdot,\cdot\}$ defined in (\ref{eq:bracket-pr}).

\begin{theorem}\label{theorem:chain}
Let $M$ be a Liouville manifold admitting a cyclic dilation, and $L\subset M$ a closed Lagrangian submanifold that is oriented and $\mathit{Spin}$ relative to $\alpha$. There exist $S^1$-equivariant de Rham chains $\underline{x}\in\widehat{C}_{-2}^\lambda$, $\underline{y}\in\widehat{C}_2^\lambda$, $\underline{z}\in\widehat{C}_1^\lambda$, and a real number $\varepsilon\in\mathbb{R}_{>0}$ such that
\begin{itemize}
	\item[(i)] $\tilde{\partial}(\underline{x})-\frac{1}{2}\left\{\underline{x},\underline{x}\right\}=0$. In other words, $\underline{x}$ is a Maurer-Cartan element.
	\item[(ii)] $\tilde{\partial}(\underline{y})-\left\{\underline{x},\underline{y}\right\}=\underline{z}$. In other words, $\underline{z}$ has a primitive $\underline{y}$ with respect to the $\underline{x}$-deformed differential on $\widehat{C}_\ast^\lambda$.
	\item[(iii)] $\underline{x}(a,k)\neq 0$ only if $\theta_M(a)\geq2\varepsilon$ or $a=0$, $k\geq3$. Moreover, the chain $\underline{x}(0,3)$ admits a lift $\tilde{x}(0,3)\in C_{-2}^{S^1}(0,3)$, whose image under the marking map $\mathbf{B}_c$ defined by (\ref{eq:mark}) gives a cycle $x(0,2)\in C_n^\mathit{dR}(\mathcal{L}_3(0))$, and therefore a cycle in the total complex $C_n^\mathit{dR}(0)$ by setting the other $k$-components ($k\neq2$) to be zero, which we still denote by $x(0,2)$ by abuse of notations. Under the isomorphism between de Rham and singular homologies, $\left[x(0,2)\right]\in H_n\left(C_\ast^\mathit{dR}(0)\right)$ goes to $(-1)^{n+1}[L]\in H_n(\mathcal{L}(0)L;\mathbb{R})$.
	\item[(iv)] $\underline{z}(a,k)\neq0$ only if $\theta_M(a)\geq2\varepsilon$ or $a=0$. Moreover, $\underline{z}(0,0)$ lifts along the natural projection $\widehat{C}_\ast\rightarrow\widehat{C}_\ast^\lambda$ to a cycle $z(0,0)\in C_n^\mathit{dR}(\mathcal{L}_1(0))$, which can be regarded as a cycle in $C_n^\mathit{dR}(0)$, whose homology class $[z(0,0)]\in H_n\left(C_\ast^\mathit{dR}(0)\right)$ corresponds to $(-1)^{n+1}[L]\in H_n(\mathcal{L}(0)L;\mathbb{R})$ under the isomorphism between de Rham and singular homologies.
\end{itemize}
\end{theorem}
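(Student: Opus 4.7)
The plan is to realize $\underline{x}$, $\underline{y}$, $\underline{z}$ as (projections of) virtual fundamental chains of certain moduli spaces of (parametrized) pseudoholomorphic curves, constructed first on the ambient complex $\widehat{C}_\ast^{S^1}$ and then projected via $\pi_\lambda$ to the Connes complex $\widehat{C}_\ast^\lambda$ where the pairing $\{\cdot,\cdot\}$ becomes a genuine graded Lie bracket. Working on $\widehat{C}_\ast^{S^1}$ at first is preferable because the differential $\partial^{S^1}=\tilde{\partial}+u\delta_\mathit{cyc}$ has a transparent geometric interpretation through forgetful maps, while the odd Lie algebra structure required for the Maurer--Cartan formalism only exists after passing to the quotient.

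First, I would construct $\tilde{x}$ by taking, for each $(a,k)$ with $k\ge 2$, the moduli of $J$-holomorphic discs $u:(D,\partial D)\to(M,L)$ representing a class $\bar a\in\pi_2(M,L)$ with $\partial\bar a=a$, equipped with $k+1$ ordered boundary marked points and a point in a finite-dimensional approximation of $ES^1$; the virtual fundamental chain (Appendix \ref{section:Kuranishi}) is pushed forward by boundary evaluation to $\mathcal{L}_{k+1}(a)$, the $u$-powers arising from the Borel tower. Summing over $(a,k)$ yields $\tilde{x}\in\widehat{C}_{-2}^{S^1}$, with the degree shift matching (\ref{eq:StrH}). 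The Maurer--Cartan equation (i) is then extracted from the codimension-one boundary of the compactified $1$-dimensional moduli spaces: disc bubbling at a boundary marked point contributes $\tfrac12\{\tilde{x},\tilde{x}\}$, and degenerations of the $ES^1$-parameter plus the de~Rham differential account for $\tilde{\partial}$. Because disc bubbling is only graded anti-symmetric up to cyclic permutation of marked points, the identity only holds on the nose after projecting with $\pi_\lambda$ to $\widehat{C}_\ast^\lambda$; the supplementary seed chain $\tilde{x}(0,3)$ with Maslov-$2$ constant component, whose marking $\mathbf{B}_c\tilde{x}(0,3)$ represents $(-1)^{n+1}[L]$, is inserted to absorb the constant-loop contribution, which is exactly the assertion of (iii).

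For (ii) and (iv) I would introduce a one-parameter family of parametrized moduli spaces $\mathcal{N}^r$ à la Fukaya--Irie, where for $r\in[0,\infty)$ the Cauchy--Riemann equation on the disc is perturbed by $\chi_r(s)H_t$ with $H_t$ a Hamiltonian displacing $L$ (see Section \ref{section:CG} and the heuristic of Section \ref{section:GH}). One distinguished boundary puncture at the $r\to\infty$ end carries a Hamiltonian asymptotic, which is capped off using the cyclic dilation: concretely, the cochain $\tilde b\in\mathit{SC}_{S^1}^1(M)$ with $\partial^{S^1}\tilde b=e_M\otimes 1$ provides the bounding data for the unit, implemented geometrically through a $S^1$-equivariant Cieliebak--Latschev cap. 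Pushing the virtual fundamental chain forward gives $\tilde{y}\in\widehat{C}_2^{S^1}$, and the boundary analysis of the 1-parameter family yields three contributions: (a) disc bubbling at an interior marked point, producing $\{\underline{x},\underline{y}\}$ in $\widehat{C}_\ast^\lambda$; (b) the $r=0$ stratum of constant solutions, whose evaluation produces the chain $\underline{z}$ with constant-loop component representing $[L]$ as in (iv); (c) the $r=\infty$ stratum, which cancels against the bounding data furnished by $\tilde b$. This gives (ii) after projection to $\widehat{C}_\ast^\lambda$.

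Finally, since all chains involve infinite sums over $(a,k)$, I would first produce finite-energy truncations satisfying the equations modulo $F^\Xi\widehat{C}_\ast^\lambda$ (following the inductive procedure of Irie \cite{ki2}) and then take the inverse limit; compatibility under $\Xi\to\infty$ uses Gromov compactness on the disc side, the action-filtration behaviour of the cyclic-dilation cap, and Corollary \ref{corollary:surjective} to lift ambiguities from $\overline{C}_\ast^\lambda$. The main obstacle is the package in Steps 1 and 3: equipping the compactified (parametrized) moduli spaces with Kuranishi data compatible simultaneously with (a) the cosimplicial/cocyclic structure on $\mathcal{O}_L$ so that boundary disc bubbling assembles to the bracket $\{\cdot,\cdot\}$ after cyclic symmetrization, (b) the Borel model of the $S^1$-action producing $\delta_\mathit{cyc}$, and (c) the cap by the cyclic dilation that eliminates the $r=\infty$ stratum. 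The delicate accounting of the three degenerations in Step (ii)/(iv)---disc bubbling, the $r=0$ limit giving the $[L]$-term, and the capped $r=\infty$ limit---together with the sign verifications via Appendix \ref{section:orientation}, constitutes the technical heart of the argument.
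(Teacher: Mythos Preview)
Your overall architecture---construct chains in $\widehat{C}_\ast^{S^1}$, project via $\pi_\lambda$, and approximate by finite-energy truncations before passing to the inverse limit---matches the paper's strategy (Remark~\ref{remark:lift1}, Theorem~\ref{theorem:approximate}). However, the specific moduli spaces you propose for $\tilde{y}$ are not the right ones, and the approach as written cannot work. You build $\tilde{y}$ from a one-parameter family $\mathcal{N}^r$ perturbed by a Hamiltonian \emph{displacing} $L$, with the cyclic dilation entering only as a cap at $r=\infty$. But in a general Liouville manifold with a cyclic dilation, $L$ need not be displaceable---this is exactly what distinguishes the present setting from the $\mathbb{C}^n$ case (see the remark following Corollary~\ref{corollary:n}, and Remark~\ref{remark:subcritical}). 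The paper replaces the displacing mechanism entirely: $\tilde{y}$ is built from the Cohen--Ganatra moduli spaces ${}_l\overline{\mathcal{R}}_{k+1}^1(\beta_{l-1},L,\ring{\bar a}_{l-1})$ of discs with one \emph{interior} puncture asymptotic to the Hamiltonian orbits $\beta_{l-1}$ appearing in the cyclic dilation cocycle $\tilde\beta=\sum_l\beta_l\otimes u^{-l}$, together with $l$ radially ordered auxiliary interior marked points and an asymptotic marker. The $S^1$-equivariant structure is implemented by these auxiliary points (not by a Borel/$ES^1$ tower), and $\tilde{y}_m$ has only $u^0$- and $u^{-1}$-parts (see (\ref{eq:t3})). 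The chain $\tilde{z}$ is not an $r=0$ constant stratum; it arises from the cylinder-breaking boundary at the interior puncture, where the algebraic identity $\sum_{l,j}\delta_j(\beta_{l-1})=B_c(\tilde\beta)+\partial\beta_{-1}=e_M$ (Lemma~\ref{lemma:ana}, (\ref{eq:id1})) forces the residual asymptote to be the unit. The $\delta_\mathit{cyc}$-term in $\partial^{S^1}$ comes from the $|p_1|=\tfrac12$ stratum ${}_{l-1}\overline{\mathcal{R}}_{k+1}^{S^1}$, decomposed into sectors via the auxiliary-rescaling maps (\ref{eq:aux-res}) and identified with $\delta_\mathit{cyc}(\bar y_{m,1})$ in (\ref{eq:id3}).

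Your construction of $\tilde{x}$ is also off: there is no Borel tower and no $u$-powers. The paper sets $\tilde{x}_m=x_{m,0}\otimes 1$ with only a $u^0$-part (see (\ref{eq:t1})), defined via the moduli spaces $\overline{\mathcal{R}}_{k+2,\vartheta}(L,\bar a)$ of discs with one extra boundary marked point at \emph{fixed argument}. Lemma~\ref{lemma:vartheta} shows that applying the chain-level BV operator to this chain recovers Irie's ordinary disc chain $\bar{x}_m$; this is the mechanism that makes the boundary-bubbling terms in (\ref{eq:dx}) assemble into $\tfrac12\{\bar{\underline{x}}_m,\bar{\underline{x}}_m\}$ after projecting to $C_\ast^\lambda$. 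The cyclic invariance needed here is not automatic and requires the $\mathbb{Z}_{k+1}$-invariant Kuranishi structures and CF-perturbations of Theorem~\ref{theorem:moduli}(vii) and Proposition~\ref{proposition:extension}.
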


\begin{remark}\label{remark:lift1}
In order to produce the chains $\underline{x}$, $\underline{y}$ and $\underline{z}$ in Theorem \ref{theorem:chain}, it is enough to find chains $\tilde{x}\in\widehat{C}_{-2}^{S^1}$, $\tilde{y}\in\widehat{C}_2^{S^1}$ and $\tilde{z}\in\widehat{C}_1^{S^1}$ such that the equations
\begin{equation}\label{eq:lifts}
\partial^{S^1}(\tilde{x})-\frac{1}{2}\left\{\tilde{x},\tilde{x}\right\}=0\textrm{ and }\partial^{S^1}(\tilde{y})-\left\{\tilde{x},\tilde{y}\right\}=\tilde{z}
\end{equation}
hold, and the corresponding conditions in (iii) and (iv) of Theorem \ref{theorem:chain} are satisfied. In particular, $\mathbf{B}_c\left(\tilde{x}(0,3)\right)=(-1)^{n+1}\mu_L$ and $\tilde{z}(0,0)=e_L\otimes1$. The operation $\{\cdot,\cdot\}$ in the above isn't a Lie bracket, but the equations in (\ref{eq:lifts}) still make sense. In fact, for our geometric argument in Section \ref{section:moduli}, we will construct chains in the $S^1$-equivariant complex $C_\ast^{S^1}$ instead of Connes' complex $C_\ast^\lambda$, by pushing forward virtual fundamental chains of moduli spaces of holomorphic curves.
\end{remark}

We then show that Theorem \ref{theorem:main} is a corollary of its chain level refinement---Theorem \ref{theorem:chain}. First note that (i) and (iii) above imply that the element
\begin{equation}
\underline{x}(0):=\sum_{k=3}^\infty\underline{x}(0,k)\in\widehat{C}_{-2}^\lambda
\end{equation}
is also a Maurer-Cartan element for the odd dg Lie algebra $\left(\widehat{C}_\ast^\lambda,\tilde{\partial},\{\cdot,\cdot\}\right)$, so we can use it to deform the differential $\tilde{\partial}$. Introduce the notation
\begin{equation}
C_\ast^\lambda(a):=\prod_{k=0}^\infty C_\ast^\lambda(a,k),
\end{equation}
and denote by $\tilde{\partial}_{\underline{x}(0)}:C_\ast^\lambda(a)\rightarrow C_{\ast-1}^\lambda(a)$ the deformed differential. By definition,
\begin{equation}
\tilde{\partial}_{\underline{x}(0)}(\underline{w}):=\tilde{\partial}\underline{w}-\left\{\underline{x}(0),\underline{w}\right\}
\end{equation}
for $\underline{w}\in C_\ast^\lambda(a)$. We show that the deformation induced by the ``low energy" Maurer-Cartan element $\underline{x}(0)$ is actually trivial.

\begin{lemma}\label{lemma:def-0}
There is an isomorphism
\begin{equation}\label{eq:iso}
H_\ast\left(C_\ast^\lambda(a),\tilde{\partial}_{\underline{x}(0)}\right)\cong H_{\ast+n+\mu(a)-1}^{S^1}(\mathcal{L}(a)L;\mathbb{R}).
\end{equation}
\end{lemma}
\begin{proof}
For this proof we temporarily use Irie's model of $\mathcal{L}_{k+1}$ as the space of Moore loops with $k+1$ marked points on $L$, so that the complexes $C_\ast^\lambda(a,k)$ are now constructed using Moore loops. Consider the short exact sequence
\begin{equation}
0\rightarrow\prod_{k=1}^\infty C_\ast^\lambda(a,k)\rightarrow C_\ast^\lambda(a)\rightarrow C_\ast^\lambda(a,0)\rightarrow0.
\end{equation}
The advantage of using Irie's model of Moore loops is that since $\underline{x}(0,0)=0$ by Theorem \ref{theorem:chain} (iii), we have the isomorphism
\begin{equation}
H_\ast\left(C_\ast^\lambda(a,0),\partial\right)\cong H_\ast\left(C_\ast^{S^1}(a,0),\partial^{S^1}\right)\cong H_{\ast+n+\mu(a)-1}^{S^1}(\mathcal{L}(a)L;\mathbb{R}),
\end{equation} 
which otherwise doesn't hold for Wang's model of the space $\mathcal{L}_1$. To prove the isomorphism (\ref{eq:iso}) for Irie's model, it suffices to show that the subcomplex $\prod_{k=1}^\infty C_\ast^\lambda(a,k)\subset C_\ast^\lambda(a)$ is acyclic.

For every $N\in\mathbb{N}$, observe that the differential $\tilde{\partial}_{\underline{x}(0)}$ preserves the subcomplex
\begin{equation}
\prod_{k>2N}C_\ast^\lambda(a,k)\subset\prod_{k=1}^\infty C_\ast^\lambda(a,k), 
\end{equation} 
so it descends to the quotient $\prod_{1\leq k\leq2N}C_\ast^\lambda(a,k)$, and it is compatible with the filtration $F^\bullet$ defined by
\begin{equation}
F^i\left(\prod_{1\leq k\leq2N}C_\ast^\lambda(a,k)\right):=\prod_{i\leq k\leq2N}C_\ast^\lambda(a,k).
\end{equation}
Consider the spectral sequence associated to this filtration, whose $E_1$-term is
\begin{equation}
H_\ast\left(C_\ast^\lambda(a,k),\partial\right)\cong H_{\ast+n+\mu(a)+k-1}^{S^1}(\mathcal{L}(a)L;\mathbb{R}),
\end{equation}
where the isomorphism holds since $\mathcal{L}_{k+1}$ is now the space of Moore loops with $k+1$ marked points. Fix lifts $\tilde{x}(a,k)=\sum_{i=0}^\infty x_i(a,k)\otimes u^{-i}$ and $\tilde{w}(a,k)=\sum_{i=0}^\infty w_i(a,k)\otimes u^{-i}$ for $\underline{x}(a,k)$ and $\underline{w}(a,k)$, respectively. The differential $d_1:H_\ast\left(C_\ast^\lambda(a,k)\right)\rightarrow H_{\ast-1}\left(C_\ast^\lambda(a,k+1)\right)$ is given by
\begin{equation}
	\begin{split}
    \left\{\underline{x}(0),\underline{w}\right\}(a,k+1)&=(-1)^{|w_0|}\sum_{i=1}^k\sum_{j=1}^3(-1)^{\maltese_{ij}}\underline{w_0(a,k)\circ_i\left((R_3)_\ast^jx_0(0,3)\circ_{4-j}e_L\right)} \\
    &-\sum_{i=1}^2\sum_{j=1}^3(-1)^{\maltese_{ij}}\underline{\left((R_3)_\ast^jx_0(0,3)\circ_{4-j}e_L\right)\circ_iw_0(a,k)} \\
    &=(-1)^{|w_0|+n+1}\left(\underline{\mu_L\circ_1w_0(a,k)}+\sum_{1\leq i\leq k}\underline{w_0(a,k)\circ_i\mu_L}\right. \\
    &\left.+(-1)^{k+1}\underline{\mu_L\circ_2w_0(a,k)}\right),
    \end{split}
\end{equation}
where the first equality follows from the anti-symmetric property (\ref{eq:com}) of the Lie bracket, and the second identity follows from Theorem \ref{theorem:chain}, (iii). We conclude that $d_1=\pm\sum_{j=0}^{k+1}(-1)^i$. Thus all $E_2$-terms vanish and the complex $\left(\prod_{k=1}^\infty C_\ast^\lambda(a,k),\tilde{\partial}_{\underline{x}(0)}\right)$ is acyclic.

Finally, to switch back to Wang's model of $\mathcal{L}_{k+1}$ and finish the proof, we apply \cite{ywt}, Theorem 2.2.1.
\end{proof}

Straightforward computations imply the following.

\begin{lemma}\label{lemma:+}
Define $\underline{x}^+:=\underline{x}-\underline{x}(0)$, then it satisfies
\begin{equation}
\tilde{\partial}_{\underline{x}(0)}(\underline{x}^+)-\frac{1}{2}\left\{\underline{x}^+,\underline{x}^+\right\}=0\textrm{ and } \tilde{\partial}_{\underline{x}(0)}(\underline{y})-\left\{\underline{x}^+,\underline{y}\right\}=\underline{z}.
\end{equation}
\end{lemma}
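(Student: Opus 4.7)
The plan is to derive both identities by a short algebraic manipulation, using Theorem \ref{theorem:chain} (i)--(iii), bilinearity of the bracket $\{\cdot,\cdot\}$ on $\widehat{C}_\ast^\lambda$, and the graded anti-symmetry relation (\ref{eq:anti-sym}) (which, since $|\underline{x}(0)| = |\underline{x}^+| = -2$, becomes strict symmetry when applied to these elements, because $(|\underline{x}(0)|+1)(|\underline{x}^+|+1) = (-1)(-1) = 1$).

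The key preliminary step is to observe that $\underline{x}(0)$ is itself a Maurer-Cartan element, i.e. $\tilde{\partial}(\underline{x}(0)) = \frac{1}{2}\{\underline{x}(0), \underline{x}(0)\}$. For this I would project the Maurer-Cartan equation of Theorem \ref{theorem:chain} (i) onto the $a=0$ part of the $H_1(L;\mathbb{Z})$-grading. The differential $\tilde{\partial}$ and the bracket both preserve this grading, and the $a=0$ component of $\{\underline{x},\underline{x}\}$ is a sum over pairs with $a_1+a_2=0$. By Theorem \ref{theorem:chain} (iii), a nonzero contribution from the pair $\bigl(\underline{x}(a_1,\cdot),\underline{x}(-a_1,\cdot)\bigr)$ forces either $a_1=0$, or $\theta_M(a_1)\geq 2\varepsilon$ and $\theta_M(-a_1)\geq 2\varepsilon$; additivity of $\theta_M$ on $H_1(L;\mathbb{Z})$ rules out the latter since $\theta_M(a_1)+\theta_M(-a_1)=0$. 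Thus only $a_1=0$ contributes, giving $\{\underline{x},\underline{x}\}(0,\cdot) = \{\underline{x}(0),\underline{x}(0)\}(0,\cdot)$, and combining with Theorem \ref{theorem:chain} (i) yields the claim. (The components of $\underline{x}$ with $a\neq 0$, paired with themselves, do contribute to $\{\underline{x},\underline{x}\}$ but only at grading sectors with $\theta_M(a)\geq 4\varepsilon$, never at $a=0$.)

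Once this is established, the first identity follows by expanding $\{\underline{x},\underline{x}\} = \{\underline{x}(0),\underline{x}(0)\} + 2\{\underline{x}(0),\underline{x}^+\} + \{\underline{x}^+,\underline{x}^+\}$, subtracting the just-established Maurer-Cartan equation for $\underline{x}(0)$ from the one for $\underline{x}$, and recognizing the result as $\tilde{\partial}(\underline{x}^+) - \{\underline{x}(0),\underline{x}^+\} = \frac{1}{2}\{\underline{x}^+,\underline{x}^+\}$, which by definition of the deformed differential is precisely the first claim. The second identity is immediate from bilinearity:
\begin{equation*}
\tilde{\partial}_{\underline{x}(0)}(\underline{y}) - \{\underline{x}^+,\underline{y}\} = \tilde{\partial}(\underline{y}) - \{\underline{x}(0)+\underline{x}^+,\underline{y}\} = \tilde{\partial}(\underline{y}) - \{\underline{x},\underline{y}\} = \underline{z}
\end{equation*}
by Theorem \ref{theorem:chain} (ii). The only real obstacle, which carries essentially the whole content of the argument, is the splitting step establishing that $\underline{x}(0)$ alone is Maurer-Cartan; this relies crucially on the action-positivity condition in Theorem \ref{theorem:chain} (iii).
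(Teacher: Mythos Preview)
Your proposal is correct and follows essentially the same approach as the paper, which simply says ``Straightforward computations imply the following.'' You have filled in those computations in full, including a justification that $\underline{x}(0)$ is itself Maurer--Cartan; the paper already records this fact (from Theorem~\ref{theorem:chain} (i) and (iii)) in the paragraph preceding Lemma~\ref{lemma:def-0}, so by the time Lemma~\ref{lemma:+} is stated it is available as an input rather than needing to be re-derived.
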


The following is now a consequence of the two lemmas above and the homotopy transfer lemma for $L_\infty$-structures.

\begin{proposition}
Theorem \ref{theorem:chain} implies Theorem \ref{theorem:main}.
\end{proposition}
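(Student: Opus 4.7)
The plan is to deduce Theorem \ref{theorem:main} from Theorem \ref{theorem:chain} by applying the homotopy transfer theorem for $L_\infty$-algebras to the (filtered, degree $+1$) dg Lie algebra $(\widehat{C}_\ast^\lambda,\tilde{\partial}_{\underline{x}(0)},\{\cdot,\cdot\})$, using the element $\underline{x}^+ := \underline{x}-\underline{x}(0)$ supplied by Lemma \ref{lemma:+} as a geometric Maurer--Cartan lift of $\tilde{x}$.

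First, I would construct the $L_\infty$-structure on $\widehat{\mathbb{H}}_\ast^{S^1}$. Lemma \ref{lemma:def-0} identifies $H_\ast(C_\ast^\lambda(a),\tilde{\partial}_{\underline{x}(0)})$ with $\mathbb{H}_\ast^{S^1}(a)$, and summing over $a\in H_1(L;\mathbb{Z})$ and completing with respect to the action filtration identifies the homology of $(\widehat{C}_\ast^\lambda,\tilde{\partial}_{\underline{x}(0)})$ with $\widehat{\mathbb{H}}_\ast^{S^1}$. Choose, on each summand indexed by $a$, a homotopy retract (projection, inclusion and chain homotopy, over $\mathbb{R}$) from $C_\ast^\lambda(a)$ onto its $\tilde{\partial}_{\underline{x}(0)}$-homology; assembling them gives a retract of $\widehat{C}_\ast^\lambda$ onto $\widehat{\mathbb{H}}_\ast^{S^1}$ that is compatible with the $H_1(L;\mathbb{Z})$-decomposition and with the action filtration. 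The homotopy transfer theorem for filtered $L_\infty$-algebras then produces an $L_\infty$-structure $(\tilde{\ell}_k)_{k\geq 1}$ on $\widehat{\mathbb{H}}_\ast^{S^1}$ with $\tilde{\ell}_1 = 0$, together with an $L_\infty$-quasi-isomorphism $F$ from $(\widehat{\mathbb{H}}_\ast^{S^1},(\tilde{\ell}_k))$ back into $(\widehat{C}_\ast^\lambda,\tilde{\partial}_{\underline{x}(0)},\{\cdot,\cdot\})$ whose linear part is the chosen inclusion. Since the retract respects the decomposition, the transferred operations do too. This gives parts (i) and (ii) of Theorem \ref{theorem:main}.

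Next I would define the required homology classes. Set $\tilde{x} := [\underline{x}^+]\in\widehat{\mathbb{H}}_{-2}^{S^1}$ and $\tilde{y} := [\underline{y}]\in\widehat{\mathbb{H}}_2^{S^1}$. Part (iii) of Theorem \ref{theorem:main} is immediate: by Theorem \ref{theorem:chain}(iii) we have $\underline{x}^+(a,k)=0$ unless $\theta_M(a)\geq 2\varepsilon$, so $\tilde{x}\in F^{2\varepsilon}\widehat{\mathbb{H}}_{-2}^{S^1}$, and we may take $c=2\varepsilon$. The first identity in (iv), the Maurer--Cartan equation
\begin{equation*}
\sum_{k=2}^\infty \frac{1}{k!}\tilde{\ell}_k(\tilde{x},\ldots,\tilde{x}) = 0,
\end{equation*}
follows from the standard fact that a Maurer--Cartan element of a filtered dg Lie algebra induces a Maurer--Cartan element of any transferred $L_\infty$-algebra (via the linear part of the $L_\infty$-quasi-isomorphism), applied to Lemma \ref{lemma:+}. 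For the second identity in (iv), the same transfer principle, applied to the inhomogeneous Maurer--Cartan-like equation $\tilde{\partial}_{\underline{x}(0)}(\underline{y})-\{\underline{x}^+,\underline{y}\}=\underline{z}$ from Lemma \ref{lemma:+}, yields
\begin{equation*}
\sum_{k=2}^\infty \frac{1}{(k-1)!}\tilde{\ell}_k(\tilde{y},\tilde{x},\ldots,\tilde{x}) = [\underline{z}]\in\widehat{\mathbb{H}}_2^{S^1}.
\end{equation*}
Projecting to the $a=0$ summand and invoking Theorem \ref{theorem:chain}(iv), together with the explicit isomorphism $H_n(C_\ast^\lambda(0,0),\partial)\cong H_n^{S^1}(\mathcal{L}(0)L;\mathbb{R})$ coming from the erasing map and the inclusion of constant loops, identifies $[\underline{z}]_{a=0}$ with $(-1)^{n+1}[\![L]\!]$. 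This is (\ref{eq:def1}).

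The step I expect to demand the most care is not the algebra of homotopy transfer itself, which is formal, but the bookkeeping that accompanies it: one must verify that the chosen retract really does restrict to each $H_1(L;\mathbb{Z})$-summand (so that $(\tilde{\ell}_k)$ respects the decomposition and hence extends to the completion), that convergence of the infinite sums in (iv) is guaranteed by (iii), and that the degree conventions match, since the Lie bracket $\{\cdot,\cdot\}$ has degree $+1$ so the transferred operations $\tilde{\ell}_k$ carry a non-standard degree that must be compatible with the gradings $|\tilde{x}|=-2$, $|\tilde{y}|=2$ and $|[\![L]\!]|=1$ used in the statement of Theorem \ref{theorem:main}. The identification of $[\underline{z}]_{a=0}$ with $(-1)^{n+1}[\![L]\!]$ under the isomorphism of Lemma \ref{lemma:def-0} also requires one explicit check, tracing Theorem \ref{theorem:chain}(iv) through the quasi-isomorphism $\pi_\lambda$ of Section \ref{section:BV}.
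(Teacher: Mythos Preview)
Your overall strategy---transfer the dg Lie structure $(\widehat{C}_\ast^\lambda,\tilde{\partial}_{\underline{x}(0)},\{\cdot,\cdot\})$ to $\widehat{\mathbb{H}}_\ast^{S^1}$ via a retract compatible with the $H_1(L;\mathbb{Z})$-decomposition, then use Lemma~\ref{lemma:+}---matches the paper exactly. But there is a genuine gap in how you define $\tilde{x}$ and $\tilde{y}$.

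You set $\tilde{x}:=[\underline{x}^+]$ and $\tilde{y}:=[\underline{y}]$, i.e.\ the images under the linear projection $\pi$. This does not work: $\underline{x}^+$ is not even a $\tilde{\partial}_{\underline{x}(0)}$-cycle (it satisfies $\tilde{\partial}_{\underline{x}(0)}\underline{x}^+=\tfrac12\{\underline{x}^+,\underline{x}^+\}$), so $[\underline{x}^+]$ has no intrinsic meaning, and even interpreting it as $\pi(\underline{x}^+)$, there is no ``standard fact'' that this satisfies the transferred Maurer--Cartan equation. Maurer--Cartan elements transfer via the \emph{full} $L_\infty$-morphism, not its linear part. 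Concretely, the paper constructs a projection-type $L_\infty$-homomorphism $p=(p_k)_{k\geq1}:(C_\ast^\lambda,\tilde{\partial}_{\underline{x}(0)},\{\cdot,\cdot\})\to(\mathbb{H}_\ast^{S^1},(\tilde{\ell}_k))$ with $p_1=\pi$ (note the direction is opposite to your $F$), and then defines
\[
\underline{X}=\sum_{k\geq1}\tfrac{1}{k!}p_k(\underline{x}^+,\ldots,\underline{x}^+),\quad
\underline{Y}=\sum_{k\geq1}\tfrac{1}{(k-1)!}p_k(\underline{y},\underline{x}^+,\ldots,\underline{x}^+),\quad
\underline{Z}=\sum_{k\geq1}\tfrac{1}{(k-1)!}p_k(\underline{z},\underline{x}^+,\ldots,\underline{x}^+).
\]
The $L_\infty$-morphism relations then give the two equations in Theorem~\ref{theorem:main}(iv) with right-hand side $\underline{Z}$. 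For the identification $\underline{Z}(0)=(-1)^{n+1}[\![L]\!]$, the higher terms $p_k$ for $k\geq2$ drop out on the $a=0$ component precisely because $\underline{x}^+(a)=0$ unless $\theta_M(a)\geq 2\varepsilon$, leaving $\underline{Z}(0)=\pi\!\left(\sum_k\underline{z}(0,k)\right)$; one then chooses $\pi$ so that this equals $(-1)^{n+1}[\![L]\!]$ using Theorem~\ref{theorem:chain}(iv). Your sketch of this last step is correct in spirit, but it only becomes available once $\underline{Z}$ is defined via the full pushforward.
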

\begin{proof}
It follows from our construction that the homology of the total complex $C_\ast^\lambda$ is $\mathbb{H}_\ast^{S^1}$ defined by (\ref{eq:StrH}). By Lemma \ref{lemma:def-0}, there exist linear maps
\begin{equation}
\iota:\mathbb{H}_\ast^{S^1}\rightarrow C_\ast^\lambda,\textrm{ }\pi:C_\ast^\lambda\rightarrow\mathbb{H}_\ast^{S^1},\textrm{ }\kappa:C_\ast^\lambda\rightarrow C_{\ast+1}^\lambda,
\end{equation}
such that
\begin{equation}
\tilde{\partial}_{\underline{x}(0)}\circ\iota=0,\textrm{ }\pi\circ\tilde{\partial}_{\underline{x}(0)}=0,\textrm{ }\pi\circ\iota=\mathit{id}_\mathbb{H},
\end{equation}
where $\mathit{id}_\mathbb{H}$ denotes the identity of $\mathbb{H}_\ast^{S^1}$, and
\begin{equation}
\kappa\circ\tilde{\partial}_{\underline{x}(0)}+\tilde{\partial}_{\underline{x}(0)}\circ\kappa=\mathit{id}_C-\iota\circ\pi,
\end{equation}
where $\mathit{id}_C$ is the identity of $C_\ast^\lambda$. These maps can be chosen to be compatible with the decompositions of $\mathbb{H}_\ast^{S^1}$ and $C_\ast^\lambda$ over $H_1(L;\mathbb{Z})$, therefore extend to the completions $\widehat{\mathbb{H}}^{S^1}_\ast$ and $\widehat{C}_\ast^\lambda$. Moreover, it follows from Theorem \ref{theorem:chain}, (iv) that we can choose $\pi$ so that the chain $\sum_{k=0}^\infty\underline{z}(0,k)\in\widehat{C}_1^\lambda$ gets mapped to $(-1)^{n+1}[\![L]\!]\in H_n^{S^1}(\mathcal{L}(0)L;\mathbb{R})$.

By \cite{jlf}, Proposition 4.9, there exist an $L_\infty$-structure $(\tilde{\ell}_k)_{k\geq1}$ on $\mathbb{H}_\ast^{S^1}$ and an $L_\infty$-homomorphism
\begin{equation}
p=(p_k)_{k\geq1}:\left(C_\ast^\lambda,\tilde{\partial}_{\underline{x}(0)},\{\cdot,\cdot\}\right)\rightarrow\left(\mathbb{H}_\ast^{S^1},(\tilde{\ell}_k)_{k\geq1}\right)
\end{equation}
such that $\ell_1=0$ and $p_1=\pi$. The $L_\infty$-structure $(\tilde{\ell}_k)_{k\geq1}$ and the $L_\infty$-homomorphism $p$ can be taken so that they respect the decompositions over $H_1(L;\mathbb{Z})$, therefore also extend to the relevant completions. Moreover, by Lemma \ref{lemma:+}, the elements
\begin{equation}\label{eq:MC}
\underline{X}:=\sum_{k=1}^\infty\frac{1}{k!}p_k(\underline{x}^+,\cdots,\underline{x}^+)\in\widehat{\mathbb{H}}_{-2}^{S^1},
\end{equation}
\begin{equation}
\underline{Y}:=\sum_{k=1}^\infty\frac{1}{(k-1)!}p_k(\underline{y},\underline{x}^+,\cdots,\underline{x}^+)\in\widehat{\mathbb{H}}_2^{S^1},
\end{equation}
\begin{equation}
\underline{Z}:=\sum_{k=1}^\infty\frac{1}{(k-1)!}p_k(\underline{z},\underline{x}^+,\cdots,\underline{x}^+)\in\widehat{\mathbb{H}}_1^{S^1}
\end{equation}
satisfy
\begin{equation}
\sum_{k=2}^\infty\frac{1}{k!}\tilde{\ell}_k(\underline{X},\cdots,\underline{X})=0,
\end{equation}
\begin{equation}
\sum_{k=2}^\infty\frac{1}{(k-1)!}\tilde{\ell}_k(\underline{Y},\underline{X},\cdots,\underline{X})=\underline{Z}.
\end{equation}
Note that the infinite sums in the definitions of $\underline{X}$, $\underline{Y}$, and $\underline{Z}$ make sense since $\underline{x}^+(a)\neq0$ only when $\theta_M(a)\geq2\varepsilon$. Since $\underline{X}(a)\neq0$ only if $\theta_M(a)\geq2\varepsilon$, (iii) of Theorem \ref{theorem:main} holds with $c=2\varepsilon$. To complete the proof, it remains to show that $\underline{Z}(0)=(-1)^{n+1}[\![L]\!]$. Since $p$ respects the decompositions over $H_1(L;\mathbb{Z})$, and $\underline{z}(a,k)\neq0$ only if $\theta_M(a)\geq2\varepsilon$ or $a=0$, we obtain
\begin{equation}
\underline{Z}(0)=\pi\left(\sum_{k=0}^\infty\underline{z}(0,k)\right)=(-1)^{n+1}[\![L]\!].
\end{equation}
\end{proof}

\subsection{Approximating the solutions}\label{section:approximation}

To prove Theorem \ref{theorem:chain}, we need to find chains $\underline{x},\underline{y},\underline{z}\in\widehat{C}_\ast^\lambda$ satisfying the equations
\begin{equation}\label{eq:eqn}
\tilde{\partial}(\underline{x})-\frac{1}{2}\{\underline{x},\underline{x}\}=0,\textrm{ }\tilde{\partial}(\underline{y})-\{\underline{x},\underline{y}\}=\underline{z}.
\end{equation}
In principle, these chains should be defined by pushing forward the virtual fundamental chains of the moduli spaces introduced in Section \ref{section:CG}. However, as Irie noticed in his situation \cite{ki2}, it is difficult to get such chains all at once as it will involve simultaneous perturbations of the Kuranishi maps of infinitely many moduli spaces. To overcome this difficulty, one can instead define a sequence of triple chains $(\underline{x}_i,\underline{y}_i,\underline{z}_i)\in C_{-2}^\lambda\times C_2^\lambda\times C_1^\lambda$ such that
\begin{itemize}
	\item[(i)] the triple $(\underline{x}_i,\underline{y}_i,\underline{z}_i)$ satisfies (\ref{eq:eqn}) up to certain energy level, which goes to infinity as $i\rightarrow\infty$;
    \item[(ii)] the triples $(\underline{x}_i,\underline{y}_i,\underline{z}_i)$ and $(\underline{x}_{i+1},\underline{y}_{i+1},\underline{z}_{i+1})$ are gauge equivalent up to certain energy level, which also goes to infinity as $i\rightarrow\infty$.
\end{itemize}
(ii) implies that the limits $\underline{x}$, $\underline{y}$ and $\underline{z}$ of the sequences of chains $(\underline{x}_i)$, $(\underline{y}_i)$ and $(\underline{z}_i)$ exist, and (i) implies that they satisfy (\ref{eq:eqn}). We show in this section that having such sequences of chains is enough for the validity of Theorem \ref{theorem:chain}.

We start by describing how to specify the value of $\varepsilon$ in the statement of Theorem \ref{theorem:chain}. Let $M$ be a Liouville manifold with $c_1(M)=0$, and $L\subset M$ a closed Lagrangian submanifold. We say that an almost complex structure $J_M$ on $M$ is of \textit{contact type} if it is compatible with $d\theta_M$ and $dr\circ J_M=-\theta_M$ on the cylindrical end $[1,\infty)\times\partial\overline{M}$. From now on, fix an almost complex structure $J_M$ on $M$ which is of contact type. We take $\varepsilon>0$ so that $2\varepsilon$ is less than the minimal symplectic area of $J_M$-holomorphic discs with boundary on $L$.

For the purpose of approximating $\underline{x},\underline{y},\underline{z}$ using finite energy de Rham chains, it would be convenient for us to consider an alternative completion of the chain complex $C_\ast^\lambda$. For each $m\in\mathbb{Z}$, define a filtration on $C_\ast^{S^1}$ by
\begin{equation}\label{eq:filt}
F^mC_\ast^{S^1}:=\bigoplus_{\substack{a\in H_1(L;\mathbb{Z}) \\ k\in\mathbb{Z}_{\geq0}\\ \theta_M(a)\geq\varepsilon(m+1-k)}}C_\ast^{S^1}(a,k),
\end{equation}
which induces a similar filtration on its quotient complex $C_\ast^\lambda$. By abuse of notations, the filtration on $C_\ast^\lambda$ will still be denoted by $F^m$. It follows from the definition that $\tilde{\partial}F^m\subset F^m$ and $\left\{F^m,F^{m'}\right\}\subset F^{m+m'}$. In the same manner, one can define a filtration on the relative chain complexes $\overline{C}_\ast^{S^1}$ and $\overline{C}_\ast^\lambda$, which we denote by $\overline{F}^m$. The chains $\underline{x}$, $\underline{y}$ and $\underline{z}$ satisfying (\ref{eq:eqn}) will then be defined as limits in the completion of $C_\ast^\lambda$ with respect to the filtration $F^m$. 

Note that a priori, the completions of $C_\ast^{S^1}$ (resp. $C_\ast^\lambda$) with respect to the filtration $F^m$ defined above can be different from $\widehat{C}_\ast^{S^1}$ (resp. $\widehat{C}_\ast^\lambda$) considered in Section \ref{section:BV} defined using the action filtration $F^\Xi$, see (\ref{eq:Xi}). In order to show that the chains $\underline{x}$, $\underline{y}$ and $\underline{z}$ actually lie in $\widehat{C}_\ast^\lambda$, we will follow the strategy of \cite{ki3} below.

We show in this subsection that in order to prove Theorem \ref{theorem:chain}, it suffices to prove the following.

\begin{theorem}\label{theorem:approximate}
Assume that $M$ admits a cyclic dilation, and $L\subset M$ is a closed oriented Lagrangian submanifold that is $\mathit{Spin}$ relative to $\alpha$. There exist integers $I,U\geq3$ and a sequence $(\underline{x}_i,\underline{y}_i,\underline{z}_i,\bar{\underline{x}}_i,\bar{\underline{y}}_i,\bar{\underline{z}}_i)_{i\geq I}$ of chains with $\underline{x}_i,\underline{y}_i,\underline{z}_i\in C_\ast^\lambda$, and $\bar{\underline{x}}_i,\bar{\underline{y}}_i,\bar{\underline{z}}_i\in\overline{C}_\ast^\lambda$, such that the following conditions hold.
\begin{itemize}
\item[(i)]
$\underline{x}_i\in F^1C_{-2}^\lambda$, $\bar{\underline{x}}_i\in\overline{F}^1\overline{C}_{-2}^\lambda$, $\underline{y}_i\in F^{-U}C_2^\lambda$, $\bar{\underline{y}}_i\in\overline{F}^{-U}\overline{C}_2^\lambda$, $\underline{z}_i\in F^{-1}C_1^\lambda$, $\bar{\underline{z}}_i\in\overline{F}^{-1}\overline{C}_1^\lambda$, where $\overline{F}^m$ is the filtration on $\overline{C}_\ast^\lambda$ defined above.
\item[(ii)]
$\underline{x}_i=\underline{e}_-(\bar{\underline{x}}_i)$, $\underline{y}_i=\underline{e}_-(\bar{\underline{y}}_i)$, $\underline{z}_i=\underline{e}_-(\bar{\underline{z}}_i)$.
\item[(iii)]
$\tilde{\bar{\partial}}(\bar{\underline{x}}_i)-\frac{1}{2}\left\{\bar{\underline{x}}_i,\bar{\underline{x}}_i\right\}\in \overline{F}^i\overline{C}_{-3}^\lambda$, $\tilde{\bar{\partial}}(\bar{\underline{y}}_i)-\left\{\bar{\underline{x}}_i,\bar{\underline{y}}_i\right\}-\bar{\underline{z}}_i\in \overline{F}^{i-U-1}\overline{C}_1^\lambda$, $\tilde{\bar{\partial}}(\bar{\underline{z}}_i)-\left\{\bar{\underline{x}}_i,\bar{\underline{z}}_i\right\}\in \overline{F}^{i-2}\overline{C}_0^\lambda$, where $\tilde{\bar{\partial}}$ is the boundary operator on $\overline{C}_\ast^\lambda$.
\item[(iv)]
$\underline{x}_{i+1}-\underline{e}_+(\bar{\underline{x}}_i)\in F^iC_{-2}^\lambda$, $\underline{y}_{i+1}-\underline{e}_+(\bar{\underline{y}}_i)\in F^{i-U-1}C_2^\lambda$, $\underline{z}_{i+1}-\underline{e}_+(\bar{\underline{z}}_i)\in F^{i-2}C_1^\lambda$.
\item[(v)]$\underline{x}_i(a,k)\neq0$ only if $\theta_M(a)\geq2\varepsilon$ or $a=0$, $k\geq3$. Moreover, $\underline{x}_i(0,3)$ admits a lift $\tilde{x}_i(0,3)\in C_{-2}^{S^1}(0,3)$ such that $\mathbf{B}_c\left(\tilde{x}_i(0,3)\right)=x_i(0,2)\in C_{-1}^\mathit{nd}(0,2)$ is a cycle, regarded as one in the total complex, whose homology class coincides with $(-1)^{n+1}[L]$ under the isomorphism between de Rham and singular homologies of the free loop space. Here, the map $\mathbf{B}_c$ is defined by (\ref{eq:mark}).
\item[(vi)] $\underline{z}_i(a,k)\neq0$ only if $\theta_M(a)\geq2\varepsilon$ or $a=0$. Moreover, $\underline{z}_i(0,0)$ lifts to a cycle $z_i(0,0)\in C_1(0,0)$, regarded as a cycle in the total complex, whose homology class $\left[{z}_i(0,0)\right]$ corresponds to $(-1)^{n+1}[L]$ under the isomorphism between de Rham and singular homologies of the free loop space.
\item[(vii)] We introduce the following subsets of $H_1(L;\mathbb{Z})$.
\begin{equation}\label{eq:A1}
\underline{A}_x:=\left\{a\in H_1(L;\mathbb{Z})\vert\exists(i,k)\textrm{ such that }\bar{\underline{x}}_i(a,k)\neq0\right\},
\end{equation}
\begin{equation}\label{eq:A2}
\underline{A}_x^+:=\left\{a_1+\cdots+a_m\vert m\geq1, a_1,\cdots,a_m\in\underline{A}_x\right\},
\end{equation}
\begin{equation}\label{eq:A3}
\underline{A}_{y,z}:=\left\{a\in H_1(L;\mathbb{Z})\vert\exists(i,k)\textrm{ such that }\left(\bar{\underline{y}}_i(a,k),\bar{\underline{z}}_i(a,k)\right)\neq(0,0)\right\},
\end{equation}
\begin{equation}\label{eq:A4}
\underline{A}_{y,z}^+:=\left\{a_1+\cdots+a_m\vert m\geq1, a_1\in\underline{A}_{y,z},a_2\cdots,a_m\in\underline{A}_x\right\}.
\end{equation}
Then for any $\Xi>0$,
\begin{equation}
\underline{A}_x^+(\Xi):=\left\{a\in\underline{A}_x^+\vert\theta_M(a)<\Xi\right\}\textrm{ and }\underline{A}_{y,z}^+(\Xi):=\left\{a\in\underline{A}_{y,z}^+\vert\theta_M(a)<\Xi\right\}
\end{equation}
are finite sets.
\end{itemize}
\end{theorem}

\begin{remark}\label{remark:lift2}
As we have noticed in Remark \ref{remark:lift1}, in order to find the chains in $C_\ast^\lambda$ and $\overline{C}_\ast^\lambda$ satisfying the conditions of Theorem \ref{theorem:approximate}, it is enough to have the chains $(\tilde{x}_i,\tilde{y}_i,\tilde{z}_i,\bar{\tilde{x}}_i,\bar{\tilde{y}}_i,\bar{\tilde{z}}_i)_{i\geq I}$, where $\tilde{x}_i,\tilde{y}_i,\tilde{z}_i\in C_\ast^{S^1}$, and $\bar{\tilde{x}}_i,\bar{\tilde{y}}_i,\bar{\tilde{z}}_i\in\overline{C}_\ast^{S^1}$, satisfying the following conditions corresponding to (i)---(vii) above.
\begin{itemize}
	\item[(i')] $\tilde{x}_i\in F^1C_{-2}^{S^1}$, $\bar{\tilde{x}}_i\in\overline{F}^1\overline{C}_{-2}^{S^1}$, $\tilde{y}_i\in F^{-U}C_2^{S^1}$, $\bar{\tilde{y}}_i\in\overline{F}^{-U}\overline{C}_2^{S^1}$, $\tilde{z}_i\in F^{-1}C_1^{S^1}$, $\bar{\tilde{z}}_i\in\overline{F}^{-1}\overline{C}_1^{S^1}$.
	\item[(ii')] $\tilde{x}_i=\tilde{e}_-(\bar{\tilde{x}}_i)$, $\tilde{y}_i=\tilde{e}_-(\bar{\tilde{y}}_i)$, $\tilde{z}_i=\tilde{e}_-(\bar{\tilde{z}}_i)$.
	\item[(iii')] $\bar{\partial}^{S^1}(\bar{\tilde{x}}_i)-\frac{1}{2}\left\{\bar{\tilde{x}}_i,\bar{\tilde{x}}_i\right\}\in \overline{F}^i\overline{C}_{-3}^{S^1}$, $\bar{\partial}^{S^1}(\bar{\tilde{y}}_i)-\left\{\bar{\tilde{x}}_i,\bar{\tilde{y}}_i\right\}-\bar{\tilde{z}}_i\in \overline{F}^{i-U-1}\overline{C}_1^{S^1}$, $\bar{\partial}^{S^1}(\bar{\tilde{z}}_i)-\left\{\bar{\tilde{x}}_i,\bar{\tilde{z}}_i\right\}\in \overline{F}^{i-2}\overline{C}_0^{S^1}$.
	\item[(iv')] $\tilde{x}_{i+1}-\tilde{e}_+(\bar{\tilde{x}}_i)\in F^iC_{-2}^{S^1}$, $\tilde{y}_{i+1}-\tilde{e}_+(\bar{\tilde{y}}_i)\in F^{i-U-1}C_2^{S^1}$, $\tilde{z}_{i+1}-\tilde{e}_+(\bar{\tilde{z}}_i)\in F^{i-2}C_1^{S^1}$.
	\item[(v')] $\tilde{x}_i(a,k)\neq0$ only if $\theta_M(a)\geq2\varepsilon$ or $a=0$, $k\geq3$. Moreover, $\mathbf{B}_c\left(\tilde{x}_i(0,3)\right)=x_i(0,2)$.
	\item[(vi')] $\tilde{z}_i(a,k)\neq0$ only if $\theta_M(a)\geq2\varepsilon$ or $a=0$. Moreover, $\tilde{z}_i(0,0)\in C_1^{S^1}$ is a cycle, whose homology class $\left[\tilde{z}_i(0,0)\right]=(-1)^{n+1}[\![L]\!]$.
	\item[(vii')] Parallel to Theorem \ref{theorem:approximate}, (vii), one can also introduce the subsets of $H_1(L;\mathbb{Z})$ corresppnding to (\ref{eq:A1})--(\ref{eq:A4}) for chains in the $S^1$-equivariant complex $\overline{C}_\ast^{S^1}$, and we will denote these subsets by $A_x$, $A_x^+$, $A_{x,y}$ and $A_{x,y}^+$, respectively. For example,
	\begin{equation}
	A_x:=\left\{a\in H_1(L;\mathbb{Z})\vert\exists(i,k)\textrm{ such that }\bar{\tilde{x}}_i(a,k)\neq0\right\}.
	\end{equation}
	Then for any $\Xi>0$,
	\begin{equation}
	A_x^+(\Xi):=\left\{a\in A_x^+\vert\theta_M(a)<\Xi\right\}\textrm{ and }A_{y,z}^+(\Xi):=\left\{a\in A_{y,z}^+\vert\theta_M(a)<\Xi\right\}
	\end{equation}
	are finite sets.
\end{itemize}
We remark that due to the cyclic invariance for chains in $\overline{C}_\ast^\lambda$, the requirement $\bar{\partial}^{S^1}(\bar{\tilde{x}}_i)-\frac{1}{2}\left\{\bar{\tilde{x}}_i,\bar{\tilde{x}}_i\right\}\in \overline{F}^i\overline{C}_{-3}^{S^1}$ in (iii') can be weakened. See our proof of Theorem \ref{theorem:approximate} in Section \ref{section:proof}.
\end{remark}

The following is an $S^1$-equivariant analogue of \cite{ki2}, Lemma 6.4 (except for the item (ix), see \cite{ki3}). Its proof is almost identical to the non-equivariant case, except for some changes of notations, gradings and signs. We record the details here for the readers' convenience.

\begin{lemma}\label{lemma:induction}
Let $I,U\in\mathbb{Z}_{\geq3}$ and $\underline{x}_i,\underline{y}_i,\underline{z}_i,\bar{\underline{x}}_i,\bar{\underline{y}}_i,\bar{\underline{z}}_i$ be as in Theorem \ref{theorem:approximate}. Then there exists a sequence
\begin{equation}
(\underline{x}_{i,j},\underline{y}_{i,j},\underline{z}_{i,j},\bar{\underline{x}}_{i,j},\bar{\underline{y}}_{i,j},\bar{\underline{z}}_{i,j})_{i\geq I,j\geq0}
\end{equation}
of (relative) de Rham chains satisfying the following conditions:
\begin{itemize}
	\item[(i)] $\underline{x}_{i,0}=\underline{x}_i$, $\underline{y}_{i,0}=\underline{y}_i$, $\underline{z}_{i,0}=\underline{z}_i$, $\bar{\underline{x}}_{i,0}=\bar{\underline{x}}_i$,  $\bar{\underline{y}}_{i,0}=\bar{\underline{y}}_i$, $\bar{\underline{z}}_{i,0}=\bar{\underline{z}}_i$.
	\item[(ii)] $\underline{x}_{i,j}\in F^1C_{-2}^\lambda$, $\bar{\underline{x}}_{i,j}\in\overline{F}^1\overline{C}_{-2}^\lambda$, $\underline{y}_{i,j}\in F^{-U}C_2^\lambda$, $\bar{\underline{y}}_{i,j}\in \overline{F}^{-U}\overline{C}_2^\lambda$, $\underline{z}_{i,j}\in F^{-1}C_1^\lambda$, $\bar{\underline{z}}_{i,j}\in\overline{F}^{-1}\overline{C}_1^\lambda$.
	\item[(iii)] $\underline{x}_{i,j}=\underline{e}_-(\underline{\bar{x}}_{i,j})$, $\underline{y}_{i,j}=\underline{e}_-(\bar{\underline{y}}_{i,j})$, $\underline{z}_{i,j}=\underline{e}_-(\bar{\underline{z}}_{i,j})$.
	\item[(iv)] $\tilde{\bar{\partial}}(\bar{\underline{x}}_{i,j})-\frac{1}{2}\left\{\bar{\underline{x}}_{i,j},\bar{\underline{x}}_{i,j}\right\}\in\overline{F}^{i+j}\overline{C}_{-3}^\lambda$, $\tilde{\bar{\partial}}(\bar{\underline{y}}_{i,j})-\left\{\bar{\underline{x}}_{i,j},\bar{\underline{y}}_{i,j}\right\}-\bar{\underline{z}}_{i,j}\in\overline{F}^{i+j-U-1}\overline{C}_1^\lambda$, $\tilde{\bar{\partial}}(\bar{\underline{z}}_{i,j})-\left\{\bar{\underline{x}}_{i,j},\bar{\underline{z}}_{i,j}\right\}\in\overline{F}^{i+j-2}\overline{C}_0^\lambda$.
	\item[(v)] $\underline{x}_{i+1,j}-\underline{e}_+(\bar{\underline{x}}_{i,j})\in F^{i+j}C_{-2}^\lambda$, $\underline{y}_{i+1,j}-\underline{e}_+(\bar{\underline{y}}_{i,j})\in F^{i+j-U-1}C_2^\lambda$, $\underline{z}_{i+1,j}-\underline{e}_+(\bar{\underline{z}}_{i,j})\in F^{i+j-2}C_1^\lambda$.
	\item[(vi)] $\bar{\underline{x}}_{i,j+1}-\bar{\underline{x}}_{i,j}\in\overline{F}^{i+j}\overline{C}_{-2}^\lambda$, $\bar{\underline{y}}_{i,j+1}-\bar{\underline{y}}_{i,j}\in\overline{F}^{i+j-U-1}\overline{C}_2^\lambda$, $\bar{\underline{z}}_{i,j+1}-\bar{\underline{z}}_{i,j}\in\overline{F}^{i+j-2}\overline{C}_1^\lambda$.
	\item[(vii)] $\bar{\underline{x}}_{i,j}(a,k)\neq0$ only if $\theta_M(a)\geq2\varepsilon$ or $a=0$, $k\geq3$. Moreover, $\bar{\underline{x}}_{i,j}(0,3)$ lifts to a chain $\bar{\tilde{x}}_{i,j}(0,3)\in\overline{C}^{S^1}_{-2}$ satisfying $\overline{\mathbf{B}}_c\left(\bar{\tilde{x}}_{i,j}(0,3)\right)=\bar{x}_{i,j}(0,2)$, where $\overline{\mathbf{B}}_c:\overline{C}_\ast^{S^1}(a,k+1)\rightarrow\overline{C}_{\ast+1}^{\mathit{nd}}(a,k)$ is the marking map defined in the same way as (\ref{eq:mark}), and $\bar{x}_{i,j}(0,2)\in\overline{C}_{-1}^\mathit{nd}(0,2)$ is a cycle whose homology class $\left[\bar{x}_{i,j}(0,2)\right]$ in the total complex coincides with $(-1)^{n+1}[L]$ under the isomorphism between relative de Rham and singular homologies of the free loop space.
	\item[(viii)] $\bar{\underline{z}}_{i,j}(a,k)\neq0$ only if $\theta_M(a)\geq2\varepsilon$ or $a=0$. Moreover, $\bar{\underline{z}}_{i,j}(0,0)\in\overline{C}_1^\lambda$ lifts to a cycle $\bar{z}_{i,j}(0,0)\in\overline{C}_1(0,0)$ whose homology class $\left[\bar{z}_{i,j}(0,0)\right]$ in the total complex corresponds to $(-1)^{n+1}[L]$ under the isomorphism between relative de Rham and singular homologies of the free loop space.
	\item[(ix)] If there exists $(i,j,k)\in\mathbb{Z}_{\geq I}\times\mathbb{Z}_{\geq0}\times\mathbb{Z}_{\geq0}$ such that 
	\begin{itemize}
		\item $\bar{\underline{x}}_{i,j}(a,k)\neq0$, then $a\in\underline{A}_x^+$.
		\item $\left(\bar{\underline{y}}_{i,j}(a,k),\bar{\underline{z}}_{i,j}(a,k)\right)\neq(0,0)$, then $a\in\underline{A}_{y,z}^+$.
	\end{itemize}
\end{itemize}
\end{lemma}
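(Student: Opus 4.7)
The proof proceeds by induction on $j$, with the base case $j = 0$ supplied by Theorem \ref{theorem:approximate}. Assuming the data has been constructed through index $j$ for all $i \geq I$, introduce the error chains
\begin{align*}
\bar{\underline{E}}_{i,j}^x &:= \tilde{\bar{\partial}}(\bar{\underline{x}}_{i,j}) - \tfrac{1}{2}\{\bar{\underline{x}}_{i,j}, \bar{\underline{x}}_{i,j}\} \in \overline{F}^{i+j}\overline{C}_{-3}^\lambda, \\
\bar{\underline{E}}_{i,j}^y &:= \tilde{\bar{\partial}}(\bar{\underline{y}}_{i,j}) - \{\bar{\underline{x}}_{i,j}, \bar{\underline{y}}_{i,j}\} - \bar{\underline{z}}_{i,j} \in \overline{F}^{i+j-U-1}\overline{C}_1^\lambda, \\
\bar{\underline{E}}_{i,j}^z &:= \tilde{\bar{\partial}}(\bar{\underline{z}}_{i,j}) - \{\bar{\underline{x}}_{i,j}, \bar{\underline{z}}_{i,j}\} \in \overline{F}^{i+j-2}\overline{C}_0^\lambda,
\end{align*}
whose filtration bounds are guaranteed by (iv). We seek corrections $\bar{\underline{u}}^x_{i,j} \in \overline{F}^{i+j}\overline{C}_{-2}^\lambda$, $\bar{\underline{u}}^y_{i,j} \in \overline{F}^{i+j-U-1}\overline{C}_2^\lambda$, $\bar{\underline{u}}^z_{i,j} \in \overline{F}^{i+j-2}\overline{C}_1^\lambda$ such that $\bar{\underline{x}}_{i,j+1} := \bar{\underline{x}}_{i,j} + \bar{\underline{u}}^x_{i,j}$ and the analogues for $y$ and $z$ push each error into the next filtration stratum.

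A direct computation using the graded Leibniz rule together with the Jacobi identity (\ref{eq:Jacobi}) and the anti-symmetry (\ref{eq:anti-sym}) gives $\tilde{\bar{\partial}}(\bar{\underline{E}}_{i,j}^x) = \{\bar{\underline{x}}_{i,j}, \bar{\underline{E}}_{i,j}^x\}$, together with analogous cascade identities coupling $\bar{\underline{E}}_{i,j}^y, \bar{\underline{E}}_{i,j}^z$ to $\bar{\underline{E}}_{i,j}^x$ via the terms $\{\bar{\underline{E}}_{i,j}^x, \bar{\underline{y}}_{i,j}\}$ and $\{\bar{\underline{E}}_{i,j}^x, \bar{\underline{z}}_{i,j}\}$ respectively. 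Because $\bar{\underline{x}}_{i,j} \in \overline{F}^1$, all the $\{\bar{\underline{x}}_{i,j}, \cdot\}$ contributions and the coupling terms raise filtration by at least one; consequently each error descends to a $\tilde{\bar{\partial}}$-cycle in the associated graded quotient $\mathrm{gr}^m\overline{C}^\lambda := \overline{F}^m\overline{C}^\lambda/\overline{F}^{m+1}\overline{C}^\lambda$ at the appropriate $m$. The three correction equations can then be solved in the cascade $x \to z \to y$, each new correction killing its error modulo the next filtration level.

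To produce the primitives, we establish an acyclicity lemma for $\mathrm{gr}^m\overline{C}^\lambda$ via the same spectral sequence as in Lemma \ref{lemma:def-0}: filtering by the number $k$ of marked points, the $d_1$-differential induced by the cosimplicial face maps $\delta_{k,i}$ kills all components with $k \geq 1$, leaving only the constant loops at $(a, k) = (0, 0)$ on the $E_2$-page. The degrees $-3, 1, 0$ of the errors then avoid the surviving homology except possibly at $(0, 0)$ for $\bar{\underline{E}}^z$, but this component is already zero since $\overline{F}^{i+j-2}$ with $i + j \geq 3$ contains no $(0, 0)$-part by definition (\ref{eq:filt}); the analogous observation rules out $(0,k)$-obstructions at $k \leq 3$ for $\bar{\underline{E}}^x$. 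Because $\bar{\underline{E}}^x, \bar{\underline{E}}^y, \bar{\underline{E}}^z$ are supported in $\underline{A}_x^+$ and $\underline{A}_{y,z}^+$ by the induction hypothesis (ix), and the acyclicity is compatible with the $H_1(L;\mathbb{Z})$-decomposition, the primitives may be chosen with the same supports, preserving (ix). Properties (vii) and (viii) are automatic: corrections in $\overline{F}^{i+j}$ with $i + j \geq 3$ vanish on all $(0, k)$ with $k \leq 3$, and similarly $\bar{\underline{u}}^z_{i,j} \in \overline{F}^{i+j-2}$ vanishes on $(0, 0)$.

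The main obstacle is the simultaneous coordination across all $i \geq I$ required by (v). Expanding
\[
\underline{x}_{i+1,j+1} - \underline{e}_+(\bar{\underline{x}}_{i,j+1}) = \bigl(\underline{x}_{i+1,j} - \underline{e}_+(\bar{\underline{x}}_{i,j})\bigr) + \underline{e}_-(\bar{\underline{u}}^x_{i+1,j}) - \underline{e}_+(\bar{\underline{u}}^x_{i,j}),
\]
one sees that the boundary values of the corrections must be coordinated so that the difference $\underline{e}_-(\bar{\underline{u}}^x_{i+1,j}) - \underline{e}_+(\bar{\underline{u}}^x_{i,j})$ cancels the known remainder $\underline{x}_{i+1,j} - \underline{e}_+(\bar{\underline{x}}_{i,j}) \in F^{i+j}$ modulo $F^{i+j+1}$. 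This can be arranged by invoking the surjectivity of $(\underline{e}_+, \underline{e}_-) : \overline{C}_\ast^\lambda \to C_\ast^\lambda \oplus C_\ast^\lambda$ from Corollary \ref{corollary:surjective}: any initially chosen primitive $\bar{\underline{u}}^x_{i,j}$ can be adjusted by an element of $\ker(\underline{e}_+, \underline{e}_-)$ to realize any prescribed pair of boundary values without disrupting the Maurer-Cartan improvement, since $\ker(\underline{e}_+, \underline{e}_-)$ is acyclic in the relevant filtration strata (from the chain homotopies of Lemma \ref{lemma:surjective}). Iterating this adjustment in $i$ and cascading through $x \to z \to y$ yields the required data $(\bar{\underline{x}}_{i,j+1}, \bar{\underline{y}}_{i,j+1}, \bar{\underline{z}}_{i,j+1})_{i \geq I}$, completing the inductive step.
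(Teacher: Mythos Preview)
Your approach has a genuine gap: the claimed ``acyclicity lemma for $\mathrm{gr}^m\overline{C}^\lambda$'' is false, and the spectral sequence argument you invoke does not apply here. Recall that the filtration $F^m$ of (\ref{eq:filt}) is defined so that the cosimplicial face maps $\delta_{k,i}$, which send $(a,k)\mapsto(a,k+1)$, \emph{strictly raise} the filtration level (since the condition $\theta_M(a)\ge\varepsilon(m+1-k)$ weakens as $k$ grows). Consequently, on the associated graded $\mathrm{gr}^m\overline{C}^\lambda$ only the de Rham part $\partial$ of $\tilde{\bar\partial}$ survives; the cosimplicial maps all vanish. If you then filter further by $k$, the $d_1$-differential you hope to see is identically zero, and the spectral sequence degenerates at $E_1$ with homology $H_\ast\bigl(\overline{C}^\lambda_\ast(a,k),\partial\bigr)\cong H^{S^1}_{\ast+n+\mu(a)+k-1}(\mathcal{L}(a);\mathbb{R})$ at every $(a,k)$ present in that graded piece. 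This is very far from acyclic, and there is no a priori reason that the error cycles $\bar{\underline{E}}^x_{i,j},\bar{\underline{E}}^y_{i,j},\bar{\underline{E}}^z_{i,j}$ are boundaries in $\mathrm{gr}^m\overline{C}^\lambda$. You have confused this situation with Lemma~\ref{lemma:def-0}, where the cosimplicial maps \emph{do} contribute to $d_1$ because the filtration there is by $k$ on $C^\lambda_\ast(a)$, not by $F^m$.

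The paper's proof circumvents this entirely, and in fact uses condition (v) as the \emph{input} that forces exactness, not as a constraint to be arranged afterwards. Setting $\Delta_x^i:=\underline{x}_{i+1,j}-\underline{e}_+(\bar{\underline{x}}_{i,j})\in F^{i+j}C^\lambda$, one computes (using (iv) at level $i+1$) that $\tilde\partial(\Delta_x^i)+\underline{e}_+\bigl(\bar{\underline{E}}^x_{i,j}\bigr)\in F^{i+j+1}$; thus $\underline{e}_+\bigl(\bar{\underline{E}}^x_{i,j}\bigr)$ is exact in $\mathrm{gr}^{i+j}C^\lambda$, with primitive $-\Delta_x^i$. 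Since $\underline{e}_+:\mathrm{gr}^{i+j}\overline{C}^\lambda\to\mathrm{gr}^{i+j}C^\lambda$ is a surjective quasi-isomorphism (Corollary~\ref{corollary:surjective}), one applies the lifting lemma \cite{ki2}, Lemma~6.3 to produce $\overline\Delta_x^i\in\overline{F}^{i+j}$ with $\underline{e}_+(\overline\Delta_x^i)\equiv\Delta_x^i$ and $\tilde{\bar\partial}(\overline\Delta_x^i)\equiv-\bar{\underline{E}}^x_{i,j}$ modulo higher filtration. Setting $\bar{\underline{x}}_{i,j+1}:=\bar{\underline{x}}_{i,j}+\overline\Delta_x^i$ then achieves (iv) and (v) at level $j+1$ simultaneously, with no separate adjustment needed. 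Your proposed post-hoc fix via $\ker(\underline{e}_+,\underline{e}_-)$ cannot work because without (v) you have no witness that the error is exact in the first place.
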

\begin{proof}
We prove the lemma by induction on $j$. Define the chains $(\underline{x}_{i,0},\underline{y}_{i,0},\underline{z}_{i,0},\bar{\underline{x}}_{i,0},\bar{\underline{y}}_{i,0},\bar{\underline{z}}_{i,0})$ as in (i). Assume that we have defined a sequence of chains $(\underline{x}_{i,j},\underline{y}_{i,j},\underline{z}_{i,j},\bar{\underline{x}}_{i,j},\bar{\underline{y}}_{i,j},\bar{\underline{z}}_{i,j})_{i\geq I}$ which satisfies the conditions (i)-(ix) above, we need to define the sequence 
\begin{equation}\label{eq:j+1}
(\underline{x}_{i,j+1},\underline{y}_{i,j+1},\underline{z}_{i,j+1},\bar{\underline{x}}_{i,j+1},\bar{\underline{y}}_{i,j+1},\bar{\underline{z}}_{i,j+1})_{i\geq I}.
\end{equation}

Set
\begin{equation}
\Delta_x^i:=\underline{x}_{i+1,j}-\underline{e}_+(\bar{\underline{x}}_{i,j})\in F^{i+j}C_{-2}^\lambda,
\end{equation}
\begin{equation}
\Delta_y^i:=\underline{y}_{i+1,j}-\underline{e}_+(\bar{\underline{y}}_{i,j})\in F^{i+j-U-1}C_2^\lambda,
\end{equation}
\begin{equation}
\Delta_z^i:=\underline{z}_{i+1,j}-\underline{e}_+(\bar{\underline{z}}_{i,j})\in F^{i+j-2}C_1^\lambda.
\end{equation}
Since $\underline{e}_-$ preserves the differential $\tilde{\bar{\partial}}$, the Lie bracket $\{\cdot,\cdot\}$, and the filtration $\overline{F}^m$, applying it to the expressions in (iv) gives
\begin{equation}
\tilde{\partial}(\underline{x}_{i+1,j})-\frac{1}{2}\left\{\underline{x}_{i+1,j},\underline{x}_{i+1,j}\right\}\in F^{i+j+1}C_{-3}^\lambda,
\end{equation}
\begin{equation}
\tilde{\partial}(\underline{y}_{i+1,j})-\left\{\underline{x}_{i+1,j},\underline{y}_{i+1,j}\right\}-\underline{z}_{i+1,j}\in F^{i+j-U}C_1^\lambda,
\end{equation}
\begin{equation}
\tilde{\partial}(\underline{z}_{i+1,j})-\left\{\underline{x}_{i+1,j},\underline{z}_{i+1,j}\right\}\in F^{i+j-1}C_0^\lambda.
\end{equation}
Combining with the definitions of $\Delta_x^i$, $\Delta_y^i$ and $\Delta_z^i$ and using (v), we obtain
\begin{equation}
\tilde{\partial}(\Delta_x^i)+\underline{e}_+\left(\tilde{\bar{\partial}}(\bar{\underline{x}}_{i,j})-\frac{1}{2}\left\{\bar{\underline{x}}_{i,j},\bar{\underline{x}}_{i,j}\right\}\right)\in F^{i+j+1}C_{-3}^\lambda,
\end{equation}
\begin{equation}
\tilde{\partial}(\Delta_y^i)+\underline{e}_+\left(\tilde{\bar{\partial}}(\bar{\underline{y}}_{i,j})-\left\{\bar{\underline{x}}_{i,j},\bar{\underline{y}}_{i,j}\right\}-\bar{\underline{z}}_{i,j}\right)\in F^{i+j-U}C_1^\lambda,
\end{equation}
\begin{equation}
\tilde{\partial}(\Delta_z^i)+\underline{e}_+\left(\tilde{\bar{\partial}}(\bar{\underline{z}}_{i,j})-\left\{\bar{\underline{x}}_{i,j},\bar{\underline{z}}_{i,j}\right\}\right)\in F^{i+j-1}C_0^\lambda.
\end{equation}
Applying the Leibniz rule and the Jacobi identity we get
\begin{equation}
\begin{split}
\tilde{\bar{\partial}}\left(\tilde{\bar{\partial}}(\bar{\underline{x}}_{i,j})-\frac{1}{2}\left\{\bar{\underline{x}}_{i,j},\bar{\underline{x}}_{i,j}\right\}\right)=&-\frac{1}{2}\left(\left\{\tilde{\bar{\partial}}(\bar{\underline{x}}_{i,j})-\frac{1}{2}\left\{\bar{\underline{x}}_{i,j},\bar{\underline{x}}_{i,j}\right\},\bar{\underline{x}}_{i,j}\right\}\right. \\
&\left.-\left\{\bar{\underline{x}}_{i,j},\tilde{\bar{\partial}}(\bar{\underline{x}}_{i,j})-\frac{1}{2}\left\{\bar{\underline{x}}_{i,j},\bar{\underline{x}}_{i,j}\right\}\right\}\right)\in\overline{F}^{i+j+1}\overline{C}_{-4}^\lambda,
\end{split}
\end{equation}
\begin{equation}
\begin{split}
\tilde{\bar{\partial}}\left(\tilde{\bar{\partial}}(\bar{\underline{y}}_{i,j})-\left\{\bar{\underline{x}}_{i,j},\bar{\underline{y}}_{i,j}\right\}-\bar{\underline{z}}_{i,j}\right)=&-\left\{\tilde{\bar{\partial}}(\bar{\underline{x}}_{i,j})-\frac{1}{2}\left\{\bar{\underline{x}}_{i,j},\bar{\underline{x}}_{i,j}\right\},\bar{\underline{y}}_{i,j}\right\} \\
&+\left\{\bar{\underline{x}}_{i,j},\tilde{\bar{\partial}}(\bar{\underline{y}}_{i,j})-\left\{\bar{\underline{x}}_{i,j},\bar{\underline{y}}_{i,j}\right\}-\bar{\underline{z}}_{i,j}\right\} \\
&-\left(\tilde{\bar{\partial}}(\bar{\underline{z}}_{i,j})-\left\{\bar{\underline{x}}_{i,j},\bar{\underline{z}}_{i,j}\right\}\right)\in\overline{F}^{i+j-U}\overline{C}_0^\lambda,
\end{split}
\end{equation}
\begin{equation}
\begin{split}
\tilde{\bar{\partial}}\left(\tilde{\bar{\partial}}(\bar{\underline{z}}_{i,j})-\left\{\bar{\underline{x}}_{i,j},\bar{\underline{z}}_{i,j}\right\}\right)=&-\left\{\tilde{\bar{\partial}}(\bar{\underline{x}}_{i,j})-\frac{1}{2}\left\{\bar{\underline{x}}_{i,j},\bar{\underline{x}}_{i,j}\right\},\bar{\underline{z}}_{i,j}\right\} \\
&+\left\{\bar{\underline{x}}_{i,j},\tilde{\bar{\partial}}(\bar{\underline{z}}_{i,j})-\left\{\bar{\underline{x}}_{i,j},\bar{\underline{z}}_{i,j}\right\}\right\}\in\overline{F}^{i+j-1}\overline{C}_{-1}^\lambda,
\end{split}
\end{equation}
where the levels of the filtration above are determined using the fact that $\left\{\overline{F}^m,\overline{F}^{m'}\right\}\subset\overline{F}^{m+m'}$.

It follows from Corollary \ref{corollary:surjective} that we can apply \cite{ki2}, Lemma 6.3 to the surjective quasi-isomorphism
\begin{equation}
\underline{e}_+:\overline{F}^D\overline{C}_\ast^\lambda/\overline{F}^{D+1}\overline{C}_\ast^\lambda\rightarrow F^DC_\ast^\lambda/F^{D+1}C_\ast^\lambda,
\end{equation}
where $D$ stands for the integer $i+j$, $i+j-U-1$, or $i+j-2$. As a consequence, there exist relative chains
\begin{equation}\label{eq:rc}
\overline{\Delta}_x^i\in\overline{F}^{i+j}\overline{C}_{-2}^\lambda,\textrm{ }\overline{\Delta}_y^i\in\overline{F}^{i+j-U-1}\overline{C}_2^\lambda,\textrm{ }\overline{\Delta}_z^i\in\overline{F}^{i+j-2}\overline{C}_1^\lambda
\end{equation}
such that
\begin{equation}\label{eq:diff}
\underline{e}_+(\overline{\Delta}_x^i)-\Delta_x^i\in F^{i+j+1}C_{-2}^\lambda,\textrm{ }\underline{e}_+(\overline{\Delta}_y^i)-\Delta_y^i\in F^{i+j-U}C_2^\lambda,\textrm{ }\underline{e}_+(\overline{\Delta}_z^i)-\Delta_z^i\in F^{i+j-1}C_1^\lambda
\end{equation}
and
\begin{equation}
\tilde{\bar{\partial}}(\overline{\Delta}_x^i)+\left(\tilde{\bar{\partial}}(\bar{\underline{x}}_{i,j})-\frac{1}{2}\left\{\bar{\underline{x}}_{i,j},\bar{\underline{x}}_{i,j}\right\}\right)\in\overline{F}^{i+j+1}\overline{C}_{-3}^\lambda,
\end{equation}
\begin{equation}
\tilde{\bar{\partial}}(\overline{\Delta}_y^i)+\left(\tilde{\bar{\partial}}(\bar{\underline{y}}_{i,j})-\left\{\bar{\underline{x}}_{i,j},\bar{\underline{y}}_{i,j}\right\}-\bar{\underline{z}}_{i,j}\right)\in\overline{F}^{i+j-U}\overline{C}_1^\lambda,
\end{equation}
\begin{equation}
\tilde{\bar{\partial}}(\overline{\Delta}_z^i)+\left(\tilde{\bar{\partial}}(\bar{\underline{z}}_{i,j})-\left\{\bar{\underline{x}}_{i,j},\bar{\underline{z}}_{i,j}\right\}\right)\in\overline{F}^{i+j-1}\overline{C}_0^\lambda.
\end{equation}
We now define the chains in (\ref{eq:j+1}) as
\begin{equation}
\bar{\underline{x}}_{i,j+1}:=\bar{\underline{x}}_{i,j}+\overline{\Delta}_x^i,\textrm{ }\bar{\underline{y}}_{i,j+1}:=\bar{\underline{y}}_{i,j}+\overline{\Delta}_y^i,\textrm{ }\bar{\underline{z}}_{i,j+1}:=\bar{\underline{z}}_{i,j}+\overline{\Delta}_z^i,
\end{equation}
and
\begin{equation}
\underline{x}_{i,j+1}:=\underline{e}_-(\bar{\underline{x}}_{i,j+1}),\textrm{ }\underline{y}_{i,j+1}:=\underline{e}_-(\bar{\underline{y}}_{i,j+1}),\textrm{ }\underline{z}_{i,j+1}:=\underline{e}_-(\bar{\underline{z}}_{i,j+1}).
\end{equation}
To complete the induction step, one needs to check the following properties for every $i\in\mathbb{Z}_{\geq I}$:
\begin{equation}\label{eq:1}
\tilde{\bar{\partial}}(\bar{\underline{x}}_{i,j+1})-\frac{1}{2}\left\{\bar{\underline{x}}_{i,j+1},\bar{\underline{x}}_{i,j+1}\right\}\in\overline{F}^{i+j+1}\overline{C}_{-3}^\lambda
\end{equation}
\begin{equation}\label{eq:2}
\tilde{\bar{\partial}}(\bar{\underline{y}}_{i,j+1})-\left\{\bar{\underline{x}}_{i,j+1},\bar{\underline{y}}_{i,j+1}\right\}-\bar{\underline{z}}_{i,j+1}\in\overline{F}^{i+j-U}\overline{C}_1^\lambda,
\end{equation}
\begin{equation}\label{eq:3}
\tilde{\bar{\partial}}(\bar{\underline{z}}_{i,j+1})-\left\{\bar{\underline{x}}_{i,j+1},\bar{\underline{z}}_{i,j+1}\right\}\in\overline{F}^{i+j-1}\overline{C}_0^\lambda,
\end{equation}
\begin{equation}\label{eq:4}
\underline{x}_{i+1,j+1}-\underline{e}_+(\bar{\underline{x}}_{i,j+1})\in F^{i+j+1}C_{-2}^\lambda,
\end{equation}
\begin{equation}\label{eq:5}
\underline{y}_{i+1,j+1}-\underline{e}_+(\bar{\underline{y}}_{i,j+1})\in F^{i+j-U}C_2^\lambda,
\end{equation}
\begin{equation}\label{eq:6}
\underline{z}_{i+1,j+1}-\underline{e}_+(\bar{\underline{z}}_{i,j+1})\in F^{i+j-1}C_1^\lambda.
\end{equation}

To prove (\ref{eq:1}), we use the definition of $\bar{\underline{x}}_{i,j+1}$ to compute
\begin{equation}
\begin{split}
\tilde{\bar{\partial}}(\bar{\underline{x}}_{i,j+1})-\frac{1}{2}\left\{\bar{\underline{x}}_{i,j+1},\bar{\underline{x}}_{i,j+1}\right\}&=\left(\tilde{\bar{\partial}}(\bar{\underline{x}}_{i,j})+\tilde{\bar{\partial}}(\overline{\Delta}_x^i)-\frac{1}{2}\left\{\bar{\underline{x}}_{i,j},\bar{\underline{x}}_{i,j}\right\}\right) \\
&-\frac{1}{2}\left\{\overline{\Delta}_x^i,\overline{\Delta}_x^i\right\}-\left\{\bar{\underline{x}}_{i,j},\overline{\Delta}_x^i\right\}.
\end{split}
\end{equation}
Now (\ref{eq:1}) follows since all three terms on the right-hand side lie in $\overline{F}^{i+j+1}\overline{C}_{-3}^\lambda$.

For (\ref{eq:2}), using the definitions of $\bar{\underline{y}}_{i,j+1}$ and $\bar{\underline{z}}_{i,j+1}$ we have
\begin{equation}
\begin{split}
\tilde{\bar{\partial}}(\bar{\underline{y}}_{i,j+1})-\left\{\bar{\underline{x}}_{i,j+1},\bar{\underline{y}}_{i,j+1}\right\}-\bar{\underline{z}}_{i,j+1}&=\left(\tilde{\bar{\partial}}(\overline{\Delta}_y^i)+\tilde{\bar{\partial}}(\bar{\underline{y}}_{i,j})-\left\{\bar{\underline{x}}_{i,j},\bar{\underline{y}}_{i,j}\right\}-\bar{\underline{z}}_{i,j}\right) \\
&-\left\{\overline{\Delta}_x^i,\bar{\underline{y}}_{i,j}\right\}-\left\{\bar{\underline{x}}_{i,j},\overline{\Delta}_y^i\right\}-\left\{\overline{\Delta}_x^i,\overline{\Delta}_y^i\right\}-\overline{\Delta}_z^i.
\end{split}
\end{equation}
Since all the terms on the right-hand side lie in $\overline{F}^{i+j-U}\overline{C}_1^\lambda$, (\ref{eq:2}) follows.

By definitions of $\bar{\underline{x}}_{i,j+1}$ and $\bar{\underline{z}}_{i,j+1}$, one can compute
\begin{equation}
\begin{split}
\tilde{\bar{\partial}}(\bar{\underline{z}}_{i,j+1})-\left\{\bar{\underline{x}}_{i,j+1},\bar{\underline{z}}_{i,j+1}\right\}&=\left(\tilde{\bar{\partial}}(\bar{\underline{z}}_{i,j})+\tilde{\bar{\partial}}(\overline{\Delta}_z^i)-\left\{\bar{\underline{x}}_{i,j},\bar{\underline{z}}_{i,j}\right\}\right) \\
&-\left\{\overline{\Delta}_x^i,\bar{\underline{z}}_{i,j}\right\}-\left\{\bar{\underline{x}}_{i,j},\overline{\Delta}_z^i\right\}-\left\{\overline{\Delta}_x^i,\overline{\Delta}_z^i\right\}.
\end{split}
\end{equation}
(\ref{eq:3}) follows since all the four terms on the right-hand side lie in $\overline{F}^{i+j-1}\overline{C}_0^\lambda$.

(\ref{eq:4}) follows from the computation
\begin{equation}\label{eq:v4}
\begin{split}
\underline{x}_{i+1,j+1}-\underline{e}_+(\bar{\underline{x}}_{i,j+1})&=(\underline{x}_{i+1,j+1}-\underline{x}_{i+1,j})+\left(\underline{x}_{i+1,j}-\underline{e}_+(\bar{\underline{x}}_{i,j})\right)+e_+(\bar{\underline{x}}_{i,j}-\bar{\underline{x}}_{i,j+1}) \\
&=\underline{e}_-(\overline{\Delta}_x^{i+1})+\left(\Delta_x^i-\underline{e}_+(\overline{\Delta}_x^i)\right),
\end{split}
\end{equation}
where the definitions of $\Delta_x^i$, $\overline{\Delta}_x^i$ are used, and the first term of the second line is obtained by applying $\underline{e}_-$ to $\bar{\underline{x}}_{i+1,j+1}:=\bar{\underline{x}}_{i+1,j}+\overline{\Delta}_x^{i+1}$. Note that by (\ref{eq:rc}) and (\ref{eq:diff}), the right-hand side of (\ref{eq:v4}) lies in $F^{i+j+1}C_{-2}^\lambda$.

Similarly, (\ref{eq:5}) and (\ref{eq:6}) follow from the computations
\begin{equation}
	\begin{split}
	\underline{y}_{i+1,j+1}-\underline{e}_+(\bar{\underline{y}}_{i,j+1})&=(\underline{y}_{i+1,j+1}-\underline{y}_{i+1,j})+\left(\underline{y}_{i+1,j}-\underline{e}_+(\bar{\underline{y}}_{i,j})\right)+\underline{e}_+(\bar{\underline{y}}_{i,j}-\bar{\underline{y}}_{i,j+1}) \\
	&=\underline{e}_-(\overline{\Delta}_y^{i+1})+\left(\Delta_y^i-\underline{e}_+(\overline{\Delta}_y^i)\right),
	\end{split}
\end{equation}
\begin{equation}
	\begin{split}
	\underline{z}_{i+1,j+1}-\underline{e}_+(\bar{\underline{z}}_{i,j+1})&=(\underline{z}_{i+1,j+1}-\underline{z}_{i+1,j})+\left(\underline{z}_{i+1,j}-\underline{e}_+(\bar{\underline{z}}_{i,j})\right)+\underline{e}_+(\bar{\underline{z}}_{i,j}-\bar{\underline{z}}_{i,j+1}) \\
	&=\underline{e}_-(\overline{\Delta}_z^{i+1})+\left(\Delta_z^i-\underline{e}_+(\overline{\Delta}_z^i)\right).
	\end{split}
\end{equation}

By the induction hypothesis, $\bar{\underline{x}}_{i,j}$ and $\bar{\underline{z}}_{i,j}$ satisfy the conditions (vii)---(ix) in the statement of the lemma. Since the $(a,k)$-components of $\Delta_x^i$ and $\tilde{\bar{\partial}}(\bar{\underline{x}}_{i,j})-\frac{1}{2}\left\{\bar{\underline{x}}_{i,j},\bar{\underline{x}}_{i,j}\right\}$ are non-zero only when $\theta_M(a)\geq2\varepsilon$ or $a=0$, we can take $\overline{\Delta}_x^i(a,k)\neq0$ only when $\theta_M(a)\geq2\varepsilon$ or $a=0$. Similarly, we can take $\overline{\Delta}_z^i$ so that $\overline{\Delta}_z^i(a,k)\neq0$ only if $\theta_M(a)\geq2\varepsilon$ or $a=0$. By definitions of $\bar{\underline{x}}_{i,j+1}$ and $\bar{\underline{z}}_{i,j+1}$, it follows that $\bar{\underline{x}}_{i,j+1}(a,k)$ and $\bar{\underline{z}}_{i,j+1}(a,k)$ are non-zero only if $\theta_M(a)\geq2\varepsilon$ or $a=0$. Moreover, since $\overline{\Delta}_x^i\in\overline{F}^3\overline{C}_{-2}^\lambda$ and $\overline{\Delta}_z^i\in\overline{F}^0\overline{C}_1^\lambda$, it follows from the definition of the filtration (\ref{eq:filt}) that $\overline{\Delta}_x^i(0,k)=0$ if $k=0,1,2,3$ and $\overline{\Delta}_z^i(0,k)=0$ if $k=0$. This shows that the corresponding statement holds for $\bar{\underline{x}}_{i,j+1}$ and $\bar{\underline{z}}_{i,j+1}$ as well. Finally, if $a\notin\underline{A}_x^+$, then for any $k\in\mathbb{Z}_{\geq0}$, the $(a,k)$-components of $\Delta_x^i$ and $\tilde{\bar{\partial}}(\bar{\underline{x}}_{i,j})-\frac{1}{2}\left\{\bar{\underline{x}}_{i,j},\bar{\underline{x}}_{i,j}\right\}$ are zero by the induction hypothesis. Thus one can take $\overline{\Delta}_x^i$ so that $\overline{\Delta}_x^i(a,k)=0$ for any $k\in\mathbb{Z}_{\geq0}$. Similarly, if $a\notin\underline{A}_{y,z}^+$, then for any $k\in\mathbb{Z}_{\geq0}$, the $(a,k)$-components of $\Delta_y^i$, $\Delta_z^i$, $\tilde{\bar{\partial}}(\bar{\underline{y}}_{i,j})-\left\{\bar{\underline{x}}_{i,j},\bar{\underline{y}}_{i,j}\right\}-\bar{\underline{z}}_{i,j}$ and $\tilde{\bar{\partial}}(\bar{\underline{z}}_{i,j})-\left\{\bar{\underline{x}}_{i,j},\bar{\underline{z}}_{i,j}\right\}$ are zero. It follows that one can take $\overline{\Delta}_y^i$ and $\overline{\Delta}_z^i$ so that $\overline{\Delta}_y^i(a,k)=\overline{\Delta}_z^i(a,k)=0$ for any $k\in\mathbb{Z}_{\geq0}$. This verifies (ix) for the chains $\bar{\underline{x}}_{i,j+1}$, $\bar{\underline{y}}_{i,j+1}$ and $\bar{\underline{z}}_{i,j+1}$.
\end{proof}

\begin{proposition}
Theorem \ref{theorem:approximate} implies Theorem \ref{theorem:chain}.
\end{proposition}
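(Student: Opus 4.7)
The plan is to feed the approximating sequence from Theorem \ref{theorem:approximate} into Lemma \ref{lemma:induction}, pass to a limit in $j$ while keeping $i$ fixed, and then push forward via $\underline{e}_-$ to produce the desired chains in $\widehat{C}^\lambda_\ast$. Fix the integers $I,U$ and the doubly-indexed family $(\underline{x}_{i,j},\underline{y}_{i,j},\underline{z}_{i,j},\bar{\underline{x}}_{i,j},\bar{\underline{y}}_{i,j},\bar{\underline{z}}_{i,j})$ provided by Lemma \ref{lemma:induction}. Property (vi) of that lemma says that for each fixed $i\geq I$ the sequences $\{\bar{\underline{x}}_{i,j}\}_j$, $\{\bar{\underline{y}}_{i,j}\}_j$, $\{\bar{\underline{z}}_{i,j}\}_j$ are Cauchy with respect to the filtration $\overline{F}^m$, so they converge to elements $\bar{\underline{x}}_{i,\infty},\bar{\underline{y}}_{i,\infty},\bar{\underline{z}}_{i,\infty}$ in the corresponding completion $\widehat{\overline{C}}{}^\lambda_\ast$. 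Property (iv), in the limit $j\to\infty$, becomes the exact equations
\begin{equation}
\tilde{\bar{\partial}}(\bar{\underline{x}}_{i,\infty})-\tfrac{1}{2}\{\bar{\underline{x}}_{i,\infty},\bar{\underline{x}}_{i,\infty}\}=0,\quad
\tilde{\bar{\partial}}(\bar{\underline{y}}_{i,\infty})-\{\bar{\underline{x}}_{i,\infty},\bar{\underline{y}}_{i,\infty}\}=\bar{\underline{z}}_{i,\infty},\quad
\tilde{\bar{\partial}}(\bar{\underline{z}}_{i,\infty})-\{\bar{\underline{x}}_{i,\infty},\bar{\underline{z}}_{i,\infty}\}=0.
\end{equation}

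Define $\underline{x}:=\underline{e}_-(\bar{\underline{x}}_{I,\infty})$, $\underline{y}:=\underline{e}_-(\bar{\underline{y}}_{I,\infty})$, $\underline{z}:=\underline{e}_-(\bar{\underline{z}}_{I,\infty})$. Since $\underline{e}_-$ is a chain map preserving the Lie bracket and the filtration, applying it to the three displayed identities yields exactly conditions (i) and (ii) of Theorem \ref{theorem:chain}, as well as the auxiliary cocycle equation $\tilde{\partial}\underline{z}=\{\underline{x},\underline{z}\}$. Property (v) of Lemma \ref{lemma:induction}, passed to the limit, ensures that $\underline{e}_-(\bar{\underline{x}}_{i+1,\infty})=\underline{e}_+(\bar{\underline{x}}_{i,\infty})$ (and similarly for $\underline{y}$, $\underline{z}$) in the completion, so the limit is independent of the choice of $i$; this independence, together with Corollary \ref{corollary:surjective} (which lets one interpolate between $\underline{e}_+$ and $\underline{e}_-$ via a chain homotopy), is what makes the construction coherent.

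To show that $\underline{x},\underline{y},\underline{z}$ actually lie in the action-filtration completion $\widehat{C}^\lambda_\ast$ rather than just in the completion with respect to $F^m$, we use the finiteness clauses (vii) of Theorem \ref{theorem:approximate} and (ix) of Lemma \ref{lemma:induction}. The latter guarantees that only homology classes $a\in\underline{A}_x^+$ (respectively $\underline{A}_{y,z}^+$) occur as supports of $\bar{\underline{x}}_{i,j}$ (respectively $\bar{\underline{y}}_{i,j}$, $\bar{\underline{z}}_{i,j}$), and the former says that within any sublevel $\{\theta_M(a)<\Xi\}$ these sets are finite; combined with the convergence in $\overline{F}^m$, this bounds the number of $(a,k)$-components of fixed $k$ and low energy that can contribute, forcing the limits to be genuine elements of $\widehat{C}^\lambda_\ast$ in the sense of~\eqref{eq:completion}. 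Choose $\varepsilon>0$ as in Section~\ref{section:approximation}; condition (iii) of Theorem \ref{theorem:chain} is then immediate from clause (v) of Theorem \ref{theorem:approximate}, since $\underline{x}(0,3)$ is the limit of a sequence whose $(0,3)$-components admit lifts $\tilde{x}_{i,j}(0,3)\in C^{S^1}_{-2}(0,3)$ with $\mathbf{B}_c\tilde{x}_{i,j}(0,3)=x_{i,j}(0,2)$ representing $(-1)^{n+1}[L]$, and this lift/homology assertion is preserved in the limit because only finitely many iterations in $j$ can alter the $a=0$ part (the perturbations $\overline{\Delta}^i_x(0,k)$ vanish for $k\leq 3$ by the filtration degree argument used in the proof of Lemma \ref{lemma:induction}). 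The same reasoning, applied with $\underline{z}_{i,j}(0,0)$ in place of $\underline{x}_{i,j}(0,3)$, yields condition (iv).

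The step I expect to be the most delicate is the last one, namely verifying that the chain-level representative of $\underline{x}(0,3)$ (respectively $\underline{z}(0,0)$) actually stabilizes along the inductive construction so that its homology class in the appropriate component of $\mathcal{L}L$ is exactly $(-1)^{n+1}[L]$: one must check that the inductive corrections $\overline{\Delta}^i_x$ and $\overline{\Delta}^i_z$ produced in Lemma \ref{lemma:induction} can be chosen to be zero on the relevant $(a,k)=(0,3)$ and $(a,k)=(0,0)$ components, which follows from the filtration assignments $\overline{\Delta}^i_x\in\overline{F}^{i+j}\overline{C}^\lambda_{-2}$ and $\overline{\Delta}^i_z\in\overline{F}^{i+j-2}\overline{C}^\lambda_1$ for $i+j$ large, combined with the definition~\eqref{eq:filt} of the filtration (which forces the $(0,k)$-part of an element of $F^m$ to vanish for $k\leq m$). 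All other verifications are formal manipulations with the dg Lie structure and the chain maps $\underline{e}_\pm$.
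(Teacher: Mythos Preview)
Your proposal is correct and follows essentially the same route as the paper: invoke Lemma \ref{lemma:induction}, use clause (vi) to get Cauchy sequences in the $F^m$-filtration, pass to the limit to obtain exact solutions of the Maurer--Cartan and primitive equations, then use clauses (vii), (viii), (ix) together with Theorem \ref{theorem:approximate} (vii) to verify conditions (iii), (iv) of Theorem \ref{theorem:chain} and membership in the action-completion $\widehat{C}_\ast^\lambda$. The only cosmetic difference is that you take the limit on the relative side and then apply $\underline{e}_-$, whereas the paper applies $\underline{e}_-$ first (via Lemma \ref{lemma:induction} (iii)) and takes the limit directly in $C_\ast^\lambda$; since $\underline{e}_-$ preserves the filtration these are equivalent. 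Your paragraph on independence of $i$ is not needed---the paper simply fixes any $i\geq I$ and does not claim or use independence---so you can safely drop it.
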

\begin{proof}
Fix an integer $i\geq I$. For every $j\in\mathbb{Z}_{\geq0}$, applying $\underline{e}_-$ to the equations in Lemma \ref{lemma:induction}, (vi) we obtain
\begin{equation}
\underline{x}_{i,j+1}-\underline{x}_{i,j}\in F^{i+j}C_{-2}^\lambda,\textrm{ }\underline{y}_{i,j+1}-\underline{y}_{i,j}\in F^{i+j-U-1}C_2^\lambda,\textrm{ }\underline{z}_{i,j+1}-\underline{z}_{i,j}\in F^{i+j-2}C_1^\lambda.
\end{equation}
Thus the limits
\begin{equation}
\underline{x}:=\lim_{j\rightarrow\infty}\underline{x}_{i,j},\textrm{ }\underline{y}:=\lim_{j\rightarrow\infty}\underline{y}_{i,j},\textrm{ }\underline{z}:=\lim_{j\rightarrow\infty}\underline{z}_{i,j}
\end{equation}
exist in the completion of $C_\ast^\lambda$ with respect to the filtration $F^m$, and they satisfy the equations
\begin{equation}
\tilde{\partial}(\underline{x})-\frac{1}{2}\left\{\underline{x},\underline{x}\right\}=0,\textrm{ }\tilde{\partial}(\underline{y})-\left\{\underline{x},\underline{y}\right\}=\underline{z}.
\end{equation}
To show that we actually have $\underline{x}\in\widehat{C}_{-2}^\lambda$, $\underline{y}\in\widehat{C}_2^\lambda$ and $\underline{z}\in\widehat{C}_1^\lambda$, we need to verify that for any $\Xi>0$, there are only finitely many classes $a\in H_1(L;\mathbb{Z})$ with $\theta_M(a)<\Xi$ and $\left(x(a,k),y(a,k),z(a,k)\right)\neq0$ for some $k\in\mathbb{Z}_{\geq0}$. This follows from the observation that by Lemma \ref{lemma:induction} (ix), such a class $a$ must satisfy $a\in\underline{A}_x^+\cup\underline{A}_{y,z}^+$. On the other hand, Theorem \ref{theorem:approximate}, (vii) implies that $\underline{A}_x^+(\Xi)\cup\underline{A}_{y,z}^+(\Xi)$ is a finite set. (iii) and (iv) in the statement of Theorem \ref{theorem:chain} follow from the last two conditions of Lemma \ref{lemma:induction}.
\end{proof}

\section{Moduli spaces and Kuranishi structures}\label{section:moduli}

This section contains the main contents of this paper, where we prove Theorem \ref{theorem:main}. In the previous section, we have reduced it to the proof of Theorem \ref{theorem:approximate}, which requires us to produce sequences of cyclic de Rham chains in the complex $C_\ast^\lambda$ (and its relative version $\overline{C}_\ast^\lambda$) satisfying certain conditions. As mentioned in Remarks \ref{remark:lift1} and \ref{remark:lift2}, these chains will be obtained as natural projections as $S^1$-equivariant de Rham chains in $C_\ast^{S^1}$ (and its relative version $\overline{C}_\ast^{S^1}$). Unlike the case of $\mathbb{C}^n$, for general Liouville manifolds with cyclic dilations, the definitions of these chains will involve new sequences of moduli spaces\footnote{With these moduli spaces, one can actually give a new proof of Fukaya-Irie's result in the $\mathbb{C}^n$ case using our ``$S^1$-equivariant argument". See Remark \ref{remark:subcritical}.}, which we will introduce below in Sections \ref{section:disc} and \ref{section:CG}. The proof follows generally the strategy of Irie \cite{ki2}, which is based on the following underlying principle: given a K-space $(X,\widehat{\mathcal{U}})$ with a CF(continuous family)-perturbation $\widehat{\mathcal{S}}=(\widehat{\mathcal{S}}^\varepsilon)_{0<\varepsilon\leq1}$ and a strongly smooth map $\hat{f}:(X,\widehat{\mathcal{U}})\rightarrow\mathcal{L}_{k+1}$, one can define a de Rham chain $\hat{f}_\ast(X,\widehat{\mathcal{U}},\widehat{\mathcal{S}}^\varepsilon)\in C_\ast$ for sufficiently small $\varepsilon$, by integration along the fiber. We will actually need slight variations of this principle for admissible K-spaces and relative de Rham chains. These facts are briefly recalled in Appendix \ref{section:pushforward}. To apply this principle, we equip the relevant moduli spaces with Kuranishi structures in Section \ref{section:compactify}, and define strongly smooth maps from these spaces to $\mathcal{L}_{k+1}$, so that they are compatible with the boundary strata in Section \ref{section:smooth}.

In this section, $M$ will be a Liouville manifold with $c_1(M)=0$, and $L\subset M$ is a closed Lagrangian submanifold, which we assume to be oriented and relatively \textit{Spin} with respect to some fixed $\mathbb{Z}_2$-gerbe $\alpha\in C^2(M;\mathbb{Z}_2)$. 

\subsection{Moduli spaces of holomorphic discs}\label{section:disc}

We start by recalling the definition of a family of moduli spaces which appears frequently in the study of mirror symmetry \cite{fooo1}, while here they serve as the sources of the chains approximating the (non-equivariant) Maurer-Cartan element $x\in\widehat{C}_{-1}$ mentioned in the introduction. Along the way, we also fix some notations.

Let $\mathcal{R}_{k+1}$ be the moduli space of closed unit discs $D$ with distinct marked points $z_0,\cdots,z_k\in\partial D$ aligned in counterclockwise order, where $k\in\mathbb{Z}_{\geq0}$, modulo the automorphism group $\mathit{Aut}(D)\cong\mathit{PSL}(2,\mathbb{R})$. For any homotopy class $\beta\in\pi_2(M,L)$ with $\beta\neq0$, or $\beta=0$ and $k\geq2$, define\footnote{Using $\mathcal{R}$ to denote the moduli space of holomorphic discs is an unusual notation, but it will be convenient when these moduli spaces appear in the boundary strata of the Cohen-Ganatra moduli spaces defined below.}
\begin{equation}
\mathcal{R}_{k+1}(L,\beta)
\end{equation}
to be the space of pairs
\begin{equation}\label{eq:pair1}
\left((D,z_0,\cdots,z_k),u\right), 
\end{equation}
where the map $u:(D,\partial D)\rightarrow(M,L)$ satisfies $\bar{\partial}u=0$ and $[u]=\beta$. Here, the Cauchy-Riemann operator $\bar{\partial}$ is taken with respect to the almost complex structure $J_M$ fixed at the beginning of Section \ref{section:approximation}. As a convention, we have $\mathcal{R}_1(L,0)=\mathcal{R}_2(L,0)=\emptyset$.

For a general closed Lagrangian submanifold $L\subset M$, the transversality of the moduli spaces $\mathcal{R}_{k+1}(L,\beta)$ cannot be achieved with standard perturbations of the almost complex structure. To study it we need the language of Kuranishi structures introduced by Fukaya-Oh-Ohta-Ono $\cite{fooo1,fooo2,fooo3,fooo4}$, which we will briefly recall in Appendix \ref{section:basic}. $\mathcal{R}_{k+1}(L,\beta)$ admits a compactification $\overline{\mathcal{R}}_{k+1}(L,\beta)$, which is an admissible K-space and is modeled on decorated rooted ribbon trees. See $\cite{ki2}$, Section 7.2.2 for details. 

To describe the compactification $\overline{\mathcal{R}}_{k+1}(L,\beta)$, we recall the following notion.

\begin{definition}[\cite{ki2}, Definition 7.18]\label{definition:tree}
A decorated rooted ribbon tree is a pair $(T,B)$ satisfying the following requirements.
\begin{itemize}
	\item[(i)] $T$ is a connected tree, with the set of vertices $C_0(T)$ and the set of edges $C_1(T)$.
	\item[(ii)] For each $v\in C_0(T)$, a cyclic order of the set of edges is fixed.
	\item[(iii)] A decomposition $C_0(T)=C_{0,\mathit{int}}(T)\sqcup C_{0,\mathit{ext}}(T)$ into the set of interior and exterior vertices. For every $v\in C_{0,\mathit{int}}(T)$, define $k_v$ to be the valency of $v$ minus 1.
	\item[(iv)] A distinguished element in $C_{0,\mathit{ext}}(T)$, which plays the role of the root.
	\item[(v)] The valency of every exterior vertex is 1.
	\item[(vi)] There is a map $B:C_{0,\mathit{int}}(T)\rightarrow\pi_2(M,L)$. For every $v\in C_{0,\mathit{int}}(T)$, either $d\theta_M(B(v))>0$ or $B(v)=0$.
	\item[(vii)] Every $v\in C_{0,\mathit{int}}(T)$ with $B(v)=0$ has valency at least 3.
\end{itemize}
\end{definition}
Denote also by $C_{1,\mathit{int}}(T)$ is the set of interior edges, and by $C_{1,\mathit{ext}}(T)$ the set of exterior edges. An edge is called exterior if it contains an exterior vertex, otherwise it is interior. For every $k\in\mathbb{Z}_{\geq0}$ and $\beta\in\pi_2(M,L)$, let $\mathcal{G}(k+1,\beta)$ be the set of decorated rooted ribbon trees with $\# C_{0,\mathit{ext}}(T)=k+1$ and $\sum_{v\in C_{0,\mathit{int}}(T)}B(v)=\beta$. For every $(T,B)\in\mathcal{G}(k+1,\beta)$, one can define an ``interior" evaluation map
\begin{equation}\label{eq:ev-int}
\mathit{ev}_\mathit{int}:\prod_{v\in C_{0,\mathit{int}}(T)}\mathcal{R}_{k_v+1}(L,B(v))\rightarrow\prod_{e\in C_{1,\mathit{int}}(T)}L^2
\end{equation}
by, roughly speaking, evaluating at the two endpoints of each edge $e\in C_{1,\mathit{int}}(T)$. For details, see \cite{fooo4}, Section 21.1. We also have the ``exterior" evaluation map
\begin{equation}\label{eq:ev-ext}
\mathit{ev}_\mathit{ext}:\prod_{C_{0,\mathit{int}}(T)}\mathcal{R}_{k_v+1}(L,B(v))\rightarrow\prod_{e\in C_{1,\mathit{ext}}(T)}L\cong L^{k+1},
\end{equation}
defined in the obvious way by evaluating at the exterior edges. Define the evaluation map
\begin{equation}
\mathit{ev}^\mathcal{R}=(\mathit{ev}_0^\mathcal{R},\cdots,\mathit{ev}_k^\mathcal{R}):\overline{\mathcal{R}}_{k+1}(L,\beta)\rightarrow L^{k+1}
\end{equation}
on the compactified moduli space by restricting $\mathit{ev}_\mathit{ext}$ to
\begin{equation}
\overline{\mathcal{R}}_{k+1}(L,\beta):=\bigsqcup_{(T,B)\in\mathcal{G}(k+1,\beta)}\left(\prod_{e\in C_{1,\mathit{int}}(T)}L\right)\textrm{ }{{}_\Delta\times_{\mathit{ev}_\mathit{int}}}\textrm{ }\left(\prod_{v\in C_{0,\mathit{int}}(T)}\mathcal{R}_{k_v+1}(L,B(v))\right),
\end{equation}
where $\Delta:\prod_{e\in C_{1,\mathit{int}}(T)}L\rightarrow\prod_{e\in C_{1,\mathit{int}}(T)}L^2$ is the diagonal map.

To construct the chain level Maurer-Cartan element $\tilde{x}\in\widehat{C}_{-2}^{S^1}$ in the equivariant case, we need a slight variation of the moduli spaces $\mathcal{R}_{k+1}(L,\beta)$ defined above. 

For $k\geq3$, define
\begin{equation}
\mathcal{R}_{k+1,\vartheta}:=\left\{(D,z_0,\cdots,z_{k-1},z_k=z_0e^{i\vartheta_k})\right\}/\mathit{Aut}(D),
\end{equation}
where $z_0,\cdots,z_k\in\partial D$ are ordered counterclockwisely and $\vartheta_k\in(0,2\pi)$ is a fixed constant. Equivalently, regarding $z_0$ as the base point, with respect to which every other marked point $z_i$, where $1\leq i\leq k$, has an argument $\vartheta_i\in(0,2\pi)$. Then $\mathcal{R}_{k+1,\vartheta}\subset\mathcal{R}_{k+1}$ is the subspace where every element satisfies
\begin{equation}\label{eq:cons}
\vartheta_1<\cdots<\vartheta_{k-1}<\vartheta_k=\mathrm{const}.
\end{equation}
By convention $\mathcal{R}_{2,\vartheta}=\mathcal{R}_{3,\vartheta}=\emptyset$. For $1\leq i\leq k$, there is a map
\begin{equation}\label{eq:f1}
\pi_{\vartheta,i}:\mathcal{R}_{k+1,\vartheta}\rightarrow\mathcal{R}_k
\end{equation}
defined by applying the cyclic permutation $i$ times to the boundary marked points $z_0,\cdots,z_k$, so that $z_{i+j\textrm{ mod }k+1}$ becomes $z_j$ for $0\leq j\leq k$, and then forgetting the point labeled $z_{k+1-i}$ after the permutation. Since the position of $z_{k+1-i}$ after the permutation is uniquely determined by $z_{k-i}$ by our assumption, $(-1)^{ki}\pi_{\vartheta,i}$ is an orientation-preserving embedding (the sign $(-1)^{ki}$ follows from our orientation convention in Appendix \ref{section:orientation}), which identifies $\mathcal{R}_{k+1,\vartheta}$ as an open sector of $\mathcal{R}_k$. More precisely, we have the following:

\begin{lemma}\label{lemma:erase}
When $k\geq3$, the disjoint union $\bigsqcup_{1\leq i\leq k}(-1)^{ki}\pi_{\vartheta,i}(\mathcal{R}_{k+1,\vartheta})$ covers all but codimension $1$ strata of $\mathcal{R}_k$.
\end{lemma}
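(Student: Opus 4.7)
The plan is to unpack each map $\pi_{\vartheta,i}$ on explicit gauge representatives, characterize its image as an open ``angular sector'' of $\mathcal{R}_k$, and then combine the $k$ sectors via a cyclic pigeonhole argument to conclude that they cover $\mathcal{R}_k$ modulo codimension~$1$.

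First, I would pick a gauge slice for $\mathcal{R}_{k+1,\vartheta}$: a representative may be taken on the standard unit disc with $z_0$ at a fixed basepoint of $\partial D$ and $z_k$ at the angular location prescribed by $\vartheta_k=\mathrm{const}$. Unwinding the definition, $\pi_{\vartheta,i}$ sends such a representative to $(D,w_0,\dots,w_{k-1})\in\mathcal{R}_k$ satisfying $w_{k-i}=z_k$ and $w_{k-i+1}=z_1$, so that the forgotten point $z_0$ lay in the counterclockwise arc from $w_{k-i}$ to $w_{k-i+1}$ at the position uniquely determined by $w_{k-i}$ together with the constraint. The inverse operation---insert a new marked point at that prescribed location and undo the cyclic relabeling---is smooth and well-defined on the open subset of $\mathcal{R}_k$ where the prescribed location falls strictly inside the intended arc, so $\pi_{\vartheta,i}$ is an open embedding onto this sector. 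The sign $(-1)^{ki}$ records the parity of the cyclic permutation of $k+1$ marked points by $i$ positions, rendering $(-1)^{ki}\pi_{\vartheta,i}$ orientation-preserving via the conventions of Appendix \ref{section:orientation}.

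For the disjointness and covering assertions, I would work in arc-length coordinates $(\Delta_0,\dots,\Delta_{k-1})$ on $\mathcal{R}_k$ subject to $\sum_j\Delta_j=2\pi$. Each sector is then cut out by a single open inequality, one per arc. Different indices $i$ produce genuinely different labelings of the underlying unordered configuration on $\partial D$, so the sectors are pairwise disjoint except on the codimension-$\geq 1$ locus of configurations with nontrivial cyclic symmetry. A configuration failing to lie in any sector must have every candidate insertion point either miss its designated arc or coincide with some $w_j$; each such coincidence ``$i$-th candidate $=w_j$'' is a single real-analytic equation on $\mathcal{R}_k$, and the cyclic pigeonhole together with the total-length normalization forces the remaining failure modes into the union of these codimension-$1$ subvarieties. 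The main obstacle is this precise codimension analysis of the exceptional locus, which one completes by writing each sector and its boundary explicitly in the arc-length coordinates and checking non-degeneracy of the boundary equations; the orientation sign $(-1)^{ki}$ is then a direct verification against the sign conventions of Appendix \ref{section:orientation}.
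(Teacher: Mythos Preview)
Your approach is essentially the one the paper takes, only dressed up with more machinery than the situation requires. The paper's proof is two sentences: it says the claim ``follows directly from the definitions,'' illustrates $k=2$ with a picture, and then observes that after the $i$-th cyclic permutation the forgotten point (the original $z_0$) lands in the open arc $(z_{k-i},z_{k-i+1\bmod k+1})$ of the resulting configuration, so that letting $i$ run from $1$ to $k$ exhausts $\partial D$ up to the isolated marked points. You arrive at the same sector description in your first paragraph, and that is already the entire argument.

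Your second paragraph introduces arc-length simplex coordinates, a ``cyclic pigeonhole'' step, and a codimension analysis of a failure locus cut out by real-analytic equations. None of this is needed once you have the sector description: the $k$ open arcs $(w_j,w_{j+1})$ tautologically partition $\partial D\setminus\{w_0,\dots,w_{k-1}\}$, so disjointness and covering up to the codimension-$1$ coincidence loci are immediate. Your proposed reason for disjointness---``different indices $i$ produce genuinely different labelings of the underlying unordered configuration, so the sectors are pairwise disjoint except on the locus of configurations with nontrivial cyclic symmetry''---is not the right mechanism; disjointness has nothing to do with cyclic symmetry of the target configuration, it is simply that the prescribed insertion point lies in at most one of the disjoint open arcs. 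Trim your argument down to the first paragraph and the partition observation, and it matches the paper's.
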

\begin{proof}
This follows directly from the definitions. After permuting $i$ times the boundary marked points $z_0,\cdots,z_k\in\partial D$, the point $z_0$ before permutation will become a point lying in the open arc $(z_{k-i},z_{k-i+1\textrm{ mod }k+1})$ after the permutation, therefore performing cyclic permutations $k$ times exhausts the whole circle $\partial D$ (up to isolated points).
\end{proof}

Consider the compactification $\overline{\mathcal{R}}_{k+1,\vartheta}$, which consists of nodal discs with a total number of $k+1$ marked points (excluding the nodes) on the boundaries of the components. For an element $S$ of $\overline{\mathcal{R}}_{k+1,\vartheta}$, we call the component $S_0\subset S$ containing the marked point $z_0$ the \textit{main component}. Let $z_{i_1},\cdots,z_{i_r}\in\partial S_0$ be the marked points on the boundary of the main component, and let $\zeta_1,\cdots,\zeta_s\in\partial S_0$ be the nodes. Denote by $\varphi_i$ the argument of $\zeta_i$ taken with respect to the base point $z_0$. If $i_r=k$, then $S$ is required to satisfy $\vartheta_{i_r}=\mathrm{const}$, otherwise we impose the condition $\varphi_i=\mathrm{const}$ (where the constant here is the same as the value of $\vartheta_k$ before breaking) for the subtree of disc bubbles emanating from $\zeta_i$ containing the marked point $z_k$. For a concrete example of a nodal disc $S\in\overline{\mathcal{R}}_{8,\vartheta}$, see Figure \ref{fig:bubble}. Note that the codimension $1$ boundary of $\overline{\mathcal{R}}_{k+1,\vartheta}$ is covered by the natural inclusions of
\begin{equation}\label{eq:st}
\bigsqcup_{\substack{k_1+k_2=k+1\\1\leq k_1\leq k-1\\1\leq i\leq k_1}}\overline{\mathcal{R}}_{k_1+1,\vartheta}\textrm{ }{{}_i\times_0}\textrm{ }\overline{\mathcal{R}}_{k_2+1},
\end{equation}
where the notation ${{}_i\times_0}$ means that the disc breaking happens at $z_i$, with the nodal point playing the role of $z_0$ on the disc bubble in $\overline{\mathcal{R}}_{k_2+1}$. The map $\pi_{\vartheta,i}$ extends as a map $\overline{\mathcal{R}}_{k+1,\vartheta}\rightarrow\overline{\mathcal{R}}_k$ defined on the compactifications, an example is given in Figure \ref{fig:bubble}.

\begin{figure}
	\centering
	\begin{tikzpicture}
		\filldraw[draw=black,color={black!15},opacity=0.5] (0,0) circle (1.5);
		\filldraw[draw=black,color={black!15},opacity=0.5] (2.5,0) circle (1);
		\filldraw[draw=black,color={black!15},opacity=0.5] (0,2.5) circle (1);
		\filldraw[draw=black,color={black!15},opacity=0.5] (4.3,0) circle (0.8);
		\draw (0,0) circle [radius=1.5];
		\draw (2.5,0) circle [radius=1];
		\draw (4.3,0) circle [radius=0.8];
		\draw (0,2.5) circle [radius=1];
		
		\draw (-1.061,-1.061) node[circle,fill,inner sep=1pt] {};
		\node at (-1.3,-1.1) {$z_0$};
		\draw (1.5,0) node[circle,fill,inner sep=1pt] {};
		\node at (1.25,0) {$\zeta_1$};
		\draw (0,1.5) node[circle,fill,inner sep=1pt] {};
		\node at (0,1.25) {$\zeta_2$};
		\draw (-1,2.5) node[circle,fill,inner sep=1pt] {};
		\node at (-1.25,2.5) {$z_7$};
		\draw (1,2.5) node[circle,fill,inner sep=1pt] {};
		\node at (1.25,2.5) {$z_6$};
		\draw (2.5,1) node[circle,fill,inner sep=1pt] {};
		\node at (2.5,1.2) {$z_5$};
		\draw (2.5,-1) node[circle,fill,inner sep=1pt] {};
		\node at (2.5,-1.25) {$z_1$};
		\draw (3.5,0) node[circle,fill,inner sep=1pt] {};
		\draw (5.1,0) node[circle,fill,inner sep=1pt] {};
		\node at (5.35,0) {$z_3$};
		\draw (4.3,0.8) node[circle,fill,inner sep=1pt] {};
		\node at (4.3,1) {$z_4$};
		\draw (4.3,-0.8) node[circle,fill,inner sep=1pt] {};
		\node at (4.3,-1.05) {$z_2$};
		
		\filldraw[draw=black,color={black!15},opacity=0.5] (0,-6) circle (1.5);
		\filldraw[draw=black,color={black!15},opacity=0.5] (2.5,-6) circle (1);
		\filldraw[draw=black,color={black!15},opacity=0.5] (0,-3.5) circle (1);
		\filldraw[draw=black,color={black!15},opacity=0.5] (4.3,-6) circle (0.8);
		\draw (0,-6) circle [radius=1.5];
		\draw (2.5,-6) circle [radius=1];
		\draw (4.3,-6) circle [radius=0.8];
		\draw (0,-3.5) circle [radius=1];
		
		\draw (1.5,-6) node[circle,fill,inner sep=1pt] {};
		\node at (1.25,-6) {$\zeta_1$};
		\draw (0,-4.5) node[circle,fill,inner sep=1pt] {};
		\node at (0,-4.75) {$\zeta_2$};
		\draw (-1,-3.5) node[circle,fill,inner sep=1pt] {};
		\node at (-1.25,-3.5) {$z_6$};
		\draw (1,-3.5) node[circle,fill,inner sep=1pt] {};
		\node at (1.25,-3.5) {$z_5$};
		\draw (2.5,-5) node[circle,fill,inner sep=1pt] {};
		\node at (2.5,-4.8) {$z_4$};
		\draw (2.5,-7) node[circle,fill,inner sep=1pt] {};
		\node at (2.5,-7.25) {$z_0$};
		\draw (3.5,-6) node[circle,fill,inner sep=1pt] {};
		\draw (5.1,-6) node[circle,fill,inner sep=1pt] {};
		\node at (5.35,-6) {$z_2$};
		\draw (4.3,-5.2) node[circle,fill,inner sep=1pt] {};
		\node at (4.3,-5) {$z_3$};
		\draw (4.3,-6.8) node[circle,fill,inner sep=1pt] {};
		\node at (4.3,-7.05) {$z_1$};
		
		\draw[->] (6,0) to[bend left] (6,-6);
		\node at (7.3,-3) {$\pi_{\vartheta,1}$};
		\end{tikzpicture}
	\caption{An element of the moduli space $\overline{\mathcal{R}}_{8,\vartheta}$, which satisfies $\varphi_2=\mathrm{const}$, and its image under $\pi_{\vartheta,1}$, which gives an element in $\overline{\mathcal{R}}_7$. }
	\label{fig:bubble}
\end{figure}
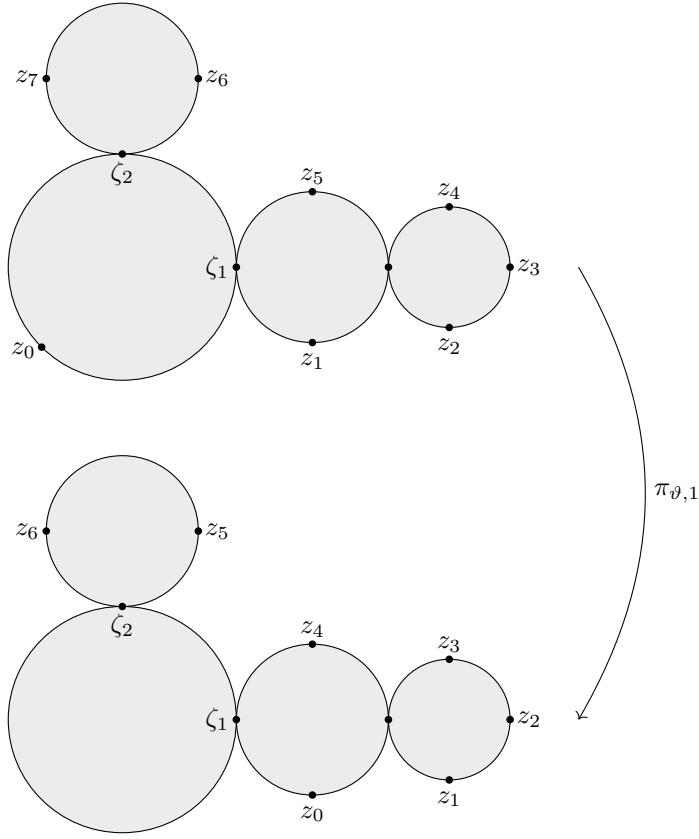

When $\beta=0$, we set $\mathcal{R}_{2,\vartheta}(L,0)=\mathcal{R}_{3,\vartheta}(L,0)=\emptyset$. When $\beta\neq0$ or $k\geq3$, define
\begin{equation}
\mathcal{R}_{k+1,\vartheta}(L,\beta)
\end{equation}
to be the space of $\left\{(u,z_0,\cdots,z_k=z_0e^{i\vartheta_k})\right\}/\mathit{Aut}(D)$, with $u:(D,\partial D)\rightarrow(M,L)$ as above and the position of $z_k$ fixed as in (\ref{eq:cons}). It is clear that $\mathcal{R}_{k+1,\vartheta}(L,\beta)$ is a K-space.  As in the case of $\overline{\mathcal{R}}_{k+1}(L,\beta)$, the compactification $\overline{\mathcal{R}}_{k+1,\vartheta}(L,\beta)$ is also modeled on decorated rooted ribbon trees. In a similar fashion as above, we can define the interior evaluation map
\begin{equation}
\mathit{ev}_\mathit{int}:\prod_{v\in C_{0,\mathit{int}}(T)\setminus\{v_0\}}\mathcal{R}_{k_v+1}(L,B(v))\times\mathcal{R}_{k_{v_0}+1,\vartheta}(L,B(v_0))\rightarrow\prod_{e\in C_{1,\mathit{int}}(T)}L^2
\end{equation}
and the exterior evaluation map
\begin{equation}\label{eq:ext2}
\mathit{ev}_\mathit{ext}:\prod_{v\in C_{0,\mathit{int}}(T)\setminus\{v_0\}}\mathcal{R}_{k_v+1}(L,B(v))\times\mathcal{R}_{k_{v_0}+1,\vartheta}(L,B(v_0))\rightarrow L^{k+1}.
\end{equation}
It follows that the compactified moduli space is given by
\begin{equation}
\begin{split}
\overline{\mathcal{R}}_{k+1,\vartheta}(L,\beta)&:=\bigsqcup_{\substack{(T,B)\in\mathcal{G}(k+1,\beta)\\v_0\in C_{0,\mathit{int}}(T)}}\left(\prod_{e\in C_{1,\mathit{int}}(T)}L\right)\textrm{ }{{}_\Delta\times_{\mathit{ev}_\mathit{int}}}\textrm{ } \\
&\;\;\;\;\left(\prod_{v\in C_{0,\mathit{int}}(T)}\mathcal{R}_{k_v+1}(L,B(v))\times\mathcal{R}_{k_{v_0}+1,\vartheta}(L,B(v_0))\right).
\end{split}
\end{equation}
Define the evaluation map
\begin{equation}
\mathit{ev}_\vartheta^\mathcal{R}=(\mathit{ev}_{\vartheta,0}^\mathcal{R},\cdots,\mathit{ev}_{\vartheta,k}^\mathcal{R}):\overline{\mathcal{R}}_{k+1,\vartheta}(L,\beta)\rightarrow L^{k+1}
\end{equation}
as the restriction of $\mathit{ev}_\mathit{ext}$ to $\overline{\mathcal{R}}_{k+1,\vartheta}(L,\beta)$.

\begin{remark}
In \cite{kf}, Fukaya proposed to use the moduli space of holomorphic discs (without marked points on $\partial D$) modulo $\mathit{Aut}(D,1)$, the automorphism group of $D$ fixing $1\in\partial D$ to define the Maurer-Cartan element in the non-equivariant case, and the same space of maps modulo $\mathit{Aut}(D)$ to define the Maurer-Cartan element in the $S^1$-equivariant case. This is compatible with our constructions with the presence of boundary marked points.
\end{remark}

\subsection{Cohen-Ganatra moduli spaces}\label{section:CG}

We consider a sequence of moduli spaces which are variants of the moduli spaces studied in a different context by Cohen-Ganatra $\cite{cg}$.  They will be used to construct the $S^1$-equivariant chains approximating the primitive $\tilde{y}\in\widehat{C}_2^{S^1}$ (under the deformed differential) of the identity (cf. (\ref{eq:def1})).

To start with, we briefly recall the definition of a simpler moduli space introduced by Ganatra (cf. \cite{sg1}, Section 4.3). An \textit{$l$-point angle-decorated cylinder} is a cylindner $C=\mathbb{R}\times S^1$, equipped with a collection of auxiliary marked points $p_1,\cdots,p_l\in C$, such that their $s\in\mathbb{R}$ coordinates $(p_i)_s$, $1\leq i\leq l$ satisfy
\begin{equation}
(p_1)_s\leq\cdots\leq(p_l)_s.
\end{equation}
Let ${}_l\mathcal{M}$ be the moduli space of $l$-point angle-decorated cylinders $(C,p_1,\cdots,p_l)$, modulo translations in the $s$-direction. It admits a compactification ${}_l\overline{\mathcal{M}}$ as a smooth manifold with corners, by adding broken cylinders with marked point decorations.

To incorporate symplectic geometry, one needs to study smooth maps from the domains $(C,p_1,\cdots,p_l)\in{}_l\mathcal{M}$ to $M$ satisfying Floer's equation with certain asymptotic conditions. For this purpose we need to introduce the Floer data for elements in ${}_l\mathcal{M}$. We say that a time-dependent Hamiltonian function $H_t:S^1\times M\rightarrow\mathbb{R}$ is \textit{admissible} if $H_t=H+F_t$ is the sum of an autonomous Hamiltonian function $H:M\rightarrow\mathbb{R}$ which is equal to $r^2$ on the cylindrical end $[r_0,\infty)\times\partial\overline{M}$ for some $r_0\gg1$, and a time-dependent perturbation $F_t:S^1\times M\rightarrow\mathbb{R}$. Furthermore, we require that for any $r_1\gg r_0$, there exists an $r>r_1$ such that $F_t=0$ in a neighborhood of the hypersurface $\{r\}\times\partial\overline{M}\subset M$. Denote by $\mathcal{H}(M)$ the set of admissible Hamiltonians $H_t$ so that all the 1-periodic orbits of the Hamiltonian vector field $X_{H_t}$ are non-degenerate.

Let $J_t:S^1\times\mathit{TM}\rightarrow\mathit{TM}$ be a time-dependent almost complex structure. It is called \textit{weak contact type} if there exists a sequence $\{r_i\}_{i\in\mathbb{N}}$ of positive real numbers with $\lim_{i\rightarrow\infty}r_i=\infty$ such that near each hypersurface $\{r_i\}\times\partial\overline{M}$ it satisfies $dr\circ J_t=-\theta_M$. Denote by $\mathcal{J}(M)$ the set of $d\theta_M$-compatible almost complex structures on $M$ which are of weak contact type.

A Floer datum for an element of ${}_l\mathcal{M}$ consists of the following:
\begin{itemize}
	\item Choices of positive and negative cylindrical ends
	\begin{equation}
	\varepsilon^+:[0,\infty)\times S^1\rightarrow C\textrm{ and }\varepsilon^-:(-\infty,0]\rightarrow C
	\end{equation}
    such that
    \begin{equation}
    \varepsilon^+(s,t)=(s+(p_l)_s+\eta,t),\textrm{ }\varepsilon^-(s,t)=(s+(p_1)_s+\eta,t+(p_1)_t),
    \end{equation}
    where $(p_1)_t$ is the $t\in S^1$-coordinate of $p_1$ and $\eta\in\mathbb{R}_{>0}$ is fixed.
    \item A domain-dependent Hamiltonian function $H_C:C\times M\rightarrow\mathbb{R}$ satisfying
    \begin{equation}
    (\varepsilon^\pm)^\ast H_C=H_t
    \end{equation}
    for some $H_t\in\mathcal{H}(M)$.
    \item A domain-dependent almost complex structure $J_C:C\times\mathit{TM}\rightarrow\mathit{TM}$ such that
    \begin{equation}
    (\varepsilon^\pm)^\ast J_C=J_t
    \end{equation}
    for some $J_t\in\mathcal{J}(M)$.
\end{itemize}

A \textit{Floer datum for the $S^1$-action} on the cochain complex $\mathit{SC}^\ast(M)$ is an inductive sequence of choices of Floer data for the compactified moduli spaces $\left\{{}_l\overline{\mathcal{M}}\right\}_{l\in\mathbb{N}}$ for each $l\geq 1$ and each element of ${}_l\overline{\mathcal{M}}$, which is compatible with the boundary strata in $\partial{}_l\overline{\mathcal{M}}$, and varies smoothly with respect to gluing. The existence of such a Floer datum is guaranteed by an induction argument. 

From now on, fix a choice of Floer datum for the $S^1$-action on $\mathit{SC}^\ast(M)$. For two 1-periodic orbits $x$ and $y$ of $X_{H_t}$, define
\begin{equation}
{}_l\mathcal{M}(x,y)
\end{equation}
to be the space of pairs $\left((C,p_1,\cdots,p_l),u\right)$, where $(C,p_1,\cdots,p_l)\in{}_l\mathcal{M}$ is an $l$-point angle-decorated cylinder, and $u:C\rightarrow M$ is a map satisfying
\begin{equation}\label{eq:F-eq}
\left\{\begin{array}{l}(du-X_{H_C}\otimes dt)^{0,1}=0, \\ \lim_{s\rightarrow\infty}(\varepsilon^+)^\ast u(s,\cdot)=x, \\ \lim_{s\rightarrow-\infty}(\varepsilon^-)^\ast u(s,\cdot)=y,\end{array}\right.
\end{equation}
where the $(0,1)$-part in the Floer equation is taken with respect to $J_C$. With generic choices of Floer data, transversality for ${}_l\mathcal{M}(x,y)$ can be achieved with standard argument, and the virtual dimension $\leq1$ components of the Gromov compactified moduli space ${}_l\overline{\mathcal{M}}(x,y)$ are compact manifolds with boundary. A signed count of rigid elements in ${}_l\overline{\mathcal{M}}(x,y)$ defines the cochain level operations
\begin{equation}
\delta_l:\mathit{SC}^{\ast+2l-1}(M)\rightarrow\mathit{SC}^\ast(M)
\end{equation}
appearing in (\ref{eq:SC}). Note that if we allow $l=0$, then the moduli space ${}_0\mathcal{M}(x,y)$, which we will abbreviate by $\mathcal{M}(x,y)$, is the moduli space defining the Floer differential on the cochain complex $\mathit{SC}^\ast(M)$.
\bigskip

The Cohen-Ganatra moduli spaces that we introduce below can be thought of as certain interpolations of the moduli spaces $\mathcal{M}_l(x,y)$ considered above and the moduli space $\mathcal{R}_{k+1}(L,\beta)$ recalled in Section \ref{section:disc}. Since this is a parametrized moduli space, we first introduce the moduli space ${}_l\mathcal{R}_{k+1}^1$ of domains
\begin{equation}\label{eq:domain}
(S;z_0,\cdots,z_k,p_1,\cdots,p_l;\ell)
\end{equation}
modulo automorphisms, where $S=D\setminus\{\zeta\}$ is a closed unit disc with an interior puncture $\zeta$, which will serve as an input. At $\zeta$ there is an asymptotic marker, which is a half-line $\ell\in T_\zeta D$. As in the case of $\mathcal{R}_{k+1}(L,\beta)$, there are $k+1$ distinct marked points $z_0,\cdots,z_k\in\partial D$, labeled in counterclockwise order. Moreover, there is a set of auxiliary marked points $p_1,\cdots,p_l\in S$ lying in the interior of $D$. For a representative of an element of ${}_l\mathcal{R}_{k+1}^1$ with $\zeta$ fixed at the origin, and $z_0$ fixed at $1$, these points are required to be strictly radially ordered with norms in $(0,\frac{1}{2})$, i.e.
\begin{equation}\label{eq:radial}
0<|p_l|<\cdots<|p_1|<\frac{1}{2}.
\end{equation}
Finally, we require that the asymptotic marker $\ell$ at $\zeta$ points toward $p_l$. See Figure \ref{fig:domain} for a depiction of a representative of ${}_3\mathcal{R}_4^1$.

\begin{figure}
    \centering
    \begin{tikzpicture}
	\filldraw[draw=black,color={black!15},opacity=0.5] (0,0) circle (1.5);
	\draw (0,0) circle [radius=1.5];
	\draw [orange] [dashed] (0,0) circle [radius=0.75];
	\draw (0,0) node {$\times$};
	\node at (0,0.25) {$\zeta$};
	\draw [orange] (0,-0.3) node[circle,fill,inner sep=1pt] {};
	\draw [orange] (-0.5,0) node[circle,fill,inner sep=1pt] {};
	\draw [orange] (0,0.65) node[circle,fill,inner sep=1pt] {};
	\draw [teal] [->] (0,0) to (0,-0.3);
	\node [orange] at (0,-0.5) {\small $p_3$};
	\node [orange] at (-0.5,-0.2) {\small $p_2$};
	\node [orange] at (0,0.85) {\small $p_1$};
	\draw (1.5,0) node[circle,fill,inner sep=1pt] {};
	\draw (0,1.5) node[circle,fill,inner sep=1pt] {};
	\draw (-1.5,0) node[circle,fill,inner sep=1pt] {};
	\draw (0,-1.5) node[circle,fill,inner sep=1pt] {};
	\node at (1.75,0) {$z_0$};
	\node at (0,1.75) {$z_1$};
	\node at (-1.75,0) {$z_2$};
	\node at (0,-1.75) {$z_3$};
	\end{tikzpicture}
	\caption{An element in the moduli space ${}_3\mathcal{R}_4^1$}
	\label{fig:domain}
\end{figure}
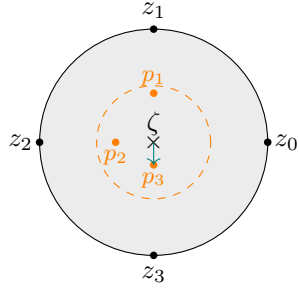

The compactification ${}_l\overline{\mathcal{R}}_{k+1}^1$ of the moduli space ${}_l\mathcal{R}_{k+1}^1$ is a real blow-up of the usual Deligne-Mumford compactification. To describe its boundary strata, we introduce two auxiliary moduli spaces: ${}_l^{j,j+1}\mathcal{R}_{k+1}^1$ and ${}_{l-1}\mathcal{R}_{k+1}^{S^1}$.

${}_l^{j,j+1}\mathcal{R}_{k+1}^1$ is the moduli space of the domains (\ref{eq:domain}), except that the strict radial ordering condition (\ref{eq:radial}) for the auxiliary marked points $p_1,\cdots,p_l$ is now replaced with
\begin{equation}
|p_l|<\cdots<|p_{j+1}|=|p_j|<\cdots<|p_1|<\frac{1}{2}, 
\end{equation}
for some $1\leq j<l$. 

${}_{l-1}\mathcal{R}_{k+1}^{S^1}$ is the moduli space of the same domains, with $p_1,\cdots,p_l$ satisfying the strict radial condition (\ref{eq:radial}), but with $|p_1|=\frac{1}{2}$, i.e.
\begin{equation}
|p_l|<\cdots<|p_1|=\frac{1}{2}.
\end{equation}

For each $1\leq j\leq l-1$, there exists a map
\begin{equation}\label{eq:fj}
\pi_j:{}_l^{j,j+1}\mathcal{R}_{k+1}^1\rightarrow{}_{l-1}\mathcal{R}_{k+1}^1,
\end{equation}
which forgets the marked point $p_j$. Since this amounts to forgetting the argument of $p_j$, $\pi_j$ has 1-dimensional fibers. The map $\pi_j$ extends to a map defined on the compactification ${}_l^{j,j+1}\overline{\mathcal{R}}_{k+1}^1$, which we will still denote by $\pi_j$ by abuse of notations. On the moduli space ${}_{l-1}\mathcal{R}_{k+1}^{S^1}$, there is another map
\begin{equation}\label{eq:f-S1}
\pi_{S^1}:{}_{l-1}\mathcal{R}_{k+1}^{S^1}\rightarrow{}_{l-1}\mathcal{R}_{k+1}^1,
\end{equation}
forgetting the marked point $p_1$. It also extends to a map on the compactification ${}_{l-1}\overline{\mathcal{R}}_{k+1}^{S^1}$.

\begin{proposition}\label{proposition:domain-bdy}
With the notations introduced above, the codimension $1$ boundary components of ${}_l\overline{\mathcal{R}}_{k+1}^1$ are covered by the images of the natural inclusions of the following strata
\begin{equation}\label{eq:dom-bdy1}
{}_j\overline{\mathcal{M}}\times{}_{l-j}\overline{\mathcal{R}}_{k+1}^1, 1\leq j\leq l,
\end{equation}
\begin{equation}\label{eq:dom-bdy2}
{}_l^{j,j+1}\overline{\mathcal{R}}_{k+1}^1, 1\leq j\leq l-1,
\end{equation}
\begin{equation}\label{eq:dom-bdy3}
{}_{l-1}\overline{\mathcal{R}}_{k+1}^{S^1},
\end{equation}
\begin{equation}\label{eq:dom-bdy4}
{}_l\overline{\mathcal{R}}_{k_1+1}^1\textrm{ }{{}_i\times}_0\textrm{ }\overline{\mathcal{R}}_{k_2+1},\textrm{ }k_1\geq1,k_2\geq2,k_1+k_2=k+1,1\leq i\leq k_1,
\end{equation}
\begin{equation}\label{eq:dom-bdy5}
\overline{\mathcal{R}}_{k_1+1}\textrm{ }{{}_i\times}_0\textrm{ }{}_l\overline{\mathcal{R}}_{k_2+1}^1,\textrm{ }k_1\geq2,k_2\geq0,k_1+k_2=k+1, 1\leq i\leq k_1.
\end{equation}
\end{proposition}
\begin{proof}
When there is no marked points on $\partial S$, the boundary strata of the moduli spaces ${}_l\overline{\mathcal{R}}^1_{k+1}$ have been analysed by Cohen-Ganatra \cite{cg}, Section 4.2. See also \cite{jz}, Section 4.4. In particular, we have the boundary strata (\ref{eq:dom-bdy1}), (\ref{eq:dom-bdy2}) and (\ref{eq:dom-bdy3}). Note that the strata in (\ref{eq:dom-bdy1}) are loci created by real blow-ups, see Figure \ref{fig:real-blp} for an illustration. The only difference in our case is that there are now $k+1$ additional marked points $z_0,\cdots,z_k$ on the boundary $\partial S$. When two boundary marked points $z_i$ and $z_j$ come together, for example when $0\leq i<j\leq k$, a disc will break off from the domain $(S;z_0,\cdots,z_k,p_1,\cdots,p_l;\ell)$, carrying the marked points $z_i,\cdots,z_j$, together with the nodal point on its boundary. Such disc bubbles give rise to the strata in (\ref{eq:dom-bdy4}) and (\ref{eq:dom-bdy5}).
\end{proof}

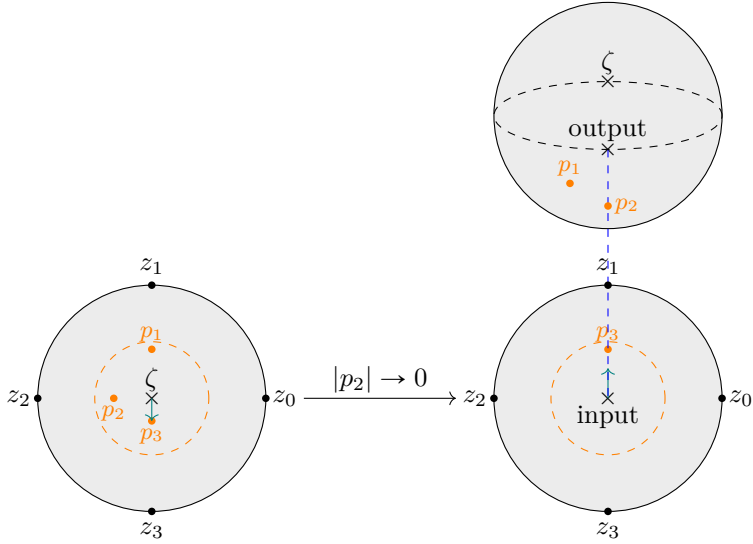
\begin{figure}
	\centering
	\begin{tikzpicture}
	\filldraw[draw=black,color={black!15},opacity=0.5] (0,0) circle (1.5);
	\draw (0,0) circle [radius=1.5];
	\draw [orange] [dashed] (0,0) circle [radius=0.75];
	\draw (0,0) node {$\times$};
	\node at (0,0.25) {$\zeta$};
	\draw [orange] (0,-0.3) node[circle,fill,inner sep=1pt] {};
	\draw [orange] (-0.5,0) node[circle,fill,inner sep=1pt] {};
	\draw [orange] (0,0.65) node[circle,fill,inner sep=1pt] {};
	\draw [teal] [->] (0,0) to (0,-0.3);
	\node [orange] at (0,-0.5) {\small $p_3$};
	\node [orange] at (-0.5,-0.2) {\small $p_2$};
	\node [orange] at (0,0.85) {\small $p_1$};
		
	\filldraw[draw=black,color={black!15},opacity=0.5] (6,0) circle (1.5);
    \draw (6,0) circle [radius=1.5];
    \draw [orange] [dashed] (6,0) circle [radius=0.75];
    \draw (6,0) node {$\times$};
    \node at (6,-0.25) {input};
    \draw [orange] (6.4,0.5) node[circle,fill,inner sep=1pt] {};
    \node [orange] at (6.6,0.75) {\small $p_3$};
    \draw [teal] [->] (6,0) to (6.3,0.375);
		
	\filldraw[draw=black,color={black!15},opacity=0.5] (6,3.75) circle (1.5);
    \draw (6,3.75) circle [radius=1.5];
    \draw (6,3.3) node {$\times$};
    \draw (6,4.2) node {$\times$};
    \draw [blue] [dashed] (6,3.3) to (6,0);
    \draw [orange] (6.2,2.55) node[circle,fill,inner sep=1pt] {};
    \draw [orange] (5.5,2.85) node[circle,fill,inner sep=1pt] {};
    \node [orange] at (6.25,2.75) {\small $p_2$};
    \node [orange] at (5.5,3.05) {\small $p_1$};
    \draw [dashed] (6,3.75) ellipse (1.5 and 0.45);
    \node at (6,4.45) {$\zeta$};
    \node at (6,3.55) {output};
		
	\draw [->] (2,0) to (4,0);
	\node at (3,0.25) {$|p_2|\rightarrow0$};
		
	\draw (1.5,0) node[circle,fill,inner sep=1pt] {};
	\draw (0,1.5) node[circle,fill,inner sep=1pt] {};
	\draw (-1.5,0) node[circle,fill,inner sep=1pt] {};
	\draw (0,-1.5) node[circle,fill,inner sep=1pt] {};
	\node at (1.75,0) {$z_0$};
	\node at (0,1.75) {$z_1$};
	\node at (-1.75,0) {$z_2$};
	\node at (0,-1.75) {$z_3$};
		
	\draw (7.5,0) node[circle,fill,inner sep=1pt] {};
	\draw (6,1.5) node[circle,fill,inner sep=1pt] {};
	\draw (4.5,0) node[circle,fill,inner sep=1pt] {};
	\draw (6,-1.5) node[circle,fill,inner sep=1pt] {};
	\node at (7.75,0) {$z_0$};
	\node at (6,1.75) {$z_1$};
	\node at (4.25,0) {$z_2$};
	\node at (6,-1.75) {$z_3$};
	\end{tikzpicture}
	\caption{A cylinder with marked points $p_1$ and $p_2$ bubbles off at $\zeta$, which belongs to the codimension $1$ boundary stratum ${}_2\overline{\mathcal{M}}\times{}_1\overline{\mathcal{R}}_{4}^1$ of ${}_3\overline{\mathcal{R}}_{4}^1$.}
	\label{fig:real-blp}
\end{figure}
\bigskip

The moduli space ${}_{l-1}\mathcal{R}_{k+1}^{S^1}$ introduced above will play a crucial role for our later purposes, we need to analyze it a little bit more. Via the forgetful map (\ref{eq:f-S1}), we have an abstract identification
\begin{equation}
{}_{l-1}\mathcal{R}_{k+1}^{S^1}\cong{}_{l-1}\mathcal{R}_{k+1}^1\times S^1,
\end{equation}
where the $S^1$ factor is determined by the argument $\theta_1:=\arg(p_1)$. Under this identification, the compactification ${}_{l-1}\overline{\mathcal{R}}_{k+1}^{S^1}$ is abstractly modeled by ${}_{l-1}\overline{\mathcal{R}}_{k+1}^1\times S^1$. In particular, the codimension $1$ boundary stratum ${}_{l-1}^{j,j+1}\overline{\mathcal{R}}_{k+1}^1\subset{}_{l-1}\overline{\mathcal{R}}_{k+1}^1$ for some $2\leq j\leq l-1$ corresponds to a stratum ${}_{l-1}^{j,j+1}\overline{\mathcal{R}}_{k+1}^{S^1}$ in the codimension $1$ boundary of ${}_{l-1}\overline{\mathcal{R}}_{k+1}^{S^1}$, where the $S^1$ factor describes the situation that $|p_1|=|p_2|=\frac{1}{2}$. Denote by
\begin{equation}
\pi_j^{S^1}:{}_{l-1}^{j,j+1}\overline{\mathcal{R}}_{k+1}^{S^1}\rightarrow{}_{l-2}\overline{\mathcal{R}}_{k+1}^{S^1}
\end{equation}
the map which forgets $p_j$. 

For an element $S$ of ${}_{l-1}\overline{\mathcal{R}}_{k+1}^{S^1}$, we say that $p_1$ \textit{points at a boundary point} $z_i$, for some $0\leq i\leq k$, if for a representative of $S$ with $\zeta$ fixed at the origin, the ray from $\zeta$ to $p_1$ points at $z_i$. Denote by ${}_{l-1}\overline{\mathcal{R}}_{k+1}^{S_i^1}\subset{}_{l-1}\overline{\mathcal{R}}_{k+1}^{S^1}$ the codimension $1$ locus where $p_1$ points at $z_i$. There is a bijection
\begin{equation}
\tau_i:{}_{l-1}\overline{\mathcal{R}}_{k+1}^{S_i^1}\rightarrow{}_{l-1}\overline{\mathcal{R}}_{k+1}^1,
\end{equation}
which is defined as follows. When $l\geq2$, $\tau_i$ forgets the point $p_1$ on the circle $|z|=\frac{1}{2}$, and relabels the remaining auxiliary marked points $p_2,\cdots,p_l$ as $p_1,\cdots,p_{l-1}$. When $l=1$, $\tau_i$ is defined by cyclically permuting the boundary marked points (for multiple times), so that the original $z_i$ is now labeled $z_k$, and then forgetting $p_1$. Similarly, we say that $p_1$ \textit{points between $z_i$ and} $z_{i+1\textrm{ mod }k}$ if for such a representative, the ray from $\zeta$ to $p_1$ intersects the arc in $\partial S$ from $z_i$ to $z_{i+1\textrm{ mod }k}$. The locus in ${}_{l-1}\mathcal{R}_{k+1}^{S^1}$ where $p_1$ points between $z_i$ and $z_{i+1\textrm{ mod }k}$ is denoted by ${}_{l-1}\mathcal{R}_{k+1}^{S_{i,i+1}^1}$.

As in the case of $\overline{\mathcal{R}}_k$, where $k\geq3$ (cf. Lemma \ref{lemma:erase}), we can decompose the moduli space ${}_{l-1}\overline{\mathcal{R}}_{k+1}^{S^1}$ into its sectors ${}_{l-1}\overline{\mathcal{R}}_{k+1}^{S_{i,i+1}^1}$, and identify each sector with a moduli space of the form ${}_{l-1}\overline{\mathcal{R}}_{k+1,\tau_i}^1$. Here, ${}_{l-1}\mathcal{R}_{k+1,\tau_i}^1$ is the abstract moduli space of discs with $k+2$ boundary marked points $z_0,\cdots,z_{i},z_f,z_{i+1},\cdots,z_k$ arranged in counterclockwise order, with the point $z_f$ marked as auxiliary, one interior puncture $\zeta$, marked as an input, equipped with an asymptotic marker $\ell$, and $l$ auxiliary marked points $p_1,\cdots,p_l$ in the interior of the disc which are strictly radially ordered with norms in $(0,\frac{1}{2})$ in the sense of (\ref{eq:radial}), for a representative of an element of ${}_{l-1}\mathcal{R}_{k+1,\tau_i}^1$ which fixes $z_0$ at $1$ and $\zeta$ at $0$. Note that the compactification ${}_{l-1}\overline{\mathcal{R}}_{k+1,\tau_i}^1$ is abstractly isomorphic to ${}_{l-1}\overline{\mathcal{R}}_{k+2}^1$, except that $z_f$ is marked as auxiliary, which means that it is forgotten when we consider the boundary $\partial S$ with its marked points as an element of $\mathcal{L}_{k+1}$. We remark that this is an important point which will play a crucial role in our argument in Section \ref{section:proof}. More precisely, at any stratum of ${}_{l-1}\overline{\mathcal{R}}_{k+1,\tau_i}^1$:
\begin{itemize}
	\item we treat the main component (the one containing the interior puncture $\zeta$) as belonging to ${}_{l-1}\overline{\mathcal{R}}_{k'+1,\tau_j}^1$ for some $0\leq k'\leq k$ and $0\leq j\leq k'$ if it contains $z_f$ as a boundary marked point, and ${}_{l-1}\overline{\mathcal{R}}_{k'+2}^1$ if it does not;
	\item if a non-main disc component (the one without the puncture $\zeta$) contains the boundary marked point $z_f$, we view it as an element of $\mathcal{R}_{k'+1,f_i}$ for some $0\leq k'\leq k$, the space of discs with $k'+1$ boundary marked points, where the $i$-th point is marked as forgotten. See \cite{sg1}, Appendix A.2 for the detailed construction of the moduli spaces $\mathcal{R}_{k'+1,f_i}$ when the boundary marked points are treated as punctures.
\end{itemize}
Moreover, the asymptotic marker $\ell$ at $\zeta$ points in the direction $\theta_l$ (or $z_f$ if $l=0$). In order to relate the moduli space ${}_{l-1}\overline{\mathcal{R}}_{k+1,\tau_i}$ to a sector ${}_{l-1}\overline{\mathcal{R}}_{k+1}^{S_{i,i+1}^1}\subset{}_{l-1}\overline{\mathcal{R}}_{k+1}^{S^1}$, we define the \textit{auxiliary-rescaling map}
\begin{equation}\label{eq:aux-res}
\pi_f^i:{}_{l-1}\overline{\mathcal{R}}_{k+1,\tau_i}^1\rightarrow{}_{l-1}\overline{\mathcal{R}}_{k+1}^{S_{i,i+1}^1},
\end{equation}
which, for a representative of an element of ${}_{l-1}\overline{\mathcal{R}}_{k+1,\tau_i}$ with $\zeta=0$, adds a point $p_0$ on the line segment connecting $\zeta$ and $z_f$ with $|p_0|=\frac{1}{2}$ and delete $z_f$. Finally, we relabel the marked points $p_0,\cdots,p_l$ as $p_1,\cdots,p_{l+1}$. For an illustrative example of the definition of $\pi_f^i$, see Figure \ref{fig:auxre}. By our orientation conventions (cf. Appendix \ref{section:orientation}), the map $\pi_f^i$ is an oriented diffeomorphism.

\begin{figure}
	\centering
	\begin{tikzpicture}
		\filldraw[draw=black,color={black!15},opacity=0.5] (0,1.9) circle (1.5);
		\draw (0,1.9) circle [radius=1.5];
		\draw [orange] [dashed] (0,1.9) circle [radius=0.75];
		\draw (0,1.9) node {$\times$};
		\node at (0,2.15) {$\zeta$};
		\draw [orange] (0,1.6) node[circle,fill,inner sep=1pt] {};
		\draw [orange] (-0.5,1.9) node[circle,fill,inner sep=1pt] {};
		\draw [teal] [->] (0,1.9) to (0,1.6);
		\node [orange] at (0,1.4) {\small $p_2$};
		\node [orange] at (-0.5,1.7) {\small $p_1$};
		\draw [blue] (1.06,2.96) node[circle,fill,inner sep=1pt] {};
		\node [blue] at (1.15,3.15) {\small $z_f$};
		
		\draw (1.5,1.9) node[circle,fill,inner sep=1pt] {};
		\draw (0,3.4) node[circle,fill,inner sep=1pt] {};
		\draw (-1.5,1.9) node[circle,fill,inner sep=1pt] {};
		\draw (0,0.4) node[circle,fill,inner sep=1pt] {};
		\node at (1.75,1.9) {$z_0$};
		\node at (0,3.65) {$z_1$};
		\node at (-1.75,1.9) {$z_2$};
		\node at (0,0.15) {$z_3$};
		
		\filldraw[draw=black,color={black!15},opacity=0.5] (5,1.9) circle (1.5);
		\draw (5,1.9) circle [radius=1.5];
		\draw [orange] [dashed] (5,1.9) circle [radius=0.75];
		\draw (5,1.9) node {$\times$};
		\node at (5,2.15) {$\zeta$};
		\draw [orange] (5,1.6) node[circle,fill,inner sep=1pt] {};
		\draw [orange] (4.5,1.9) node[circle,fill,inner sep=1pt] {};
		\draw [orange] (5.53,2.43) node[circle,fill,inner sep=1pt] {};
		\draw [teal] [->] (5,1.9) to (5,1.6);
		\node [orange] at (5,1.4) {\small $p_2$};
		\node [orange] at (4.5,1.7) {\small $p_1$};
		\node [orange] at (5.55,2.65) {\small $p_0$};
		\draw [blue] (6.06,2.96) node[circle,fill,inner sep=1pt] {};
		\node [blue] at (6.15,3.15) {\small $z_f$};
		
		\draw (6.5,1.9) node[circle,fill,inner sep=1pt] {};
		\draw (5,3.4) node[circle,fill,inner sep=1pt] {};
		\draw (3.5,1.9) node[circle,fill,inner sep=1pt] {};
		\draw (5,0.4) node[circle,fill,inner sep=1pt] {};
		\node at (6.75,1.9) {$z_0$};
		\node at (5,3.65) {$z_1$};
		\node at (3.25,1.9) {$z_2$};
		\node at (5,0.15) {$z_3$};
		\draw [blue] [dashed] (5,1.9) to (6.06,2.96);
		
		\filldraw[draw=black,color={black!15},opacity=0.5] (10,1.9) circle (1.5);
		\draw (10,1.9) circle [radius=1.5];
		\draw [orange] [dashed] (10,1.9) circle [radius=0.75];
		\draw (10,1.9) node {$\times$};
		\node at (10,2.15) {$\zeta$};
		\draw [orange] (10,1.6) node[circle,fill,inner sep=1pt] {};
		\draw [orange] (9.5,1.9) node[circle,fill,inner sep=1pt] {};
		\draw [orange] (10.53,2.43) node[circle,fill,inner sep=1pt] {};
		\draw [teal] [->] (10,1.9) to (10,1.6);
		\node [orange] at (10,1.4) {\small $p_3$};
		\node [orange] at (9.5,1.7) {\small $p_2$};
		\node [orange] at (10.55,2.65) {\small $p_1$};
		
		\draw (11.5,1.9) node[circle,fill,inner sep=1pt] {};
		\draw (10,3.4) node[circle,fill,inner sep=1pt] {};
		\draw (8.5,1.9) node[circle,fill,inner sep=1pt] {};
		\draw (10,0.4) node[circle,fill,inner sep=1pt] {};
		\node at (11.75,1.9) {$z_0$};
		\node at (10,3.65) {$z_1$};
		\node at (8.25,1.9) {$z_2$};
		\node at (10,0.15) {$z_3$};
		
		\draw [->] (2,1.9) to (3,1.9);
		\draw [->] (7,1.9) to (8,1.9);
		
	\end{tikzpicture}
	\caption{The definition of the auxiliary-rescaling map $\pi_f^0:{}_2\overline{\mathcal{R}}_{4,\tau_0}^1\rightarrow{}_2\overline{\mathcal{R}}_4^{S_{0,1}^1}$}
	\label{fig:auxre}
\end{figure}
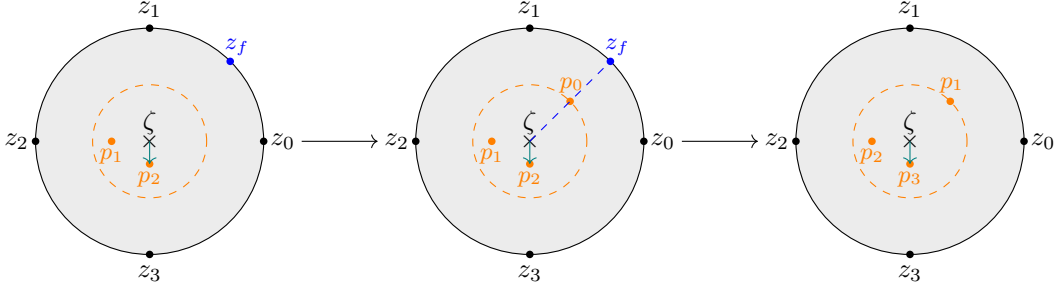

Finally, there is a free $\mathbb{Z}_{k+1}$-action on the moduli space ${}_{l-1}\overline{\mathcal{R}}_{k+1}^{S^1}$ generated by the map
\begin{equation}
\kappa:{}_{l-1}\overline{\mathcal{R}}_{k+1}^{S^1}\rightarrow{}_{l-1}\overline{\mathcal{R}}_{k+1}^{S^1},
\end{equation}
which cyclically permutes the labels of the boundary marked points. Concretely, $\kappa$ changes the label $z_i$ to $z_{i+1}$ if $0\leq i\leq k-1$, and $z_k$ to $z_0$. It can be shown that this $\mathbb{Z}_{k+1}$-action is properly discontinuous. For a similar (in fact, almost identical) situation, see \cite{sg1}, Lemma 12.

\begin{remark}
Similar auxiliary moduli spaces as ${}_{l-1}\mathcal{R}_{k+1}^{S^1}$, ${}_{l-1}\mathcal{R}_{k+1}^{S_i^1}$,  ${}_{l-1}\mathcal{R}_{k+1}^{S_{i,i+1}^1}$ and ${}_{l-1}\mathcal{R}_{k+1,\tau_i}^1$ considered above also play important roles in Ganatra's construction of cyclic open-closed maps \cite{sg1}. The main difference between our set up and his is that the marked points $z_0,\cdots,z_k\in\partial S$ there are treated as punctures, and therefore equipped with strip-like ends (and Floer data) for the purpose of constructing closed-open maps. Here, as we have mentioned, they are regarded as marked points on the boundary loops, so one can study the corresponding forgetful maps. These forgetful maps will be important for the construction of cyclic invariant Kuranishi structures in Section \ref{section:compactify}, see the proof of Theorem \ref{theorem:moduli}.
\end{remark}
\bigskip

In order to study the Floer theory associated to the domains in ${}_l\mathcal{R}_{k+1}^1$, we need the corresponding notion of Floer data. Note that by identifying an element of ${}_l\mathcal{R}_{k+1}^1$ with a half-cylinder $\mathbb{R}_{\geq0}\times S^1$, we can assign each point $p_i$ with an $s$-coordinate $(p_i)_s\in\mathbb{R}_{\geq0}$, and a $t$-coordinate $(p_i)_t\in S^1$.

\begin{definition}\label{definition:data}
A Floer datum for an element of ${}_l\mathcal{R}_{k+1}^1$ consists of the following:
\begin{itemize}
	\item A positive cylindrical end which is compatible with the asymptotic marker, namely an embedding
	\begin{equation}
	\varepsilon^+:[0,\infty)\times S^1\rightarrow S,\textrm{ }(s,t)\mapsto\left(s+(p_l)_s+\eta,t\right)
	\end{equation}
	for some $\eta>0$.
	\item A sub-closed 1-form $\nu_S\in\Omega^1(S)$ such that $\nu_S\equiv0$ near $\partial S$ and $(\varepsilon^+)^\ast\nu_S=dt$.
	\item A domain-dependent Hamiltonian $H_S:S\times M\rightarrow\mathbb{R}$ satisfying
	\begin{equation}
	(\varepsilon^+)^\ast H_S=H_t
	\end{equation}
    for some $H_t\in\mathcal{H}(M)$, and
    \begin{equation}
    H_S\equiv0\textrm{ near }\partial S.
    \end{equation}
    \item A domain-dependent almost complex structure $J_S:S\times\mathit{TM}\rightarrow\mathit{TM}$ such that
    \begin{equation}
    (\varepsilon^+)^\ast J_S=J_t
    \end{equation}
    for some $J_t\in\mathcal{J}(M)$, and
    \begin{equation}
    J_S\equiv J_M\textrm{ near }\partial S,
    \end{equation}
    where $J_M$ is the almost complex structure fixed in Section \ref{section:approximation}.
\end{itemize}
\end{definition}

In a similar fashion, we can define Floer data on the auxiliary moduli spaces ${}_l^{j,j+1}\mathcal{R}_{k+1}^1$ and ${}_{l-1}\mathcal{R}_{k+1}^{S^1}$. We will inductively choose the Floer data on the compactified moduli spaces ${}_l\overline{\mathcal{R}}_{k+1}^1$ so that they satisfy certain consistency conditions. In order to do so, we first choose a Floer datum on the auxiliary moduli space ${}_{l-1}\overline{\mathcal{R}}_{k+1}^{S^1}$ so that
\begin{itemize}
	\item The Floer datum on ${}_{l-1}\overline{\mathcal{R}}_{k+1}^{S^1}$ is $\mathbb{Z}_{k+1}$-equivariant under the cyclic permutation $\kappa$.
	\item On the boundary stratum ${}_{l-1}^{j,j+1}\overline{\mathcal{R}}_{k+1}^{S^1}\subset\partial{}_{l-1}\overline{\mathcal{R}}_{k+1}^{S^1}$, the Floer datum is conformally equivalent to the one pulled back from ${}_{l-2}\overline{\mathcal{R}}_{k+1}^{S^1}$ via the forgetful map $\pi_j^{S^1}$.
\end{itemize}

\begin{definition}
A Cohen-Ganatra Floer datum is a an inductive sequence of choices, for every $k\in\mathbb{Z}_{\geq0}$ and $l\in\mathbb{N}$, of a Floer datum for every representative of an element of ${}_l\overline{\mathcal{R}}_{k+1}^1$ in the sense of Definition \ref{definition:data}, which vary smoothly over the moduli spaces, and is required to satisfy the following :
\begin{itemize}
	\item[(i)] The choice of Floer datum on any boundary stratum should agree with the inductively chosen datum along the boundary stratum for which we have already picked the data.
	\item[(ii)] Near the boundary strata in (\ref{eq:dom-bdy2}), the Floer data are conformally equivalent to the ones obtained by puling back from ${}_{l-1}\overline{\mathcal{R}}_{k+1}^1$ via the forgetful maps $\pi_j$, and fix a conformal equivalence.
	\item[(iii)] On the codimension 1 loci ${}_{l-1}\overline{\mathcal{R}}_{k+1}^{S_i^1}\subset{}_{l-1}\overline{\mathcal{R}}_{k+1}^{S^1}$, where $p_1$ points at $z_i$, the Floer datum should agree with the pullback by $\tau_i$ of the existing Floer datum on ${}_{l-1}\overline{\mathcal{R}}_{k+1}^1$.
\end{itemize}
\end{definition}

\begin{proposition}
A Cohen-Ganatra Floer datum exists.
\end{proposition}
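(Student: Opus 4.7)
The plan is a standard inductive construction, proceeding by induction on the pair $(l,k)$ ordered lexicographically (or equivalently on the total number $l+k$ of marked points, refined by $l$). At each stage I appeal to the fact that the space of admissible Hamiltonian functions $\mathcal{H}(M)$, the space of weakly contact-type almost complex structures $\mathcal{J}(M)$, and the space of sub-closed one-forms $\nu_S$ with the required behavior near $\partial S$ and on the cylindrical end, are all non-empty and contractible in their natural topologies. Thus any consistent choice on the codimension-one boundary of a given compactified moduli space extends to the interior, and the whole construction reduces to verifying that the consistency conditions (i)--(iii) fit together at higher-codimension corners.

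First I would fix, once and for all, a Floer datum for the $S^1$-action on $\mathit{SC}^\ast(M)$ over all of the $\{{}_l\overline{\mathcal{M}}\}_{l\geq 1}$; this was already recalled in the text as a standard input. Next, I would construct the Floer data on the auxiliary spaces ${}_{l-1}\overline{\mathcal{R}}_{k+1}^{S^1}$ by a separate induction on $(l,k)$, imposing $\mathbb{Z}_{k+1}$-equivariance under $\kappa$ and conformal equivalence to the pullback along $\pi_j^{S^1}$ near ${}_{l-1}^{j,j+1}\overline{\mathcal{R}}_{k+1}^{S^1}$. Because the $\mathbb{Z}_{k+1}$-action is free (properly discontinuous and without fixed points on the relevant strata), one may work on the quotient ${}_{l-1}\overline{\mathcal{R}}_{k+1}^{S^1}/\mathbb{Z}_{k+1}$; equivariance is then automatic, and compatibility with the forgetful maps $\pi_j^{S^1}$ reduces to extending a section of a contractible-fiber bundle over an already-prescribed closed subset, which has no obstruction.

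With these auxiliary data in hand, the main induction on ${}_l\overline{\mathcal{R}}_{k+1}^1$ proceeds as follows. Assume Floer data have been chosen on all compactified moduli spaces of strictly smaller complexity. By Proposition \ref{proposition:domain-bdy} the codimension-one boundary of ${}_l\overline{\mathcal{R}}_{k+1}^1$ is covered by the four types of strata (\ref{eq:dom-bdy1})--(\ref{eq:dom-bdy4}); on each of them the datum is forced by the inductive hypothesis together with conditions (i)--(iii) and the already-chosen Floer data for the $S^1$-action and for ${}_{l-1}\overline{\mathcal{R}}_{k+1}^{S^1}$. I would then extend the resulting datum on $\partial{}_l\overline{\mathcal{R}}_{k+1}^1$ to the interior by choosing a smooth section of the contractible fiber bundle of admissible choices $(H_S,J_S,\nu_S,\varepsilon^+)$ over ${}_l\overline{\mathcal{R}}_{k+1}^1$; since the fiber is contractible and the base is a compact manifold with corners, such an extension exists and is unique up to homotopy.

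The main obstacle is verifying that the prescribed data on the codimension-one boundary strata actually agree on the codimension-two corners where two such strata meet. The key cases to check are: (a) the corner of (\ref{eq:dom-bdy1}) with (\ref{eq:dom-bdy2}) or (\ref{eq:dom-bdy3}), which amounts to commutativity of forgetting an auxiliary point with degenerating a cylinder; (b) the corner of (\ref{eq:dom-bdy3}) with itself along the locus ${}_{l-1}\overline{\mathcal{R}}_{k+1}^{S_i^1}$, where condition (iii) demands agreement between the $\tau_i$-pullback of the ${}_{l-1}\overline{\mathcal{R}}_{k+1}^1$-datum and the inductively built datum on ${}_{l-1}\overline{\mathcal{R}}_{k+1}^{S^1}$; and (c) the interaction of (\ref{eq:dom-bdy4}) with the disc-bubble boundaries of $\overline{\mathcal{R}}_{k_2+1}$, where the Hamiltonian vanishes near $\partial S$ so the bubbles are ordinary $J_M$-holomorphic discs carrying no Floer data. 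Each of these compatibilities is built into the definitions of the forgetful maps $\pi_j$, $\pi_j^{S^1}$, $\tau_i$, and of the cylindrical ends, and the $\mathbb{Z}_{k+1}$-equivariance arranged in the preliminary step is what makes (iii) consistent with itself across all sectors ${}_{l-1}\overline{\mathcal{R}}_{k+1}^{S_{i,i+1}^1}$ simultaneously. Once these corner compatibilities are checked, the contractibility argument completes the induction.
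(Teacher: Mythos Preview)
Your proposal is correct and follows essentially the same approach as the paper: both argue by induction using contractibility of the space of Floer data at each stage and consistency of the imposed boundary conditions, with the paper simply citing \cite{sg1}, Proposition 10 for the analogous argument rather than spelling out the inductive scheme and corner checks as you do. Your expanded treatment of the $\mathbb{Z}_{k+1}$-equivariant construction on ${}_{l-1}\overline{\mathcal{R}}_{k+1}^{S^1}$ and the codimension-two compatibility verification is a faithful elaboration of what the paper leaves implicit.
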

\begin{proof}
This follows from the fact that the choices of Floer data at each stage is contractible, and for a suitable inductive order, the conditions imposed on Floer data at various strata do not contradict each other. For a similar situation, see \cite{sg1}, Proposition 10.
\end{proof}

Fix a Cohen-Ganatra Floer datum, and a 1-periodic orbit $x$ of $X_{H_t}$, which is a generator of the symplectic cochain complex $\mathit{SC}^\ast(M)$. Let $\pi_2(M,x,L)$ be the set of homotopy classes of maps $u:S\rightarrow M$ with boundary on $L$ and asymptotic to $x$ at the puncture $\zeta$. There is a natural map $\partial:\pi_2(M,x,L)\rightarrow H_1(L;\mathbb{Z})$ defined by sending $[u]\in\pi_2(M,x,L)$ to the homology class $[u(\partial S)]\in H_1(L;\mathbb{Z})$. For any $\ring{\beta}\in\pi_2(M,x,L)$, define the \textit{Cohen-Ganatra moduli space}
\begin{equation}
{}_l\mathcal{R}_{k+1}^1(x,L,\ring{\beta})
\end{equation}
to be the space of pairs
\begin{equation}\label{eq:pair}
\left((S;z_0,\cdots,z_k,p_1,\cdots,p_l;\ell),u\right),
\end{equation}
where $(S;z_0,\cdots,z_k,p_1,\cdots,p_l;\ell)\in{}_l\mathcal{R}^1_{k+1}$, and the map $u:S\rightarrow M$ satisfies
\begin{equation}\label{eq:CR}
\left\{\begin{array}{l}
(du-X_{H_S}\otimes\nu_S)^{0,1}=0, \\
u(\partial S)\subset L, \\
\lim_{s\rightarrow\infty}(\varepsilon^+)^\ast u(s,\cdot)=x, \\
{[u]}=\ring{\beta},
\end{array}\right.
\end{equation}
where the $(0,1)$-part in the Floer equation is taken with respect to $J_S$.

Among the Cohen-Ganatra moduli spaces ${}_l\mathcal{R}_{k+1}^1(x,L,\ring{\beta})$, there is one special case that will be particularly important for us, namely the moduli space
\begin{equation}
\mathcal{R}_{k+1}^1(e_M,L,\beta),
\end{equation}
which is defined using domains without the auxiliary interior marked points $p_1,\cdots,p_l$, and the asymptotic condition at the positive puncture is given by the minimum of a $C^2$-small Morse function $f:M^\mathit{in}\rightarrow\mathbb{R}$ defined in the interior $M^\mathit{in}$ of the Liouville domain $\overline{M}$. In this case, the homotopy class $\ring{\beta}$ can be identified with a class $\beta\in\pi_2(M,L)$ under the isomorphism $\pi_2(M,e_M,L)\cong\pi_2(M,L)$, which explains the notation.

Given $x\in\mathit{SC}^\ast(M)$ and $\ring{\beta}\in\pi_2(M,x,L)$, in a similar fashion as above, we can define the moduli spaces
\begin{equation}
{}_{l-1}\mathcal{R}_{k+1}^{S^1}(x,L,\ring{\beta}),
\end{equation}
\begin{equation}\label{eq:sector}
{}_{l-1}\mathcal{R}_{k+1,\tau_i}^1(x,L,\ring{\beta}),\textrm{ }0\leq i\leq k,
\end{equation}
\begin{equation}
{}_l^{j,j+1}\mathcal{R}_{k+1}^1(x,L,\ring{\beta}),\textrm{ }1\leq j\leq l-1,
\end{equation}
which parametrize the same maps $u:S\rightarrow M$ as in (\ref{eq:CR}), but with the domains in the moduli spaces ${}_{l-1}\mathcal{R}_{k+1}^{S^1}$, ${}_{l-1}\mathcal{R}_{k+1,\tau_i}^1$ and ${}_l^{j,j+1}\mathcal{R}_{k+1}^1$, respectively. 

When defining (\ref{eq:sector}) (and its compactification), we need to choose Floer data on the moduli spaces ${}_{l-1}\overline{\mathcal{R}}_{k+1,\tau_i}^1$. In this case, there is an oriented diffeomorphism
\begin{equation}\label{eq:permu}
\tau_i:{}_{l-1}\overline{\mathcal{R}}_{k+1,\tau_i}^1\rightarrow{}_{l-1}\overline{\mathcal{R}}_{k+1,\tau_0}^1
\end{equation}
induced by cyclic permutations of the boundary labels $z_0,\cdots,z_k$. The Floer datum for ${}_{l-1}\mathcal{R}_{k+1,\tau_i}^1$ shall be chosen so that it coincides with the pullback of the Floer datum on ${}_{l-1}\mathcal{R}_{k+1,\tau_0}^1$ by (\ref{eq:permu}).

The Gromov compactification ${}_l\overline{\mathcal{R}}_{k+1}^1(x,L,\ring{\beta})$ of the Cohen-Ganatra moduli space is an admissible K-space. Its detailed description will be postponed to the next subsection. 

\subsection{Kuranishi structures on compactified moduli spaces}\label{section:compactify}

The purpose of this subsection is to state Theorem \ref{theorem:moduli}, where we describe the compactifications of the moduli spaces $\mathcal{R}_{k+1,\vartheta}(L,\beta)$ and ${}_l\mathcal{R}_{k+1}^1(x,L,\ring{\beta})$ introduced in Sections \ref{section:disc} and \ref{section:CG} as admissible K-spaces. We start with the following lemma which summarizes the basic properties of these moduli spaces.

\begin{lemma}\label{lemma:basic}
The following properties of the moduli spaces hold.
\begin{itemize}
	\item[(i)] When $\beta=0$, the moduli space $\mathcal{R}_{k+1,\vartheta}(L,0)$ consists of constant maps for every $k\geq3$, and $\mathcal{R}_{k+1}^1(e_M,L,0)$ consists of constant maps for every $k\geq0$, where $e_M\in\mathit{SC}^0(M)$ represents the identity.
	\item[(ii)] If $\theta_M(\partial\ring{\beta})+|A_{H_t}(x)|<0$, then ${}_l\mathcal{R}_{k+1}^1(x,L,\ring{\beta})=\emptyset$, where $H_t\in\mathcal{H}(M)$ is a fixed choice of some admissible Hamiltonian, which serves as part of our Floer datum, and
	\begin{equation}
	A_{H_t}(x)=-\int_{S^1}x^\ast\theta_M+\int_{S^1}H_t(x(t))dt
	\end{equation}
	is the action of the orbit $x$ of $X_{H_t}$.
	\item[(iii)] For every $k\in\mathbb{N}$ we have
	\begin{equation}
	\mathcal{R}_{k+1,\vartheta}(L,\beta)=\emptyset\Leftrightarrow\mathcal{R}_{2,\vartheta}(L,\beta)=\emptyset.
	\end{equation}
    For every $k,l\in\mathbb{Z}_{\geq0}$, we have
    \begin{equation}
    {}_l\mathcal{R}_{k+1}^1(x,L,\ring{\beta})=\emptyset\Leftrightarrow{}_l\mathcal{R}_1^1(x,L,\ring{\beta})=\emptyset.
    \end{equation}
    Moreover, for every $c>0$ and fixed $l\in\mathbb{Z}_{\geq0}$, the sets
    \begin{equation}
    \left\{\beta\in\pi_2(M,L)\vert\mathcal{R}_{2,\vartheta}(L,\beta)\neq\emptyset,\theta_M(\partial\beta)<c\right\},
    \end{equation}
    \begin{equation}
    \left\{\ring{\beta}\in\pi_2(M,x,L)\left\vert{}_l\mathcal{R}_1^1(x,L,\ring{\beta})\neq\emptyset,\theta_M(\partial\ring{\beta})<c\right.\right\}
    \end{equation}
    are both finite.
\end{itemize}
\end{lemma}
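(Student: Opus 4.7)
The plan is to dispatch the three parts by (i) the energy--area identity for $J_M$-holomorphic discs, (ii) a topological-versus-geometric energy estimate for Floer-type equations on domains with sub-closed $1$-forms, and (iii) forgetful maps combined with Gromov compactness.

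For part (i), any $J_M$-holomorphic disc $u\colon(D,\partial D)\to(M,L)$ in class $\beta=0$ has vanishing symplectic area $\int_Du^*d\theta_M=\langle d\theta_M,\beta\rangle=0$, and compatibility of $J_M$ with $d\theta_M$ forces $du\equiv 0$, so $u$ is constant; this settles $\mathcal{R}_{k+1,\vartheta}(L,0)$ for $k\geq 3$. For $\mathcal{R}_{k+1}^1(e_M,L,0)$ there are no auxiliary points, the asymptote is the constant orbit $e_M$ at the minimum of the $C^2$-small Morse function $f$, and a direct computation of the topological energy in class $0$ gives zero; non-negativity of the geometric energy forces both to vanish, so $u$ is the constant map at $e_M$.

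For part (ii), I will apply the standard energy identity for the Floer equation (\ref{eq:CR}) on a surface with sub-closed $1$-form $\nu_S$. For any $u\in{}_l\mathcal{R}_{k+1}^1(x,L,\ring\beta)$, Stokes' theorem applied to $|du-X_{H_S}\otimes\nu_S|_{J_S}^2$, together with the Lagrangian boundary condition, the vanishing of $H_S$ near $\partial S$, the asymptotic condition at the puncture, and the normalization $(\varepsilon^+)^*\nu_S=dt$, yields
\[
0 \,\leq\, E_\mathit{geo}(u) \,=\, \theta_M(\partial\ring\beta) - A_{H_t}(x) - \int_S(u^*H_S)\,d\nu_S,
\]
and sub-closedness of $\nu_S$ ($d\nu_S\leq 0$), together with $H_S\geq 0$ on the relevant region, makes the last term non-negative, giving $\theta_M(\partial\ring\beta)\geq A_{H_t}(x)$. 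Running the analogous comparison against a dual choice of Hamiltonian (or equivalently using the action filtration in the reverse direction) yields $\theta_M(\partial\ring\beta)\geq -A_{H_t}(x)$, and combining gives $\theta_M(\partial\ring\beta)+|A_{H_t}(x)|\geq 0$, contradicting the hypothesis.

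For part (iii), the forgetful maps $\mathcal{R}_{k+1,\vartheta}(L,\beta)\to\mathcal{R}_{2,\vartheta}(L,\beta)$ (erasing $z_2,\dots,z_{k-1}$, retaining the positional constraint on $z_k$) and ${}_l\mathcal{R}_{k+1}(x,L,\ring\beta)\to{}_l\mathcal{R}_1(x,L,\ring\beta)$ (erasing $z_1,\dots,z_k$) are surjective with non-empty fibres, since boundary marked points can be freely inserted without affecting the PDE; hence the two moduli spaces are simultaneously empty or non-empty. The finiteness assertions then follow from Gromov compactness: among classes $\beta$ with $\theta_M(\partial\beta)<c$ and $\mathcal{R}_{2,\vartheta}(L,\beta)\neq\emptyset$ the energy of representatives is uniformly bounded, so only finitely many topological types can occur; the analogous statement for ${}_l\mathcal{R}_1(x,L,\ring\beta)$ uses that $l$ and $x$ are fixed and that bounded $\theta_M(\partial\ring\beta)$ yields a bounded Floer energy via the estimate of part (ii). The main technical obstacle lies in (ii): pinning down the exact topological-versus-geometric energy identity on a compact surface simultaneously carrying a Lagrangian boundary, a positive Hamiltonian puncture, and a sub-closed but non-closed $1$-form, and in particular justifying the appearance of the absolute value $|A_{H_t}(x)|$, which requires running the comparison inequality in two dual ways rather than a single signed estimate.
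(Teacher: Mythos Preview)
Your treatment of (i) and (iii) matches the paper's, with one cosmetic difference: for $\mathcal{R}_{k+1}^1(e_M,L,0)$ the paper invokes the removable singularity theorem to extend $u$ across the puncture $\zeta$ to a genuine $J_M$-holomorphic disc $\bar u:(D,\partial D)\to(M,L)$ in class $0$, and then reduces to the disc case already handled. Your direct topological-energy argument is also workable, but requires unpacking the Floer datum near the puncture, whereas the removable-singularity route avoids that.

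For (ii), however, you have manufactured a difficulty that is not there. The paper proves a \emph{single} inequality,
\[
0\;\le\;E^{\mathit{geom}}(u)\;\le\;E^{\mathit{top}}(u)\;=\;\theta_M(\partial\ring\beta)+A_{H_t}(x),
\]
and stops. The absolute value in the hypothesis is then free: $\theta_M(\partial\ring\beta)+|A_{H_t}(x)|<0$ forces $\theta_M(\partial\ring\beta)<-|A_{H_t}(x)|\le -A_{H_t}(x)$, hence $\theta_M(\partial\ring\beta)+A_{H_t}(x)<0$, contradicting the single estimate. (Regardless of which sign convention one adopts for the action, the same trick works: $-|A|\le\pm A$.) No ``dual choice of Hamiltonian'' or reversed-filtration argument is needed, and it is unclear what such an argument would even mean here, since the domain has only one puncture and one Lagrangian boundary component, so there is no symmetry to exploit. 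Your identification of this step as ``the main technical obstacle'' is misplaced: the statement with $|A_{H_t}(x)|$ is strictly \emph{weaker} than the statement with $A_{H_t}(x)$, so the obstacle dissolves on inspection.
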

\begin{proof}
(i) is obvious for $\mathcal{R}_{k+1,\vartheta}(L,0)$. For any map $u:(S,\partial S)\rightarrow(M,L)$ parametrized by $\mathcal{R}_{k+1}^1(e_M,L,\beta)$, the removable singularity theorem for pseudoholomorphic maps implies that there exists a $J_M$-holomorphic map $\bar{u}:(D,\partial D)\rightarrow(M,L)$, with $\left[\bar{u}\right]=\beta$ and $\bar{u}(0)$ mapped to the minimum of the $C^2$-small Morse function $f:M^\mathit{in}\rightarrow\mathbb{R}$. If $\bar{u}$ is constant, so must be $u$.

(ii) is an energy estimate. For any element (which is represented by a pair (\ref{eq:pair})) of ${}_l\mathcal{R}_{k+1}(L,x,\ring{\beta})$, we have
\begin{equation}\label{eq:energy}
0\leq E^\mathit{geom}(u)=\int_S\frac{1}{2}||du-X_{H_S}\otimes\nu_S||^2\leq E^\mathit{top}(u)=\theta_M(\partial\ring{\beta})+A_{H_t}(x),
\end{equation}
where $E^\mathit{geom}(u)$ and $E^\mathit{top}(u)$ are the geometric and the topological energies of $u$, respectively (cf. \cite{ase}, Section 7.2). Note that in the above computations, we have used the fact that $H_S\equiv0$ near $\partial S$.

(iii) is a consequence of Gromov compactness.
\end{proof}

To describe the compactifications of the moduli spaces ${}_l\mathcal{R}_{k+1}^1(x,L,\ring{\beta})$, ${}_l^{j,j+1}\mathcal{R}_{k+1}^1(x,L,\ring{\beta})$, ${}_{l-1}\mathcal{R}_{k+1}^{S^1}(x,L,\ring{\beta})$ and ${}_{l-1}\mathcal{R}_{k+1,\tau_i}^1(x,L,\ring{\beta})$ introduced in Section \ref{section:CG}, we need a slight variation of the notion of a decorated rooted ribbon tree.

\begin{definition}
A decorated rooted ribbon tree with a single puncture is a triple $(T,\ring{B},v_0)$ such that the tree $T$ satisfies (i)-(v) in the definition of a decorated rooted ribbon tree, but now there is a distinguished interior vertex $v_0\in C_{0,\mathit{int}}(T)$ which is called a puncture. The conditions imposed on the map $B:C_{0,\mathit{int}}(T)\rightarrow\pi_2(M,L)$ in Definition \ref{definition:tree}, (vi) and (vii) are replaced with the following:
\begin{itemize}
	\item[(vi)'] There exists a map $\ring{B}:C_{0,\mathit{int}}(T)\rightarrow\pi_2(M,x,L)$, such that its restrictions to $C_{0,\mathit{int}}(T)\setminus\{v_0\}$ and $\{v_0\}$ give rise to maps $C_{0,\mathit{int}}(T)\setminus\{v_0\}\rightarrow\pi_2(M,L)$ and $\{v_0\}\rightarrow\pi_2(M,x,L)$, respectively, where $x$ is a 1-periodic orbit of $X_{H_t}$. For every $v\in C_{0,\mathit{int}}(T)$, either $d\theta_M\left(\ring{B}(v)\right)>0$ or $\ring{B}(v)=0$. Note that for $v=v_0$, this means that $x$ is a constant orbit, so $\ring{B}(v_0)$ can be regarded as a class in $\pi_2(M,L)$.
	\item[(vii)'] Every $v\in C_{0,\mathit{int}}(T)\setminus\{v_0\}$ with $\ring{B}(v)=0$ has valency at least $3$. When $x$ is a constant orbit, we also require that $v_0$ has valency at least $3$ if $\ring{B}(v_0)=0$ in $\pi_2(M,L)$.
\end{itemize}

\end{definition}

Note that when $x$ is a constant orbit, this reduces to the notion of a decorated rooted ribbon tree. For $k\in\mathbb{Z}_{\geq0}$ and $\ring{\beta}\in\pi_2(M,x,L)$, denote by $\mathcal{G}(k+1,\ring{\beta})$ the set of decorated rooted ribbon trees with a single puncture $(T,\ring{B},v_0)$, such that $\#C_{0,\mathit{ext}}(T)=k+1$ and $\sum_{v\in C_{0,\mathit{int}}(T)}\ring{B}(v)=\ring{\beta}$.

We also have the following extension of a reduction of decorated rooted ribbon trees (cf. \cite{ki2}, Definition 7.19).

\begin{definition}\label{definition:reduction}
Let $(T,\ring{B},v_0)\in\mathcal{G}(k+1,\ring{\beta})$ and $e\in C_{1,\mathit{int}}(T)$, with $v_0,v_1$ being vertices of $e$. By collapsing $e$ to a new vertex $v_{01}$, we get another $(T',\ring{B}',v_0)\in\mathcal{G}(k+1,\ring{\beta})$ such that
\begin{equation}
C_0(T')=\left(C_0(T)\setminus\{v_0,v_1\}\right)\cup\{v_{01}\},
\end{equation}
\begin{equation}
C_1(T')=C_1(T)\setminus\{e\},
\end{equation}
\begin{equation}
\ring{B}'(v)=\left\{\begin{array}{ll}
\ring{B}(v) & v\neq v_{01}, \\ \ring{B}(v_0)+\ring{B}(v_1) & v=v_{01}.
\end{array}\right.
\end{equation}
An element of $\mathcal{G}(k+1,\ring{\beta})$ which can be obtained from $(T,\ring{B},v_0)\in\mathcal{G}(k+1,\ring{\beta})$ by repeating the above procedure is called a reduction of $(T,\ring{B},v_0)$.
\end{definition}

Let $X$ be an admissible K-space. For $r\in\mathbb{N}$, denoted by $\widehat{S}_rX$ the normalized codimension $r$ corner of $X$. When $r=1$, we shall also write $\partial X$ for the normalized boundary.

From now on, let $\varepsilon>0$ be chosen as in the beginning of Section \ref{section:approximation}, so that every non-constant $J_M$-holomorphic disc bounded by $L\subset M$ has area more than $2\varepsilon$, where $J_M$ is a convex almost complex structure. Take $U\in\mathbb{N}$ so that $\varepsilon(U-1)\geq|A_{H_t}(x)|$.

\begin{theorem}\label{theorem:moduli}
For every $k,m,l\in\mathbb{Z}_{\geq0}$, and $P=\{m\}$ or $[m,m+1]$, there exist the following data.
\begin{itemize}
	\item[(i)] (Moduli spaces) Compact, oriented, admissible K-spaces (when $l=0$, the moduli spaces (\ref{eq:m3}), (\ref{eq:m4}) and (\ref{eq:m5}) are empty)
	\begin{equation}\label{eq:m0}
	\overline{\mathcal{R}}_{k+1}(L,\beta;P),\textrm{ where }\beta\in\pi_2(M,L)\textrm{ and }\theta_M(\partial\beta)<(m-k+1)\varepsilon,
	\end{equation}
	\begin{equation}\label{eq:m1}
	\overline{\mathcal{R}}_{k+2,\vartheta}(L,\beta;P),\textrm{ where }\beta\in\pi_2(M,L)\textrm{ and }\theta_M(\partial\beta)<(m-k+1)\varepsilon,
	\end{equation}
    \begin{equation}\label{eq:m2}
    {}_l\overline{\mathcal{R}}_{k+1}^1(x,L,\ring{\beta};P),\textrm{ where }\ring{\beta}\in\pi_2(M,x,L)\textrm{ and }\theta_M(\partial\ring{\beta})<(m-k-U)\varepsilon,
    \end{equation}
    \begin{equation}\label{eq:m3}
    {}_l^{j,j+1}\overline{\mathcal{R}}_{k+1}^1(x,L,\ring{\beta};P), \textrm{ where }\ring{\beta}\in\pi_2(M,x,L)\textrm{ and }\theta_M(\partial\ring{\beta})<(m-k-U)\varepsilon,
    \end{equation}
    \begin{equation}\label{eq:m4}
    {}_{l-1}\overline{\mathcal{R}}_{k+1}^{S^1}(x,L,\ring{\beta};P),\textrm{ where }\ring{\beta}\in\pi_2(M,x,L)\textrm{ and }\theta_M(\partial\ring{\beta})<(m-k-U)\varepsilon,
    \end{equation}
    \begin{equation}\label{eq:m5}
    {}_{l-1}\overline{\mathcal{R}}_{k+1,\tau_i}^1(x,L,\ring{\beta};P),\textrm{ where }\ring{\beta}\in\pi_2(M,x,L)\textrm{ and }\theta_M(\partial\ring{\beta})<(m-k-U)\varepsilon,
    \end{equation}
    whose underlying topological spaces are $P\times\overline{\mathcal{R}}_{k+1}(L,\beta)$, $P\times\overline{\mathcal{R}}_{k+2,\vartheta}(L,\beta)$, $P\times{}_l\overline{\mathcal{R}}_{k+1}^1(x,L,\ring{\beta})$, $P\times{}_l^{j,j+1}\overline{\mathcal{R}}_{k+1}^1(x,L,\ring{\beta})$, $P\times{}_{l-1}\overline{\mathcal{R}}_{k+1}^{S^1}(x,L,\ring{\beta})$, and $P\times{}_{l-1}\overline{\mathcal{R}}_{k+1,\tau_i}^1(x,L,\ring{\beta})$, respectively. Dimensions of these K-spaces are
    \begin{equation}
    \dim\overline{\mathcal{R}}_{k+2,\vartheta}(L,\beta;P)=\dim\overline{\mathcal{R}}_{k+1}(L,\beta;P)=\mu(\beta)+n+k-2+\dim P,
    \end{equation}
    \begin{equation}
    \dim{}_l\overline{\mathcal{R}}_{k+1}^1(x,L,\ring{\beta};P)=\mu(\ring{\beta})+k+2l+\mathit{CZ}(x)+\dim P,
    \end{equation}
    \begin{equation}
    \begin{split}
    \dim{}_l^{j,j+1}\overline{\mathcal{R}}_{k+1}^1(x,L,\ring{\beta};P)&=\dim{}_{l-1}\overline{\mathcal{R}}_{k+1}^{S^1}(x,L,\ring{\beta};P)=\dim{}_{l-1}\overline{\mathcal{R}}_{k+1,\tau_i}^1(x,L,\ring{\beta};P) \\
    &=\mu(\ring{\beta})+k+2l-1+\mathit{CZ}(x)+\dim P,
    \end{split}
    \end{equation}
    where $\mathit{CZ}(x)$ is the Conley-Zehnder index of $x$.
    \item[(ii)] (Evaluation maps) Corner-stratified strongly smooth maps
    \begin{equation}\label{eq:eva0}
    \mathit{ev}^{\mathcal{R},P}:\overline{\mathcal{R}}_{k+1}(L,\beta;P)\rightarrow P\times L^{k+1},
    \end{equation}
    \begin{equation}\label{eq:eva1}
    \mathit{ev}_\vartheta^{\mathcal{R},P}:\overline{\mathcal{R}}_{k+2,\vartheta}(L,\beta;P)\rightarrow P\times L^{k+2},
    \end{equation}
    \begin{equation}\label{eq:eva2}
    {}_l\mathit{ev}^{\mathcal{R},P}:{}_l\overline{\mathcal{R}}_{k+1}^1(x,L,\ring{\beta};P)\rightarrow P\times L^{k+1},
    \end{equation}
    \begin{equation}\label{eq:eva3}
    {}_l^{j,j+1}\mathit{ev}^{\mathcal{R},P}:{}_l^{j,j+1}\overline{\mathcal{R}}_{k+1}^1(x,L,\ring{\beta};P)\rightarrow P\times L^{k+1},
    \end{equation}
    \begin{equation}\label{eq:eva4}
    {}_{l-1}\mathit{ev}^{S^1,P}:{}_{l-1}\overline{\mathcal{R}}_{k+1}^{S^1}(x,L,\ring{\beta};P)\rightarrow P\times L^{k+1},
    \end{equation}
    \begin{equation}\label{eq:eva5}
	{}_{l-1}\mathit{ev}_i^{\mathcal{R},P}:{}_{l-1}\overline{\mathcal{R}}_{k+1,\tau_i}^1(x,L,\ring{\beta};P)\rightarrow P\times L^{k+1},
    \end{equation}
    such that their underlying set-theoretic maps are $\mathit{id}_P\times\mathit{ev}^\mathcal{R}$, $\mathit{id}_P\times\mathit{ev}_\vartheta^\mathcal{R}$, $\mathit{id}_P\times{}_l\mathit{ev}^\mathcal{R}$, $\mathit{id}_P\times{}_l^{j,j+1}\mathit{ev}^\mathcal{R}$, $\mathit{id}_P\times{}_{l-1}\mathit{ev}^{S^1}$, and $\mathit{id}_P\times{}_{l-1}\mathit{ev}_i^\mathcal{R}$, respectively. Moreover, the maps
    \begin{equation}
    (\mathit{id}_P\times\mathit{pr}_0)\circ\mathit{ev}^{\mathcal{R},P}:\overline{\mathcal{R}}_{k+1}(L,\beta;P)\rightarrow P\times L,
    \end{equation}
    \begin{equation}
    (\mathit{id}_P\times\mathit{pr}_0)\circ \mathit{ev}_\vartheta^{\mathcal{R},P}:\overline{\mathcal{R}}_{k+2,\vartheta}(L,\beta;P)\rightarrow P\times L,
    \end{equation}
    \begin{equation}
    (\mathit{id}_P\times\mathit{pr}_0)\circ{}_l\mathit{ev}^{\mathcal{R},P}:{}_l\overline{\mathcal{R}}_{k+1}^1(x,L,\ring{\beta};P)\rightarrow P\times L,
    \end{equation}
    \begin{equation}
    (\mathit{id}_P\times\mathit{pr}_0)\circ{}_l^{j,j+1}\mathit{ev}^{\mathcal{R},P}:{}_l^{j,j+1}\overline{\mathcal{R}}_{k+1}^1(x,L,\ring{\beta};P)\rightarrow P\times L,
    \end{equation}
    \begin{equation}
    (\mathit{id}_P\times\mathit{pr}_0)\circ\mathit{ev}^{S^1,P}:{}_{l-1}\overline{\mathcal{R}}_{k+1}^{S^1}(x,L,\ring{\beta};P)\rightarrow P\times L,
    \end{equation}
    \begin{equation}
    (\mathit{id}_P\times\mathit{pr}_0)\circ\mathit{ev}_i^{\mathcal{R},P}:{}_{l-1}\overline{\mathcal{R}}_{k+1,\tau_i}^1(x,L,\ring{\beta};P)\rightarrow P\times L
    \end{equation}
    are corner-stratified weak submersions, where $\mathit{pr}_0:L^{k+1}\rightarrow L$ is the projection to the first factor.
    \item[(iii)] (Energy zero part) An isomorphism of admissible Kuranishi structures
    \begin{equation}\label{eq:e0}
    \overline{\mathcal{R}}_{k+2,\vartheta}(L,0;P)\cong P\times L\times D^{k-2}
    \end{equation}
    for every $k\geq2$, so that $\mathit{ev}_\vartheta^{\mathcal{R},P}: \overline{\mathcal{R}}_{k+2,\vartheta}(L,0;P)\rightarrow P\times L^{k+2}$ coincides with $\mathit{pr}_P\times(\mathit{pr}_L)^{k+2}$, where $\mathit{pr}_P$ is the projection to $P$ and $\mathit{pr}_L$ is the projection to $L$. Here, the disc $D^{k-2}$ on the right-hand side of (\ref{eq:e0}) is given by the Stasheff cell.
    \item[(iv)] (Compatibility at boundaries) Orientation-preserving isomorphisms of admissible K-spaces:
    \begin{equation}\label{eq:bd1}
    \partial\overline{\mathcal{R}}_{k+2,\vartheta}(L,\beta;\{m\})\cong\bigsqcup_{\substack{k_1+k_2=k+1\\1\leq i\leq k_1+1\\ \beta_1+\beta_2=\beta}}(-1)^{\varepsilon_1}\overline{\mathcal{R}}_{k_1+2,\vartheta}(L,\beta_1;\{m\})\textrm{ }{{}_i\times_0}\textrm{ }\overline{\mathcal{R}}_{k_2+1}(L,\beta_2;\{m\}),
    \end{equation}
    \begin{equation}\label{eq:bd2}
    \begin{split}
    \partial{}_l\overline{\mathcal{R}}_{k+1}^1(x,L,\ring{\beta};\{m\})&\cong\bigsqcup_{\substack{k_1+k_2=k+1\\1\leq i\leq k_1\\ \ring{\beta}_1+\beta_2=\ring{\beta}}}(-1)^{\varepsilon_2}{}_l\overline{\mathcal{R}}_{k_1+1}^1(x,L,\ring{\beta}_1;\{m\})\textrm{ }{{}_i\times_0}\textrm{ }\overline{\mathcal{R}}_{k_2+1}(L,\beta_2;\{m\}) \\
    &\sqcup\bigsqcup_{\substack{k_1+k_2=k+1\\1\leq i\leq k_1\\ \beta_1+\ring{\beta}_2=\ring{\beta}}}(-1)^{\varepsilon_3}\overline{\mathcal{R}}_{k_1+1}(L,\beta_1;\{m\})\textrm{ }{{}_i\times_0}\textrm{ }{}_l\overline{\mathcal{R}}^1_{k_2+1}(x,L,\ring{\beta}_2;\{m\}) \\
    &\sqcup\bigsqcup_{0\leq j\leq l}(-1)^{\varepsilon_{4,j}}{}_j\overline{\mathcal{M}}(x,y_j;\{m\})\times{}_{l-j}\overline{\mathcal{R}}_{k+1}^1(y_j,L,\ring{\beta};\{m\}) \\
    &\sqcup\bigsqcup_{1\leq j\leq l-1}(-1)^{\varepsilon_5}{}_l^{j,j+1}\overline{\mathcal{R}}_{k+1}^1(x,L,\ring{\beta};\{m\}) \\
    &\sqcup(-1)^{\varepsilon_6}{}_{l-1}\overline{\mathcal{R}}_{k+1}^{S^1}(x,L,\ring{\beta};\{m\}),
    \end{split}
    \end{equation}
    \begin{equation}\label{eq:bd4}
    \begin{split}
    &\partial\overline{\mathcal{R}}_{k+2,\vartheta}(L,\beta;[m,m+1])\cong(-1)^{\varepsilon_7}\overline{\mathcal{R}}_{k+2,\vartheta}(L,\beta;\{m\})\sqcup(-1)^{\varepsilon_{8}}\overline{\mathcal{R}}_{k+2,\vartheta}(L,\beta;\{m+1\}) \\
    &\sqcup\bigsqcup_{\substack{k_1+k_2=k+1\\1\leq i\leq k_1+1\\ \beta_1+\beta_2=\beta}}(-1)^{\varepsilon_{9}}\overline{\mathcal{R}}_{k_1+2,\vartheta}(L,\beta_1;[m,m+1])\textrm{ }{{}_i\times_0}\textrm{ }\overline{\mathcal{R}}_{k_2+1}(L,\beta_2;[m,m+1]),
    \end{split}
    \end{equation}
    \begin{equation}\label{eq:bd5}
    \begin{split}
    &\partial{}_l\overline{\mathcal{R}}_{k+1}^1(x,L,\ring{\beta};[m,m+1]) \\
    &\cong(-1)^{\varepsilon_{10}}{}_l\overline{\mathcal{R}}_{k+1}^1(x,L,\ring{\beta};\{m\})\sqcup(-1)^{\varepsilon_{11}}{}_l\overline{\mathcal{R}}_{k+1}^1(x,L,\ring{\beta};\{m+1\}) \\
    &\sqcup\bigsqcup_{\substack{k_1+k_2=k+1\\1\leq i\leq k_1\\ \ring{\beta}_1+\beta_2=\ring{\beta}}}(-1)^{\varepsilon_{12}}{}_l\overline{\mathcal{R}}_{k_1+1}^1(x,L,\ring{\beta}_1;[m,m+1])\textrm{ }{{}_i\times_0}\textrm{ }\overline{\mathcal{R}}_{k_2+1}(L,\beta_2;[m,m+1]) \\
    &\sqcup\bigsqcup_{\substack{k_1+k_2=k+1\\1\leq i\leq k_1\\ \beta_1+\ring{\beta}_2=\ring{\beta}}}(-1)^{\varepsilon_{13}}\overline{\mathcal{R}}_{k_1+1}(L,\beta_1;[m,m+1])\textrm{ }{{}_i\times_0}\textrm{ }{}_l\overline{\mathcal{R}}_{k_2+1}(x,L,\ring{\beta}_2;[m,m+1])\\
    &\sqcup\bigsqcup_{0\leq j\leq l}(-1)^{\varepsilon_{14,j}}{}_j\overline{\mathcal{M}}(x,y_j;[m,m+1])\times{}_{l-j}\overline{\mathcal{R}}_{k+1}^1(y_j,L,\ring{\beta};[m,m+1]) \\
    &\sqcup\bigsqcup_{1\leq j\leq l-1}(-1)^{\varepsilon_{15}}{}_l^{j,j+1}\overline{\mathcal{R}}_{k+1}^1(x,L,\ring{\beta};[m,m+1])\sqcup(-1)^{\varepsilon_{16}}{}_{l-1}\overline{\mathcal{R}}_{k+1}^{S^1}(x,L,\ring{\beta};[m,m+1]),
    \end{split}
    \end{equation}
    where the notation ${{}_i\times_0}$ in the above is an abbreviation for the fiber product ${{}_{\mathit{ev}_i}\times_{\mathit{ev}_0}}$, with $\mathit{ev}_i$ given by the composition $(\mathit{id}_P\times\mathit{pr}_i)\circ\mathit{ev}^{\bullet,P}$, where $\mathit{ev}^{\bullet,P}$ stands for one of the evaluations maps (\ref{eq:eva0}), (\ref{eq:eva1}) and (\ref{eq:eva2}) in (ii), and $\mathit{pr}_i:L^{k+1}\rightarrow L$ is the projection to the $(i+1)$th factor. The signs $\varepsilon_1,\cdots,\varepsilon_{16}$ above are given as follows:
    \begin{equation}
    \begin{split}
    &\varepsilon_1=(k_1-i)(k_2-1)+n+k_1-1, \\
    &\varepsilon_2=(k_1-i)(k_2-1)+n+k,\textrm{ }\varepsilon_3=(k_1-i)(k_2-1)+n+1, \\
    &\varepsilon_{4,j}=n+|y_j|,\textrm{ }0\leq j\leq l,\textrm{ }\varepsilon_5=0,\textrm{ }\varepsilon_6=0, \\
    &\varepsilon_7=1,\textrm{ }\varepsilon_{8}=0,\textrm{ }\varepsilon_{9}=(k_1-i)(k_2-1)+n+k_1, \\
    &\varepsilon_{10}=1,\textrm{ }\varepsilon_{11}=0, \\
    &\varepsilon_{12}=(k_1-i)(k_2-1)+n+k+1,\textrm{ }\varepsilon_{13}=(k_1-i)(k_2-1)+n, \\
    &\varepsilon_{14,j}=n+|y_j|+1,\textrm{ }0\leq j\leq l,\textrm{ }\varepsilon_{15}=1,\textrm{ }\varepsilon_{16}=1.
    \end{split}
    \end{equation}
    The compatibility of the Kuranishi structures at the boundaries of ${}_l^{j,j+1}\overline{\mathcal{R}}_{k+1}^1(x,L,\ring{\beta};P)$, ${}_{l-1}\overline{\mathcal{R}}_{k+1}^{S^1}(x,L,\ring{\beta};P)$ and ${}_{l-1}\overline{\mathcal{R}}_{k+1,\tau_i}^1(x,L,\ring{\beta};P)$ are similar to that of ${}_l\overline{\mathcal{R}}_{k+1}^1(x,L,\ring{\beta};P)$ described above, we do not write down the details for these moduli spaces here since they will not be needed for our main argument in Section \ref{section:proof}.
    \item[(v)] (Compatibility at corners, I) Isomorphisms of admissible K-spaces\footnote{For simplicity, we do not consider orientations of the moduli spaces for these isomorphisms. The signs can be computed inductively using the methods described in Appendix \ref{section:orientation}.}
    \begin{equation}\label{eq:corner1}
    \begin{split}
    &\widehat{S}_r\overline{\mathcal{R}}_{k+2,\vartheta}(L,\beta;P)\cong\bigsqcup_{\substack{(T,B)\in\mathcal{G}(k+2,\beta)\\ \#C_{1,\mathit{int}}(T)+d=r \\ v_0\in C_{0,\mathit{int}}(T)}}\left(\prod_{e\in C_{1,\mathit{int}}(T)}\widehat{S}_dP\times L\right)\textrm{ }{{}_\Delta\times\mathit{ev}_\mathit{int}}\textrm{ } \\
    &\left(\prod_{v\in C_{0,\mathit{int}}(T)\setminus\{v_0\}}\overline{\mathcal{R}}_{k_v+1}\left(L,B(v);\widehat{S}_dP\right)\times\overline{\mathcal{R}}_{k_{v_0}+1,\vartheta}\left(L,B(v_0);\widehat{S}_dP\right)\right),
    \end{split}
    \end{equation}
    where the fiber product on the right-hand side is taken over $\prod_{e\in C_{1,\mathit{int}}(T)}\left(\widehat{S}_dP\times L\right)^2$.
    \begin{equation}\label{eq:corner2}
    \begin{split}
    &\widehat{S}_r\left({}_l\overline{\mathcal{R}}_{k+1}^1(x,L,\ring{\beta};P)\right)\cong\bigsqcup_{\substack{(T,\ring{B},v_0)\in\mathcal{G}(k+1,\ring{\beta})\\ \#C_{1,\mathit{int}}(T)+d+r_1+r_2=r \\ v_0\in C_{0,\mathit{int}}(T)}}\bigsqcup_{\substack{j_1+\cdots+j_{r_2}=l_2\\l_1+l_2=l}}\left(\prod_{e\in C_{1,\mathit{int}}(T)}\widehat{S}_dP\times L\right) \\
    &\textrm{ }{{}_\Delta\times\mathit{ev}_\mathit{int}}\textrm{ }\left(\prod_{v\in C_{0,\mathit{int}}(T)\setminus\{v_0\}}\overline{\mathcal{R}}_{k_v+1}\left(L,\ring{B}(v);\widehat{S}_dP\right)\right.\\
    &\left.\times\left(\prod_{i=1}^{r_2}{}_{j_i}\overline{\mathcal{M}}(y_{j_i},x;\widehat{S}_dP)\times\widehat{S}_{r_1}({}_{l_1}\overline{\mathcal{R}}_{k_{v_0}+1}^1)\left(y_{j_i},L,\ring{B}(v_0);\widehat{S}_dP\right)\right)\right), \\
    \end{split}
    \end{equation}
    where $\widehat{S}_{r_1}({}_{l_1}\overline{\mathcal{R}}_{k_{v_0}+1}^1)$ denotes the codimension $r_1<r$ corner of the moduli space of domains ${}_{l_1}\overline{\mathcal{R}}_{k_{v_0}+1}^1$, and $y_{j_i}$ are 1-periodic orbits of $X_{H_t}$. The identifications of the codimension $r$ corners of the moduli spaces ${}_l^{j,j+1}\overline{\mathcal{R}}_{k+1}^1(x,L,\ring{\beta};P)$, ${}_{l-1}\overline{\mathcal{R}}_{k+1}^{S^1}(x,L,\ring{\beta};P)$ and ${}_{l-1}\overline{\mathcal{R}}_{k+1,\tau_i}^1(x,L,\ring{\beta};P)$ are similar to that of ${}_l\overline{\mathcal{R}}_{k+1}^1(x,L,\ring{\beta};P)$ above, and are therefore omitted.
    \item[(vi)] (Compatibility at corners, II) Let $X$ be either $\overline{\mathcal{R}}_{k+1}(L,\beta;P)$, ${}_l\overline{\mathcal{R}}_{k+1}^1(x,L,\ring{\beta};P)$, $\overline{\mathcal{R}}_{k+2,\vartheta}(L,\beta;P)$, ${}_l^{j,j+1}\overline{\mathcal{R}}_{k+1}^1(x,L,\ring{\beta};P)$, ${}_{l-1}\overline{\mathcal{R}}_{k+1}^{S^1}(x,L,\ring{\beta};P)$ and ${}_{l-1}\overline{\mathcal{R}}_{k+1,\tau_i}^1(x,L,\ring{\beta};P)$. Then for every $l,l'\in\mathbb{N}$, the canonical covering map $\pi_{l,l'}:\widehat{S}_{l'}(\widehat{S}_lX)\rightarrow\widehat{S}_{l+l'}(X)$ coincides with the map defined from the fiber product presentation in (v).
    \item[(vii)] (Cyclic invariance) Let $X$ be one of the moduli spaces $\overline{\mathcal{R}}_{k+1}(L,\beta;P)$, $\overline{\mathcal{R}}_{k+2,\vartheta}(L,\beta;P)$, ${}_l\overline{\mathcal{R}}_{k+1}^1(x,L,\ring{\beta};P)$, ${}_l^{j,j+1}\overline{\mathcal{R}}_{k+1}^1(x,L,\ring{\beta};P)$, and ${}_{l-1}\overline{\mathcal{R}}_{k+1}^{S^1}(x,L,\ring{\beta};P)$. Then the Kuranishi structure on $X$ is invariant under the $\mathbb{Z}_{k+1}$-action induced by the cyclic permutations of boundary marked points $z_0,\cdots,z_k$.
\end{itemize}
\end{theorem}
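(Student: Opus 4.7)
The overall strategy is the familiar one: construct admissible Kuranishi structures and evaluation maps by simultaneous induction on the energy level $m$, on the number $k+l$ of marked points, and on a lexicographic ordering of the domain types, taking for each boundary stratum the fibre-product Kuranishi structure inherited from the previously constructed ones. Finiteness of the inductive steps at each energy level is guaranteed by Lemma \ref{lemma:basic}(iii). The foundational fact I will rely on is the construction in \cite{fooo4}, Sections~16--21, which produces cyclically symmetric Kuranishi structures on the moduli spaces $\overline{\mathcal{R}}_{k+1}(L,\beta)$ of pseudoholomorphic discs. The role of the interval factor $P$ is purely parametric; for $P=[m,m+1]$ one works with Kuranishi structures in families and introduces the $P$-boundary contributions $(-1)^{\varepsilon_7}$ and $(-1)^{\varepsilon_8}$ in (\ref{eq:bd4})--(\ref{eq:bd5}). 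The energy-zero statement (iii) is a direct translation of the fact that $\overline{\mathcal{R}}_{k+2,\vartheta}(L,0)$ is isomorphic to $L\times(\text{Stasheff polytope})$, arising from the identification of $\overline{\mathcal{R}}_{k+2,\vartheta}$ with a sector of $\overline{\mathcal{R}}_{k+1}$ via the map $\pi_{\vartheta,i}$ of Lemma \ref{lemma:erase}.

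I would carry out the induction in the following order. First, handle the purely disc moduli spaces $\overline{\mathcal{R}}_{k+1}(L,\beta;P)$: here one uses the Fukaya--Oh--Ohta--Ono construction verbatim, with the slight variant that all data are chosen to vary smoothly over $P$. Next, construct $\overline{\mathcal{R}}_{k+2,\vartheta}(L,\beta;P)$ by pulling back the Kuranishi structure on $\overline{\mathcal{R}}_{k+1}(L,\beta;P)$ through the cyclic-sector identification, and insist that the resulting Kuranishi data be invariant under the $\mathbb{Z}_{k+2}$-action. At this stage (vii) is automatic, since the structure is tautologically cyclically symmetric. Third, treat the Cohen--Ganatra moduli spaces $_l\overline{\mathcal{R}}_{k+1}^1(x,L,\ring{\beta};P)$ together with their auxiliary variants $_l^{j,j+1}\overline{\mathcal{R}}_{k+1}^1$, $_{l-1}\overline{\mathcal{R}}_{k+1}^{S^1}$ and $_{l-1}\overline{\mathcal{R}}_{k+1,\tau_i}^1$. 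Proposition \ref{proposition:domain-bdy} dictates which strata must appear in the boundary; on each such stratum the fibre-product Kuranishi structure is already available by the induction hypothesis, and I would extend it inwards using the relative construction of \cite{fooo4}. The evaluation maps (ii) are built from the domain-level evaluations and are corner-stratified weak submersions because $\text{ev}_0^{\mathcal{R}}$ is already one on the Fukaya--Oh--Ohta--Ono pieces, and submersivity is preserved by the fibre products along diagonal maps.

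For the compatibility assertions (iv)--(vi), the set-theoretic identifications follow from the explicit description of the compactifications (Gromov limits, real blow-up loci at the puncture, disc bubbling along $\partial S$, cylinder breaking at $\zeta$, and the $|p_1|=\tfrac12$ stratum producing $_{l-1}\overline{\mathcal{R}}_{k+1}^{S^1}$). The assertion is that these identifications upgrade to isomorphisms of admissible K-spaces; this is exactly what it means to define the Kuranishi data by pullback from boundary strata. The sign contributions $\varepsilon_1,\ldots,\varepsilon_{16}$ are obtained by the orientation bookkeeping procedure in Appendix \ref{section:orientation}, running through the standard triples (orientation of the Stasheff cell boundary, orientation of the fibre product over $L$, and orientation of the cylindrical end breaking; the extra $+1$ for $P=[m,m+1]$ reflects the additional factor of $dt$). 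For the cyclic invariance (vii), the choice of Cohen--Ganatra Floer datum was made so that it is $\mathbb{Z}_{k+1}$-equivariant on $_{l-1}\overline{\mathcal{R}}_{k+1}^{S^1}$ and so that the data on the sector identifications $\pi_f^i$ are compatible; provided the obstruction spaces at every inductive step are also chosen $\mathbb{Z}_{k+1}$-equivariantly (which is possible by averaging, since we work over $\mathbb{R}$), the resulting Kuranishi structure is cyclically invariant.

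The main obstacle is the third step: simultaneously arranging, during the inductive construction of the Cohen--Ganatra moduli spaces, all of (a) admissibility, (b) corner-stratified submersivity of $\text{ev}_0$, (c) cyclic $\mathbb{Z}_{k+1}$-equivariance, (d) compatibility between the different variants $_l\overline{\mathcal{R}}_{k+1}^1$, $_l^{j,j+1}\overline{\mathcal{R}}_{k+1}^1$, $_{l-1}\overline{\mathcal{R}}_{k+1}^{S^1}$, $_{l-1}\overline{\mathcal{R}}_{k+1,\tau_i}^1$ via the forgetful maps $\pi_j$, $\pi_j^{S^1}$, $\pi_f^i$ and $\tau_i$, and (e) consistency with the Floer breaking moduli spaces $_j\overline{\mathcal{M}}(y_j,x)$. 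Each of these requirements is standard in isolation, but making them hold simultaneously requires an intricate inductive ordering: one stratifies by total energy $\theta_M(\partial\ring{\beta})+|A_{H_t}(x)|+\varepsilon l$, and within each stratum one first constructs the $S^1$-variants and the sectors $_{l-1}\overline{\mathcal{R}}_{k+1,\tau_i}^1$ in a cyclically equivariant way via the auxiliary-rescaling map $\pi_f^i$ of (\ref{eq:aux-res}), then extends to $_l^{j,j+1}\overline{\mathcal{R}}_{k+1}^1$ using the forgetful compatibility, and only then to the interior of $_l\overline{\mathcal{R}}_{k+1}^1$. The parametrized ($P=[m,m+1]$) extension is done in a final pass once the endpoint structures are in place, using the relative Kuranishi extension lemma. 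Once this bookkeeping is set up, the remaining verifications reduce to routine applications of the gluing theorem in \cite{fooo4}.
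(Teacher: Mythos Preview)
Your overall inductive framework---ordering moduli spaces by energy and marked-point count, building Kuranishi structures on boundary strata first as fibre products, then extending inward---matches the paper's approach, and your enumeration of the set-theoretic boundary strata is correct.

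The substantive divergence is in how you achieve cyclic invariance (vii). You propose to choose obstruction spaces $\mathbb{Z}_{k+1}$-equivariantly ``by averaging.'' The paper instead uses Fukaya's forgetful-map technique (\cite{kfc}, Corollary~3.1): one first constructs a Kuranishi structure on the moduli space ${}_l\overline{\mathcal{R}}^1(x,L,\ring{\beta};P)$ \emph{without} boundary marked points, and then pulls it back along the forgetful map
\[
f_{k+1}:{}_l\overline{\mathcal{R}}_{k+1}^1(x,L,\ring{\beta};P)\longrightarrow{}_l\overline{\mathcal{R}}^1(x,L,\ring{\beta};P)
\]
which erases $z_0,\dots,z_k$. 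Cyclic invariance is then tautological, since $\kappa$ commutes with $f_{k+1}$. Your averaging idea runs into a genuine compatibility problem: the group $\mathbb{Z}_{k+1}$ depends on $k$, but the boundary strata are fibre products of pieces with \emph{different} numbers of boundary marked points (e.g.\ ${}_l\overline{\mathcal{R}}_{k_1+1}^1\,{}_i\times_0\,\overline{\mathcal{R}}_{k_2+1}$ with $k_1+k_2=k+1$), and the cyclic action on the glued space does not decompose into the cyclic actions on the factors. Averaging over $\mathbb{Z}_{k_1+1}$ and $\mathbb{Z}_{k_2+1}$ separately will not, in general, produce a $\mathbb{Z}_{k+1}$-invariant structure after gluing. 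The forgetful-map approach sidesteps this because every marked-point moduli space, for every $k$, pulls back from the same unmarked source.

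The paper also flags a subtlety you omit: when $\partial\ring{\beta}=0$, the unmarked moduli space ${}_l\overline{\mathcal{R}}^1(x,L,\ring{\beta};P)$ acquires an extra boundary stratum ${}_l\overline{\mathcal{M}}(x)\,{}_{\mathit{ev}_0^{\mathcal{M}}}\!\times_M L$, a Floer plane attached to a constant disc, which must be handled separately when defining the pullback Kuranishi chart (see the discussion around (\ref{eq:extra}) and the configuration space $\mathit{Conf}_{k+1}$).
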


\begin{proof}
We shall explain here some important points in the statement of the theorem which are not self-evident.

The constructions of the Kuranishi structures on the moduli spaces ${}_l\overline{\mathcal{R}}_{k+1}^1(x,L,\ring{\beta};P)$, ${}_l^{j,j+1}\overline{\mathcal{R}}_{k+1}^1(x,L,\ring{\beta};P)$, ${}_{l-1}\overline{\mathcal{R}}_{k+1}^{S^1}(x,L,\ring{\beta};P)$, $\overline{\mathcal{R}}_{k+1}^1(x,L,\ring{\beta};P)$ and ${}_{l-1}\overline{\mathcal{R}}^1_{k+1,\tau_i}(x,L,\ring{\beta};P)$ are combinations of the constructions of the Kuranishi structures for the moduli spaces of pseudoholomorphic discs (tree-like K-systems, cf. \cite{fooo5,fooo6}) and that of the moduli spaces of solutions to Floer's equation associated to a time-dependent Hamiltonian function (linear K-systems, cf. \cite{fooo4,fon}). In fact, since in our case the transversality of the moduli spaces ${}_j\overline{\mathcal{M}}(x,y)$ of Floer cylinders can be achieved by carefully choosing Floer data, the constructions of Kuranishi structures on these moduli spaces are essentially reduced to that of $J_M$-holomorphic discs. However, as we will discuss shortly, in order to achieve the invariance property (vii), we need to be a little bit more careful when constructing these Kuranishi structures. The dimension computations are straightforward. The explicit description of a Kuranishi chart of ${}_l\overline{\mathcal{R}}_{k+1}^1(x,L,\ring{\beta};P)$ can be found in Lemma \ref{lemma:chart} below.

Next, we explain how to define the evaluation map
\begin{equation}
{}_l\mathit{ev}^\mathcal{R}=({}_l\mathit{ev}^\mathcal{R}_0,\cdots,{}_l\mathit{ev}^\mathcal{R}_k):{}_l\overline{\mathcal{R}}_k^1(x,L,\ring{\beta})\rightarrow L^{k+1}.
\end{equation} 
For each $(T,\ring{B},v_0)\in\mathcal{G}(k+1,\ring{\beta})$, each $0\leq l_1\leq l$, and each decomposition $l-l_1=\sum_{i=1}^{r_2}j_i$, with $j_i\in\mathbb{Z}_{\geq0}$ for each $1\leq i\leq r_2$, we can define the exterior evaluation map
\begin{equation}
\begin{split}
\mathit{ev}_\mathit{ext}:&\prod_{v\in C_{0,\mathit{int}}(T)\setminus\{v_0\}}\mathcal{R}_{k_v+1}\left(L,\ring{B}(v)\right)\times\prod_{i=1}^{r_2}{}_{j_i}\mathcal{M}(y_{j_i},x)\times{}_{l_1}\mathcal{R}_{k_{v_0}+1}^1\left(y_{j_i},L,\ring{B}(v_0)\right)\rightarrow L^{k+1}
\end{split}
\end{equation}
in the same manner as the maps (\ref{eq:ev-ext}) and (\ref{eq:ext2}) in Section \ref{section:disc}. The map ${}_l\mathit{ev}^\mathcal{R}$ is given by restricting $\mathit{ev}_\mathit{ext}$ to the compactified moduli space
\begin{equation}
\begin{split}
&{}_l\overline{\mathcal{R}}_{k+1}^1(x,L,\ring{\beta})\cong\bigsqcup_{\substack{(T,\ring{B},v_0)\in\mathcal{G}(k+1,\ring{\beta})\\v_0\in C_{0,\mathit{int}}(T)}}\bigsqcup_{\substack{j_1+\cdots+j_{r_2}=l_2\\l_1+l_2=l}}\left(\prod_{e\in C_{1,\mathit{int}}(T)}L\right)\textrm{ }{{}_\Delta\times\mathit{ev}_\mathit{int}}\textrm{ } \\
&\left(\prod_{v\in C_{0,\mathit{int}}(T)\setminus\{v_0\}}\mathcal{R}_{k_v+1}\left(L,\ring{B}(v)\right)\times\prod_{i=1}^{r_2}{}_{j_i}\mathcal{M}(y_{j_i},x)\times{}_{l_1}\mathcal{R}_{k_{v_0}+1}^1\left(y_{j_i},L,\ring{B}(v_0)\right)\right). \\
\end{split}
\end{equation}
This definition directly extends to the moduli spaces ${}_l\overline{\mathcal{R}}_{k+1}^1(x,L,\ring{\beta};P)$ with $P$-coefficients and gives rise to the evaluation map ${}_l\mathit{ev}^{\mathcal{R},P}$ in (ii). The constructions of the evaluation maps (\ref{eq:eva3}), (\ref{eq:eva4}) and (\ref{eq:eva5}) are similar, and we omit the details.
	
The isomorphisms (\ref{eq:bd1}) and (\ref{eq:bd4}) follow from (\ref{eq:st}). Note that in (\ref{eq:bd1}) and (\ref{eq:bd4}), there is no boundary strata of the form
\begin{equation}
\overline{\mathcal{R}}_{k_1+1}(L,\beta_1;P)\textrm{ }{{}_i\times_0}\textrm{ }\overline{\mathcal{R}}_{k_2+2,\vartheta}(L,\beta_2;P).
\end{equation}
This is because according to the construction, the constraint imposed on the boundary marked point with ``the largest label" only makes sense on the disc component with which carries $z_0$.
	
For the isomorphisms (\ref{eq:bd2}) and (\ref{eq:bd4}), note that the codimension $1$ boundary strata of the compactified moduli space ${}_l\overline{\mathcal{R}}_{k+1}^1(x,L,\ring{\beta})$ are covered by the natural inclusions of
\begin{equation}\label{eq:de1}
{}_j\overline{\mathcal{M}}(x,y_j)\times{}_{l-j}\overline{\mathcal{R}}_{k+1}^1(y_j,L,\ring{\beta}),\textrm{ where }1\leq j\leq l,
\end{equation}
\begin{equation}\label{eq:de2}
{}_l^{j,j+1}\overline{\mathcal{R}}_{k+1}^1(x,L,\ring{\beta}),\textrm{ where }1\leq j\leq l-1,
\end{equation}
\begin{equation}\label{eq:de3}
{}_{l-1}\overline{\mathcal{R}}_{k+1}^{S^1}(x,L,\ring{\beta}),
\end{equation}
which come from the degenerations of domains,
\begin{equation}\label{eq:br1}
\overline{\mathcal{M}}(x,y)\times{}_l\overline{\mathcal{R}}_{k+1}^1(y,L,\ring{\beta}),
\end{equation}
which comes from semi-stable breaking, and
\begin{equation}\label{eq:bubb1}
{}_l\overline{\mathcal{R}}_{k_1+1}^1(x,L,\ring{\beta}_1)\textrm{ }{{}_i\times}_0\textrm{ }\overline{\mathcal{R}}_{k_2+1}(L,\beta_2),\textrm{ where }k_1+k_2=k+1,1\leq i\leq k_1\textrm{ and }\ring{\beta}_1+\beta_2=\ring{\beta},
\end{equation}
\begin{equation}\label{eq:bubb2}
\overline{\mathcal{R}}_{k_1+1}(L,\beta_1)\textrm{ }{{}_i\times}_0\textrm{ }{}_l\overline{\mathcal{R}}_{k_2+1}^1(x,L,\ring{\beta}_2),\textrm{ where }k_1+k_2=k+1,1\leq i\leq k_1\textrm{ and }\beta_1+\ring{\beta}_2=\ring{\beta},
\end{equation}
which are disc bubbles (the cases when $k_2\geq2$ and $k_1\geq2$ in (\ref{eq:bubb1}) and (\ref{eq:bubb2}) respectively also correspond to degenerations of domains, see Proposition \ref{proposition:domain-bdy}). \\
In the above, the strata (\ref{eq:de1})-(\ref{eq:de3}) correspond respectively to the strata (\ref{eq:dom-bdy1})-(\ref{eq:dom-bdy4}) in Proposition \ref{proposition:domain-bdy}. Due to the possible non-exactness of $L$ in our case, there can be disc bubbles. By our choice of Floer data as specified in Definition \ref{definition:data}, their contributions give the boundary strata (\ref{eq:bubb1}) and (\ref{eq:bubb2}).

The sign computations in (iv) will be carried out in Appendix \ref{section:orientation}. The compatibility conditions in (iv) and (v) will be explained at the end of this subsection.

For the moduli spaces $\overline{\mathcal{R}}_{k+1}(L,\beta;P)$ and $\overline{\mathcal{R}}_{k+2,\vartheta}(L,\beta;P)$, the invariance property (vii) of the Kuranishi structures follows from the construction of Fukaya (cf.\cite{kfc}, Corollary 3.1). The arguments for the moduli spaces ${}_l\overline{\mathcal{R}}_{k+1}^1(x,L,\ring{\beta};P)$, ${}_l^{j,j+1}\overline{\mathcal{R}}_{k+1}^1(x,L,\ring{\beta};P)$ and ${}_{l-1}\overline{\mathcal{R}}_{k+1}^{S^1}(x,L,\ring{\beta};P)$ are similar. For clarity, we breifly explain here Fukaya's construction. The key point is to observe that \cite{kfc}, Lemma 3.1 holds not only for the case when the domain is a disc, but a punctured disc as well. For simplicity, we consider here only the case of ${}_l\overline{\mathcal{R}}_{k+1}^1(x,L,\ring{\beta};P)$. Let ${}_l\overline{\mathcal{R}}^1(x,L,\ring{\beta};P)$ be the Cohen-Ganatra moduli space constructed as in Section \ref{section:CG}, but using the domains $(S;p_1,\cdots,p_l;\ell)\in{}_l\mathcal{R}^1$ without boundary marked points. This enables us to construct Kuranishi structures on ${}_l\overline{\mathcal{R}}_{k+1}^1(x,L,\ring{\beta};P)$ for various $k\in\mathbb{Z}_{\geq0}$ by pulling back the Kuranishi structure on the moduli space ${}_l\overline{\mathcal{R}}^1(x,L,\ring{\beta};P)$, via the map
\begin{equation}\label{eq:forget}
f_{k+1}:{}_l\overline{\mathcal{R}}_{k+1}^1(x,L,\ring{\beta};P)\rightarrow{}_l\overline{\mathcal{R}}^1(x,L,\ring{\beta};P)
\end{equation}
which forgets all the boundary marked points $z_0,\cdots,z_k$. Here, what is slightly different from the cases of the moduli spaces $\overline{\mathcal{R}}_{k+1}(L,\beta;P)$ and $\overline{\mathcal{R}}_{k+2,\vartheta}(L,\beta;P)$ is that additional care must be taken when $\partial\ring{\beta}=0\in H_1(L;\mathbb{Z})$, in which case there is an extra boundary stratum in $\partial{}_l\overline{\mathcal{R}}^1(x,L,\ring{\beta};P)$ corresponding to a configuration of Floer cylinders (with additional marked points) in ${}_l\overline{\mathcal{M}}(x,\mathit{pt})$, where $\mathit{pt}\in L$ is regarded as a constant orbit of $X_{H_t}$, attached to a constant disc at $\mathit{pt}$. Equivalently, denote by ${}_l\overline{\mathcal{M}}(x)$ the moduli space of pairs $\left((\mathbb{C},p_1,\cdots,p_l),u\right)$, where $p_1,\cdots,p_l\in\mathbb{C}$ are auxiliary marked points satisfying
\begin{equation}
0<|p_1|<\cdots<|p_l|,
\end{equation}
and $u:\mathbb{C}\rightarrow M$ is a solution to the Floer equation with the asymptotic condition at the positive cylindrical end given by $x$. Evaluation at the origin $0\in\mathbb{C}$ defines a map $\mathit{ev}_0^\mathcal{M}:{}_l\overline{\mathcal{M}}(x)\rightarrow M$, with which the extra boundary stratum can be written as the fiber product
\begin{equation}\label{eq:extra}
{}_l\overline{\mathcal{M}}(x)\textrm{ }{{}_{\mathit{ev}_0^\mathcal{M}}\times_M}\textrm{ }L.
\end{equation}
In this case, we need a Kuranishi structure on ${}_l\overline{\mathcal{M}}(x)$ which is compatible with the gluings of Floer trajectories, such that $\mathit{ev}_0^\mathcal{M}$ is strongly continuous and weakly submersive. Since transversalities of (\ref{eq:extra}) can be achieved by appropriately choosing Floer data, this is obviously true with trivial obstruction bundle. Taking a manifold (with corners) chart $U_p$ of ${}_l\overline{\mathcal{M}}(x)$, where $p\in{}_l\overline{\mathcal{M}}(x)$, the restriction of $\mathit{ev}_0^\mathcal{M}$ defines a map $U_p\rightarrow M$. Define $V_p:=(\mathit{ev}_0^\mathcal{M}|_{U_p})^{-1}(L)$, then its product with the compactification of the configuration space
\begin{equation}
\mathit{Conf}_{k+1}:=\left\{(z_0,\cdots,z_k)\in(\partial S)^{k+1}|\textrm{the points }z_i\textrm{'s respect cyclic order}\right\}/S^1
\end{equation}
of $k+1$ cyclically ordered points on the circle can be regarded as a stratum of a Kuranishi neighborhood of the pullback of $(p,q)\in{}_l\overline{\mathcal{M}}(x)\textrm{ }{{}_{\mathit{ev}_0^\mathcal{M}}\times_M}\textrm{ }L\subset\partial{}_l\overline{\mathcal{R}}^1(x,L,\ring{\beta};P)$ in ${}_l\overline{\mathcal{R}}_{k+1}^1(x,L,\ring{\beta};P)$. We can then extend it to a Kuranishi neighborhood of $f_{k+1}^{-1}(p,q)$ in ${}_l\overline{\mathcal{R}}_{k+1}^1(x,L,\ring{\beta};P)$.
\end{proof}

\begin{remark}\label{remark:invariance}
Theorem \ref{theorem:moduli} above is actually parallel to \cite{ki2}, Theorem 7.20. A remarkable difference is that the invariance property (vii) of the Kuranishi structures for certain moduli spaces under cyclic permutations of boundary marked points is not required there. However, this is crucial for our purposes since we are working in the $S^1$-equivariant case, where both the $S^1$-equivariant differential and the chain level string bracket involve cyclic permutations of the marked points on the loops (or equivalently, cyclic permutations of the representatives of the smooth paths $c_0,\cdots,c_k\in\Pi_1L$ whose end points are evaluations of $z_0,\cdots,z_k$). Note also that the moduli space $\overline{\mathcal{R}}_{k+1,\tau_i}^1(x,L,\ring{\beta};P)$ does not admit a Kuranishi structure which is invariant under the cyclic permutations of the boundary marked points. Instead, by our choices of Floer data, the group $\mathbb{Z}_{k+1}$ of cyclic permutations acts transitively on the set of admissible K-spaces $\left\{\overline{\mathcal{R}}_{k+1,\tau_i}^1(x,L,\ring{\beta};P)\right\}_{i=0}^k$.
\end{remark}

From now on, we shall fix Kuranishi structures on the moduli spaces (\ref{eq:m0})---(\ref{eq:m5}) satisfying (ii)-(vii) of Theorem \ref{theorem:moduli}.

We describe explicitly the Kuranishi charts of the compactified Cohen-Ganatra moduli space ${}_l\overline{\mathcal{R}}_{k+1}^1(x,L,\ring{\beta};P)$. Let
\begin{equation}
{}_l\overline{\mathcal{K}}_{k+1}^1(x,L,\ring{\beta};P)
\end{equation}
be the space of pairs (\ref{eq:pair}), where $(S;z_0,\cdots,z_k,p_1,\cdots,p_l;\ell)\in{}_l\overline{\mathcal{R}}_{k+1}^1$, but now $u:S\rightarrow M$ is a $C^\infty$-map satisfying $u(\partial S)\subset L$, $[u]=\ring{\beta}\in\pi_2(M,x,L)$, $\bar{\partial} u=0$ (with respect to $J_M$) in a neighborhood of $\partial S$, and
\begin{equation}\label{eq:Floer}
\partial_su+J_t(\partial_tu-X_{H_t})=0
\end{equation}
on the (positive) cylindrical end, if there is no cylinders breaking off at $\zeta$. When cylinders bubble off at $\zeta$, $u$ is required to satisfy (\ref{eq:Floer}) on the cylindrical end of the main component, and solves the corresponding Floer equations of the form (\ref{eq:F-eq}) on the cylinder components (which may carry some of the auxiliary marked points $p_1,\cdots,p_l$). Moreover, we have the space
\begin{equation}
\overline{\mathcal{K}}_{k+1}(L,\beta;P)
\end{equation}
introduced in \cite{ki2}, which parametrizes pairs the (\ref{eq:pair1}), but now $u:(D,\partial D)\rightarrow(M,L)$ is a $C^\infty$-map satisfying $\bar{\partial}u=0$ on a small neighborhood of $\partial D$ and $[u]=\beta\in\pi_2(M,L)$. Since we can take the obstruction bundles $\mathcal{E}$ in both cases so that every section of it is supported away from $\partial S$ (resp. $\partial D$) and the cylindrical end near $\zeta$ (in fact, the punctured disc of radius $\frac{1}{2}$), the following lemma is straightforward.

\begin{lemma}\label{lemma:chart}
Let $p\in{}_l\overline{\mathcal{R}}_{k+1}^1(x,L,\ring{\beta};P)$ and $\mathcal{U}_p=(U_p,\mathcal{E}_p,s_p,\psi_p)$ be a Kuranishi chart at $p$. Let $(T,\ring{B},v_0)\in\mathcal{G}(k+1,\ring{\beta})$, $0\leq l_1\leq l$, $l-l_1=\sum_{i=1}^{r_2}j_i$, where $j_i\in\mathbb{Z}_{\geq0}$ for each $1\leq i\leq r_2$, such that
\begin{equation}\label{eq:stratum}
\begin{split}
p\in P&\times\left(\prod_{e\in C_{1,\mathit{int}}(T)}L\right)\textrm{ }{{}_\Delta\times\mathit{ev}_\mathit{int}}\textrm{ }\left(\prod_{v\in C_{0,\mathit{int}}(T)\setminus\{v_0\}}\mathcal{R}_{k_v+1}\left(L,\ring{B}(v)\right)\right. \\
&\left.\times\prod_{i=1}^{r_2}{}_{j_i}\mathcal{M}(y_{j_i},x)\times{}_{l_1}\mathcal{R}_{k_{v_0}+1}^1\left(y_{j_i},L,\ring{B}(v_0)\right)\right)
\end{split}
\end{equation}
Then set theoretically $U_p$ can be embedded into
\begin{equation}
\begin{split}
\bigsqcup_{(T',B',v_0)}P&\times\left(\prod_{e\in C_{1,\mathit{int}}(T')}L\right)\textrm{ }{{}_\Delta\times_{\mathit{ev}_\mathit{int}}}\textrm{ }\left(\prod_{v\in C_{0,\mathit{int}}(T')\setminus\{v_0\}}\overline{\mathcal{K}}_{k_v+1}\left(L,B'(v)\right)\right. \\
&\left.\times{}_l\overline{\mathcal{K}}^1_{k_{v_0}+1}\left(x,L,\ring{B}'(v_0)\right)\right),
\end{split}
\end{equation}
where $(T',\ring{B}',v_0)$ runs over all reductions of $(T,\ring{B},v_0)$.
\end{lemma}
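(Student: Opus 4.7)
The plan is to follow the standard construction of Kuranishi charts for stable pseudoholomorphic maps, as in \cite{fooo5,fooo6}, adapted to the Cohen-Ganatra setting where the domain has both a puncture $\zeta$ (with Floer asymptotic) and boundary marked points (with Lagrangian boundary condition on $L$). First I would recall that, given the stable map $p$ lying in the stratum (\ref{eq:stratum}) encoded by $(T,\ring{B},v_0)$, one constructs a Kuranishi neighborhood by: (a) picking finite-dimensional obstruction spaces $\mathcal{E}_v$ at each interior vertex $v\in C_{0,\mathit{int}}(T)$; (b) producing a family of pre-glued maps depending on the original stable map, the $P$-parameter, and gluing parameters attached to the interior edges and the semi-stable breakings at $\zeta$; (c) solving the $\mathcal{E}_p$-modified equation and dividing by the finite isotropy.

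The key observation, which is the mechanism that lets the embedding target be described purely in terms of the $\overline{\mathcal{K}}$-spaces, is that the obstruction bundles $\mathcal{E}_v$ may (and will) be chosen so that every section is supported in a compact subset of the component lying away from $\partial S$ and away from the cylindrical end near $\zeta$ (equivalently, disjoint from the disc of radius $\tfrac12$ used in (\ref{eq:radial})); this is possible by the standard argument because on each component we may enlarge the obstruction space using a finite collection of bump-form sections supported in any chosen open subset of the interior away from $\partial S$ and away from the cylindrical coordinate region. With this choice, any representative $(S',u')$ of a point in $U_p$ automatically satisfies $\bar{\partial} u'=0$ in a neighborhood of $\partial S'$ and Floer's equation (\ref{eq:Floer}) on the cylindrical ends of its main component and on any Floer-cylinder components bubbled off at $\zeta$. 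These are precisely the conditions built into the definitions of $\overline{\mathcal{K}}_{k_v+1}(L,B'(v))$ and ${}_l\overline{\mathcal{K}}_{k_{v_0}+1}^1(x,L,\ring{B}'(v_0))$.

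Next I would enumerate the possible combinatorial types of $(S',u')$ that appear as gluing parameters vary: smoothing a subset of the interior edges of $T$ collapses precisely those edges and sums the $\ring{B}$-decorations on their endpoints, which is the operation of Definition \ref{definition:reduction}. Thus the set of combinatorial types occurring in $U_p$ is indexed by reductions $(T',\ring{B}',v_0)$ of $(T,\ring{B},v_0)$, and for each such reduction the configuration decomposes as a fiber product over $L$ along the remaining interior edges, exactly matching
\begin{equation*}
P\times\left(\prod_{e\in C_{1,\mathit{int}}(T')}L\right)\,{{}_\Delta\times_{\mathit{ev}_\mathit{int}}}\,\left(\prod_{v\neq v_0}\overline{\mathcal{K}}_{k_v+1}\!\left(L,B'(v)\right)\times{}_l\overline{\mathcal{K}}^1_{k_{v_0}+1}\!\left(x,L,\ring{B}'(v_0)\right)\right).
\end{equation*}
Taking the disjoint union over reductions and recording the set-theoretic map that sends each chart point to its underlying (possibly broken) curve yields the embedding claimed in the statement. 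The one step that requires a modest amount of care, rather than being purely formal, is treating the semi-stable breakings at $\zeta$ in a way compatible with the Floer-cylinder moduli ${}_{j_i}\overline{\mathcal{M}}(y_{j_i},x)$ appearing in (\ref{eq:stratum}): this is handled exactly as in Fukaya's construction for linear K-systems \cite{fooo4,fon}, using that our Floer data ensure transversality for ${}_j\overline{\mathcal{M}}(x,y)$ so these do not contribute additional obstruction directions. With this, the embedding is constructed stratum by stratum and glued using the gluing-compatibility built into the Kuranishi structures of Theorem \ref{theorem:moduli}.
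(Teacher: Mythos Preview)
Your sketch is correct and follows exactly the paper's approach: the paper's entire argument is the single observation (stated just before the lemma) that the obstruction bundles $\mathcal{E}$ may be chosen so that every section is supported away from $\partial S$ and from the punctured disc of radius $\tfrac{1}{2}$ near $\zeta$, after which the lemma is declared ``straightforward.'' Your expansion of this into an explicit chart construction, with the combinatorial types indexed by reductions and the semi-stable breakings handled via the transversality of ${}_j\overline{\mathcal{M}}(x,y)$, simply fills in the details the paper omits.
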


Let $\mathcal{C}$ be the collection of all the moduli spaces appeared in Theorem \ref{theorem:moduli}, except for those of the form ${}_{l-1}\overline{\mathcal{R}}_{k+1,\tau_i}^1(x,L,\ring{\bar{a}};P)$, where $\ring{\bar{a}}\in\pi_2(M,x,L)$ and $a=\partial\ring{\bar{a}}\in H_1(L;\mathbb{Z})$. To achieve the compatibility conditions (iv)-(vi) in Theorem \ref{theorem:moduli}, we need a total order on $\mathcal{C}$ with the following property: if a moduli space $X\in\mathcal{C}$ is part of the normalized boundary $\partial Y$ of another moduli space $Y\in\mathcal{C}$, then $X<Y$. Concretely, we choose our total order on $\mathcal{C}$ as follows:
\begin{itemize}
	\item[(i)] $X<Y$ if the $P$-part of $X$ is $\{m\}$ and the $P$-part of $Y$ is $[m',m'+1]$, for any $m,m'\in\mathbb{Z}_{\geq0}$.
	\item[(ii)] $X<Y$ if the $P$-part of $X$ is $\{m\}$ and the $P$-part of $Y$ is $\{m+1\}$.
	\item[(iii)] $X<Y$ if the $P$-part of $X$ is $[m,m+1]$ and the $P$-part of $Y$ is $[m+1,m+2]$.
	\item[(iv)] For any $k_1,k_2,k_3,k_4,k_5,l_1,l_2,l_3,l_4\in\mathbb{Z}_{\geq0}$, $a_1,a_2,a_3,a_4,a_5\in H_1(L;\mathbb{Z})$, $1\leq j\leq l_2-1$ and 1-periodic orbits $x_1,x_2,x_3,x_4,x_5$ of $X_{H_t}$,
	\begin{equation}
	\begin{split}
	{}_{l_1}\overline{\mathcal{M}}(x_1,x_2;\{m\})&<\overline{\mathcal{R}}_{k_1+1}(L,\bar{a}_1;\{m\})<\overline{\mathcal{R}}_{k_2+2,\vartheta}(L,\bar{a}_2;\{m\}) \\
	&<{}_{l_2}^{j,j+1}\overline{\mathcal{R}}_{k_3+1}^1(x_3,L,\ring{\bar{a}}_3;\{m\})<{}_{l_3}\overline{\mathcal{R}}_{k_4+1}^{S^1}(x_4,L,\ring{\bar{a}}_4;\{m\}) \\
	&<{}_{l_4}\overline{\mathcal{R}}_{k_5+1}^1(x_5,L,\ring{\bar{a}}_5;\{m\})
	\end{split}
	\end{equation}
    \item[(v)] If $k_1,k_2,l_1,l_2\in\mathbb{Z}_{\geq0}$, $1\leq j_1\leq l_1-1$, $1\leq j_2\leq l_2-1$ and $a_1,a_2\in H_1(L;\mathbb{Z})$ satisfy $\theta_M(a_1)+(k_1-1)\varepsilon<\theta_M(a_2)+(k_2-1)\varepsilon$, then
    \begin{equation}
    \overline{\mathcal{R}}_{k_1+1}(L,\bar{a}_1;\{m\})<\overline{\mathcal{R}}_{k_2+1}(L,\bar{a}_2;\{m\}),
    \end{equation}
    \begin{equation}
    \overline{\mathcal{R}}_{k_1+2,\vartheta}(L,\bar{a}_1;\{m\})<\overline{\mathcal{R}}_{k_2+2,\vartheta}(L,\bar{a}_2;\{m\}),
    \end{equation}
    \begin{equation}
    {}_{l_1}^{j_1,j_1+1}\overline{\mathcal{R}}_{k_1+1}^1(x,L,\ring{\bar{a}}_1;\{m\})<{}_{l_2}^{j_2,j_2+1}\overline{\mathcal{R}}_{k_2+1}^1(x,L,\ring{\bar{a}}_2;\{m\}),
    \end{equation}
    \begin{equation}
    {}_{l_1}\overline{\mathcal{R}}_{k_1+1}^{S^1}(x,L,\ring{\bar{a}}_1;\{m\})<{}_{l_2}\overline{\mathcal{R}}_{k_2+1}^{S^1}(x,L,\ring{\bar{a}}_2;\{m\}),
    \end{equation}
    \begin{equation}
    {}_{l_1}\overline{\mathcal{R}}_{k_1+1}^1(x,L,\ring{\bar{a}}_1;\{m\})<{}_{l_2}\overline{\mathcal{R}}_{k_2+1}^1(x,L,\ring{\bar{a}}_2;\{m\}).
    \end{equation}
    \item[(vi)] Let $k,l_1,l_2\in\mathbb{Z}_{\geq0}$, $1\leq j_1\leq l_1-1$, $1\leq j_2\leq l_2-1$, $a\in H_1(L;\mathbb{Z})$ and $x_1$, $x_2$, $x_3$, $x_4$ be 1-periodic orbits of $X_{H_t}$. If $A_{H_t}(x_1)<A_{H_t}(x_2)$, then
    \begin{equation}
    {}_{l_1}^{j_1,j_1+1}\overline{\mathcal{R}}_{k+1}^1(x_1,L,\ring{\bar{a}};\{m\})<{}_{l_2}^{j_2,j_2+1}\overline{\mathcal{R}}_{k+1}^1(x_2,L,\ring{\bar{a}};\{m\}),
    \end{equation}
    \begin{equation}
    {}_{l_1}\overline{\mathcal{R}}_{k+1}^{S^1}(x_1,L,\ring{\bar{a}};\{m\})<{}_{l_2}\overline{\mathcal{R}}_{k+1}^{S^1}(x_2,L,\ring{\bar{a}};\{m\}),
    \end{equation}
    \begin{equation}
    {}_{l_1}\overline{\mathcal{R}}_{k+1}^1(x_1,L,\ring{\bar{a}};\{m\})<{}_{l_2}\overline{\mathcal{R}}_{k+1}^1(x_2,L,\ring{\bar{a}};\{m\}).
    \end{equation}
    If $A_{H_t}(x_1)-A_{H_t}(x_2)<A_{H_t}(x_3)-A_{H_t}(x_4)$, then
    \begin{equation}
    {}_{l_1}\overline{\mathcal{M}}(x_1,x_2)<{}_{l_2}\overline{\mathcal{M}}(x_3,x_4).
    \end{equation}
    \item[(vii)] For $c\in\mathbb{R}$, we choose arbitrary total orders on sets
    \begin{equation}
    \left\{{}_l\overline{\mathcal{M}}(x_1,x_2;\{m\})|A_{H_t}(x_1)-A_{H_t}(x_2)=c\right\},
    \end{equation}
    \begin{equation}
    \left\{\overline{\mathcal{R}}_{k+1}(L,\bar{a};\{m\})|\theta_M(a)+(k-1)\varepsilon=c\right\},
    \end{equation}
    \begin{equation}
    \left\{\overline{\mathcal{R}}_{k+2,\vartheta}(L,\bar{a};\{m\})|\theta_M(a)+(k-1)\varepsilon=c\right\},
    \end{equation}
    \begin{equation}
    \left\{{}_l^{j,j+1}\overline{\mathcal{R}}_{k+1}^1(x,L,\ring{\bar{a}};\{m\})|\theta_M(a)+(k-1)\varepsilon=c\right\},
    \end{equation}
    \begin{equation}
    \left\{{}_{l-1}\overline{\mathcal{R}}_{k+1}^{S^1}(x,L,\ring{\bar{a}};\{m\})|\theta_M(a)+(k-1)\varepsilon=c\right\},
    \end{equation}
    \begin{equation}
    \left\{{}_l\overline{\mathcal{R}}_{k+1}^1(x,L,\ring{\bar{a}};\{m\})|\theta_M(a)+(k-1)\varepsilon=c\right\}.
    \end{equation}
    \item[(viii)] For each $m\in\mathbb{Z}_{\geq0}$, we take a total order on the moduli spaces with $P=[m,m+1]$ in a similar manner as (iv)---(vii) above.
\end{itemize}

We haven't dealt with the moduli spaces ${}_{l-1}\overline{\mathcal{R}}_{k+1,\tau_i}^1(x,L,\ring{\bar{a}};P)$ when defining the total ordering above. In this case, we have a (codimension 0) embedding of admissible K-spaces
\begin{equation}
{}_{l-1}\overline{\mathcal{R}}_{k+1,\tau_i}^1(x,L,\ring{\bar{a}};P)\hookrightarrow{}_{l-1}\overline{\mathcal{R}}_{k+1}^{S^1}(x,L,\ring{\bar{a}};P)
\end{equation}
induced from the auxiliary-rescaling map (\ref{eq:aux-res}). See the proof of Lemma \ref{lemma:ana} below for details. Thus the compatibility at corners for ${}_{l-1}\overline{\mathcal{R}}_{k+1,\tau_i}^1(x,L,\ring{\bar{a}};P)$ follows essentially from the compatibility at corners for ${}_{l-1}\overline{\mathcal{R}}_{k+1}^{S^1}(x,L,\ring{\bar{a}};P)$, which is guaranteed by the existence of the total order on $\mathcal{C}$ specified above.

\subsection{Strongly smooth evaluation maps}\label{section:smooth}

In this subsection, we show the existence of strongly smooth maps from the moduli spaces $\overline{\mathcal{R}}_{k+2,\vartheta}(L,\beta)$, ${}_l\overline{\mathcal{R}}_{k+1}^1(x,L,\ring{\beta})$, ${}_{l-1}\overline{\mathcal{R}}_{k+1}^{S^1}(x,L,\ring{\beta})$, ${}_{l-1}\overline{\mathcal{R}}_{k+1,\tau_i}^1(x,L,\ring{\beta})$ and ${}_l^{j,j+1}\overline{\mathcal{R}}_{k+1}^1(x,L,\ring{\beta})$ to the spaces $\mathcal{L}_{k+2}$ and $\mathcal{L}_{k+1}$ introduced in Section \ref{section:de Rham}, such that natural compatibility conditions are satisfied. 

We start by observing that for $\beta\in\pi_2(M,L)$ and $(T,B)\in\mathcal{G}(k+1,\beta)$, there is a smooth evaluation map
\begin{equation}
\mathit{ev}_\mathit{int}^P:\prod_{v\in C_{0,\mathit{int}}(T)}P\times\mathcal{L}_{k_v+1}(\partial B(v))\rightarrow\prod_{e\in C_{1,\mathit{int}}(T)}(P\times L)^2
\end{equation}
defined in the same way as (\ref{eq:ev-int}). Using the concatenation map (\ref{eq:con}), which is also smooth, we obtain a smooth map
\begin{equation}\label{eq:ev-con}
\left(\prod_{e\in C_{1,\mathit{int}}(T)}P\times L\right)\textrm{ }{{}_\Delta\times_{\mathit{ev}_\mathit{int}^P}}\textrm{ }\left(\prod_{v\in C_{0,\mathit{int}}(T)}P\times\mathcal{L}_{k_v+1}(\partial B(v))\right)\rightarrow P\times\mathcal{L}_{k+1}(\partial\beta).
\end{equation}

As we have mentioned in Remark \ref{remark:model}, with Wang's model of $\mathcal{L}_{k+1}$ in place of the space of Moore loops, there is no essential difficulties in defining strongly smooth maps from these moduli spaces to $\mathcal{L}_{k+1}$, therefore the proof of the following proposition is greatly simplified. Compare with \cite{ki2}, Proposition 7.26, where Irie constructed strongly continuous map from these moduli spaces to the space of continuous Moore loops.

\begin{proposition}\label{proposition:smev}
For $k,m\in\mathbb{Z}_{\geq0}$, and $P=\{m\}$ or $[m,m+1]$, there are strongly smooth maps
\begin{equation}\label{eq:sc1}
\mathit{Ev}^\mathcal{R}:\overline{\mathcal{R}}_{k+1}(L,\beta;P)\rightarrow P\times\mathcal{L}_{k+1}(\partial\beta),\textrm{ where }\theta_M(\partial\beta)<(m+1-k)\varepsilon,
\end{equation}
\begin{equation}\label{eq:sc2}
\mathit{Ev}^{\mathcal{R}}_\vartheta:\overline{\mathcal{R}}_{k+2,\vartheta}(L,\beta;P)\rightarrow P\times\mathcal{L}_{k+1}(\partial\beta),\textrm{ where }\theta_M(\partial\beta)<(m+1-k)\varepsilon,
\end{equation}
\begin{equation}\label{eq:sc3}
{}_l\mathit{Ev}^\mathcal{R}:{}_l\overline{\mathcal{R}}_{k+1}^1(x,L,\ring{\beta};P)\rightarrow P\times\mathcal{L}_{k+1}(\partial\ring{\beta}),\textrm{ where }\theta_M(\partial\ring{\beta})<(m-k-U)\varepsilon,
\end{equation}
\begin{equation}\label{eq:sc4}
{}_{l-1}\mathit{Ev}^{S^1}:{}_{l-1}\overline{\mathcal{R}}_{k+1}^{S^1}(x,L,\ring{\beta};P)\rightarrow P\times\mathcal{L}_{k+1}(\partial\ring{\beta}),\textrm{ where }\theta_M(\partial\ring{\beta})<(m-k-U)\varepsilon,
\end{equation}
\begin{equation}\label{eq:sc5}
{}_l^{j,j+1}\mathit{Ev}^\mathcal{R}:{}_l^{j,j+1}\overline{\mathcal{R}}_{k+1}^1(x,L,\ring{\beta};P)\rightarrow P\times\mathcal{L}_{k+1}(\partial\ring{\beta}),\textrm{ where }\theta_M(\partial\ring{\beta})<(m-k-U)\varepsilon,
\end{equation}
\begin{equation}\label{eq:sc6}
{}_{l-1}\mathit{Ev}_i:{}_{l-1}\overline{\mathcal{R}}_{k+1,\tau_i}^1(x,L,\ring{\beta};P)\rightarrow P\times\mathcal{L}_{k+1}(\partial\ring{\beta}),\textrm{ where }\theta_M(\partial\ring{\beta})<(m-k-U)\varepsilon,
\end{equation}
such that the following diagram commutes for every $(T,B)\in\mathcal{G}(k+2,\beta)$:
\begin{equation}
    \begin{tikzcd}[font=\small]
    &(\prod_eP\times L){{}_\Delta\times_{\mathit{ev}_\mathit{int}^P}}\left(\prod_{v\neq v_0}\mathcal{R}_{k_v+1}(B(v);P)\times\mathcal{R}_{k_{v_0}+1,\vartheta}(B(v_0);P)\right) \arrow[r] \arrow[d] &\overline{\mathcal{R}}_{k+2,\vartheta}(\beta;P) \arrow[d] \\
    &(\prod_eP\times L){{}_\Delta\times_{\mathit{ev}_\mathit{int}^P}}\left(\prod_vP\times\mathcal{L}_{k_v+1}(\partial B(v))\right) \arrow[r,"(\ref{eq:ev-con})"] &P\times\mathcal{L}_{k+2}(\partial\beta)
    \end{tikzcd}
\end{equation}
where the first horizontal map is defined from (\ref{eq:corner1}) by setting $d=0$, and the vertical maps are given by the $\mathit{Ev}^\mathcal{R}$ and $\mathit{Ev}_\vartheta^\mathcal{R}$ above; and the diagram
\begin{equation}\label{eq:cd2}
	\begin{tikzcd}[font=\small]
	&(\prod_eP\times L){{}_\Delta\times_{\mathit{ev}_\mathit{int}^P}}\left(\prod_{v\neq v_0}\mathcal{R}_{k_v+1}\left(\ring{B}(v);P\right)\times{}_l\mathcal{R}^1_{k_{v_0}+1}\left(\ring{B}(v_0);P\right)\right) \arrow[r] \arrow[d] &{}_l\overline{\mathcal{R}}_{k+1}^1(\ring{\beta};P) \arrow[d] \\
	&(\prod_eP\times L){{}_\Delta\times_{\mathit{ev}_\mathit{int}^P}}\left(\prod_vP\times\mathcal{L}_{k_v+1}\left(\partial \ring{B}(v)\right)\right) \arrow[r,"(\ref{eq:ev-con})"] &P\times\mathcal{L}_{k+1}(\partial\ring{\beta})
	\end{tikzcd}
\end{equation}
commutes for every $(T,\ring{B},v_0)\in\mathcal{G}(k+1,\ring{\beta})$, where the first horizontal map is defined from (\ref{eq:corner2}) by setting $d=0$, and the vertical maps are given by ${}_l\mathit{Ev}^\mathcal{R}$. 

In the above, we have abbreviated the notations of the moduli spaces so that the boundary conditions specified by the Lagrangian submanifold $L$ and the asymptotic conditions specified by a Hamiltonian orbit $x$ are omitted. In the commutative diagram (\ref{eq:cd2}) above, we can also include cylinder bubbles in $\prod_{i=1}^{r_2}{}_{j_i}\mathcal{M}(y_{j_i},x)$, just as in (\ref{eq:stratum}). This will not affect the compatibility. There are similar compatibility diagrams for the strongly smooth maps $\mathit{Ev}^\mathcal{R}$, ${}_{l-1}\mathit{Ev}^{S^1}$, ${}_l^{j,j+1}\mathit{Ev}^\mathcal{R}$ and ${}_{l-1}\mathit{Ev}_i$, which we will not write down since they are not explicitly used in our argument in Section \ref{section:proof}.
\end{proposition}
\begin{proof}
Once the strongly smoothmaps (\ref{eq:sc1})-(\ref{eq:sc6}) are defined, the commutativity of the corresponding diagrams are straightforward. For completeness, we will sketch the definition of the strongly smooth map ${}_l\mathit{Ev}^\mathcal{R}:{}_l\overline{\mathcal{R}}_{k+1}^1(x,L,\ring{\beta};P)\rightarrow P\times\mathcal{L}_{k+1}(\partial\ring{\beta})$. For each point $p\in{}_l\overline{\mathcal{R}}_{k+1}(x,L,\ring{\beta};P)$, let $\mathcal{U}_p=(U_p,\mathcal{E}_p,s_p,\psi_p)$ be a Kuranishi chart at $p$, we need to define a smooth map ${}_l\mathit{Ev}^\mathcal{R}_p:U_p\rightarrow P\times\mathcal{L}_{k+1}$, which is compatible with coordinate changes. By Lemma \ref{lemma:chart}, $x\in U_p$ can be identified with an element of
\begin{equation}
P\times\left(\prod_{e\in C_{1,\mathit{int}}(T')}L\right)\textrm{ }{{}_\Delta\times_{\mathit{ev}_\mathit{int}}}\textrm{ }\left(\prod_{v\in C_{0,\mathit{int}}(T')\setminus\{v_0\}}\overline{\mathcal{K}}_{k_v+1}\left(L,\ring{B}'(v)\right)\times{}_l\overline{\mathcal{K}}_{k_{v_0}+1}^1\left(x,L,\ring{B}'(v_0)\right)\right),
\end{equation}
with $(T',\ring{B}',v_0)$ a reduction of $(T,\ring{B},v_0)\in\mathcal{G}(k+1,\ring{\beta})$. Using this description, we can define smooth maps $\mathit{ev}_{v_0}:{}_l\overline{\mathcal{K}}_{k_{v_0}+1}^1\left(x,L,\ring{B}'(v_0)\right)\rightarrow\mathcal{L}_{k_{v_0}+1}\left(\partial\ring{B}'(v_0)\right)$ and $\mathit{ev}_v:\overline{\mathcal{K}}_{k_v+1}\left(L,\ring{B}'(v)\right)\rightarrow\mathcal{L}_{k_{v_0}+1}\left(\partial\ring{B}'(v_0)\right)$ for each $v\in C_{0,\mathit{int}}(T')\setminus\{v_0\}$ in a straightforward way by evaluating at the boundary marked points. The map ${}_l\mathit{Ev}_p^\mathcal{R}$ is defined by taking concatenations of the images of $\mathit{ev}_v$ for different $v\in C_{0,\mathit{int}}(T')$, therefore it is also smooth. The compatibility of the family of maps $({}_l\mathit{Ev}_p^\mathcal{R})_{p\in{}_l\overline{\mathcal{R}}^1_{k+1}(x,L,\ring{\beta};P)}$ with coordinate changes follows directly from the definition.
\end{proof}

\subsection{Proof of the main theorem}\label{section:proof}

We prove Theorem \ref{theorem:approximate} in this subsection, therefore complete the proof of Theorem \ref{theorem:main}, the main result of this paper. 

From now on, let $M$ be a Liouville manifold with $c_1(M)=0$ admitting a cyclic dilation $\tilde{b}\in\mathit{SH}_{S^1}^1(M)$, whose cochain level representative is $\tilde{\beta}=\sum_{l=0}^\infty\beta_l\otimes u^{-l}\in\mathit{SC}_{S^1}^1(M)$, where $\beta_l\in\mathit{SC}^{2l+1}(M)$. Note that there are only finitely many $\beta_l$ that are non-vanishing. By definition of a cyclic dilation, we have
\begin{equation}\label{eq:cdilation}
B_c(\tilde{\beta})=e_M-\partial\beta_{-1}
\end{equation}
for some $\beta_{-1}\in\mathit{SC}^{-1}(M)$, where $B_c$ is the cochain level marking map
\begin{equation}\label{eq:marking}
B_c:=\sum_{l=0}^\infty\delta_{l+1}u^l:\mathit{SC}_{S^1}^\ast(M)\rightarrow\mathit{SC}^{\ast-1}(M),
\end{equation}
and $e_M\in\mathit{SC}^0(M)$ is the cochain level representative of the identity. We want to fix the cochain $x$ in the definition of Cohen-Ganatra moduli spaces so that it comes from the cochains $\{\beta_l\}_{l\in\mathbb{Z}_{\geq0}}$ in the expression of the cyclic dilation $\tilde{\beta}$, together with $\beta_{-1}$. For a closed Lagrangian submanifold $L\subset M$ which is oriented and $\mathit{Spin}$ relative to the gerbe $\alpha$, fix classes $a_{l-1}\in H_1(L;\mathbb{Z})$ for each $l\in\mathbb{Z}_{\geq0}$ such that $a_{l-1}=0$ if $\beta_{l-1}=0$, and $\sum_{l=0}^\infty a_{l-1}=a$. For $\ring{\bar{a}}_{l-1}\in\pi_2(M,\beta_{l-1},L)$ (when $\beta_l$ consists of a linear combination of Hamiltonian orbits, $\ring{\bar{a}}_{l-1}$ is a linear combination of the corresponding homotopy classes) with $\partial\ring{\bar{a}}_{l-1}=a_{l-1}$, and $P\in\left\{\{m\},[m,m+1]\right\}$ for some $m\in\mathbb{Z}_{\geq0}$, consider the moduli spaces
\begin{equation}
{}_l\mathcal{R}_{k+1}^1(\beta_{l-1},L,\ring{\bar{a}}_{l-1};P),\textrm{ where }l\in\mathbb{Z}_{\geq0}.
\end{equation}
Set $\ring{\bar{a}}=\sum_{l=0}^\infty\ring{\bar{a}}_{l-1}$. It follows that $\partial\ring{\bar{a}}=a$. 

\begin{remark}\label{remark:subcritical}
When $M$ is a subcritical Weinstein manifold, the symplectic cohomology vanishes and so is the cyclic dilation, therefore we can pick $\tilde{\beta}\equiv0$. Thus we only need to consider the moduli space $\mathcal{R}_{k+1}^1(\beta_{-1},L,\ring{\bar{a}};P)$, where $\beta_{-1}$ is the primitive of the identity $e_M$. This moduli space is closely related to the moduli space $\mathcal{N}_{k+1}^{\geq0}(L,\bar{a};P)$ studied by Fukaya and Irie (see \cite{ki2}, Section 7.2, the definition of the unmarked case has been recalled in Section \ref{section:Lag}). In fact, one can show that there is an identification (as compact K-spaces) between the boundary component $\overline{\mathcal{R}}_{k+1}^1(e_M,L,\bar{a};P)\subset\partial\overline{\mathcal{R}}_{k+1}^1(\beta_{-1},L,\ring{\bar{a}};P)$ arising from a cylinder breaking off at the puncture $\zeta$, and the boundary component $\overline{\mathcal{N}}(L,\bar{a};P)\subset\partial\overline{\mathcal{N}}_{k+1}^{\geq0}(L,\bar{a};P)$ defined by setting the inhomogeneous term to be $0$.
\end{remark}

Recall that we have the following data for every $k,m,l\in\mathbb{Z}_{\geq0}$ and $P\in\{\{m\},[m,m+1]\}$.
\begin{itemize}
	\item[(i)] Compact admissible K-spaces (when $l=0$, the moduli spaces (\ref{eq:S1}), (\ref{eq:tau}) and (\ref{eq:jj}) are empty; when $l=1$, the moduli space (\ref{eq:jj}) is empty)
	\begin{equation}\label{eq:d}
	\overline{\mathcal{R}}_{k+1}(L,\bar{a};P),\textrm{ }\theta_M(a)<(m+1-k)\varepsilon,
	\end{equation}
	\begin{equation}\label{eq:theta}
    \overline{\mathcal{R}}_{k+2,\vartheta}(L,\bar{a};P),\textrm{ }\theta_M(a)<(m+1-k)\varepsilon,
	\end{equation}
    \begin{equation}\label{eq:CG}
    {}_l\overline{\mathcal{R}}_{k+1}^1(\beta_{l-1},L,\ring{\bar{a}}_{l-1};P),\textrm{ }\theta_M(a)<(m-k-U)\varepsilon,
    \end{equation}
    \begin{equation}\label{eq:S1}
    {}_{l-1}\overline{\mathcal{R}}^{S^1}_{k+1}(\beta_{l-1},L,\ring{\bar{a}}_{l-1};P),\textrm{ }\theta_M(a)<(m-k-U)\varepsilon,
    \end{equation}
    \begin{equation}\label{eq:tau}
    {}_{l-1}\overline{\mathcal{R}}_{k+1,\tau_i}^1(\beta_{l-1},L,\ring{\bar{a}}_{l-1};P),\textrm{ }\theta_M(a)<(m-k-U)\varepsilon,
    \end{equation}
    \begin{equation}\label{eq:jj}
    {}_l^{j,j+1}\overline{\mathcal{R}}_{k+1}^1(\beta_{l-1},L,\ring{\bar{a}}_{l-1};P),\textrm{ }\theta_M(a)<(m-k-U)\varepsilon,
    \end{equation}
    and admissible CF-perturbations on these moduli spaces.
    
    Moreover, consider the $\mathbb{Z}_{k+1}$-action on these moduli spaces induced by cyclic permutations of the boundary marked points $z_0,\cdots,z_k$, we also require that the Kuranishi structures and CF-perturbations on (\ref{eq:d}), (\ref{eq:theta}), (\ref{eq:CG}), (\ref{eq:S1}) and (\ref{eq:jj}) to be $\mathbb{Z}_{k+1}$-invariant. For the moduli spaces (\ref{eq:tau}), we require $\mathbb{Z}_{k+1}$ acts transitively on the set $\left\{{}_{l-1}\overline{\mathcal{R}}_{k+1,\tau_i}^1(\beta_{l-1},L,\ring{\bar{a}}_{l-1};P),\widehat{\mathcal{S}}_i^\varepsilon\right\}_{i=0}^k$ of moduli spaces together with their admissible CF-perturbations.
    \item[(ii)] Admissible maps (cf. Proposition \ref{proposition:smev})
    \begin{equation}\label{eq:ev1}
    \mathit{Ev}^\mathcal{R}:\overline{\mathcal{R}}_{k+1}(L,\bar{a};P)\rightarrow P\times\mathcal{L}_{k+1}(a),
    \end{equation}
    \begin{equation}\label{eq:ev2}
    \mathit{Ev}_\vartheta^\mathcal{R}:\overline{\mathcal{R}}_{k+2,\vartheta}(L,\bar{a};P)\rightarrow P\times\mathcal{L}_{k+2}(a),
    \end{equation}
    \begin{equation}\label{eq:ev3}
    {}_l\mathit{Ev}^\mathcal{R}:{}_l\overline{\mathcal{R}}_{k+1}^1(\beta_{l-1},L,\ring{\bar{a}}_{l-1};P)\rightarrow P\times\mathcal{L}_{k+1}(a_{l-1}),
    \end{equation}
    \begin{equation}\label{eq:ev4}
    {}_{l-1}\mathit{Ev}^{S^1}: {}_{l-1}\overline{\mathcal{R}}^{S^1}_{k+1}(\beta_{l-1},L,\ring{\bar{a}}_{l-1};P)\rightarrow P\times\mathcal{L}_{k+1}(a_{l-1}),
    \end{equation}
    \begin{equation}\label{eq:ev5}
    {}_{l-1}\mathit{Ev}_i:{}_{l-1}\overline{\mathcal{R}}_{k+1,\tau_i}^1(\beta_{l-1},L,\ring{\bar{a}}_{l-1};P)\rightarrow P\times\mathcal{L}_{k+1}(a_{l-1}),
    \end{equation}
    \begin{equation}\label{eq:ev6}
    {}_l^{j,j+1}\mathit{Ev}^\mathcal{R}:{}_l^{j,j+1}\overline{\mathcal{R}}_{k+1}^1(\beta_{l-1},L,\ring{\bar{a}}_{l-1};P)\rightarrow P\times\mathcal{L}_{k+1}(a_{l-1}),
    \end{equation}
    such that their compositions with $\mathit{id}_P\times\mathit{ev}_0^\mathcal{L}$ are corner stratified strong submersions with respect to the CF-perturbations fixed in (i).
    \item[(iii)] Isomorphisms of admissible K-spaces in (\ref{eq:bd1})---(\ref{eq:bd5}). We require these isomorphisms to be compatible with the CF-perturbations in (i) and the evaluation maps in (ii).
\end{itemize}

\begin{remark}\label{remark:CF}
As the cyclic invariance of Kuranishi structures on $X$ (cf. the proof of Theorem \ref{theorem:moduli}), the existence of cyclically invariant CF-perturbations for the moduli spaces (\ref{eq:m0}), (\ref{eq:m1}), (\ref{eq:m2}), (\ref{eq:m3}) and (\ref{eq:m4}) follows essentially from Fukaya's argument in \cite{kfc}, Section 5.
\end{remark}

Applying Theorem \ref{theorem:pushforward} in Appendix \ref{section:pushforward} to the moduli spaces in (i) above, we have well-defined (relative) de Rham chains (where we have abbreviated the evaluation maps (\ref{eq:ev1})---(\ref{eq:ev6}) above as $\mathit{Ev}$)
\begin{equation}\label{eq:c1}
x_m(k):=\sum_{\theta_M(a)<(m+1-k)\varepsilon}(-1)^{n+1}\mathit{Ev}_\ast\left(\overline{\mathcal{R}}_{k+1}(L,\bar{a};\{m\})\right)\in C_{-1},
\end{equation}
\begin{equation}\label{eq:c2}
\bar{x}_m(k):=\sum_{\theta_M(a)<(m+1-k)\varepsilon}(-1)^{k+1}\mathit{Ev}_\ast\left(\overline{\mathcal{R}}_{k+1}(L,\bar{a};[m,m+1])\right)\in\overline{C}_{-1},
\end{equation}
\begin{equation}\label{eq:c3}
x_{m,0}(k+1):=\sum_{\theta_M(a)<(m+1-k)\varepsilon}(-1)^{n+1}\mathit{Ev}_\ast\left(\overline{\mathcal{R}}_{k+2,\vartheta}(L,\bar{a};\{m\})\right)\in C_{-2},
\end{equation}
\begin{equation}\label{eq:c4}
\bar{x}_{m,0}(k+1):=\sum_{\theta_M(a)<(m+1-k)\varepsilon}(-1)^k\mathit{Ev}_\ast\left(\overline{\mathcal{R}}_{k+2,\vartheta}(L,\bar{a};[m,m+1])\right)\in\overline{C}_{-2},
\end{equation}
\begin{equation}\label{eq:c5}
y_{m,0}(k):=\sum_{\theta_M(a)<(m-U-k)\varepsilon}\sum_{l=0}^\infty(-1)^{n+k+1}\mathit{Ev}_\ast\left({}_l\overline{\mathcal{R}}_{k+1}^1(\beta_{l-1},L,\ring{\bar{a}}_{l-1};\{m\})\right)\in C_2,
\end{equation}
\begin{equation}\label{eq:c6}
y_{m,1}(k+1):=\sum_{\theta_M(a)<(m-U-k-1)\varepsilon}\sum_{l=1}^\infty(-1)^{n+k+1}\mathit{Ev}_\ast\left({}_{l-1}\overline{\mathcal{R}}_{k+2}^1(\beta_{l-1},L,\ring{\bar{a}}_{l-1};\{m\})\right)\in C_0,
\end{equation}
\begin{equation}\label{eq:c7}
\bar{y}_{m,0}(k):=\sum_{\theta_M(a)<(m-U-k)\varepsilon}\sum_{l=0}^\infty\mathit{Ev}_\ast\left({}_l\overline{\mathcal{R}}_{k+1}^1(\beta_{l-1},L,\ring{\bar{a}}_{l-1};[m,m+1])\right)\in\overline{C}_2,
\end{equation}
\begin{equation}\label{eq:c8}
\bar{y}_{m,1}(k+1):=\sum_{\theta_M(a)<(m-U-k-1)\varepsilon}\sum_{l=1}^\infty\mathit{Ev}_\ast\left({}_{l-1}\overline{\mathcal{R}}_{k+2}^1(\beta_{l-1},L,\ring{\bar{a}}_{l-1};[m,m+1])\right)\in\overline{C}_0,
\end{equation}
\begin{equation}\label{eq:c9}
z_{m}(k):=\sum_{\theta_M(a)<(m-1-k)\varepsilon}(-1)^{n+k+1}\mathit{Ev}_\ast\left(\overline{\mathcal{R}}_{k+1}^1(e_M,L,\bar{a};\{m\})\right)\in C_1,
\end{equation}
\begin{equation}\label{eq:c10}
\bar{z}_{m}(k):=-\sum_{\theta_M(a)<(m-1-k)\varepsilon}\mathit{Ev}_\ast\left(\overline{\mathcal{R}}_{k+1}^1(e_M,L,\bar{a};[m,m+1])\right)\in\overline{C}_1,
\end{equation}
where the chains (\ref{eq:c1}) and (\ref{eq:c2}) were defined in \cite{ki2}, Section 7.6. In the above, the sums are taken over $a\in H_1(L;\mathbb{Z})$, which give the components of the chains (resp. relative chains) in $C_\ast(k)$ (resp. $\overline{C}_\ast(k)$). One can further take sums over all $k\in\mathbb{Z}_{\geq0}$ to obtain the full chains $x_m,\cdots,\bar{z}_m$, or restrict to a particular $a\in H_1(L;\mathbb{Z})$ to obtain the (relative) chains $x_m(a,k),\cdots,\bar{z}_m(a,k)$. Note that these chains give rise to chains in the quotient complexes $C_\ast^\mathit{nd}$ and $\overline{C}_\ast^\mathit{nd}$ under the natural projections $C_\ast\rightarrow C_\ast^\mathit{nd}$ and $\overline{C}_\ast\rightarrow\overline{C}_\ast^\mathit{nd}$. By abuse of notations, we shall denote the corresponding chains in $C_\ast^\mathit{nd}$ and $\overline{C}_\ast^\mathit{nd}$ by $x_m,\cdots,\bar{z}_m$ as well. 

Using the chains (\ref{eq:c3})---(\ref{eq:c10}) (realized as chains in $C_\ast^\mathit{nd}$ and $\overline{C}_\ast^\mathit{nd}$), we can form the $S^1$-equivariant (relative) de Rham chains
\begin{equation}\label{eq:t1}
\tilde{x}_m(k):=x_{m,0}(k)\otimes1\in C_{-2}^{S^1},
\end{equation}
\begin{equation}\label{eq:t2}
\bar{\tilde{x}}_m(k):=\bar{x}_{m,0}(k)\otimes1\in\overline{C}_{-2}^{S^1},
\end{equation}
\begin{equation}\label{eq:t3}
\tilde{y}_m(k,k+1):=y_{m,0}(k)\otimes1+y_{m,1}(k+1)\otimes u^{-1}\in C_2^{S^1},
\end{equation}
\begin{equation}\label{eq:t4}
\bar{\tilde{y}}_m(k,k+1):=\bar{y}_{m,0}(k)\otimes1+\bar{y}_{m,1}(k+1)\otimes u^{-1}\in \overline{C}_2^{S^1},
\end{equation}
\begin{equation}\label{eq:t5}
\tilde{z}_m(k):=z_m(k)\otimes1\in C_1^{S^1},
\end{equation}
\begin{equation}\label{eq:t6}
\bar{\tilde{z}}_m(k):=\bar{z}_m(k)\otimes1\in\overline{C}_1^{S^1}.
\end{equation}
In the above definitions, we have specified the $k$-components of the $x$ and $z$-type chains, but this is not the case for the $y$-type chains, where we have combined the chain $y_{m,0}$ (resp. $\bar{y}_{m,0}$) over the $k$-component and the chain $y_{m,1}$ (resp. $\bar{y}_{m,1}$) over the $(k+1)$-component. This is because we will be dealing with the $S^1$-equivariant differentials of these chains, so $\partial\left(y_{m,0}(k)\right)$ (resp. $\partial\left(\bar{y}_{m,0}(k)\right)$) and $\delta_\mathit{cyc}^\mathit{nd}\left(y_{m,1}(k+1)\right)$ (resp. $\delta_\mathit{cyc}^\mathit{nd}\left(\bar{y}_{m,1}(k+1)\right)$) both lie in the $(a,k)$-component of $C_\ast^\mathit{nd}$ (resp. $\overline{C}_\ast^\mathit{nd}$).

For later purposes, we also introduce the following auxiliary chains (as before, they can be regarded as chains in $\overline{C}_1^\mathit{nd}$):
\begin{equation}\label{eq:c11}
\bar{y}^{j,j+1}_m(k):=\sum_{\theta_M(a)<(m-k-U)\varepsilon}\sum_{l=2}^\infty\mathit{Ev}_\ast\left({}_l^{j,j+1}\overline{\mathcal{R}}_{k+1}^1(\beta_{l-1},L,\ring{\bar{a}}_{l-1};[m,m+1])\right)\in\overline{C}_1,
\end{equation}
\begin{equation}\label{eq:c12}
\bar{y}_m^{S^1}(k):=\sum_{\theta_M(a)<(m-k-U)\varepsilon}\sum_{l=1}^\infty\mathit{Ev}_\ast\left({}_{l-1}\overline{\mathcal{R}}_{k+1}^{S^1}(\beta_{l-1},L,\ring{\bar{a}}_{l-1};[m,m+1])\right)\in\overline{C}_1.
\end{equation}
\begin{equation}\label{eq:c13}
\bar{y}_{m,1}^i(k):=\sum_{\theta_M(a)<(m-k-U)\varepsilon}\sum_{l=1}^\infty\mathit{Ev}_\ast\left({}_{l-1}\overline{\mathcal{R}}_{k+1,\tau_i}^1(\beta_{l-1},L,\ring{\bar{a}}_{l-1};[m,m+1])\right)\in\overline{C}_1.
\end{equation}

\begin{remark}
In the definitions of the de Rham chains (\ref{eq:c1})---(\ref{eq:c10}), (\ref{eq:c11}), (\ref{eq:c12}), and (\ref{eq:c13}) above, we have omitted the differential form $\hat{\omega}\equiv1$ and CF-perturbations (compare with Theorem \ref{theorem:pushforward}). The superscripts and subscripts for various evaluation maps (\ref{eq:ev1})---(\ref{eq:ev6}) have also been left out for simplicity of notations. For the precise choices of the parameters of CF-perturbations, we refer the reader to \cite{ki2}, Remark 7.36.

The moduli spaces ${}_j\mathcal{M}(\beta_{l-1},y;P)$, where $y$ is an orbit of $X_{H_t}$, will also play an essential role in the argument below, but we didn't include them in the above discussions, since their transversalities can be achieved with standard inhomogeneous perturbations, therefore do not involve choices of Kuranishi structures or CF-perturbations.
\end{remark}

To prove Theorem \ref{theorem:approximate}, we start with the following observations.

\begin{lemma}\label{lemma:vartheta}
We have the following relation for (relative) de Rham chains in $\overline{C}_\ast^\mathit{nd}$:
\begin{equation}
\bar{x}_m(k)=\sum_{i=1}^{k+1}(-1)^{|\bar{x}_{m,0}|+k(i-1)}(\overline{R}_{k+1})^i_\ast\bar{x}_{m,0}(k+1)\circ_{k+2-i}\bar{e}_L,
\end{equation}
for any $k\in\mathbb{Z}_{\geq0}$.
\end{lemma}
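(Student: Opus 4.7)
The strategy is to interpret the right-hand side as the pushforward of a sector decomposition of the unconstrained moduli space $\overline{\overline{\mathcal{R}}}_{k+1}(L,\bar{a};[m,m+1])$ into $k+1$ pieces, each identified with a copy of $\overline{\overline{\mathcal{R}}}_{k+2,\vartheta}(L,\bar{a};[m,m+1])$ via the forgetful maps $\pi_{\vartheta,i}$ of Section~\ref{section:disc}. Concretely, the first step is to upgrade Lemma~\ref{lemma:erase} from the level of abstract domains to the level of holomorphic discs with Lagrangian boundary: since $\overline{\mathcal{R}}_{k+1}(L,\bar{a})$ (resp.\ its constrained version) is a fiber product over $\overline{\mathcal{R}}_{k+1}$ (resp.\ $\overline{\mathcal{R}}_{k+2,\vartheta}$), the embeddings $(-1)^{(k+1)i}\pi_{\vartheta,i}$ lift, and one obtains the oriented identity
\begin{equation*}
\bigl[\overline{\overline{\mathcal{R}}}_{k+1}(L,\bar{a};[m,m+1])\bigr]=\sum_{i=1}^{k+1}(-1)^{(k+1)i}\,\pi_{\vartheta,i,\ast}\bigl[\overline{\overline{\mathcal{R}}}_{k+2,\vartheta}(L,\bar{a};[m,m+1])\bigr]
\end{equation*}
modulo codimension-$1$ strata. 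The content of this step is essentially topological and uses only that $\pi_{\vartheta,i}$ is a codimension-$0$ embedding onto an open sector, together with the sign computation of Lemma~\ref{lemma:erase}; it extends to the Kuranishi-theoretic setting because our Kuranishi structures were chosen to be cyclically invariant (Theorem~\ref{theorem:moduli}(vii) together with Remark~\ref{remark:CF}), so that pullback along $\pi_{\vartheta,i}$ is unambiguous.

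The second step is to identify, on the loop-space side, the composition $\overline{\mathit{Ev}}^\mathcal{R}\circ\pi_{\vartheta,i}$ with the algebraic operation appearing in the BV formula. By the very definition of $\pi_{\vartheta,i}$, applying it followed by $\overline{\mathit{Ev}}^\mathcal{R}$ amounts to taking a disc with boundary loop carrying $k+2$ marked points, rotating the labels so that the original $z_i$ becomes the new base point, and then dropping the original $z_0$ from the list of marked points. On the chain complex $\overline{C}_\ast^\mathit{nd}$, cyclic relabeling is precisely $(\overline{R}_{k+1})_\ast^i$, while deleting a marked point at position $k+2-i$ is realized by fiber product with the unit $\bar{e}_L$ via $\circ_{k+2-i}\bar{e}_L$. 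Consequently, up to the sign $(-1)^n$ contributed by the convention in~(\ref{eq:fpr}),
\begin{equation*}
\overline{\mathit{Ev}}^\mathcal{R}_\ast\,\pi_{\vartheta,i,\ast}\bigl[\overline{\overline{\mathcal{R}}}_{k+2,\vartheta}(L,\bar{a};[m,m+1])\bigr]=(\overline{R}_{k+1})^i_\ast\,\overline{\mathit{Ev}}^\mathcal{R}_{\vartheta,\ast}\bigl[\overline{\overline{\mathcal{R}}}_{k+2,\vartheta}(L,\bar{a};[m,m+1])\bigr]\circ_{k+2-i}\bar{e}_L.
\end{equation*}
Substituting this into the sector decomposition above, and then rewriting $\overline{\mathit{Ev}}^\mathcal{R}_\ast[\overline{\overline{\mathcal{R}}}_{k+1}]$ as $(-1)^{-(k+1)}\bar{x}_m(k)$ and $\overline{\mathit{Ev}}_{\vartheta,\ast}^\mathcal{R}[\overline{\overline{\mathcal{R}}}_{k+2,\vartheta}]$ as $(-1)^{-k}\bar{x}_{m,0}(k+1)$ according to the sign conventions in definitions~(\ref{eq:c2}) and~(\ref{eq:c4}), produces the asserted identity.

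The main obstacle is to track the combination of signs honestly: the covering sign $(-1)^{(k+1)i}$ from Lemma~\ref{lemma:erase}, the sign $(-1)^n$ built into the relative fiber product~(\ref{eq:fpr}), and the normalization signs $(-1)^{k+1}$ and $(-1)^k$ appearing in~(\ref{eq:c2}) and~(\ref{eq:c4}) must combine to the factor $(-1)^{|\bar{x}_{m,0}|+k(i-1)}=(-1)^{k(i-1)}$ mandated by the definition of $\bar{\delta}_\mathit{cyc}$ in~(\ref{eq:bar-BV}); this bookkeeping ultimately reduces to the orientation conventions recorded in Appendix~\ref{section:orientation}. Beyond the signs, one should verify that no extra contributions arise from the codimension-$1$ loci missed by the sector decomposition: these correspond to configurations where the constrained angle collides with another marked point, and their contributions either cancel pairwise between adjacent sectors or are absorbed into degenerate chains which vanish after passing to the quotient $\overline{C}_\ast^\mathit{nd}$, in parallel with the analogous cancellations observed in~\cite{ki1}. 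Once these points are settled, the lemma follows directly by matching the two sides term by term in the index $i$.
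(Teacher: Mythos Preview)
Your proposal is correct and follows essentially the same approach as the paper: both arguments lift the sector decomposition of Lemma~\ref{lemma:erase} to the moduli spaces of holomorphic discs (using the cyclic invariance of the Kuranishi data), identify the effect of $\pi_{\vartheta,i}$ on the pushforward chains with the algebraic operation $\sigma_{k,k+1-i}\circ\tau_{k+1}^i$ (cyclic rotation followed by capping with the unit), and then track the signs to obtain the $N_k\circ s_0$ form of $\bar{\delta}_\mathit{cyc}$. The paper packages the second step as a commutative diagram and invokes Theorems~\ref{theorem:fp} and~\ref{theorem:cyclic} explicitly, whereas you describe the same identification verbally, but the content is identical.
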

\begin{proof}
It follows from Theorem \ref{theorem:fp} (its variation for admissible K-spaces and relative de Rham chains) and Theorem \ref{theorem:cyclic} in Appendix \ref{section:pushforward} that the diagram
\begin{equation}
\begin{tikzcd}
    \overline{\mathcal{R}}_{k+2,\vartheta}(L,\bar{a};[m,m+1]) \arrow[rrrr, "\phi_{\vartheta,i}"] \arrow[d,"\mathit{Ev}_\ast"']
	&&&& \overline{\mathcal{R}}_{k+1}(L,\bar{a};[m,m+1]) \arrow[d, "\mathit{Ev}_\ast"] \\
	\overline{C}_\ast(a,k+1) \arrow[rrrr, "(-1)^{|\bar{x}_{m,0}|+k(i-1)}\sigma_{k,k+1-i}\circ\tau_{k+1}^i"]
	&&&& \overline{C}_\ast(a,k)
\end{tikzcd}
\end{equation}
commutes for every $a\in H_1(L;\mathbb{Z})$ and $k\in\mathbb{Z}_{\geq0}$, where the map $\phi_{\vartheta,i}$ is induced from the map $(-1)^{(k+1)i}\pi_{\vartheta,i}$ on domains considered in Section \ref{section:disc}, which identifies $\mathcal{R}_{k+2,\vartheta}$ with an open sector of the moduli space $\mathcal{R}_{k+1}$. Here the operations $\tau_{k+1}$ and $\sigma_{k,k+1-i}$ are defined in Section \ref{section:BV}. It follows from Lemma \ref{lemma:erase} that the union of the images of $\phi_{\vartheta,i}$ covers $\overline{\mathcal{R}}_{k+1}(L,\bar{a};[m,m+1])$ up to codimension 1 strata, which do not affect the de Rham chain in the quotient complex $\overline{C}_\ast^\mathit{nd}$ defined via the pushforward $\mathit{Ev}_\ast$.

It remains to explain the signs before the maps $\sigma_{k,k+1-i}\circ\tau_{k+1}^i$. Recall the operators $s_1:\overline{C}_\ast(k+1)\rightarrow\overline{C}_{\ast+1}(k)$ and $\lambda_k:\overline{C}_\ast(k)\rightarrow\overline{C}_\ast(k)$ from the proof of Lemma \ref{lemma:strict}. We claim that the identity
\begin{equation}\label{eq:lam}
\begin{split}
\bar{x}_m(k)&=N_k\circ s_0\left(\bar{x}_{m,0}(k+1)\right) \\
&=\left(\sum_{i=0}^{k}\lambda_{k}^i\right)(\lambda_{k}\circ s_1\circ\lambda_{k+1}^{-1})\left(\bar{x}_{m,0}(k+1)\right)
\end{split}
\end{equation}
holds. To see this, we need to identify the effect on the de Rham chain $\bar{x}_{m,0}(k+1)$ induced by cyclically permuting the boundary marked points $z_0,\cdots,z_{k+1}$ with the algebraic operation $\lambda_{k+1}$, and identify the effect on the de Rham chain $\lambda_{k+1}^{-1}\left(\bar{x}_{m,0}(k+1)\right)$ induced by forgetting the marked point $z_1$ with the algebraic operation $s_1$. The former will be done in the proof of Theorem \ref{theorem:cyclic} in Appendix \ref{section:pushforward}, where it is shown that with our choices of Kuranishi structures and CF-perturbations on the moduli space $\overline{\overline{\mathcal{R}}}_{k+2,\vartheta}(L,\bar{a};[m,m+1])$, changing the labeling of the boundary marked points from $z_0,z_1\cdots,z_{k+1}$ to $z_{k+1},z_0,\cdots,z_k$ has the effect of applying $(-1)^{k+1}(\overline{R}_{k+1})_\ast$ to $\bar{x}_{m,0}(k+1)$, which is by definition $\lambda_{k+1}$. The latter follows from a comparison of the orientations of $\overline{\mathcal{R}}_{k+2,f_1}(L,\bar{a};[m,m+1])$ (the moduli space of pseudoholomorphic discs bounded by $L$ with $z_1\in\partial D$ marked as forgotten, cf. Section \ref{section:CG}) with the fiber product $\overline{\mathcal{R}}_{k+2,\vartheta}(L,\bar{a};[m,m+1])\textrm{ }{{}_1\times_0}\textrm{ }\overline{\mathcal{R}}_1^1(e_M,L,0;[m,m+1])$, which gives the sign $|\bar{x}_{m,0}|+1$, where $\overline{\mathcal{R}}_1^1(e_M,L,0;[m,m+1])$ is the moduli space defining the unit $\bar{e}_L$. Now the total sign $|\bar{x}_{m,0}|+k(i-1)$ is the sum of the sign $|\bar{x}_{m,0}|+1$ with a sign $k+1$ coming from the comparison between $\lambda_{k+1}^{-1}$ and $\tau_{k+1}^{-1}$, a sign $k$ from $\lambda_k$, and a sign $ki$ from $\lambda_k^i$.
\end{proof}
	
We then analyze the codimension $1$ boundary strata
\begin{equation}\label{eq:st1}
\bigsqcup_{0\leq j\leq l}{}_j\overline{\mathcal{M}}(\beta_{l-1},y_{j,l};[m,m+1])\times{}_{l-j}\overline{\mathcal{R}}_{k+1}^1(y_{j,l},L,\ring{\bar{a}}_{l-1};[m,m+1]),
\end{equation}
where the $y_{j,l}$'s are 1-periodic orbits of $X_{H_t}$,
\begin{equation}\label{eq:st2}
\bigsqcup_{1\leq j\leq l-1}{}_l^{j,j+1}\overline{\mathcal{R}}_{k+1}^1(\beta_{l-1},L,\ring{\bar{a}}_{l-1};[m,m+1]),
\end{equation}
and
\begin{equation}\label{eq:st3}
{}_{l-1}\overline{\mathcal{R}}_{k+1}^{S^1}(\beta_{l-1},L,\ring{\bar{a}}_{l-1};[m,m+1])
\end{equation}
of the admissible K-space ${}_l\overline{\mathcal{R}}_{k+1}^1(\beta_{l-1},L,\ring{\bar{a}}_{l-1};[m,m+1])$ (cf. (\ref{eq:bd5})), and identify their contributions in terms of chain level string topology operations.

\begin{lemma}\label{lemma:ana}
The following identities hold in $\overline{C}_\ast^\mathit{nd}$ for any $k,l\in\mathbb{Z}_{\geq0}$:
\begin{equation}\label{eq:id1}
\sum_{l=0}^\infty\sum_{j=0}^ly_{j,l}=\sum_{l=1}^\infty\sum_{j=0}^l\delta_j(\beta_{l-1})=e_M,
\end{equation}
\begin{equation}\label{eq:id2}
\bar{y}_m^{j,j+1}(k)=0,
\end{equation}
\begin{equation}\label{eq:id3}
\bar{y}_m^{S^1}(k)=\delta_\mathit{cyc}^\mathit{nd}\left(\bar{y}_{m,1}(k+1)\right),
\end{equation}
where the $\delta_j$'s are structure maps on the $S^1$-complex $\mathit{SC}^\ast(M)$, with $\delta_0=\partial$, and $\delta_\mathit{cyc}^\mathit{nd}$ is the BV operator on the strict $S^1$-complex $\overline{C}_\ast^\mathit{nd}$ defined by (\ref{eq:dnd}).
\end{lemma}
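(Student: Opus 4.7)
The plan is to prove the three identities separately: (\ref{eq:id1}) is a cochain-level translation of the cyclic dilation hypothesis, while (\ref{eq:id2}) and (\ref{eq:id3}) are geometric consequences extracted from the behaviour of the pushforward maps along the boundary strata (\ref{eq:st2}) and (\ref{eq:st3}), respectively.

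For (\ref{eq:id1}) the first equality is by unwinding definitions: the moduli space ${}_j\overline{\mathcal{M}}(\beta_{l-1},y;[m,m+1])$ is precisely the one whose signed count of rigid elements defines the $S^1$-complex structure map $\delta_j$ applied to $\beta_{l-1}$, and allowing $y$ to range over all $1$-periodic orbits of $X_{H_t}$ reproduces $\delta_j(\beta_{l-1})$. The second equality is then the cochain-level relation (\ref{eq:cdilation}) defining a cyclic dilation, rearranged so that the primitive $\beta_{-1}$ (which the left-hand side absorbs as the $l=0$ contribution $y_{0,0} = \delta_0(\beta_{-1}) = \partial\beta_{-1}$) is moved to the opposite side.

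For (\ref{eq:id2}) I would invoke the consistency built into our Cohen-Ganatra Floer datum: along the stratum ${}_l^{j,j+1}\overline{\mathcal{R}}_{k+1}^1$ the Floer datum is conformally equivalent to the pullback, under the forgetful map $\pi_j$ of (\ref{eq:fj}) (which has one-dimensional $S^1$-fibres), of the Floer datum on ${}_{l-1}\overline{\mathcal{R}}_{k+1}^1$. Consequently the parametrised moduli space ${}_l^{j,j+1}\overline{\overline{\mathcal{R}}}_{k+1}^1(\beta_{l-1},L,\ring{\bar{a}}_{l-1};[m,m+1])$ fibres over ${}_{l-1}\overline{\overline{\mathcal{R}}}_{k+1}^1(\beta_{l-1},L,\ring{\bar{a}}_{l-1};[m,m+1])$ with $S^1$-fibres, and the evaluation map factors through this forgetful map. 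Integration along the fibres of the constant form $\hat{\omega}\equiv 1$ then yields zero, exactly as in the standard argument that forgetting a free marked point annihilates a pushforward of lower form-degree.

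The main difficulty is identity (\ref{eq:id3}). I would start by decomposing
\begin{equation*}
{}_{l-1}\overline{\mathcal{R}}_{k+1}^{S^1} = \bigsqcup_{i=0}^{k}{}_{l-1}\overline{\mathcal{R}}_{k+1}^{S_{i,i+1}^1}
\end{equation*}
into the cyclic sectors indexed by the arc of $\partial S$ at which the radial direction through $p_1$ points, and then use the auxiliary-rescaling diffeomorphisms $\pi_f^i$ of (\ref{eq:aux-res}) to identify each sector with the moduli space ${}_{l-1}\overline{\mathcal{R}}_{k+1,\tau_i}^1$ carrying an auxiliary-forgotten boundary marked point $z_f$. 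This already gives $\bar{y}_m^{S^1}(k) = \sum_{i=0}^{k}\bar{y}_{m,1}^i(k)$. The subtle step is to match each summand $\bar{y}_{m,1}^i(k)$ with one of the terms on the right-hand side of (\ref{eq:bar-BV}) applied to $\bar{y}_{m,1}(k+1)$. For this I would use the $\mathbb{Z}_{k+2}$-invariance of our Kuranishi structures and CF-perturbations (cf. Remark \ref{remark:CF} and (vii) of Theorem \ref{theorem:moduli}): a cyclic permutation of the boundary labels identifies $\bar{y}_{m,1}^i(k)$ with $(\overline{R}_{k+1})_\ast^{j}\bar{y}_{m,1}(k+1) \circ_{k+2-j}\bar{e}_L$ for $j = k+1-i$, since the auxiliary point $z_f$ is by construction \emph{forgotten} when we interpret $\partial S$ as a Moore loop with marked points, which on de Rham chains corresponds precisely to a fibre product with the unit relative chain $\bar{e}_L$. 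Summing over $i$ reproduces the defining formula (\ref{eq:bar-BV}) of $\bar{\delta}_\mathit{cyc}$ applied to $\bar{y}_{m,1}(k+1)$. The signs, which are the most delicate point, I would recover by the orientation bookkeeping of Appendix \ref{section:orientation}, following the same pattern as in the proof of Lemma \ref{lemma:vartheta}.
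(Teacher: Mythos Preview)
Your proposal is correct and follows essentially the same route as the paper: the same factorization argument for (\ref{eq:id2}), and the same sector decomposition via the auxiliary-rescaling maps $\pi_f^i$ combined with the ``forgotten point $=$ fibre product with $\bar{e}_L$'' principle for (\ref{eq:id3}). The one ingredient you leave implicit in (\ref{eq:id1}) is the $S^1$-equivariant cocycle condition $\partial^{S^1}\tilde\beta=0$: relation (\ref{eq:cdilation}) alone only identifies the diagonal contribution $\sum_{l\geq 0}\delta_l(\beta_{l-1})=\partial\beta_{-1}+B_c(\tilde\beta)=e_M$, while the off-diagonal terms $\delta_j(\beta_{l-1})$ with $j<l$ in the double sum are killed precisely by the cocycle relations, as the paper makes explicit.
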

\begin{proof}
First, observe that the moduli spaces ${}_j\overline{\mathcal{M}}(\beta_{l-1},y_{j,l})$ in (\ref{eq:st1}) are rigid due to dimension reasons. Since counting rigid elements of ${}_j\overline{\mathcal{M}}(\beta_{l-1},y_{j,l})$ gives the image of $\beta_{l-1}$ under the operator $\delta_j$, it follows that $\sum_{l=0}^\infty\sum_{j=0}^ly_{j,l}=\sum_{l=0}^\infty\sum_{j=0}^l\delta_j(\beta_{l-1})$. The $S^1$-equivariant cocycle condition satisfied by $\tilde{\beta}=\sum_{l=0}^\infty\beta_l\otimes u^{-l}$ implies that
\begin{equation}
\sum_{l=1}^\infty\sum_{j=1}^{l-1}\delta_j(\beta_{l-1})=0,
\end{equation}
so
\begin{equation}
\sum_{l=0}^\infty\sum_{j=0}^ly_{j,l}=\sum_{l=0}^\infty y_{l,l}=\sum_{l=0}^\infty\delta_l(\beta_{l-1}).
\end{equation}
By (\ref{eq:marking}), the right-hand side gives $B_c(\tilde{\beta})+\partial\beta_{-1}$, which is equal to the identity $e_M$ by (\ref{eq:cdilation}). This proves (\ref{eq:id1}).

For the second identity, observe that the forgetful map (\ref{eq:fj}) induces an isomorphism
\begin{equation}\label{eq:split}
{}_l^{j,j+1}\overline{\mathcal{R}}_{k+1}^1(\beta_{l-1},L,\ring{\bar{a}}_{l-1};[m,m+1])\cong{}_{l-1}\overline{\mathcal{R}}_{k+1}^1(\beta_{l-1},L,\ring{\bar{a}}_{l-1};[m,m+1])\times S^1
\end{equation}
as admissible K-spaces. Since the admissible map ${}_l^{j,j+1}\mathit{Ev}^\mathcal{R}$ factors as the projection onto the first factor followed by ${}_{l-1}\mathit{Ev}^\mathcal{R}$, and the virtual fundamental chain of the moduli space ${}_l^{j,j+1}\overline{\mathcal{R}}_{k+1}^1(\beta_{l-1},L,\ring{\bar{a}}_{l-1};[m,m+1])$ is product-like, with non-trivial degree in the $S^1$-factor, we conclude that
\begin{equation}
\mathit{Ev}_\ast\left({}_l^{j,j+1}\overline{\mathcal{R}}_{k+1}^1(\beta_{l-1},L,\ring{\bar{a}}_{l-1};[m,m+1])\right)=0
\end{equation}
in $\overline{C}_1^\mathit{nd}$. This proves (\ref{eq:id2}).
	
The proof of the third identity is inspired by that of \cite{sg1}, Proposition 5.9. We use the decomposition (up to some codimension $1$ strata) of the moduli space ${}_{l-1}\mathcal{R}_{k+1}^{S^1}$ into sectors ${}_{l-1}\mathcal{R}_{k+1,\tau_i}^1$ considered in Section \ref{section:CG}, which induces a decomposition of the moduli space ${}_{l-1}\overline{\mathcal{R}}_{k+1}^{S^1}(\beta_{l-1},L,\ring{\bar{a}}_{l-1};[m,m+1])$ into the corresponding sectors ${}_{l-1}\overline{\mathcal{R}}_{k+1,\tau_i}^1(\beta_{l-1},L,\ring{\bar{a}}_{l-1};[m,m+1])$. More precisely, there is an embedding of abstract moduli spaces
\begin{equation}
\bigsqcup_{i=0}^k{}_{l-1}\mathcal{R}_{k+1,\tau_i}^1\xrightarrow{\bigsqcup_i\pi_f^i}\bigsqcup_{i=0}^k{}_{l-1}\mathcal{R}_{k+1}^{S_{i,i+1}^1}\hookrightarrow{}_{l-1}\mathcal{R}_{k+1}^{S^1},
\end{equation}
which is compatible with our choices of Floer data, and covers all but a codimension $1$ locus in the target (compare with Lemma \ref{lemma:erase}). Since codimension $1$ strata do not affect the de Rham chain defined by the pushforward $\mathit{Ev}_\ast$ in the quotient complex $\overline{C}_\ast^\mathit{nd}$, the chain defined by ${}_{l-1}\overline{\mathcal{R}}_{k+1}^{S^1}(\beta_{l-1},L,\ring{\bar{a}}_{l-1};[m,m+1])$ is a sum of the chains defined by the sectors
\begin{equation}
\bigsqcup_{i=0}^k{}_{l-1}\overline{\mathcal{R}}_{k+1,\tau_i}^1(\beta_{l-1},L,\ring{\bar{a}}_{l-1};[m,m+1])\xrightarrow{\cong}\bigsqcup_{i=0}^k{}_{l-1}\overline{\mathcal{R}}_{k+1}^{S_{i,i+1}^1}(\beta_{l-1},L,\ring{\bar{a}}_{l-1};[m,m+1]),
\end{equation}
where the identification is induced by the auxiliary-rescaling maps $\left\{\pi_f^i\right\}_{i=0}^k$. It follows that
\begin{equation}
\begin{split}
\bar{y}_m^{S^1}(k)&=\sum_{\theta_M(a)<(m-k-U)\varepsilon}\sum_{l=1}^\infty\mathit{Ev}_\ast\left({}_{l-1}\overline{\mathcal{R}}_{k+1}^{S^1}(\beta_{l-1},L,\ring{\bar{a}}_{l-1};[m,m+1])\right) \\
&=\sum_{i=0}^k\sum_{\theta_M(a)<(m-k-U)\varepsilon}\sum_{l=1}^\infty\mathit{Ev}_\ast\left({}_{l-1}\overline{{\mathcal{R}}}_{k+1,\tau_i}^1(\beta_{l-1},L,\ring{\bar{a}}_{l-1};[m,m+1])\right) \\
&=\sum_{i=0}^k\bar{y}_{m,1}^i(k).
\end{split}
\end{equation}
In the definitions of the moduli spaces contributing to the chains $\bar{y}_{m,1}(k+1)\in\overline{C}_0$, we have required that the boundary marked points $z_0,\cdots,z_{k+1}\in\partial S$ being distinct, therefore by definitions of $\delta_{k,j}:C_\ast(k-1)\rightarrow C_\ast(k)$ these chains do not have degenerate part. Applying Lemma \ref{lemma:delta} (more precisely, its variation for the relative de Rham complex) to these chains we see that in order to deduce (\ref{eq:st3}), it remains to show that
\begin{equation}\label{eq:BV}
\bar{y}_{m,1}^i(k)=(-1)^{k(k-i)}(\overline{R}_{k+1})_\ast^{k+1-i}\bar{y}_{m,1}(k+1)\circ_{i+1}\bar{e}_L
\end{equation}
in $\overline{C}_\ast^\mathit{nd}$ for any $0\leq i\leq k$. By cyclically permuting the labels of the boundary marked points $z_0,\cdots,z_k$ for an element of the moduli space ${}_{l-1}\overline{\mathcal{R}}_{k+1,\tau_i}^1(\beta_{l-1},L,\ring{\bar{a}}_{l-1};[m,m+1])$, we can achieve that the auxiliary marked point $z_f\in\partial S$ lies between $z_0$ and $z_1$. By our choices of Floer data (cf. Section \ref{section:CG}) and CF-perturbations (cf. (i) at the beginning of this section), the chain $\bar{y}_{m,1}^i(k)$ is obtained by applying the algebraic operator $\lambda^{-i}$ to the de Rham chain $\bar{y}_{m,1}^0(k)$ defined by the moduli space ${}_{l-1}\overline{\mathcal{R}}_{k+1,\tau_0}^1(\beta_{l-1},L,\ring{\bar{a}}_{l-1};[m,m+1])$, see our arguments for Lemma \ref{lemma:vartheta} and Theorem \ref{theorem:cyclic}. Thus it is enough to prove (\ref{eq:BV}) for $i=0$. It is a principle explained in the proof of \cite{sg1}, Lemma 5.8 that marking $z_f$ as auxiliary is equivalent to putting it back in as an ordinary marked point and then erasing it by taking fiber product with the unit $\bar{e}_L$. Since treating $z_f$ as an ordinary marked point gives the moduli space ${}_{l-1}\overline{\mathcal{R}}_{k+2}^1(\beta_{l-1},L,\ring{\bar{a}};[m,m+1])$, rotated so that $z_f$ now play the role of $z_1$ under the new labeling, it follows that
\begin{equation}
\bar{y}_{m,1}^0(k)=(s_1\circ\lambda^{-1}_{k+1})\left(\bar{y}_{m,1}(k+1)\right),
\end{equation}
which is exactly the identity (\ref{eq:BV}) when $i=0$. The justification for signs is the same as in the proof of Lemma \ref{lemma:vartheta}.
\end{proof}

\begin{remark}
We describe here an alternative approach to prove the identity (\ref{eq:BV}). To do this, we equip the Cohen-Ganatra moduli space ${}_l\mathcal{R}_{k+1}^1(x,L,\ring{\bar{a}})$ with the Floer data so that a disc bubble develops (at the boundary point determined by the line segment connecting the origin and $p_1$) when $|p_1|\rightarrow\frac{1}{2}$. See the right-hand side of Figure \ref{fig:alter}. This gives an alternative compactification ${}_l\widehat{\mathcal{R}}_{k+1}^1(x,L,\ring{\bar{a}})$ of the moduli space ${}_l\mathcal{R}_{k+1}^1(x,L,\ring{\bar{a}})$, which is also an admissible K-space. When $l=1$ and $L\subset M$ is exact, such a compactification is used in \cite{ss} to deduce the identity (2.14) there. When $l\geq2$, a little bit more care must be taken when choosing the Floer data if we want to keep the splitting (\ref{eq:split}) induced by the forgetful map. Here, we choose the Floer data so that when $|p_1|$ is sufficiently close to $\frac{1}{2}$, it creates a ``neck region" with gluing parameter
\begin{equation}
r=\log\left(\frac{1-2|p_2|}{1-2|p_1|}\right).
\end{equation}
In other words, when $|p_1|\rightarrow\frac{1}{2}$, $r\rightarrow\infty$, a disc bubble appears, but when moving towards the codimension 2 stratum corresponding to $|p_1|=|p_2|=\frac{1}{2}$, the disc bubble shrinks back to a point.

\begin{figure}
	\centering
	\begin{tikzpicture}
		\filldraw[draw=black,color={black!15},opacity=0.5] (0,0) circle (1.5);
		\draw (0,0) circle [radius=1.5];
		\draw [orange] [dashed] (0,0) circle [radius=0.75];
		
		\draw (1.5,0) node[circle,fill,inner sep=1pt] {};
		\draw (0,1.5) node[circle,fill,inner sep=1pt] {};
		\draw (-1.5,0) node[circle,fill,inner sep=1pt] {};
		\draw (0,-1.5) node[circle,fill,inner sep=1pt] {};
		
		\draw [blue] (1.06,1.06) node[circle,fill,inner sep=1pt] {};
		\node [blue] at (1.15,1.25) {\small $z_f$};
		\draw [blue,dashed] (0,0)--(1.06,1.06);
		
		\draw [orange] (0.53,0.53) node[circle,fill,inner sep=1pt] {};
		\draw [orange] (0,-0.3) node[circle,fill,inner sep=1pt] {};
		\draw [orange] (-0.5,0) node[circle,fill,inner sep=1pt] {};
		\draw [teal] [->] (0,0) to (0,-0.3);
		\node [orange] at (0,-0.5) {\small $p_3$};
		\node [orange] at (-0.5,-0.2) {\small $p_2$};
		
		\node at (1.75,0) {$z_0$};
		\node at (0,1.75) {$z_1$};
		\node at (-1.75,0) {$z_2$};
		\node at (0,-1.75) {$z_3$};
		
		\draw (0,0) node {$\times$};
		\node at (0,0.25) {$\zeta$};
		
		\filldraw[draw=black,color={black!15},opacity=0.5] (5,0) circle (1.5);
		\draw (5,0) circle [radius=1.5];
		\draw [orange] [dashed] (5,0) circle [radius=0.75];
		\filldraw[draw=black,color={black!15},opacity=0.5] (6.4136,1.4136) circle (0.5);
		\draw (6.4136,1.4136) circle [radius=0.5];
		
		\draw (6.5,0) node[circle,fill,inner sep=1pt] {};
		\draw (5,1.5) node[circle,fill,inner sep=1pt] {};
		\draw (3.5,0) node[circle,fill,inner sep=1pt] {};
		\draw (5,-1.5) node[circle,fill,inner sep=1pt] {};
		\node at (6.75,0) {$z_0$};
		\node at (5,1.75) {$z_2$};
		\node at (3.25,0) {$z_3$};
		\node at (5,-1.75) {$z_4$};
		
		\draw (6.06,1.06) node[circle,fill,inner sep=1pt] {};
		\node at (5.85,1.05) {\small $z_1$};
		
		\draw [blue] (6.767,1.767) node[circle,fill,inner sep=1pt] {};
		\node [blue] at (7,1.8) {\small $z_f$};
		\draw [blue,dashed] (5,0)--(6.06,1.06);
		
		\draw [orange] (5.53,0.53) node[circle,fill,inner sep=1pt] {};
		\draw [orange] (5,-0.3) node[circle,fill,inner sep=1pt] {};
		\draw [orange] (4.5,0) node[circle,fill,inner sep=1pt] {};
		\draw [teal] [->] (5,0) to (5,-0.3);
		\node [orange] at (5,-0.5) {\small $p_3$};
		\node [orange] at (4.5,-0.2) {\small $p_2$};
		
		\draw (5,0) node {$\times$};
		\node at (5,0.25) {$\zeta$};
		
	\end{tikzpicture}
	\caption{The boundary stratum ${}_2\overline{\mathcal{R}}_{4,\tau_0}^1(x,L,\ring{\bar{a}})$ in the compactification ${}_3\overline{\mathcal{R}}_4^1(x,L,\ring{\bar{a}})$ (left) and the corresponding stratum ${}_2\widehat{\mathcal{R}}_{4,\tau_0}^1(x,L,\ring{\bar{a}})$ in ${}_3\widehat{\mathcal{R}}_4^1(x,L,\ring{\bar{a}})$ (right)}
	\label{fig:alter}
\end{figure}
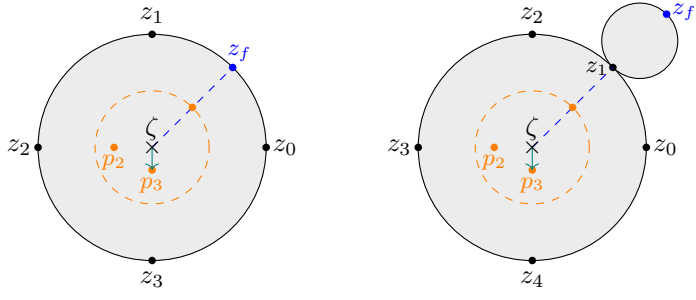

There is a cobordism relating the two compactifications ${}_l\overline{\mathcal{R}}_{k+1}^1(x,L,\ring{\bar{a}})$ and ${}_l\widehat{\mathcal{R}}_{k+1}^1(x,L,\ring{\bar{a}})$, under which the boundary stratum ${}_{l-1}\overline{\mathcal{R}}_{k+1,\tau_0}^1(x,L,\ring{\bar{a}})\subset\partial{}_l\overline{\mathcal{R}}_{k+1}^1(x,L,\ring{\bar{a}})$ goes to
\begin{equation}
{}_{l-1}\widehat{\mathcal{R}}_{k+1,\tau_0}^1(x,L,\ring{\bar{a}}):=\bigsqcup_{\ring{\bar{a}}_1+\bar{a}_2=\ring{\bar{a}}}\kappa^{-1}\left({}_{l-1}\overline{\mathcal{R}}_{k+2}^1(x,L,\ring{\bar{a}})\right)\textrm{ }{{}_1\times_0}\textrm{ }\overline{\mathcal{R}}_{2,f_1}(e_M,L,0),
\end{equation}
where $\overline{\mathcal{R}}_{2,f_1}^1$ is abstractly the moduli space $\overline{\mathcal{R}}_2^1$ defined in Section \ref{section:disc}, but with $z_1\in\partial D$ marked as forgotten, and $\kappa$ is the $\mathbb{Z}_{k+2}$-action on ${}_{l-1}\overline{\mathcal{R}}_{k+2}^1(x,L,\ring{\bar{a}})$ induced by cyclic permutations of the boundary marked points. Since pushing forward the virtual fundamental chain of $\overline{\mathcal{R}}_{2,f_1}^1(e_M,L,0)$ defines the identity $e_L\in C_1(0,0)$, up to sign this gives the right-hand side of (\ref{eq:BV}) when $i=1$.
\end{remark}

With the preparations above, we are now at a position to complete the proof of our main result.

\begin{proof}[Proof of Theorem \ref{theorem:approximate}]
Following Remark \ref{remark:lift2}, we shall construct chains in the $S^1$-equivariant de Rham complex $C_\ast^{S^1}$, and then project them to chains in Connes' complex $C_\ast^\lambda$ which satisfy the conditions (i)-(vii) of Theorem \ref{theorem:approximate}. We have defined the chains $\tilde{x}_m$, $\bar{\tilde{x}}_m$, $\tilde{y}_m$, $\bar{\tilde{y}}_m$, $\tilde{z}_m$ and $\bar{\tilde{z}}_m$ in (\ref{eq:t1})---(\ref{eq:t6}). 

Let us first check that the requirements (i')---(vi') in Remark \ref{remark:lift2} are satisfied. The relations $\tilde{x}_m=\tilde{e}_-(\bar{\tilde{x}}_m)$, $\tilde{y}_m=\tilde{e}_-(\bar{\tilde{y}}_m)$, and $\tilde{z}_m=\tilde{e}_-(\bar{\tilde{z}}_m)$ follow from the definitions above and (\ref{eq:epm}). Moreover, the implication
\begin{equation}
\left(\tilde{x}_{m+1}-\tilde{e}_+(\bar{\tilde{x}}_m)\right)(a,k)\neq0\Rightarrow\theta_m(a)\geq(m-k)\varepsilon
\end{equation}
shows that $\tilde{x}_{m+1}-\tilde{e}_+(\bar{\tilde{x}}_m)\in F^m$. Similarly, one can show that $\tilde{y}_{m+1}-\tilde{e}_+(\bar{\tilde{y}}_m)\in F^{m-U-1}$ and $\tilde{z}_{m+1}-\tilde{e}_+(\bar{\tilde{z}}_m)\in F^{m-2}$.

By Theorem \ref{theorem:pushforward} (and its variations described in Remark \ref{remark:pushforward}), the isomorphism (\ref{eq:bd1}) implies that
\begin{equation}\label{eq:dx}
\begin{split}
&\;\;\;\;\;\;\bar{\partial}\bar{x}_{m,0}(a,k+1) \\
&=\sum_{\substack{k_1+k_2=k+1\\a_1+a_2=a\\1\leq i\leq k_1}}(-1)^{(k_1-i)(k_2-1)+k_1}\bar{x}_{m,0}(a_1,k_1+1)\circ_i\bar{x}_m(a_2,k_2) \\
&=\sum_{\substack{k_1+k_2=k+1\\a_1+a_2=a\\1\leq i\leq k_1}}\sum_{j=1}^{k_2+1}(-1)^{\maltese_{ij}^1}\bar{x}_{m,0}(a_1,k_1+1)\circ_i\left((\overline{R}_{k_2+1})^j_\ast\bar{x}_{m,0}(a_2,k_2+1)\circ_{k_2+2-j}e_L\right),
\end{split}
\end{equation}
where
\begin{equation}
\maltese_{ij}^1=(i-1)(k_2-1)+(k_1-1)k_2+k_2(j-1),
\end{equation}
for every $(a,k)$ with $\theta_M(a)<(m+k-1)\varepsilon$. In the above, the second equality follows from Lemma \ref{lemma:vartheta}. Note that the right hand side of (\ref{eq:dx}) isn't equal to $\frac{1}{2}\left\{\bar{\tilde{x}}_m,\bar{\tilde{x}}_m\right\}(a,k+1)$ after projecting to $\overline{C}_\ast^\mathit{nd}$ (cf. (\ref{eq:bracket-pr})), so the condition (iii') in Remark \ref{remark:lift2} isn't satisfied. However, by definition of $\{\cdot,\cdot\}$ it is clear that after passing to the quotient complex $\overline{C}_\ast^\lambda$ of $\overline{C}_\ast^\mathit{nd}$, the right-hand side of (\ref{eq:dx}) becomes $\frac{1}{2}\left\{\bar{\underline{x}}_m,\bar{\underline{x}}_m\right\}(a,k+1)$. It is therefore strong enough to replace the requirement $\bar{\partial}^{S^1}(\bar{\tilde{x}}_m)-\frac{1}{2}\left\{\bar{\tilde{x}}_m,\bar{\tilde{x}}_m\right\}\in\overline{F}^m$ in Remark \ref{remark:lift2}, (iii') for the purpose of proving Theorem \ref{theorem:approximate}. We shall refer to this weaker condition as (iii''). Note that here the equivariant differential $\bar{\partial}^{S^1}$ is actually given by the ordinary de Rham differential $\bar{\partial}$ since the $S^1$-equivariant chain $\bar{\tilde{x}}_m$ only has $u^0$-part. Similar argument shows that $\bar{\partial}^{S^1}(\bar{\tilde{z}}_m)-\left\{\bar{\tilde{x}}_m,\bar{\tilde{z}}_m\right\}\in\overline{F}^{m-2}$.

In order to confirm that $\bar{\partial}^{S^1}(\bar{\tilde{y}}_m)-\left\{\bar{\tilde{x}}_m,\bar{\tilde{y}}_m\right\}-\bar{\tilde{z}}_m\in\overline{F}^{m-U-1}$, we first note that it follows from the isomorphism (\ref{eq:bd5}) that
\begin{equation}\label{eq:ana}
\begin{split}
\bar{\partial}{\bar{y}}_{m,0}(a,k)&=\sum_{\substack{k_1+k_2=k+1\\a_1+a_2=a\\1\leq i\leq k_1}}(-1)^{(k_1-i)(k_2-1)+k_1-1}\bar{y}_{m,0}(a_1,k_1)\circ_i\bar{x}_m(a_2,k_2) \\
&+\sum_{\substack{k_1+k_2=k+1\\a_1+a_2=a\\1\leq i\leq k_1}}(-1)^{(k_1-i)(k_2-1)+k_1}\bar{x}_m(a_1,k_1)\circ_i\bar{y}_{m,0}(a_2,k_2) \\
&-\sum_{l=1}^\infty\sum_{j=0}^l\mathit{Ev}_\ast\left({}_{l-j}\overline{\mathcal{R}}_{k+1}^1\left(\delta_j(\beta_{l-1}),L,\ring{\bar{a}}_{l-1};[m,m+1]\right)\right) \\
&-\bar{y}_m^{j,j+1}(a,k)-\bar{y}_m^{S^1}(a,k).
\end{split}
\end{equation}
By the identity (\ref{eq:id1}) proved in Lemma \ref{lemma:ana}, we have
\begin{equation}\label{eq:fun}
\sum_{l=0}^\infty\sum_{j=0}^l\mathit{Ev}_\ast\left({}_{l-j}\overline{\mathcal{R}}_{k+1}^1\left(\delta_j(\beta_{l-1}),L,\ring{\bar{a}}_{l-1};[m,m+1]\right)\right)=\mathit{Ev}_\ast\left(\overline{\mathcal{R}}_{k+1}^1(e_M,L,\bar{a};[m,m+1])\right),
\end{equation}
where the right-hand side of (\ref{eq:fun}) is by definition the de Rham chain $\bar{z}_m(a,k)$. By the identities (\ref{eq:id2}) and (\ref{eq:id3}) from Lemma \ref{lemma:ana}, we have
\begin{equation}\label{eq:ana1}
\begin{split}
\bar{\partial}{\bar{y}}_{m,0}(a,k)&=\sum_{\substack{k_1+k_2=k+1\\a_1+a_2=a\\1\leq i\leq k_1}}(-1)^{(k_1-i)(k_2-1)+k_1-1}\bar{y}_{m,0}(a_1,k_1)\circ_i\bar{x}_m(a_2,k_2) \\
&+\sum_{\substack{k_1+k_2=k+1\\a_1+a_2=a\\1\leq i\leq k_1}}(-1)^{(k_1-i)(k_2-1)+k_1}\bar{x}_m(a_1,k_1)\circ_i\bar{y}_{m,0}(a_2,k_2) \\
&+\bar{z}_m(a,k)-\bar{\delta}_\mathit{cyc}^\mathit{nd}\left(\bar{y}_{m,1}(a,k+1)\right).
\end{split}
\end{equation}
When combining with $\bar{\partial}\bar{y}_{m,0}(a,k)$, the term $\bar{\delta}_\mathit{cyc}^\mathit{nd}\left(\bar{y}_{m,1}(a,k+1)\right)$ gives the $(a,k)$-part of the $S^1$-equivariant differential $\bar{\partial}^{S^1}(\bar{\tilde{y}}_m)(a,k)$. Note that $\bar{\partial}^{S^1}\left(\bar{\tilde{y}}_m(a,k,k+1)\right)$ contains an additional term $\bar{\partial}\bar{y}_{m,1}(a,k+1)\otimes u^{-1}$, which belongs to the $(a,k+1)$-part of $\overline{C}_\ast^{S^1}$, so it does not appear in (\ref{eq:ana1}). The identification of the first two sums on the right-hand side of (\ref{eq:ana1}) with the $(a,k)$-part of the Lie bracket $\left\{\bar{\tilde{x}}_m,\bar{\tilde{y}}_m\right\}$ is the same as above, which relies on Lemma \ref{lemma:vartheta}. All together, we have proved $\bar{\partial}^{S^1}(\bar{\tilde{y}}_m)-\left\{\bar{\tilde{x}}_m,\bar{\tilde{y}}_m\right\}-\bar{\tilde{z}}_m\in\overline{F}^{m-U-1}$.

For (v') and (vi'), $\tilde{x}_{m,0}(a,k+1)\neq0$ implies that $\mathcal{R}_{k+2,\vartheta}(L,\bar{a};\{m\})\neq\emptyset$, thus $\theta_m(a)\geq2\varepsilon$ or $a=0$, $k\geq2$. Moreover, it follows from Lemma \ref{lemma:vartheta} that
\begin{equation}
\mathbf{B}_c\left(\tilde{x}_{m,0}(0,3)\right)=\sum_{j=1}^{k+1}(-1)^{j-1}(R_3)_\ast^jx_{m,0}(0,3)\circ_{4-j}e_L=x_m(0,2).
\end{equation}
Similarly, $\tilde{z}_m(a,k)\neq0$ implies that $\mathcal{R}_{k+1}^1(e_M,L,\bar{a};\{m\})\neq\emptyset$, thus $\theta_M(a)\geq2\varepsilon$ or $a=0$ (cf. the proof of Lemma \ref{lemma:basic}, (i)), and $\left[\tilde{z}_m(0,0)\right]=(-1)^{n+1}\left[\overline{\mathcal{R}}_1^1(e_M,L,0;\{m\})\right]\otimes1=(-1)^{n+1}[\![L]\!]$.

Finally, we show that Remark \ref{remark:lift2}, (vii') is satisfied. Note that $\left(\bar{\tilde{y}}_m(a,k),\bar{\tilde{z}}_m(a,k)\right)\neq(0,0)$ implies that there exists an $l\in\mathbb{Z}_{\geq0}$ such that one of the following two moduli spaces
\begin{equation}
{}_l\overline{\mathcal{R}}_{k+1}^1(\beta_{l-1},L,\ring{\bar{a}}_{l-1})\textrm{ and }{}_{l-1}\overline{\mathcal{R}}_{k+2}^1(\beta_{l-1},L,\ring{\bar{a}}_{l-1})
\end{equation}
is non-empty. (When $l=0$, then $\overline{\mathcal{R}}_{k+1}^1(\beta_{-1},L,\ring{\bar{a}}_{-1})\neq\emptyset$.) By Lemma \ref{lemma:basic} (iii), this implies that for such an $l$, we have
\begin{equation}\label{eq:l}
\left({}_{l}\overline{\mathcal{R}}_1^1(\beta_{l-1},L,\ring{\bar{a}}_{l-1}),{}_{l-1}\overline{\mathcal{R}}_1^1(\beta_{l-1},L,\ring{\bar{a}}_{l-1})\right)\neq(\emptyset,\emptyset).
\end{equation}
Hence, $a\in A_x$ implies that $\overline{\mathcal{R}}_{2,\vartheta}(L,\bar{a};\{m\})\neq\emptyset$, while $a\in A_{y,z}$ implies there is an $l$ such that (\ref{eq:l}) holds. From now on, fix the choice of such an $l$. We claim that the set
\begin{equation}\label{eq:s}
\left\{a\in H_1(L;\mathbb{Z})\left\vert\theta_M(a)<\Xi,\left(\overline{\mathcal{R}}_{2,\vartheta}(L,\bar{a};\{m\}),{}_{l}\overline{\mathcal{R}}_1^1(\beta_{l-1},L,\ring{\bar{a}}),{}_{l-1}\overline{\mathcal{R}}_1^1(\beta_{l-1},L,\ring{\bar{a}})\right)
\neq(\emptyset,\emptyset,\emptyset)\right.\right\}
\end{equation}
is finite for any $\Xi>0$. In fact, assume that this isn't the case, then there exists a sequence $(a_\ell)_{\ell\in\mathbb{N}}$ of distinct elements $a_\ell\in H_1(L;\mathbb{Z})$ such that $\theta_M(a_\ell)<\Xi+|A_{H_t}(\beta_l)|$ for every $\ell\in\mathbb{N}$, and at least one of the following conditions holds:
\begin{itemize}
	\item $\overline{\mathcal{R}}_{2,\vartheta}(L,\bar{a}_\ell;\{m\})\neq\emptyset$ for every $\ell\in\mathbb{N}$,
	\item ${}_{l}\overline{\mathcal{R}}_1^1(\beta_{l-1},L,\ring{\bar{a}}_\ell)\neq\emptyset$ for every $\ell\in\mathbb{N}$,
	\item ${}_{l-1}\overline{\mathcal{R}}_1^1(\beta_{l-1},L,\ring{\bar{a}}_\ell)\neq\emptyset$ for every $\ell\in\mathbb{N}$.
\end{itemize}
Consider the first case. Pick a $u_\ell\in\overline{\mathcal{R}}_{2,\vartheta}(L,\bar{a}_\ell;\{m\})$ for each $\ell$. By possibly passing to a subsequence we may assume that $(u_\ell)_{\ell\in\mathbb{N}}$ is Gromov convergent, so $(a_\ell)_{\ell\in\mathbb{N}}$ is constant for $\ell\gg0$, which contradicts the assumption. Similar arguments in the second and the third case will lead to the same contradiction. Thus we conclude that (\ref{eq:s}) is a finite set, which shows that both $A_x^+(\Xi)$ and $A_{y,z}^+(\Xi)$ are finite sets for any $\Xi>0$. This completes the verification of the conditions (i')---(vii') in Remark \ref{remark:lift2} (with (iii') replaced with a weaker condition (iii'') described above). Thus after projecting the chains $\tilde{x}_m$, $\bar{\tilde{x}}_m$, $\tilde{y}_m$, $\bar{\tilde{y}}_m$, $\tilde{z}_m$ and $\bar{\tilde{z}}_m$ to $C_\ast^\lambda$ and $\overline{C}_\ast^\lambda$, we obtain the chains $\underline{x}_m$, $\underline{y}_m$, $\underline{z}_m$, $\bar{\underline{x}}_m$, $\bar{\underline{y}}_m$ and $\bar{\underline{z}}_m$ satisfying Theorem \ref{theorem:approximate} (with the index $m$ replaced with $i$).
\end{proof}

\section{Miscellanies}\label{section:mix}

We discuss some implications and variations of the main results of this paper. In this section, $\mathbb{K}$ will be a field with $\mathrm{char}(\mathbb{K})=0$.

\subsection{Cyclic coordinate functions}\label{section:coordinate}

Recall that the symplectic cohomology $\mathit{SH}^\ast(M)$ of a Liouville manifold carries the structure of a Gerstenhaber algebra, with the Gerstenhaber bracket
\begin{equation}
[\cdot,\cdot]:\mathit{SH}^i(M)\otimes\mathit{SH}^j(M)\rightarrow\mathit{SH}^{i+j-1}(M)
\end{equation}
defined by counting rigid points in the moduli space of solutions to the perturbed Cauchy-Riemann equation, with domains given by 1-parameter families of pair-of-pants, see $\cite{ps4}$, Section (4a) for details. When $M=T^\ast Q$ is the cotangent bundle of a compact smooth manifold $Q$, the Gerstenhaber bracket coincides with the loop bracket in string topology $\cite{cs}$ under the Viterbo isomorphism.

Inspired by the work of Fukaya $\cite{kf}$, Ganatra and Pomerleano introduced the following notion (cf. $\cite{gp1}$, Remark 5.5). From now on, let $M$ be a Liouville manifold with $c_1(M)=0$.

\begin{definition}
A pair $(c,\phi)\in\mathit{SH}^0(M)\times\mathit{SH}^1(M)$ is called a coordinate function if
\begin{equation}\label{eq:coordinate}
[c,\phi]=1.
\end{equation}
\end{definition}

It is straightforward to check that $T^\ast S^1$ admits a coordinate function. By the K\"{u}nneth formula for symplectic cohomology \cite{ao}, a coordinate function exists for $T^\ast(S^1\times K)$, where $K$ is any closed oriented manifold.

The relation between coordinate functions and Fukaya-Irie's result is as follows. One expects that the $\ell_2$ in the identity (\ref{eq:def}) should coincide with the loop bracket $[\cdot,\cdot]$ (up to sign, cf. \cite{ki2}, Remark 3.4), which means that the $L_\infty$-structure $(\ell_k)_{k\geq1}$ on $H_{n-\ast}(\mathcal{L}L;\mathbb{R})$ is a deformation of the formal $L_\infty$-structure given by $[\cdot,\cdot]$. If this deformation is trivial, and $y(a)\neq0$ for some $a\in H_1(L;\mathbb{Z})$ with $\mu(a)=2$, then (\ref{eq:def}) reduces to (\ref{eq:coordinate}), and we conclude that $\mathit{SH}^\ast(T^\ast L)$ admits a coordinate function. 

On the other hand, when restricting ourselves to dimension 3, Theorem \ref{theorem:3} provides evidence to the following conjecture made in $\cite{gp1}$.

\begin{conjecture}[Ganatra-Pomerleano]\label{conjecture:GP}
If the cotangent bundle $T^\ast Q$ of a closed, oriented 3-manifold $Q$ admits a coordinate function, then $Q$ is diffeomorphic to $S^1\times\Sigma_g$ for some $g\geq0$.
\end{conjecture}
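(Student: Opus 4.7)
The strategy is to adapt the paper's approach to the (a priori incomparable) coordinate function hypothesis. Under the Abbondandolo-Schwarz/Viterbo isomorphism $SH^\ast(T^\ast Q;\mathbb{R}) \cong H_{3-\ast}(\mathcal{L}Q;\mathbb{R})$ of Gerstenhaber algebras, the coordinate function $(c,\phi)$ corresponds to a pair of classes in $H_3(\mathcal{L}Q) \times H_2(\mathcal{L}Q)$ whose Chas-Sullivan loop bracket equals the fundamental class $[Q] \in H_3(\mathcal{L}Q)$ (image of $[Q]$ under the inclusion of constant loops). The first step is to realize this geometrically via a non-equivariant analog of Theorem \ref{theorem:main}: construct a Maurer-Cartan element $x \in \widehat{H}_\ast(\mathcal{L}Q;\mathbb{R})$ and a primitive $y$ satisfying the identity $\sum_{k \geq 2} \frac{1}{(k-1)!}\, \ell_k(y, x, \ldots, x) = (-1)^{n+1}[Q]$, in the spirit of Fukaya's original sketch \cite{kf}.

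Granted such a Maurer-Cartan structure, the centralizer-subgroup argument of Section \ref{section:corollary} goes through verbatim: one identifies a class $\gamma \in \pi_1(Q)$ of positive symplectic area and Maslov index $2$, whose centralizer produces a finite cover $\widetilde{Q} \to Q$ homotopy equivalent to $S^1 \times K$ for some closed surface $K$. In the prime, $H_1(Q;\mathbb{Q}) \neq 0$ case this together with Perelman's geometrization yields $Q \cong S^1 \times \Sigma_g$. The remaining spherical-space-form case (which Corollary \ref{corollary:3} leaves open) must be ruled out by a direct calculation showing $T^\ast(S^3/\Gamma)$ admits no coordinate function: with $\pi_1(Q)$ finite, $\mathcal{L}(S^3/\Gamma)$ has only finitely many components, each rationally modelled by $(S^3/\Gamma') \times \Omega S^3$ for some subgroup $\Gamma' \leq \Gamma$, and the loop bracket $H_3 \otimes H_2 \to H_3$ can be explicitly checked to miss the fundamental class. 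The non-prime case is excluded by a parallel Viterbo-functoriality argument: a coordinate function on $T^\ast(Q_1 \# Q_2)$ would restrict to data on each $T^\ast Q_i$ that, combined with the previous step, is incompatible with the bracket relation $[c,\phi]=1$.

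The main obstacle is the first step: building the Maurer-Cartan element from a coordinate function rather than a cyclic dilation. These hypotheses are genuinely incomparable, since Proposition \ref{proposition:aspherical} shows that $T^\ast(S^1 \times \Sigma_g)$ with $g \geq 1$ admits no cyclic dilation even though (conjecturally) it carries a coordinate function; hence the $S^1$-equivariant moduli-space machinery of Section \ref{section:moduli} does not transfer verbatim. One must instead develop a non-equivariant variant of the Cohen-Ganatra moduli space ${}_l\mathcal{R}_{k+1}^1(x, L, \ring{\beta})$ with \emph{two} interior punctures carrying cochain-level representatives of $c$ and $\phi$, in which compactness of the moduli space in the relevant codimension is controlled by the bracket relation $[c,\phi]=1$. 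The key technical point is analyzing the new ``collision-of-interior-punctures'' boundary stratum, whose net contribution is killed precisely by the identity $[c,\phi]=1$, in parallel with Fukaya's original sketch in \cite{kf} and Irie's rigorous implementation in \cite{ki2} for $\mathbb{C}^n$. Making this rigorous within the admissible Kuranishi framework developed in Appendix \ref{section:Kuranishi} is the crux of the proposed proof.
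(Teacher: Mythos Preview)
The statement you are attempting to prove is labeled a \emph{Conjecture} in the paper and is presented as open; the paper offers no proof, only the remark that Theorem~\ref{theorem:3} provides supporting evidence. There is therefore nothing in the paper to compare your proposal against.

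Your outline also contains a genuine gap that cannot be patched by the paper's machinery. You claim that, once a Maurer--Cartan identity is in hand, ``the centralizer-subgroup argument of Section~\ref{section:corollary} goes through verbatim: one identifies a class $\gamma \in \pi_1(Q)$ of positive symplectic area and Maslov index $2$.'' But here the Lagrangian is the zero section $Q \subset T^\ast Q$, which is \emph{exact}: every loop has vanishing symplectic area, and the Maslov class of the zero section is identically zero. The entire degree-counting argument of Corollary~\ref{corollary:n} hinges on the shifted grading $\mathbb{H}_\ast(a) = H_{\ast+n+\mu(a)-1}(\mathcal{L}(a)L)$, and it is precisely the nonvanishing of $\mu$ that forces some $a_i$ to have $\mu(a_i)=2$. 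With $\mu \equiv 0$ this step yields nothing. Likewise, the Maurer--Cartan chain $x$ of Theorem~\ref{theorem:main} is built from pseudoholomorphic discs bounded by $L$; for the exact zero section all such discs are constant, so $x$ vanishes and the identity degenerates to $0=[Q]$. Your proposed replacement---two-punctured discs with asymptotics $c,\phi$---is a different construction whose compactness analysis and role for the ``collision'' stratum you explicitly leave as the crux. In short, the proposal identifies a plausible strategy (translate to string topology via Abbondandolo--Schwarz and exploit $[c,\phi]=[Q]$), but the step you label ``verbatim'' is exactly where the paper's argument no longer applies, and the genuinely new ingredient remains to be supplied. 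This is consistent with the statement's status as a conjecture.
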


More generally, the discussions at the end of Section \ref{section:GH} suggest that coordinate functions exist for many Liouville domains which admit symplectic embeddings into $\mathbb{C}^n$, or more generally, into flexible Weinstein manifolds. In general, it seems quite difficult to construct interesting examples of Liouville manifolds with coordinate functions. 

Inspired by Theorem \ref{theorem:main}, we introduce the following $S^1$-equivariant analogue of a coordinate function.

\begin{definition}
We call a pair $(\tilde{c},\tilde{\phi})\in\mathit{SH}_{S^1}^1(M)\times\mathit{SH}_{S^1}^1(M)$ a cyclic coordinate function if they satisfy
\begin{equation}
\left\{\tilde{c},\tilde{\phi}\right\}=1.
\end{equation}
\end{definition}

Here $\{\cdot,\cdot\}$ is the degree $-2$ Lie bracket on $\mathit{SH}_{S^1}^\ast(M)$, which serves as part of the gravity algebra structure on $\mathit{SH}_{S^1}^\ast(M)$ (cf. \cite{ps2}, Section (8b)). One can obtain non-trivial examples of Liouville manifolds with cyclic coordinate functions by relating it with the following notion introduced in $\cite{yl2}$.

\begin{definition}\label{definition:cyclic-quasi}
We say that a Liouville manifold $M$ admits a cyclic quasi-dilation if there is a pair $(\tilde{b},h)\in\mathit{SH}_{S^1}^1(M)\times\mathit{SH}^0(M)^\times$ such that $B(\tilde{b})=h$.
\end{definition}

It follows from the definition of the Lie bracket $\{\cdot,\cdot\}$ that if $M$ admits a cyclic coordinate function, then
\begin{equation}\label{eq:B}
B(\tilde{c})\cdot B(\tilde{\phi})=1.
\end{equation}
In particular, $M$ admits a cyclic quasi-dilation. On the other hand, if $M$ admits a cyclic dilation $\tilde{b}\in\mathit{SH}_{S^1}^1(M)$, then the pair $(\tilde{b},\tilde{b})$ gives a cyclic coordinate function. Parallel to Conjecture \ref{conjecture:GP}, it seems reasonable to expect the following:

\begin{conjecture}
If the cotangent bundle $T^\ast Q$ of a closed, oriented 3-manifold $Q$ admits a cyclic coordinate function, then $Q$ is diffeomorphic to $S^1\times\Sigma_g$, where $g\geq0$, or a spherical space form.
\end{conjecture}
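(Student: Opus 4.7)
The plan is to reduce the conjecture to an application of a quasi-dilation generalization of Corollary \ref{corollary:3}, applied to the zero section $Q \subset T^\ast Q$. A cyclic coordinate function $(\tilde{c},\tilde{\phi})$ automatically yields a cyclic quasi-dilation in the sense of Definition \ref{definition:cyclic-quasi}: the compatibility of the marking map $B$ with the Lie bracket $\{\cdot,\cdot\}$ (the gravity-algebra counterpart of BV) forces $B(\tilde{c})\cdot B(\tilde{\phi}) = 1$ in $\mathit{SH}^0(T^\ast Q)$, as already noted in (\ref{eq:B}), so both $B(\tilde{c})$ and $B(\tilde{\phi})$ are invertible classes hit by $B$.

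Next I would extend Theorem \ref{theorem:main} to Liouville manifolds admitting cyclic quasi-dilations. The proof in Section \ref{section:proof} begins with a cochain representative $\tilde{\beta}$ satisfying $B_c(\tilde{\beta}) = e_M - \partial\beta_{-1}$; in the quasi-dilation setting one arranges instead $B_c(\tilde{\beta}) = h_c - \partial\beta_{-1}$ for a cochain $h_c$ representing an invertible class $h \in \mathit{SH}^0(M)^\times$. Tracking this substitution through Lemma \ref{lemma:ana}, equation (\ref{eq:id1}) in particular, and through the pushforward identities in Section \ref{section:proof}, one finds that the right-hand side of the key identity (\ref{eq:def1}) is replaced by the image of $h\cdot[\![L]\!]$ under the canonical erasing map. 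Since in the case $M = T^\ast Q$ the Viterbo restriction $\mathit{SH}^0(T^\ast Q) \to H^0(Q;\mathbb{R})$ sends invertible elements to nowhere-vanishing functions, this class is still nonzero, and the argument of Corollary \ref{corollary:n} goes through verbatim: any closed aspherical relatively-$\mathit{Spin}$ Lagrangian in $T^\ast Q$ bounds a non-constant $J_{T^\ast Q}$-holomorphic disc of Maslov index $2$.

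Assuming $Q$ is prime, one then applies the extended Corollary \ref{corollary:3} to $L = Q$, which is automatically $\mathit{Spin}$ as a closed orientable (hence parallelizable) $3$-manifold. If $\pi_1(Q)$ is infinite and $H_1(Q;\mathbb{Q}) \neq 0$, the arguments of Lemmas \ref{lemma:homeo}--\ref{lemma:h-equiv} identify $Q$ as $S^1 \times \Sigma_g$ for some $g \geq 1$; if $\pi_1(Q)$ is finite, Perelman's proof of Geometrization identifies $Q$ as a spherical space form; the remaining case $Q \cong S^1 \times S^2$ appears as the $g = 0$ instance of the $S^1 \times \Sigma_g$ family.

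The principal obstacle is the non-prime case. The conjecture does not assume $Q$ is prime, yet its conclusion list consists entirely of prime $3$-manifolds, so one must separately rule out cyclic coordinate functions on $T^\ast(Q_1 \# Q_2)$ for any non-trivial connected sum. A natural attack is a Mayer--Vietoris-type analysis of $\mathit{SH}^\ast_{S^1}$ along a Weinstein neighborhood of the connect-sum sphere, combined with a careful study of how the degree $-2$ Lie bracket interacts with the resulting long exact sequence, aiming to show that no pair $(\tilde{c},\tilde{\phi})$ with $\{\tilde{c},\tilde{\phi}\} = 1$ can exist unless $Q$ is already prime. This reduction seems to require genuinely new input beyond the present paper, and together with the algebraic extension to quasi-dilations (which, while largely bookkeeping, must be carried out carefully enough to preserve the cyclic-invariance in Section \ref{section:compactify}) constitutes the principal reason the statement remains a conjecture.
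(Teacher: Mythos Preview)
The statement is a \emph{conjecture} in the paper, not a theorem; the paper offers no proof and presents it as an $S^1$-equivariant analogue of the Ganatra--Pomerleano Conjecture~\ref{conjecture:GP}, motivated by analogy with Corollary~\ref{corollary:3}. So there is no ``paper's own proof'' to compare against, and the relevant question is whether your outline would actually establish it.

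Your strategy has a fundamental gap at the step where you apply the (quasi-dilation extension of) Corollary~\ref{corollary:n} to $L = Q$. The zero section $Q \subset T^\ast Q$ is an \emph{exact} Lagrangian, hence bounds no non-constant $J$-holomorphic discs whatsoever for any convex almost complex structure. In the language of Theorem~\ref{theorem:main}, the Maurer--Cartan element $\tilde{x}$ is built from moduli of discs with boundary on $L$; for exact $L$ all such moduli are empty in positive energy, so $\tilde{x}^+ = 0$ and the mechanism in the proof of Corollary~\ref{corollary:n} that extracts a class $a_i$ with $\mu(a_i)=2$ simply has nothing to extract. Your asserted conclusion that $Q$ ``bounds a non-constant $J_{T^\ast Q}$-holomorphic disc of Maslov index~2'' is therefore false on its face. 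Note also the internal consistency check: $Q = S^1 \times \Sigma_g$ with $g\geq 1$ is aspherical, is allowed by the conjecture, and $T^\ast(S^1\times\Sigma_g)$ does carry a cyclic coordinate function (via K\"unneth from $T^\ast S^1$), yet the zero section bounds no discs. So any argument that produces a Maslov~2 disc on the zero section, or that rules out aspherical $Q$ outright, proves too much.

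The subsidiary claim that the Viterbo restriction $\mathit{SH}^0(T^\ast Q)\to H^0(Q;\mathbb{R})$ sends invertible elements to units is also not correct in general: for $Q = S^1$ the invertible class $w\in\mathit{SH}^0(T^\ast S^1)$ lives in a non-contractible loop component and restricts to zero on constant loops, so the right-hand side of your proposed quasi-dilation analogue of~(\ref{eq:def1}) can genuinely vanish. This is consistent with the previous paragraph: for the exact zero section both sides of the identity collapse, and no contradiction or disc is produced. A proof of the conjecture would have to proceed by a direct analysis of $\mathit{SH}^\ast_{S^1}(T^\ast Q)\cong H^{S^1}_\ast(\mathcal{L}Q)$ and its gravity bracket, not by feeding the zero section into the disc-counting machinery of Section~\ref{section:moduli}.
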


One of the motivations of introducing the notion of a cyclic coordinate function is that we can prove its existence for an important class of Liouville manifolds---punctured $A_m$ Milnor fibers, which are hypersurfaces $M\subset\mathbb{C}^n\times\mathbb{C}^\ast$ defined by the equation
\begin{equation}
\left\{(x_1,\cdots,x_n,z)\in\mathbb{C}^n\times\mathbb{C}^\ast\left|x_1^2+\cdots+x_n^2=z^{m+1}-1\right.\right\},
\end{equation}
where $m\geq0$. Note that here we do not exclude the possibility $m=0$, in which case $M$ isn't a punctured Milnor fiber, but a self-plumbing of $T^\ast S^n$.

From the point of view of Theorem \ref{theorem:main}, there are good reasons to believe that cyclic coordinate functions should exist for these punctured Milnor fibers, since adding a copy of $T^\ast S^{n-1}$ at the origin to the total space of $\pi:M\rightarrow\mathbb{C}^\ast$ defines a symplectic embedding from $M$ to the $A_m$ Milnor fiber, and the latter space admits a cyclic dilation (when $n\geq3$, a dilation). On the other hand, it is clear that these punctured Milnor fibers cannot be symplectically embedded into $\mathbb{C}^n$ (except when $m=0$), or more generally, any Liouville manifold with vanishing symplectic cohomology, so one doesn't expect the existence of coordinate functions for a punctured $A_m$ Milnor fiber when $m\geq1$.

\begin{proposition}
Let $M$ be a punctured $A_m$ Milnor fiber of dimension $n\geq2$, then $M$ admits a cyclic coordinate function.
\end{proposition}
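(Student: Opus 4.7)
The proof would proceed in two stages: first establishing a cyclic coordinate function on the base $\mathbb{C}^\ast\cong T^\ast S^1$ of the projection $\pi:M\to\mathbb{C}^\ast$, $(x,z)\mapsto z$, and then transferring it to the total space. The generic fibre of $\pi$ is the affine quadric $\{x_1^2+\cdots+x_n^2=c\}\cong T^\ast S^{n-1}$, and $\pi$ has exactly $m+1$ critical values at the $(m+1)$-th roots of unity.

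For the base, the Chas-Sullivan isomorphism $\mathit{SH}^\ast(T^\ast S^1)\cong H_{1-\ast}(\mathcal{L}S^1)$ identifies $\mathit{SH}^0(\mathbb{C}^\ast)\cong\mathbb{K}[c^{\pm 1}]$, where $c^k$ corresponds to the class $[S^1]_k\in H_1(\mathcal{L}_k S^1)$. Since the reparametrisation $S^1$-action on $\mathcal{L}_k S^1$ is locally free for $k\neq 0$, the generator of $H^{S^1}_0(\mathcal{L}_k S^1)\cong\mathbb{K}$ marks to $[S^1]_k$, so every $c^k$ with $k\neq 0$ lies in the image of the marking map $B:\mathit{SH}^1_{S^1}(\mathbb{C}^\ast)\to\mathit{SH}^0(\mathbb{C}^\ast)$. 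Choosing lifts $\tilde c,\tilde c^{\,\prime}\in\mathit{SH}^1_{S^1}(\mathbb{C}^\ast)$ with $B\tilde c=c$ and $B\tilde c^{\,\prime}=c^{-1}$, the Chas-Sullivan formula expressing the gravity bracket via loop product, marking and erasing yields (after matching signs)
\[
\{\tilde c,\tilde c^{\,\prime}\}=I(c\cdot c^{-1})=I(e_{\mathbb{C}^\ast})=1,
\]
so $(\tilde c,\tilde c^{\,\prime})$ is a cyclic coordinate function on $\mathbb{C}^\ast$.

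To transfer this structure to $M$, I would fix a Liouville form for which $\pi$ is cylindrical at infinity, so that Reeb orbits on $\partial\overline M$ split into those descending from $\mathbb{C}^\ast$ and those trapped in the fibres. The once-wrapping Reeb orbit of $\mathbb{C}^\ast$ lifts to an $S^1$-family of Reeb orbits on $\partial\overline M$ parametrised over the link at infinity of a generic fibre; after a small Morse perturbation this family produces a class $c_M\in\mathit{SH}^0(M)$ whose $S^1$-equivariant refinement $\tilde c_M\in\mathit{SH}^1_{S^1}(M)$ satisfies $B\tilde c_M=c_M$, with a similar construction producing $\tilde c^{\,\prime}_M$ with $B\tilde c^{\,\prime}_M=c_M^{-1}$. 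The cyclic coordinate function identity $\{\tilde c_M,\tilde c^{\,\prime}_M\}=1$ then reduces via the gravity-bracket formula to proving $c_M\cdot c_M^{-1}=e_M$ in $\mathit{SH}^0(M)$. This identity would follow from a neck-stretching argument using an almost complex structure on $M$ making $\pi$ pseudoholomorphic outside a small neighbourhood of the critical fibres: any pair-of-pants contributing to $c_M\cdot c_M^{-1}$ projects under $\pi$ to a pair-of-pants in $\mathbb{C}^\ast$ computing $c\cdot c^{-1}=e_{\mathbb{C}^\ast}$, while its fibrewise factor is a constant map representing the identity of $\mathit{SH}^\ast(T^\ast S^{n-1})$, giving the required count of $1$.

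The main obstacle is controlling pseudoholomorphic pairs-of-pants in $M$ passing through the $m+1$ critical fibres of $\pi$, since these may acquire vanishing-thimble contributions that threaten the expected count. I would address this by arranging the Hamiltonians representing $c_M$, $c_M^{-1}$ and $e_M$ so that the projected pair-of-pants is confined to a small disc in $\mathbb{C}^\ast$ disjoint from the critical values, which is achievable by a positivity-of-intersection argument for the fibered $J$; any residual contributions can be organised via the Lefschetz-fibration formalism of Seidel and Ganatra--Pardon, in which the vanishing thimbles assemble into the Fukaya category of vanishing cycles and contribute only through algebraically matched pairs. An alternative route is to bypass the non-equivariant computation entirely and construct the bracket identity directly at the level of Cohen-Ganatra-type equivariant moduli spaces as developed in Section \ref{section:moduli}.
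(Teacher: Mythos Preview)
Your proposal shares with the paper the same algebraic backbone---reduce the cyclic coordinate function identity to finding two classes in $\mathit{SH}^0(M)$ whose product is the unit and which both lie in the image of the marking map $B$, then apply the gravity-bracket formula $\{\tilde c,\tilde\phi\}=I\bigl(B(\tilde c)\cdot B(\tilde\phi)\bigr)$. Where you diverge is in how you produce those classes: you try to lift from the \emph{base} $\mathbb{C}^\ast$ of the Lefschetz fibration via horizontal Reeb dynamics and a neck-stretching argument, whereas the paper lifts from the \emph{fibre} $T^\ast S^{n-1}$. Moreover, the paper splits off the case $n\geq 3$ entirely: there the Seidel--Solomon argument (\cite{ss}, Example 7.5) already shows $M$ carries a dilation, hence a cyclic coordinate function $(\tilde b,\tilde b)$, with no further work. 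Only for $n=2$ does the paper need the fibration, and there it invokes the established long exact sequence relating $\mathit{SH}^\ast_{\mathit{vert}}(M)$ to $\mathit{SH}^\ast(F)\otimes H^\ast(S^1)$ together with a concrete Gromov--Witten vanishing computation of Abouzaid--Smith type to show that the fibrewise classes $-\partial_w$, $w^2\partial_w\in\mathit{SH}^1(T^\ast S^1)$ lift to $b_\pm\in\mathit{SH}^1(M)$ with $\Delta(b_\pm)=h^{\pm 1}$.

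The gap in your route is that the three analytic ingredients you need---(a) that the once-wrapping horizontal orbit family defines a well-behaved class $c_M\in\mathit{SH}^0(M)$ in the image of $B$, (b) that $c_M\cdot c_M^{-1}=e_M$, and (c) that holomorphic pairs-of-pants avoid the $m+1$ critical fibres---are each substantial and not available off the shelf. You acknowledge (c) as an obstacle, but (a) and (b) are already nontrivial: there is no ready-made ring map $\mathit{SH}^\ast(\mathbb{C}^\ast)\to\mathit{SH}^\ast(M)$ compatible with $B$ that would let you transport the base identity upward, and constructing one would essentially require the fibred-Floer machinery you gesture at. The paper's approach bypasses all of this by working with \emph{vertical} (fibrewise) symplectic cohomology, where the long exact sequence and the BV-compatibility of the continuation map $\mathit{SH}^\ast_{\mathit{vert}}(M)\to\mathit{SH}^\ast(M)$ are already established, and the only genuinely new input is the Gromov--Witten count showing the obstruction class $\mathit{GW}_1|_M$ vanishes.
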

\begin{proof}
$M$ is the total space of a Lefschetz fibration $\pi:M\rightarrow\mathbb{C}^\ast$, whose smooth fibers are symplectomorphic to $T^\ast S^{n-1}$, and the monodromy at the origin is trivial. It follows that there is a long exact sequence
\begin{equation}
\cdots\rightarrow\mathbb{K}^{\mathit{Crit}(\pi)}[-n]\rightarrow\mathit{SH}_\mathit{vert}^\ast(M)\rightarrow\mathit{SH}^\ast(F)\otimes H^\ast(S^1;\mathbb{K})\rightarrow\cdots
\end{equation}
relating the symplectic cohomology of the fibers $F$ and that of the total space, where $\mathit{SH}_\mathit{vert}^\ast(M)$ is the ``vertical" symplectic cohomology defined as the direct limit of a system of Hamiltonians which are small in the base, but linear with growing slopes in the fiber direction of $\pi$.

When $n\geq3$, it follows from the argument of $\cite{ss}$, Section 7 (in particular, Example 7.5) that $M$ admits a dilation, so in particular, a cyclic coordinate function.

When $n=2$, a smooth fiber $F$ of $\pi$ is symplectomorphic to $T^\ast S^1$, in which case we have an algebra isomorphism $\mathit{SH}^\ast(F)\cong\mathbb{K}[w,w^{-1},\partial_w]$, where $|w|=0$ and $|\partial_w|=1$. Both of the generators $w$ and $w^{-1}$ lie in the image of the BV operator, more precisely,
\begin{equation}\label{eq:vf}
\Delta(-\partial_w)=w^{-1}, \Delta(w^2\partial_w)=w,
\end{equation}
and they can be lifted to elements of $\mathit{SH}^0(M)^\times$. In fact, consider the composition
\begin{equation}\label{eq:compo}
\mathit{SH}^0(F)\xrightarrow{\cong}\mathit{SH}^0_\mathit{vert}(M)\rightarrow\mathit{SH}^0(M),
\end{equation}
where the isomorphism follows from the fact that the contributions of the critical points of $\pi$ are supported in degree 2, and the second map is the continuation map. Denote by $h^{\pm1}\in\mathit{SH}^0(M)$ the images of $w^{\pm1}$ under (\ref{eq:compo}). 

To show that the classes $-\partial_w,w^2\partial_w\in\mathit{SH}^1(F)$ can be lifted to $\mathit{SH}^1_\mathit{vert}(M)$, we will apply an argument of Abouzaid-Smith (cf. \cite{asm1}, Sections 3.3 and 4.3). Consider the fiberwise (partial) compactification $\bar{\pi}:\overline{M}\rightarrow\mathbb{C}^\ast$ of $\pi$, whose generic fiber $\overline{F}\cong\mathbb{C}$, and the singular fibers consist of a $\mathbb{C}$ intersecting transversely with a $\mathbb{P}^1$ at a unique point. Let $\mathcal{M}_1(\overline{M}|1)$ be the moduli space of stable rational curves in $\overline{M}$ with one marked point. The Gromov-Witten invariant $\mathit{GW}_1:=\left[\mathit{ev}_1\mathcal{M}_1(\overline{M}|1)\right]\in H^2(\overline{M};\mathbb{Z})$ defined by evaluating at the unique marked point is given by the union of $\mathbb{P}^1$'s in the singular fibers of $\bar{\pi}$ in our case. When restricting to a basis of Lagrangian matching spheres in the punctured Milnor fiber $M$, the contributions of these $\mathbb{P}^1$'s vanish since for any fixed matching sphere, one of them has intersection number $+1$ with the sphere, while the other one has intersection number $-1$. It follows that $\mathit{GW}_1|_M\in H^2(M;\mathbb{Z})$ vanishes. Since the lift of the cochain representative of the fiberwise class $-\partial_w$ (or $w^2\partial_w$) is given by counting thimbles $\mathbb{C}\rightarrow\overline{M}$ which pass through the fiberwise divisor $\overline{M}\setminus M$, this proves that the lifts of the cochain representatives of $-\partial_w$ and $w^2\partial_w$ in $\mathit{SC}^1_\mathit{vert}(M)$ can be corrected to cocycles by adding topological cochains in $C^2(M;\mathbb{Z})$. In particular, they have preimages under the map $\mathit{SH}^1_\mathit{vert}(M)\rightarrow\mathit{SH}^1(F)\otimes H^0(S^1;\mathbb{K})\cong\mathit{SH}^1(F)$ in the long exact sequence. Denote by $b_-$ and $b_+$ their lifts in $\mathit{SH}^1(M)$, respectively. Since both the map $\mathit{SH}^\ast_\mathit{vert}(M)\rightarrow\mathit{SH}^\ast(F)\otimes H^\ast(S^1;\mathbb{K})$ in the long exact sequence and the continuation map $\mathit{SH}^\ast_\mathit{vert}(M)\rightarrow\mathit{SH}^\ast(M)$ are homomorphisms of BV algebras, we have $\Delta(b_-)=h^{-1}$ and $\Delta(b_+)=h$. Denote by $\tilde{b}_-,\tilde{b}_+\in\mathit{SH}_{S^1}^1(M)$ the images of $b_-$ and $b_+$ under the erasing map $I$, it follows that $B(\tilde{b}_-)\cdot B(\tilde{b}_+)=1$, so $(\tilde{b}_-,\tilde{b}_+)$ gives a cyclic coordinate function for the 4-dimensional punctured Milnor fiber.
\end{proof}

We mention here one potential application of the notions of coordinate functions and cyclic coordinate functions, namely proving the non-formality of the $L_\infty$-structures on the cochain complexes $\mathit{SC}^\ast(M)$ and $\mathit{SC}^\ast_{S^1}(M)$.

\begin{definition}
An $L_\infty$-algebra $\mathcal{A}$ is called formal if it has a  minimal model $H^\ast(\mathcal{A})$ with $\ell_k=0$ for $k\geq3$. It is called intrinsically formal if for any $L_\infty$-structure on $H^\ast(\mathcal{A})$, we necessarily have $\ell_k=0$ for $k\geq3$.
\end{definition}

Let $M_d$ be the complement of $d\geq n+2$ generic hyperplanes in $\mathbb{CP}^n$, then it admits a symplectic embedding into $\mathbb{C}^n$. A generalization of our argument should lead to the identity (\ref{eq:def1}), with the string homology $\mathbb{H}_\ast^{S^1}$ replaced with the $S^1$-equivariant symplectic cohomology $\mathit{SH}_{S^1}^\ast(M_d)$. On the other hand, it follows from \cite{yl2}, Theorem 12 that $M_d$ does not admit a cyclic quasi-dilation, so in particular it does not admit a cyclic coordinate function. It follows from (\ref{eq:def1}) that there must be some $k\geq3$ such that $\tilde{\ell}_k\neq0$ on $\mathit{SH}_{S^1}^\ast(M_d)$. In particular, this shows that the $L_\infty$-algebra $\mathit{SH}_{S^1}^\ast(M_d)$ cannot be intrinsically formal. Since it is reasonable to expect that the $L_\infty$-structure $(\tilde{\ell}_k)_{k\geq1}$ is homotopy equivalent to the geometric $L_\infty$-structure on $\mathit{SC}_{S^1}^\ast(M_d)$ constructed by counting holomorphic curves (cf. \cite{ks}), we conjecture that $\mathit{SC}_{S^1}^\ast(M_d)$ isn't formal as an $L_\infty$-algebra. Since $\mathbb{C}^n$ has vanishing symplectic cohomology, similar argument can be carried out in the non-equivariant case to show the non-intrinsic formality (and conjecturally non-formality) of the $L_\infty$-structure $(\ell_k)_{k\geq1}$ on $\mathit{SC}^\ast(M_d)$. The non-existence of a coordinate function follows from similar argument as in the proof of \cite{yl2}, Theorem 12, since $M_d$ cannot be uniruled by $\mathbb{C}^\ast$. Note that these non-formality results fit with the geometric intuition that the affine variety $M_d$ is uniruled by $d$-punctured spheres.

\subsection{Compact symplectic manifolds}\label{section:compact}

We briefly discuss in this subsection how to combine the original idea of Fukaya $\cite{kf}$ and the new ingredients in this paper to obtain restrictions on Lagrangian embedding in certain compact symplectic manifolds.

In $\cite{kf}$, Fukaya applied similar ideas as described in the introduction to the compact symplectic manifold $\mathbb{CP}^n$ and proved the following:

\begin{theorem}[$\cite{kf}$, Theorem 14.1]\label{theorem:compact}
Let $L\subset\mathbb{CP}^n$ be a closed Lagrangian submanifold, which is a $K(\pi,1)$ space and $\mathit{Spin}$. Then there exists $\bar{a}\in\pi_2(\mathbb{CP}^n,L)$ such that $L$ bounds a holomorphic disc in the class $\bar{a}$ with Maslov index 2. Moreover, $a=\partial\bar{a}\in\pi_1(L)$ is non-zero, and its centralizer $Z_a\subset\pi_1(L)$ has finite index.
\end{theorem}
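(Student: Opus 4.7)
The plan is to adapt the strategy of Corollary \ref{corollary:n} to the compact monotone setting of $\mathbb{CP}^n$. Since $\mathbb{CP}^n$ is not a Liouville manifold, the input of a cyclic dilation $\tilde{b}\in\mathit{SH}_{S^1}^1(M)$ must be replaced by a closed-string analogue. The natural substitute is the following: in the $S^1$-equivariant Hamiltonian Floer cochain complex $\mathit{HF}_{S^1}^\ast(\mathbb{CP}^n)$, the unit $e_{\mathbb{CP}^n}$ lies in the image of the marking map. This holds because $\mathbb{CP}^n$ is uniruled by lines, and the underlying geometric mechanism is exactly the one that is abstracted by the condition $B(\tilde{b})=1$ in the Liouville case. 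Fixing such a chain-level primitive $\tilde{\beta}$ of the unit plays in this setting the role that $\tilde{\beta}$ plays in Section \ref{section:proof}.

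The next step is to construct the analogues of the Cohen-Ganatra moduli spaces of Section \ref{section:CG}, where the interior puncture $\zeta$ is now capped off by holomorphic spheres in $\mathbb{CP}^n$ (asymptotic to orbits of $X_{H_t}$ appearing in $\tilde{\beta}$), together with the compatible moduli spaces $\overline{\mathcal{R}}_{k+1}(L,\beta)$ of holomorphic discs with boundary on $L$. Running the analogue of Theorem \ref{theorem:approximate} with these moduli spaces produces chains $\tilde{x}\in\widehat{\mathbb{H}}_{-2}^{S^1}$, $\tilde{y}\in\widehat{\mathbb{H}}_2^{S^1}$ satisfying an identity of the form (\ref{eq:def1}) with respect to an $L_\infty$-structure on $\mathbb{H}_\ast^{S^1}$. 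Here compactness of the ambient manifold removes issues at infinity, but it introduces sphere bubbles, which are absorbed into the cap placed at $\zeta$ via the primitive $\tilde{\beta}$; this is the main technical ingredient that must be checked.

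Once this identity is established, the conclusion follows by the algebraic argument in the proof of Corollary \ref{corollary:n}. The $K(\pi,1)$ assumption together with Corollary \ref{corollary:h-type} forces some term $\tilde{\ell}_k\bigl(\tilde{y}(-a),\tilde{x}(a_1),\ldots,\tilde{x}(a_k)\bigr)$ to be non-zero with $\mu(a_i)=2$ for some $i$; this provides the class $\bar{a}\in\pi_2(\mathbb{CP}^n,L)$ bounded by a pseudoholomorphic Maslov $2$ disc, with $a=\partial\bar{a}\ne 0$. Moreover, invoking Lemmas \ref{lemma:homeo} and \ref{lemma:h-equiv} together with the non-vanishing of $H_n\bigl(\mathcal{L}(a)L;\mathbb{R}\bigr)$ shows that the cover $\widetilde{L}\to L$ associated to $Z_a$ is compact, i.e.\ $Z_a$ has finite index. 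The main obstacle in this plan is the construction of the closed-string analogue of Theorem \ref{theorem:moduli}: one must carefully define admissible CF-perturbations and cyclically invariant Kuranishi structures on the new moduli spaces that incorporate holomorphic sphere caps, and prove the analogue of Lemma \ref{lemma:ana} showing that the boundary contributions assemble into the unit $e_{\mathbb{CP}^n}$ via the chosen primitive $\tilde{\beta}$.
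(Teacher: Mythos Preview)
The paper does not prove this theorem; it is quoted from Fukaya's paper \cite{kf} and stated in Section~\ref{section:compact} as external input, with no proof supplied here. So there is no ``paper's own proof'' to compare against directly.

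That said, your proposal is \emph{not} a reconstruction of Fukaya's argument but an attempt to re-prove his result via the $S^1$-equivariant machinery of this paper. Fukaya's original approach in \cite{kf} is the non-equivariant one: it produces an identity of the shape~(\ref{eq:def}) on $H_\ast(\mathcal{L}L;\mathbb{R})$, not~(\ref{eq:def1}) on $\mathbb{H}_\ast^{S^1}$. The geometric input replacing Hamiltonian displaceability is the uniruling of $\mathbb{CP}^n$ by lines (the evaluation from the moduli of degree-$1$ spheres through a point is a diffeomorphism onto $\mathbb{CP}^n$), which plays the role that the vanishing $\mathit{SH}^\ast(\mathbb{C}^n)=0$ plays in the affine case. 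This is much closer in spirit to the log~PSS discussion the paper gives at the end of Section~\ref{section:compact} than to the cyclic-dilation framework you invoke.

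Your equivariant route is plausible as an alternative, but there are genuine gaps. First, the assertion that ``in $\mathit{HF}_{S^1}^\ast(\mathbb{CP}^n)$ the unit lies in the image of the marking map'' needs a precise meaning: for a closed monotone manifold there is no action-filtered positive part splitting off, so the Gysin sequence and the map $B$ do not behave as in~(\ref{eq:Gysin}); you would need to say exactly which equivariant theory you use and why the analogue of $B(\tilde b)=1$ holds there. Second, your handling of sphere bubbles is too optimistic: spheres can bubble off anywhere on any disc component, not only at the cap over $\zeta$, and the Kuranishi framework used in this paper deliberately avoids sphere bubbles (cf.\ Appendix~\ref{section:basic}, where charts are built on manifolds rather than orbifolds precisely because spheres do not arise in the Liouville setting). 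Extending Theorem~\ref{theorem:moduli} and the cyclic-invariance arguments to accommodate sphere bubbling is substantial work that you have not addressed. Finally, the equivariant detour buys nothing for $\mathbb{CP}^n$ specifically: since Fukaya already obtains the non-equivariant identity~(\ref{eq:def}) there, the algebraic endgame of Corollary~\ref{corollary:n} runs without passing to $\mathbb{H}_\ast^{S^1}$.
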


\begin{remark}\label{remark:Damian}
When $X=Y\times Z$, where $Y$ is a closed K\"{a}hler manifold with a subcritical polarization in the sense of Biran-Cieliebak \cite{bc}, and $Z$ is chosen so that $X$ is monotone, Theorem \ref{theorem:compact} is proved by Damian \cite{md} for monotone Lagrangian submanifolds $L\subset X$ which are $K(\pi,1)$ spaces.
\end{remark}

Theorem \ref{theorem:compact} is parallel to Corollary \ref{corollary:n} in this paper. In particular, it implies that a Lagrangian $L\subset\mathbb{CP}^n$ cannot be a hyperbolic manifold. This is a special case of the \textit{Viterbo-Eliashberg theorem} $\cite{egh}$, which states that if $L\subset X$ is a closed Lagrangian submanifold in a \textit{uniruled} smooth algebraic variety $X$, then $L$ cannot be hyperbolic.

It is therefore desirable to have generalizations of Theorem \ref{theorem:compact} for closed Lagrangian submanifolds $L$ in a more general class of uniruled smooth algebraic varieties $X$. Inspired by the work of Biran-Cieliebak \cite{bc}, we introduce the following notion.

\begin{definition}
Let $X$ be a closed K\"{a}hler manifold. If there is a complex hypersurface $\Sigma\subset X$ such that the complement $X\setminus\Sigma$ is a Weinstein manifold which admits a cyclic dilation, we call the pair $(X,\Sigma)$ a polarization with finite first Gutt-Hutchings capacity.
\end{definition}

In view of Remark \ref{remark:Damian}, it is natural to expect the following.

\begin{conjecture}\label{conjecture:compact}
Let $X$ be a K\"{a}hler manifold which admits a polarization $(X,\Sigma)$ with finite first Gutt-Hutchings capacity. Then the conclusion of Theorem \ref{theorem:compact} holds for any closed Lagrangian submanifold $L\subset X$ which is a $K(\pi,1)$ space and \textit{Spin}.
\end{conjecture}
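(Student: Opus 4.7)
The plan is to imitate the proof of Theorem \ref{theorem:main} and Corollary \ref{corollary:n}, using the complement $M=X\setminus\Sigma$ as the ``Liouville piece'' that carries the cyclic dilation, while carefully accounting for the discs in $X$ that cross the divisor $\Sigma$. First I would apply a small Hamiltonian isotopy to make $L\subset M$; this is possible generically because $\Sigma$ has real codimension $2$, and we may further arrange that near $\Sigma$ the Liouville structure on $M$ matches the standard model from Biran--Cieliebak \cite{bc}, so that discs in $X$ meeting $\Sigma$ are asymptotically modeled on half-Reeb orbits at the ideal boundary of $M$. Next, I would enlarge each of the moduli spaces from Section \ref{section:moduli} to parametrize $J$-holomorphic maps $u:S\to X$ with $u(\partial S)\subset L$, keeping track of the non-negative intersection number $m:=u\cdot\Sigma$; for $J$ that is integrable near $\Sigma$ all intersections are positive and carry a uniform area contribution $\omega([\Sigma])>0$. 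Correspondingly I would refine the filtration on $C_\ast^{S^1}$ and $C_\ast^\lambda$ by adding $m\cdot\omega([\Sigma])$ to the action weight $\theta_M(a)$, so that the combined filtration is still bounded below on every fixed energy window.

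The second and more substantial step is to build the chain-level Maurer--Cartan data $(\underline{x},\underline{y},\underline{z})$ in this enlarged complex, reproducing Theorem \ref{theorem:chain} with ``divisor corrections''. The cyclic dilation $\tilde b\in\mathit{SH}_{S^1}^1(M)$ still supplies the primitive of $e_M$ needed in Lemma \ref{lemma:ana}, but its cochain representative $\tilde\beta$ must be extended to a cochain on $X$ by counting Floer cylinders that may cross $\Sigma$; this is essentially the relative symplectic cohomology viewpoint, and the resulting Maurer--Cartan equation
\begin{equation*}
\tilde\partial(\underline{x})-\tfrac{1}{2}\{\underline{x},\underline{x}\}=0,\qquad
\tilde\partial(\underline{y})-\{\underline{x},\underline{y}\}=\underline{z}
\end{equation*}
holds after completion with respect to the enlarged filtration. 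The boundary analysis of the Cohen--Ganatra moduli spaces proceeds as in Theorem \ref{theorem:moduli}; the new phenomena are sphere bubbles meeting $\Sigma$ (absorbed into strata of positive $m$, hence higher filtration) and disc bubbles in $X$ with positive $\Sigma$-intersection, which produce the extra ``divisor terms'' in $\underline{x}$. Inductive construction via Lemma \ref{lemma:induction} then produces the desired limiting chains, giving the identity
\begin{equation*}
\Bigl(\sum_{k\ge 2}\tfrac{1}{(k-1)!}\tilde\ell_k(\tilde y,\tilde x,\dots,\tilde x)\Bigr)_{a=0,\,m=0}=(-1)^{n+1}[\![L]\!].
\end{equation*}

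Once this chain-level identity is in hand, the rest of the argument follows Corollary \ref{corollary:n} essentially verbatim. Since $L$ is aspherical, $[\![L]\!]\ne 0$, so there must be a non-zero summand $\tilde\ell_k\bigl(\tilde y(-a),\tilde x(a_1,m_1),\dots,\tilde x(a_{k-1},m_{k-1})\bigr)$; the grading restriction $0\le |\tilde x(a_i,m_i)|\le n-1$ forces $\mu(a_i)=2$ for some $i$, and the corresponding moduli space is non-empty, producing a non-constant $J$-holomorphic disc $\bar a\in\pi_2(X,L)$ of Maslov index $2$. Because $\mu(a)=2$, the class $a=\partial\bar a\in\pi_1(L)$ is non-zero. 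The finite-index claim for $Z_a$ is extracted exactly as in the final paragraph of the proof of Corollary \ref{corollary:n}: via Lemmas \ref{lemma:homeo} and \ref{lemma:h-equiv}, the covering $\widetilde L\to L$ associated to $Z_a$ satisfies $\mathcal L(\tilde a)\widetilde L\simeq \widetilde L$, and the non-vanishing of $\tilde x(a,m)$ in top degree forces $H_n(\widetilde L;\mathbb R)\ne 0$, hence $\widetilde L$ is compact and $[\pi_1(L):Z_a]<\infty$.

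The main obstacle will be the compactness and Kuranishi theory for the enlarged moduli spaces in the compact target $X$. Unlike the Liouville case, one must cope with sphere bubbling in $X$ (which is controlled in principle by the polarization but requires careful virtual perturbation), with multiply-covered discs tangent to $\Sigma$ at high order, and with the mixed degenerations where several components of a broken configuration intersect $\Sigma$ with different multiplicities. Showing that the inductive scheme of Lemma \ref{lemma:induction} is compatible with the $(\theta_M,m)$-bigraded filtration---equivalently, that the error chains $\overline\Delta^i_x,\overline\Delta^i_y,\overline\Delta^i_z$ can be chosen to respect the filtration by divisor-intersection number---is the key technical point one must verify, and is also what prevents this conjecture from being immediate given the framework developed in the paper.
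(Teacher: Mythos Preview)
This statement is labeled a \emph{conjecture} in the paper, and the paper explicitly says ``It is not clear how to prove Conjecture~\ref{conjecture:compact} in general.'' The paper does not give a proof; it only sketches an approach to a \emph{special case} where $L$ is assumed disjoint from $\Sigma$, using the logarithmic PSS machinery of Ganatra--Pomerleano: one introduces moduli spaces ${}_l\mathcal{R}_{k+1}(L,\mathbf{v}_l,\alpha_l,\bar a)$ of discs with prescribed tangency to the strata of $\Sigma$, and the key hypothesis is that the cyclic dilation on $M=X\setminus\Sigma$ arises via $\mathit{PSS}^{\mathit{log}}$ from admissible cochains in $C^\ast_{\mathit{log}}(X,\Sigma)$. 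Your proposal is not a proof but a strategy sketch, and you yourself identify the main obstacle (sphere bubbling and the bigraded-filtration compatibility of the inductive scheme) in your final paragraph. So at best you are outlining a program, not giving a proof, and the paper is in the same position.

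That said, your sketch contains a genuine error at the very first step. You claim one can apply a small Hamiltonian isotopy to push $L$ into $M=X\setminus\Sigma$ ``because $\Sigma$ has real codimension $2$''. This is false in general: a Lagrangian $L^n$ and a symplectic divisor $\Sigma^{2n-2}$ in $X^{2n}$ generically intersect transversely in an $(n-2)$-dimensional submanifold, which is nonempty for $n\ge 2$; real codimension $2$ gives transversality, not disjointness. Whether $L$ can be Hamiltonian displaced off $\Sigma$ is a nontrivial question (this is precisely the content of Biran's barrier and decomposition theorems, and fails for many polarizations that are not subcritical). The paper's own sketch sidesteps this by explicitly assuming $L\cap\Sigma=\emptyset$, so even the paper's partial approach does not cover the conjecture as stated.

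Beyond that, your approach and the paper's diverge in how they package the divisor contributions. You propose to track a single intersection number $m=u\cdot\Sigma$ and refine the filtration accordingly, whereas the paper (following \cite{gp1}) records the full tangency vector $\mathbf{v}\in\mathbb{Z}_{\ge 0}^m$ to each irreducible component and uses the enhanced evaluation map $\mathit{Ev}_0^{\mathbf{v}}$ into the torus bundle $S_I^\circ$. The paper's route has the advantage that the cylinder-breaking strata are expressed directly in terms of $\mathit{PSS}^{\mathit{log}}_j$, so the condition ``$\sum_l\mathit{PSS}^{\mathit{log}}_l(\alpha_l t^{\mathbf{v}_l})=e_M$'' replaces your vaguer requirement that $\tilde\beta$ ``be extended to a cochain on $X$''. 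Neither approach is currently a proof.
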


It is not clear to the author how to prove Conjecture \ref{conjecture:compact} in general. Here let us take a look at a special case, which is inspired by the works of Abouzaid-Smith \cite{asm1} and Ganatra-Pomerleano \cite{gp1}. Let $X$ be a smooth projective variety, and $\Sigma\subset X$ is a normal crossing divisor, with smooth components $\Sigma_1,\cdots,\Sigma_m$. Assume that
\begin{equation}
K_X^{-1}\cong\mathcal{O}(\sum_{i=1}^ma_i\Sigma_i).
\end{equation}
Let $I\subset\{1,\cdots,m\}$ be a subset, we introduce the notations $\Sigma_I:=\bigcap_{i\in I}\Sigma_i$, $\Sigma_I^\circ:=\Sigma_I\setminus\bigcup_{j\notin I}\Sigma_j$. Locally near $\Sigma_I\subset X$, there is an $|I|$-fold intersection $U_I$ of the tubular neighborhoods $U_i$ of each $\Sigma_i$, and the iterated projection $U_I\rightarrow\Sigma_I$ is a symplectic fibration with structure group $U(1)^{|I|}$, with fibers a product of standard symplectic 2-discs. Denote by $S_I\rightarrow\Sigma_I$ the associated $T^{|I|}$-bundle over $\Sigma_I$, and by $S_I^\circ$ its restriction to $\Sigma_I^\circ$.

Let $J_X$ be an almost complex structure on $X$ which is compatible with the symplectic form $\omega_X$, and let $L\subset X$ be a closed Lagrangian submanifold disjoint from $\Sigma$. For a $J_X$-holomorphic map $u:(D,\partial D)\rightarrow(X,L)$ with $u(0)\in\Sigma_I^\circ$, one can choose a local complex coordinate $z$ near the origin so that the normal component $u_i$ of $u$ has an expansion
\begin{equation}
u_i(z)=a_iz^{v_i}+O(|z|^{v_i+1}),
\end{equation}
where $v_i\in\mathbb{N}$ and the coefficient $a_i\neq0$ is the $v_i$-jet normal to $\Sigma_i$ modulo higher order terms, which takes value in $(T_0^\ast D)^{\otimes v_i}\otimes N_{u(0)}\Sigma_i$, where $N\Sigma_i$ is the normal bundle of $\Sigma_i\subset X$. Now fix an asymptotic marker $\ell$ at $0\in D$, it specifies via $a_i$ a non-zero vector in $N_{u(0)}\Sigma_i\setminus\{0\}$. Denote its image in the real projectivization $\mathbb{P}N_{u(0)}\Sigma_i$ by $[a_i]$. Taking the direct sum over $i\in I$ gives a point $\left[\bigoplus_{i\in I}a_i\right]$ in the fiber $S_{I,u(0)}$ of the torus bundle $S_I\rightarrow D_I$. Let $\mathbf{v}=(v_1,\cdots,v_m)\in\mathbb{Z}_{\geq0}^m$, where $v_i=0$ for $i\notin I$, the \textit{enhanced evaluation map} of $u$ at $0$ is defined as
\begin{equation}
\mathit{Ev}_0^\mathbf{v}(u):=\left(u(0),\left[\bigoplus_{i\in I}a_i\right]\right),
\end{equation}
which keeps track of both the incidence condition and the tangency condition of the holomorphic curve $u$ at $0$ with the divisors $\Sigma_i$, $i\in I$.

For $\bar{a}\in H_2(X,L)$ and $\mathbf{v}_l=(v_{l,1},\cdots,v_{l,m})\in\mathbb{Z}_{\geq0}^m$ supported on $I_l\subset\{1,\cdots,m\}$, let
\begin{equation}
{}_l\mathcal{R}_{k+1}(L,\mathbf{v}_l,\bar{a})
\end{equation}
be the moduli space of pairs
\begin{equation}
\left((D;z_0,\cdots,z_k,p_1,\cdots,p_l;\ell),u\right),
\end{equation}
where $z_0,\cdots,z_k\in\partial D$, the auxiliary marked points $p_1,\cdots,p_l$ satisfy (\ref{eq:radial}), and the asymptotic marker $\ell$ at $0\in D$ points towards $p_l$. The map $u:(D,\partial D)\rightarrow(X,L)$ is $J_X$-holomorphic, such that $u(0)\cdot\Sigma_i=v_{l,i}$ for $i\in I_l$, and $[u]=\bar{a}$. The enhanced evaluation map gives rise to a map
\begin{equation}
\mathit{Ev}_0^{\mathbf{v}_l}:{}_l\mathcal{R}_{k+1}(L,\mathbf{v}_l,\bar{a})\rightarrow S_{I_l}^\circ.
\end{equation}
For any locally finite chain $\alpha_l\in C_\ast^\mathit{BM}(S_{I_l}^\circ;\mathbb{K})$, we form the fiber product
\begin{equation}
{}_l\mathcal{R}_{k+1}(L,\mathbf{v}_l,\alpha_l,\bar{a}):={}_l\mathcal{R}_{k+1}(L,\mathbf{v}_l,\bar{a})\times_{\mathit{Ev}_0^{\mathbf{v}_l}}\alpha_l.
\end{equation}

The compactification ${}_l\overline{\mathcal{R}}_{k+1}(L,\mathbf{v}_l,\alpha_l,\bar{a})$ is an admissible K-space, with its codimension 1 boundary covered by the following strata:
\begin{equation}\label{eq:s1}
\bigsqcup_{\substack{k_1+k_2=k+1\\1\leq i\leq k_1\\ \bar{a}_1+\bar{a}_2=\bar{a}}}\left({}_l\overline{\mathcal{R}}_{k_1+1}(L,\mathbf{v}_l,\alpha_l,\bar{a}_1)\textrm{ }{{}_i\times_0}\textrm{ }\overline{\mathcal{R}}_{k_2+1}(L,\bar{a}_2)\sqcup\overline{\mathcal{R}}_{k_1+1}(L,\bar{a}_1)\textrm{ }{{}_i\times_0}\textrm{ }{}_l\overline{\mathcal{R}}_{k_2+1}(L,\mathbf{v}_l,\alpha_l,\bar{a}_2)\right),
\end{equation}
\begin{equation}\label{eq:s2}
{}_j\overline{\mathcal{P}}(\mathbf{v}_l,\alpha_l,x_j)\times{}_{l-j}\overline{\mathcal{R}}^1_{k+1}(x_j,L,\bar{a}), 0\leq j\leq l,
\end{equation}
\begin{equation}\label{eq:s3}
{}_{l-1}\overline{\mathcal{R}}^{S^1}_{k+1}(L,\mathbf{v}_l,\alpha_l,\bar{a}),
\end{equation}
\begin{equation}\label{eq:s4}
{}_l^{j,j+1}\overline{\mathcal{R}}^{S^1}_{k+1}(L,\mathbf{v}_l,\alpha_l,\bar{a}), 1\leq j\leq l-1,
\end{equation}
where ${}_j\mathcal{P}(\mathbf{v}_l,\alpha_l,x_j)$ are moduli spaces of (parametrized) logarithmic Piunikhin-Salamon-Schwarz (PSS) maps studied in \cite{gp1}, with $x_j$ being an orbit of the vector field $X_{H_t}$ of some admissible Hamiltonian function (defined using the radial coordinate on the collar neighborhood of the divisor complement $X\setminus\Sigma$). When $j=0$, this is defined as the fiber product
\begin{equation}
\mathcal{P}(\mathbf{v}_l,\alpha_l,x_j):=\mathcal{P}(\mathbf{v}_l,x_j)\times_{\mathit{Ev}_0^{\mathbf{v}_l}}\alpha_l,
\end{equation}
where $\mathcal{P}(\mathbf{v}_l,x_j)$ is the moduli space of maps $u:\mathbb{C}\rightarrow X$ satisfying the Floer equation $(du-X_{H_t}\otimes dt)^{0,1}=0$, with the obvious asymptotic and tangency conditions. In general, ${}_j\mathcal{P}(\mathbf{v}_l,\alpha_l,x_j)$ is defined as the space of the same maps $u:\mathbb{C}\rightarrow X$, but from a varying family of domains obtained by equipping $\mathbb{C}$ with the auxiliary marked points $p_{l-j+1},\cdots,p_l$, which are strictly radially ordered in the sense that
\begin{equation}
0<|p_l|<\cdots<|p_{l-j+1}|.
\end{equation}
As before, the boundary strata (\ref{eq:s1}) come from disc bubbles along the Lagrangian boundary $L$, (\ref{eq:s2}) arise from breakings at the origin $0\in D$ when $|p_{l-j+1}|\rightarrow0$, (\ref{eq:s3}) corresponds to the locus where $|p_1|=\frac{1}{2}$, and (\ref{eq:s4}) are the loci defined by $|p_j|=|p_{j+1}|$.

A count of rigid elements in the moduli spaces ${}_j\overline{\mathcal{P}}(\mathbf{v},\alpha,x)$ for varying $\alpha\in C_\ast^\mathit{BM}(S_I^\circ;\mathbb{K})$ and $\mathbf{v}\in\mathbb{Z}_{\geq0}^m$ supported on $I\subset\{1,\cdots,m\}$ defines a map
\begin{equation}
\mathit{PSS}^\mathit{log}_j:C_\mathit{log}^{\ast+2j}(X,\Sigma)\rightarrow\mathit{SC}^\ast(M)
\end{equation}
on the logarithmic cochain complex
\begin{equation}
C_\mathit{log}^\ast(X,\Sigma):=\bigoplus_{I\subset\{1,\cdots,m\}}t^\mathbf{v}C_{2n-|I|-\ast}^\mathit{BM}(S_I^\circ;\mathbb{K}),
\end{equation}
When $l=0$, the positive energy part of this map is a chain map on \textit{admissible} cochains $C_\mathit{log}^\ast(X,\Sigma)_\mathit{ad}\subset C_\mathit{log}^\ast(X,\Sigma)$ (cf. \cite{gp1}, Definition 3.20), which induces on the cohomology level the logarithmic PSS map $H_\mathit{log}^\ast(X,\Sigma)_\mathit{ad}\rightarrow\mathit{SH}^\ast_+(M)$ studied in \cite{gp1}.

Now suppose that there exists a family of locally finite chains $(\alpha_l)_{l\geq0}$ and vectors $(\mathbf{v}_l)_{l\geq0}$ such that $\sum_{l=0}^\infty\mathit{PSS}_l^\mathit{log}(\alpha_lt^{\mathbf{v}_l})$ hits the identity $e_M\in\mathit{SC}^0(M)$, and $\mathit{PSS}_j^\mathit{log}(\alpha_lt^{\mathbf{v}_l})=0$ for $j<l$, then by pushing forward the virtual fundamental chains of the moduli spaces in (\ref{eq:s1})---(\ref{eq:s4}), similar arguments as in Section \ref{section:proof} would imply the identity (\ref{eq:def1}), and therefore confirm Conjecture \ref{conjecture:compact} for Lagrangian submanifolds $L\subset X$ which are disjoint from $\Sigma$. 

When $X\setminus\Sigma$ admits a dilation $b\in\mathit{SH}^1(X\setminus\Sigma)$, and the dilation comes from an admissible cochain $\alpha t^\mathbf{v}$, i.e. $\mathit{PSS}^\mathit{log}(\alpha t^\mathbf{v},\mathfrak{b})=b$, where $\mathfrak{b}\in C^\ast(X\setminus\Sigma;\mathbb{K})$ is a bounding cochain for $\alpha t^\mathbf{v}$, it follows from \cite{gp1}, Lemmas 4.29 and 4.30 that there exists an $\tilde{\alpha}\in C_\ast^\mathit{BM}(S_I^\circ;\mathbb{K})$ such that $\mathit{PSS}^\mathit{log}(\tilde{\alpha}t^\mathbf{v})$ hits the identity $e_M\in\mathit{SC}^0(M)$. In particular, this confirms Conjecture \ref{conjecture:compact} for the pairings $(X,\Sigma)$ studied in \cite{gp1}, Sections 6.1 and 6.4, where a dilation can be constructed via the logarithmic PSS map.

\appendix

\section{Kuranishi structures and virtual fundamental chains}\label{section:Kuranishi}

For the reader's convenience, we collect in this appendix some basic notions and useful facts in the theory of Kuranishi structures. Our references here are \cite{fooo1,fooo2,fooo3,fooo4} and \cite{ki2}.

\subsection{Basic notions}\label{section:basic}

Instead of writing down all the definitions, we will only recall here the most basic notions and provide the references for their variations and generalizations, so the reader may use this section as a dictionary for the notions in Kuranishi theory used in the main content of our paper. Following \cite{ki2}, we shall define Kuranish charts using manifolds instead of orbifolds. This is sufficient for our purposes because sphere bubbles do not appear for Lagrangian submanifolds in Liouville manifolds. Let $X$ be a separable, metrizable topological space.

\begin{definition}
A Kuranishi chart of $X$ is a quadruple $\mathcal{U}=(U,\mathcal{E},s,\psi)$ such that
    \begin{itemize}
		\item[(i)] $U$ is a smooth manifold,
		\item[(ii)] $\mathcal{E}$ is a smooth vector bundle on $U$,
		\item[(iii)] $s$ is a smooth section of $\mathcal{E}$,
		\item[(iv)] $\psi:s^{-1}(0)\rightarrow X$ is a homeomorphism onto an open subset of $X$.
	\end{itemize}
We call $U$ a Kuranishi neighborhood, $\mathcal{E}$ an obstruction bundle, $s$ a Kuranishi map, and $\psi$ a parametrization. $\dim\mathcal{U}:=\dim U-\mathrm{rank}\;E$ is called the dimension of $\mathcal{U}$. An orientation of $\mathcal{U}$ is a pair of orientations of $U$ and $E$.
\end{definition}

To globalize the notion of a Kuranishi chart, we need the following two definitions.

\begin{definition}
	Let $\mathcal{U}_i=(U_i,\mathcal{E}_i,s_i,\psi_i)$, $i=1,2$ be Kuranishi charts of $X$. An embedding of Kuranishi charts $\Phi:\mathcal{U}_1\rightarrow\mathcal{U}_2$ is a pair $\Phi=(\phi,\hat{\phi})$ such that
	\begin{itemize}
		\item[(i)] $\phi:U_1\rightarrow U_2$ is an embedding of smooth manifolds,
		\item[(ii)] $\hat{\phi}:\mathcal{E}_1\rightarrow\mathcal{E}_2$ is an embedding of smooth vector bundles over $\phi$,
		\item[(iii)] $\hat{\phi}\circ s_1=s_2\circ\phi$,
		\item[(iv)] $\psi_2\circ\phi=\psi_1$ on $s_1^{-1}(0)$,
		\item[(v)] for any $x\in s_1^{-1}(0)$, the covariant derivative
		\begin{equation}\label{eq:co-der}
			D_{\phi(x)}s_2:\frac{T_{\phi(x)}U_2}{(D_x\phi)(T_xU_1)}\rightarrow\frac{(\mathcal{E}_2)_{\phi(x)}}{\hat{\phi}((\mathcal{E}_1)_x)}
		\end{equation}
		is an isomorphism.
	\end{itemize}
\end{definition}

\begin{definition}
Let $\mathcal{U}_i=(U_i,\mathcal{E}_i,s_i,\psi_i)$, $i=1,2$ be Kuranishi charts of $X$. A coordinate change in the weak sense (resp. strong sense) from $\mathcal{U}_1$ to $\mathcal{U}_2$ is a triple $\Phi_{21}=(U_{21},\phi_{21},\hat{\phi}_{21})$ satisfying the first two (resp. all three) of the requirements below:
	\begin{itemize}
		\item[(i)] $U_{21}$ is an open subset of $U_1$;
		\item[(ii)] $(\phi_{21},\hat{\phi}_{21})$ is an embedding of Kuranishi charts $\mathcal{U}_1|_{U_{21}}\rightarrow\mathcal{U}_2$;
		\item[(iii)] $\psi_1\left(s_1^{-1}(0)\cap U_{21}\right)=\mathrm{im}\psi_1\cap\mathrm{im}\psi_2$.
	\end{itemize}
\end{definition}

We can now define a Kuranishi structure, which is key for the description of the geometric structure of the moduli spaces cut out by non-linear partial differential equations.

\begin{definition}\label{definition:K-structure}
A Kuranishi structure $\widehat{\mathcal{U}}$ of $X$ of dimension $d$ consists of
	\begin{itemize}
		\item a Kuranishi chart of dimension $d$ $\mathcal{U}_p=(U_p,\mathcal{E}_p,s_p,\psi_p)$ at $p$ for every $p\in X$,
		\item a coordinate change in the weak sense $\Phi_{pq}=(U_{pq},\phi_{pq},\hat{\phi}_{pq}):\mathcal{U}_q\rightarrow\mathcal{U}_p$ for every $p\in X$ and $q\in\mathrm{im}\psi_p$ so that $\Phi_{pp}=(U_p,\mathit{id},\mathit{id})$,
	\end{itemize}
such that
	\begin{itemize}
		\item[(i)] $o_q\in U_{pq}$ for every $q\in\mathrm{im}\psi_p$,
		\item[(ii)] for every $p\in X$, $q\in\mathrm{im}\psi_p$ and $r\in\psi_q\left(s^{-1}(0)\cap U_{pq}\right)$, there holds $\Phi_{pr}|_{U_{pqr}}=\Phi_{pq}\circ\Phi_{qr}|_{U_{pqr}}$, where $U_{pqr}:=\phi_{qr}^{-1}(U_{pq})\cap U_{pr}$.
	\end{itemize}
The pair $(X,\widehat{\mathcal{U}})$ is called a K-space of dimension $d$. We say the Kuranishi structure $\widehat{\mathcal{U}}$ is oriented if each Kuranishi chart $\mathcal{U}_p$ is oriented and $\phi_{pq}$ preserves orientations for every $p\in X$ and $q\in\mathrm{im}\psi_p$.
\end{definition}

The analogy of the notion of a manifold with corners in the theory of Kuranishi structures is called an \textit{admissible K-space}, see \cite{fooo4}, Chapters 17 and 25 for details.

\begin{definition}
Let $\widehat{\mathcal{U}}$ be a Kuranishi structure of $X$, and $Y$ a topological space.
\begin{itemize}
	\item[(i)] A strongly continuous map $\hat{f}:(X,\widehat{\mathcal{U}})\rightarrow Y$ assigns a continuous map $f_p:U_p\rightarrow Y$ for each $p\in X$ such that $f_p\circ\phi_{pq}=f_q$ holds on $U_{pq}$.
	\item[(ii)] When $Y$ is a smooth manifold, we say that $\hat{f}$ is strongly smooth if each $f_p$ is smooth.
\end{itemize}
\end{definition}

The notion of a strongly smooth map generalizes to that of an \textit{admissible map} for admissible K-spaces.

In order to define the pushforward of virtual fundamental chains of K-spaces via strongly continuous and submersive maps, we need to equip the K-spaces with \textit{CF-perturbations}. Roughly speaking, these are triples $\mathcal{S}_x=\left(W_x,\omega_x,\{\mathfrak{s}_x^\varepsilon\}\right)$ for every point $x\in V_x\subset U_p$ satisfying certain conditions, where $\mathfrak{s}_x^\varepsilon$ is a family of sections of $\mathcal{E}_p$ over the manifold chart $V_x$ of $U_p$, parametrized by an open neighborhood $W_x$ of $0$ in some finite-dimensional vector space, and $\omega_x$ is a top degree differential form on $W_x$. See \cite{fooo4}, Definition 7.4 for details. 

\begin{definition}
Let $(X,\widehat{\mathcal{U}})$ be a K-space and $N$ a manifold with corners.
\begin{itemize}
	\item[(i)] A strongly continuous map $\hat{f}: (X,\widehat{\mathcal{U}})\rightarrow N$ is said to be a corner stratified smooth map if $f_p:U_p\rightarrow N$ is a corner stratified smooth map (\cite{fooo4}, Definition 26.1) for any $p\in X$.
	\item[(ii)] A corner stratified smooth map $\hat{f}: (X,\widehat{\mathcal{U}})\rightarrow N$ is a corner stratified weak submersion if $f_p:U_p\rightarrow N$ is a corner stratified submersion for any $p\in X$.
	\item[(iii)] Let $\widehat{\mathcal{S}}$ be a CF-perturbation of $X$. We say that a corner stratified smooth map $\hat{f}: (X,\widehat{\mathcal{U}})\rightarrow N$ is a corner stratified strong submersion with respect to $\widehat{\mathcal{S}}$ if the following holds. Let $p\in X$ and $(U_p,\mathcal{E}_p,s_p,\psi_p)$ is a Kuranishi chart at $p$. Let $(V_x,\phi_x)$ be a manifold chart of $U_p$ and $\left(W_x,\omega_x,\{\mathfrak{s}_x^\varepsilon\}\right)$ be a representative of $\widehat{\mathcal{S}}$ in the chart $V_x$, then
	\begin{equation}
	f\circ\psi_p\circ\phi_x\circ\mathit{pr}_V:(\mathfrak{s}_x^\varepsilon)^{-1}(0)\rightarrow N
	\end{equation}
    is a corner stratified submersion, where $\mathit{pr}_V$ is the projection $V_x\times W_x\rightarrow V_x$.
\end{itemize}
\end{definition}

To achieve the compatibility of CF-perturbations, we need the notion of a \textit{thickening} of Kuranishi structures, see \cite{fooo4}, Definition 5.3.

\subsection{Integration along the fiber}\label{section:pushforward}

We recall the integration along the fiber associated to a strongly smooth map, and the compatibility properties satisfied by the de Rham chains defined in this way. The main references here are \cite{fooo4}, Chapter 7 and \cite{ki2}, Section 7.1.

\begin{theorem}[\cite{ki2}, Theorem 7.2]\label{theorem:pushforward}
Let $(X,\widehat{\mathcal{U}})$ be a compact, oriented K-space of dimension $d$ without boundary, equipped with a strongly smooth map $\hat{f}:(X,\widehat{\mathcal{U}})\rightarrow\mathcal{L}_{k+1}$, a differential form $\hat{\omega}$, and a CF-perturbation $\widehat{\mathcal{S}}=(\widehat{\mathcal{S}}^\varepsilon)_{0<\varepsilon\leq1}$. We assume that $\widehat{\mathcal{S}}$ is transversal to $0$, and $\mathit{ev}_0\circ\hat{f}:(X,\widehat{\mathcal{U}})\rightarrow L$ is strongly submersive with respect to $\widehat{\mathcal{S}}$. Then one can define a de Rham chain
\begin{equation}
\hat{f}_\ast(X,\widehat{\mathcal{U}},\hat{\omega},\widehat{\mathcal{S}}^\varepsilon)\in C_{d-|\hat{\omega}|}^\mathit{dR}(\mathcal{L}_{k+1})
\end{equation}
for sufficiently small $\varepsilon$, so that the Stokes' formula and the fiber product formula hold.
\end{theorem}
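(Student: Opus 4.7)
The plan is to construct the de Rham chain $\hat{f}_\ast(X,\widehat{\mathcal{U}},\hat{\omega},\widehat{\mathcal{S}}^\varepsilon)$ by working locally in Kuranishi charts and then checking that the local constructions fit together to give a well-defined element of $C_{d-|\hat{\omega}|}^{\mathit{dR}}(\mathcal{L}_{k+1})$. Since $(X,\widehat{\mathcal{U}})$ is compact, one can first choose a finite good coordinate system $\{(\mathcal{U}_\alpha,\mathcal{E}_\alpha,s_\alpha,\psi_\alpha)\}$ together with a strongly smooth partition of unity $\{\chi_\alpha\}$ subordinate to its support, so it suffices to define $(\hat{f}\chi_\alpha)_\ast(\mathcal{U}_\alpha,\widehat{\mathcal{S}}^\varepsilon|_{\mathcal{U}_\alpha})$ in each chart and sum. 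For each chart, a representative of the CF-perturbation $\widehat{\mathcal{S}}^\varepsilon$ on a manifold sub-chart $V_x$ is a triple $(W_x,\omega_x,\mathfrak{s}_x^\varepsilon)$; since $\widehat{\mathcal{S}}$ is transversal to $0$, the zero locus $(\mathfrak{s}_x^\varepsilon)^{-1}(0) \subset V_x\times W_x$ is a smooth manifold of dimension $d+\dim W_x$ for $\varepsilon$ small, and since $\mathit{ev}_0\circ\hat{f}$ is strongly submersive with respect to $\widehat{\mathcal{S}}$, the composition $\mathit{ev}_0\circ\hat{f}\circ\mathit{pr}_V$ restricted to this zero locus is a submersion to $L$. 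This is exactly what is needed for the data $(U,\varphi,\omega) = \bigl((\mathfrak{s}_x^\varepsilon)^{-1}(0),\, (\hat{f}\circ\mathit{pr}_V)|_{(\mathfrak{s}_x^\varepsilon)^{-1}(0)},\, \chi_\alpha\cdot(\hat{\omega}\wedge\omega_x)|_{(\mathfrak{s}_x^\varepsilon)^{-1}(0)}\bigr)$ to represent a plot for $C_\ast^{\mathit{dR}}(\mathcal{L}_{k+1})$ in the sense of Section~\ref{section:de Rham}.

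Next I would verify that the resulting element in $\bigoplus_{(U,\varphi)\in\mathcal{P}(\mathcal{L}_{k+1})}A_c^{\dim(U)-N}(U)$ lies in a single equivalence class modulo the subspace $Z_N$. There are two sources of ambiguity to deal with: the freedom in choosing the representative $(W_x,\omega_x,\mathfrak{s}_x^\varepsilon)$ of $\widehat{\mathcal{S}}^\varepsilon$ on overlapping sub-charts, and the freedom in shrinking $W_x$ or passing to a finer good coordinate system. Both are handled by the standard observation that the transition data for CF-perturbations on a thickening define submersions $\pi\colon U'\to U$ with $\pi_!\omega' = \omega$, which is precisely the relation modded out in the definition of the de Rham chain complex. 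The independence from the partition of unity $\{\chi_\alpha\}$ then follows because a convex combination of two choices produces a cobordism whose fiber integral is a boundary in $C_\ast^{\mathit{dR}}$.

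With the pushforward well-defined, Stokes' formula
\begin{equation*}
\partial\hat{f}_\ast(X,\widehat{\mathcal{U}},\hat{\omega},\widehat{\mathcal{S}}^\varepsilon) = (-1)^{|\hat{\omega}|+1}\hat{f}_\ast(X,\widehat{\mathcal{U}},d\hat{\omega},\widehat{\mathcal{S}}^\varepsilon) + \hat{f}_\ast(\partial X,\widehat{\mathcal{U}}|_{\partial X},\hat{\omega}|_{\partial X},\widehat{\mathcal{S}}^\varepsilon|_{\partial X})
\end{equation*}
reduces, chart by chart, to the ordinary Stokes theorem applied to $(\mathfrak{s}_x^\varepsilon)^{-1}(0)$ with the usual care for signs coming from the grading shift by $\dim U - \dim W_x$. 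The fiber product formula likewise follows from the classical identity $\pi_!(\alpha\times\beta) = (-1)^{\maltese}\pi_{1!}\alpha\wedge\pi_{2!}\beta$ applied to the Cartesian product of CF-perturbations on the fiber product K-space (which requires that one of the two evaluation maps is a strong submersion with respect to its CF-perturbation, exactly our hypothesis).

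The main technical obstacle is not any one of these steps individually but rather the bookkeeping needed to ensure that the sub-chart representatives $(W_x,\omega_x,\mathfrak{s}_x^\varepsilon)$ piece together coherently across the good coordinate system and across the coordinate changes $\Phi_{pq}$, so that the resulting element of $\bigoplus A_c^\bullet(U)$ genuinely descends to the quotient defining $C_\ast^{\mathit{dR}}$ and does so independently of all auxiliary choices; this is essentially the content of \cite{fooo4}, Chapter~7, and Irie's axiomatization \cite{ki1} was designed precisely to absorb this bookkeeping into the equivalence relation on plots, which is the reason the proof goes through cleanly in the present framework.
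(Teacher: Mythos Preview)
The paper does not contain its own proof of this theorem: it is stated with the attribution [\cite{ki2}, Theorem 7.2] and the surrounding text in Section~\ref{section:pushforward} says only that ``the main references here are \cite{fooo4}, Chapter 7 and \cite{ki2}, Section 7.1.'' So there is nothing in the paper to compare your argument against directly.

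That said, your outline is essentially the construction carried out in those references: pass to a good coordinate system, use a partition of unity, on each chart take the zero locus of a local representative of the CF-perturbation together with the restricted form, and observe that the resulting triple $(U,\varphi,\omega)$ is a plot because the submersion hypothesis on $\mathit{ev}_0\circ\hat{f}$ is exactly the smoothness condition in Definition~\ref{definition:smooth}. Your identification of the two sources of ambiguity and the mechanism (pushforward under submersions lands in $Z_N$) is correct. One small mismatch: the theorem as stated assumes $X$ has no boundary, so the Stokes formula here has no boundary term (cf.\ Theorem~\ref{theorem:Stokes}); the version with a boundary contribution appears separately in Remark~\ref{remark:pushforward} for admissible K-spaces.
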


\begin{remark}
In \cite{ki2}, Theorem 7.2, $\mathcal{L}_{k+1}$ is the space of Moore loops with $k+1$ marked points in $L$, which is a differentiable space in the sense of \cite{ki1}, Section 4.2. Since our model of $\mathcal{L}_{k+1}$ defined in Section \ref{section:de Rham} is a smooth finite-dimensional manifold, the above theorem clearly holds.
\end{remark}

The Stokes' formula and the fiber product formula in Theorem \ref{theorem:pushforward} are stated as follows.

\begin{theorem}[Stokes' formula]\label{theorem:Stokes}
For sufficiently small $\varepsilon>0$, there holds
\begin{equation}
\partial\left(\hat{f}_\ast(X,\widehat{\mathcal{U}},\hat{\omega},\widehat{\mathcal{S}}^\varepsilon)\right)=(-1)^{|\hat{\omega}|+1}\hat{f}_\ast(X,\widehat{\mathcal{U}},d\hat{\omega},\widehat{\mathcal{S}}^\varepsilon).
\end{equation}
\end{theorem}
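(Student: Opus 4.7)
Since $X$ is compact without boundary, Stokes' formula here amounts to the statement that the de Rham differential $\partial$ commutes with the pushforward, up to the standard sign $(-1)^{|\hat{\omega}|+1}$ built into $\partial$. The plan is to verify this locally on Kuranishi charts and glue via a partition of unity; no analytic input beyond the graded Leibniz rule is needed, because all Stokes boundary contributions from $X$ vanish by hypothesis.

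First I would fix a locally finite refinement of the covering of $X$ by Kuranishi neighborhoods $\{U_p\}_{p\in X}$ with a subordinate partition of unity $\{\chi_p\}$, and, in each chart, a local representative $(V_x, W_x, \omega_x, \mathfrak{s}_x^\varepsilon)$ of the CF-perturbation $\widehat{\mathcal{S}}^\varepsilon$. By construction, the pushforward $\hat{f}_\ast(X,\widehat{\mathcal{U}},\hat{\omega},\widehat{\mathcal{S}}^\varepsilon)$ is the sum, over $p$ and $x$, of de Rham chains of the form
$$\bigl((\mathfrak{s}_x^\varepsilon)^{-1}(0),\; \hat{f}_p\circ\phi_x\circ\mathrm{pr}_{V_x},\; \chi_p\cdot\mathrm{pr}_{V_x}^*(f_p^*\hat{\omega})\wedge\mathrm{pr}_{W_x}^*\omega_x\bigr).$$
Transversality of $\widehat{\mathcal{S}}^\varepsilon$ to $0$ ensures that $(\mathfrak{s}_x^\varepsilon)^{-1}(0)$ is a smooth submanifold of $V_x\times W_x$, while the strong submersivity of $\mathrm{ev}_0\circ\hat{f}$ provides the submersion property needed to define a well-defined class in $C_\ast^\mathit{dR}(\mathcal{L}_{k+1})$.

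Next I would apply $\partial$ summand by summand, invoking two facts: (i) $\omega_x$ is a top-degree form on $W_x$ (by the definition of a CF-perturbation), hence automatically closed; (ii) $\sum_p\chi_p\equiv 1$, so $\sum_p d\chi_p=0$. By the graded Leibniz rule,
$$d\bigl(\chi_p\cdot\mathrm{pr}_{V_x}^*(f_p^*\hat{\omega})\wedge\mathrm{pr}_{W_x}^*\omega_x\bigr)=\mathrm{pr}_{V_x}^*\bigl(\chi_p\cdot f_p^*d\hat{\omega}+d\chi_p\wedge f_p^*\hat{\omega}\bigr)\wedge\mathrm{pr}_{W_x}^*\omega_x,$$
and the $d\chi_p$-terms cancel after summing over $p$. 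What remains is precisely the local representative of $\hat{f}_\ast(X,\widehat{\mathcal{U}},d\hat{\omega},\widehat{\mathcal{S}}^\varepsilon)$, multiplied by the conventional sign $(-1)^{|\hat{\omega}|+1}$ coming from $\partial(U,\varphi,\omega)=(-1)^{|\omega|+1}(U,\varphi,d\omega)$.

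The main obstacle is not analytic---under the no-boundary assumption on $X$ there are no surviving contributions from $\partial X$---but rather the combinatorial bookkeeping needed to verify that the above local expressions descend to a well-defined element of $C_\ast^\mathit{dR}(\mathcal{L}_{k+1})$ modulo the equivalence relation $Z_N$, compatibly with coordinate changes of $\widehat{\mathcal{U}}$ and with KK-embeddings between charts of the CF-perturbation. A secondary subtlety is the sign computation: the form on the local chart has differential degree $|\hat{\omega}|+\dim W_x$, so reconciling $(-1)^{|\hat{\omega}|+\dim W_x+1}$ with the desired $(-1)^{|\hat{\omega}|+1}$ requires the sign conventions built into the definition of $\hat{f}_\ast$ in \cite{ki2}, Section 7.1, and \cite{fooo4}, Chapter 7, which effectively absorb the parity of $\dim W_x$.
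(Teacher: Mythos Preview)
The paper does not give its own proof of this statement: it is recorded in the appendix as a fact quoted from \cite{ki2}, Section 7.1 and \cite{fooo4}, Chapter 7, with no argument beyond the statement itself. Your sketch is the standard argument and is essentially correct in outline.

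One notational slip is worth flagging. The differential form $\hat{\omega}$ is a form on the Kuranishi structure, i.e.\ a compatible family $(\omega_p)_p$ with $\omega_p\in A^\ast(U_p)$; it is not a form on $\mathcal{L}_{k+1}$. So the local chain should carry $\chi_p\cdot\mathrm{pr}_{V_x}^\ast(\phi_x^\ast\omega_p)\wedge\mathrm{pr}_{W_x}^\ast\omega_x$, not $\mathrm{pr}_{V_x}^\ast(f_p^\ast\hat{\omega})$ --- writing $f_p^\ast\hat{\omega}$ suggests pulling back along $f_p:U_p\to\mathcal{L}_{k+1}$, which is not what is meant. This does not affect the argument: the Leibniz computation, the closedness of $\omega_x$ (top degree on $W_x$), and the cancellation of the $d\chi_p$ terms all go through exactly as you say. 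Your remarks on the sign bookkeeping and on compatibility with the quotient by $Z_N$ correctly identify where the actual work in \cite{fooo4} and \cite{ki2} lies.
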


Suppose for $i=1,2$, we are given the following data:
\begin{itemize}
	\item a compact oriented K-space $(X_i,\widehat{\mathcal{U}}_i)$ of dimension $d_i$ such that $d=d_1+d_2$;
	\item a strongly smooth map $\hat{f}_i:(X_i,\widehat{\mathcal{U}}_i)\rightarrow\mathcal{L}_{k_i+1}$;
	\item a differential form $\hat{\omega}_i$ on $(X_i,\widehat{\mathcal{U}}_i)$;
	\item a CF-perturbation $\widehat{\mathcal{S}}_i$ on $(X_i,\widehat{\mathcal{U}}_i)$ such that $\mathit{ev}_0\circ\hat{f}_i:(X_i,\widehat{\mathcal{U}}_i)\rightarrow L$ is strongly submersive with respect to $\widehat{\mathcal{S}}_i$.
\end{itemize}
For each $1\leq j\leq k_1$, we can take fiber product of K-spaces and define
\begin{equation}
(X_{12},\widehat{\mathcal{U}}_{12}):=(X_1,\widehat{\mathcal{U}}_1)\textrm{ }{{}_{\mathit{ev}_j\circ\hat{f}_1}\times_{\mathit{ev}_0\circ\hat{f}_2}}\textrm{ }(X_2,\widehat{\mathcal{U}}_2).
\end{equation}
On can also define fiber product of CF-perturbations $\widehat{\mathcal{S}}_{12}:=\widehat{\mathcal{S}}_1\times\widehat{\mathcal{S}}_2$ on $(X_{12},\widehat{\mathcal{U}}_{12})$ (\cite{fooo4}, \S10.2). Finally, define a differential form $\hat{\omega}_{12}$ on $(X_{12},\widehat{\mathcal{U}}_{12})$ by
\begin{equation}
\hat{\omega}_{12}:=(-1)^{(d-|\hat{\omega}|_1-n)|\hat{\omega}_2|}\hat{\omega}_1\times\hat{\omega}_2,
\end{equation}
and a strongly smooth map $\hat{f}_{12}:(X_{12},\widehat{\mathcal{U}}_{12})\rightarrow\mathcal{L}_{k_1+k_2}$ by
\begin{equation}
(f_{12})_{p_1,p_2}(x_1,x_2):=\mathit{con}_j\left((f_1)_{p_1}(x_1),(f_2)_{p_2}(x_2)\right),
\end{equation}
where $x_1\in U_{p_1}$, $x_2\in U_{p_2}$, and $\mathit{ev}_j\circ f_{p_1}(x_1)=\mathit{ev}_0\circ f_{p_2}(x_2)$.

\begin{theorem}[Fiber product formula]\label{theorem:fp}
We have
\begin{equation}
(\hat{f}_{12})_\ast\left(X_{12},\widehat{\mathcal{U}}_{12},(-1)^{|\hat{\omega}_{12}|+n}\hat{\omega}_{12},\widehat{\mathcal{S}}_{12}^\varepsilon\right)=(\hat{f}_1)_\ast(X_1,\widehat{\mathcal{U}}_1,\hat{\omega}_1,\widehat{\mathcal{S}}_1^\varepsilon)\circ_j(\hat{f}_2)_\ast(X_2,\widehat{\mathcal{U}}_2,\hat{\omega}_2,\widehat{\mathcal{S}}_2^\varepsilon)
\end{equation}
for sufficiently small $\varepsilon>0$.
\end{theorem}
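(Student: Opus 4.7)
The plan is to reduce the identity to a local (chartwise) statement and then invoke a Fubini-type formula for integration along fibers, with the compatibility condition on the submersion $\mathit{ev}_0\circ\hat f_2$ ensuring that every fiber product that appears is transverse after perturbation. Concretely, choose representatives $(V_{x_i},\phi_{x_i})$ of good coordinate systems on $(X_i,\widehat{\mathcal U}_i)$ together with local representatives $(W_{x_i},\omega_{x_i},\{\mathfrak s_{x_i}^\varepsilon\})$ of the CF-perturbations $\widehat{\mathcal S}_i$, and partitions of unity subordinate to the GCS, so that up to a locally finite sum each $(\hat f_i)_\ast(X_i,\widehat{\mathcal U}_i,\hat\omega_i,\widehat{\mathcal S}_i^\varepsilon)$ is represented by $(V_{x_i}\times W_{x_i}, (\mathfrak s_{x_i}^\varepsilon)^{-1}(0), \hat f_i\circ\phi_{x_i}\circ\mathrm{pr}_V, \omega_{x_i}\wedge\hat\omega_i|_{V_{x_i}})$, interpreted as a de Rham chain via integration along the $W_{x_i}$-factor.

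Next, unpack the fiber product. By construction, a chart of $(X_{12},\widehat{\mathcal U}_{12})$ near a pair $(p_1,p_2)$ with $\mathit{ev}_j\circ f_{p_1}(x_1)=\mathit{ev}_0\circ f_{p_2}(x_2)$ has Kuranishi neighborhood $U_{p_1}\times_LU_{p_2}$, obstruction bundle $\mathcal E_{p_1}\oplus\mathcal E_{p_2}$ and Kuranishi map $s_1\oplus s_2$; the product CF-perturbation is $(W_{x_1}\times W_{x_2},\omega_{x_1}\wedge\omega_{x_2},\{\mathfrak s^\varepsilon_{x_1}\oplus\mathfrak s^\varepsilon_{x_2}\})$. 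The submersion hypothesis on $\mathit{ev}_0\circ\hat f_2$ with respect to $\widehat{\mathcal S}_2$ guarantees that $\mathit{ev}_j\circ\hat f_1$ and the perturbed zero locus of $\mathfrak s_{x_2}^\varepsilon$ meet transversely when pulled back to $L$, so the perturbed zero locus of $\mathfrak s^\varepsilon_{x_1}\oplus\mathfrak s^\varepsilon_{x_2}$ on the fiber product chart equals the smooth fiber product of the individual perturbed zero loci. Under this identification the map $\hat f_{12}$ factors as the concatenation $\mathit{con}_j$ applied to $(\hat f_1,\hat f_2)$, exactly as in the definition of $\circ_j$ in (\ref{eq:fp}).

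Then the identity becomes a standard Fubini/push-pull statement: if $Z_i\to V_{x_i}\times W_{x_i}$ is the perturbed zero locus and $g_i:Z_i\to\mathcal L_{k_i+1}$ is the resulting smooth map, the two iterated integrations-along-fibers
\begin{equation*}
(\hat f_1)_\ast(\cdots)\circ_j(\hat f_2)_\ast(\cdots)\quad\text{and}\quad (\hat f_{12})_\ast\bigl(X_{12},\widehat{\mathcal U}_{12},\hat\omega_{12},\widehat{\mathcal S}_{12}^\varepsilon\bigr)
\end{equation*}
both realize the same pushforward under $\mathit{con}_j\circ(g_1\times g_2)$ of $\omega_{x_1}\wedge\omega_{x_2}\wedge\hat\omega_1\wedge\hat\omega_2$, once the $Z_i$ are regarded as fibered over $L$ via $\mathit{ev}_j\circ g_1$ and $\mathit{ev}_0\circ g_2$. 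Independence of the GCS and of the partition of unity, which is the content of the equivalence relation defining $C_\ast^{\mathrm{dR}}(\mathcal L_{k_1+k_2})$, shows the equality of the two representatives.

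The main obstacle is tracking the signs. The prefactors $(-1)^{(d-|\hat\omega_1|-n)|\hat\omega_2|}$ and $(-1)^{|\hat\omega_{12}|+n}$ in the definition of $\hat\omega_{12}$ and in the statement are not arbitrary: they arise from reordering fiber coordinates when writing $Z_1\times_LZ_2$ as a zero locus of the direct sum bundle, and from the convention that $\circ_j$ carries the Koszul sign $(-1)^{(\dim U-|\omega|-n)|\omega'|}$ in (\ref{eq:fp}). Thus the sign computation reduces to verifying that the orientation convention for fiber products of K-spaces used to orient $\widehat{\mathcal U}_{12}$ (cf. Appendix \ref{section:orientation}) matches the Koszul convention for the algebraic fiber product $\circ_j$, after accounting for the factor $n=\dim L$ coming from the dimension of the diagonal. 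Once these sign books are reconciled with those of \cite{fooo4}, Chapter 10, the formula follows; I would cite the verification there rather than reproduce it.
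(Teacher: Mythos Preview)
The paper does not actually prove this theorem: it is stated in the appendix as a recalled fact from the literature, with the section header pointing to \cite{fooo4}, Chapter 7 and \cite{ki2}, Section 7.1 as the main references (and Theorem~\ref{theorem:pushforward} is explicitly attributed to \cite{ki2}, Theorem 7.2). So there is no ``paper's own proof'' to compare against; the fiber product formula is simply quoted.

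Your sketch is a faithful outline of how the argument in those references actually goes: pass to a good coordinate system, represent each pushforward locally via the CF-perturbation data, observe that the product CF-perturbation makes the fiber product chart transverse because $\mathit{ev}_0\circ\hat f_2$ is strongly submersive, and then reduce to a Fubini identity for integration along fibers. Your remark that the sign bookkeeping is the only nontrivial point, and that it should be matched against the conventions in \cite{fooo4}, \S10.2, is exactly right. In short, your proposal is correct and aligns with the literature the paper is citing; the paper itself simply defers to that literature rather than reproducing the argument.
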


\begin{remark}\label{remark:pushforward}
There are versions of Theorem \ref{theorem:pushforward} for admissible K-spaces $(X,\widehat{\mathcal{U}})$, in which case the strongly smooth map $\hat{f}$ is replaced with an admissible map, and the strong submersion $\mathit{ev}_0\circ\hat{f}$ is replaced with a corner-stratified strong submersion, see \cite{ki2}, Theorem 7.9. The Stokes' formula now takes the form
\begin{equation}
\partial\left(\hat{f}_\ast(X,\widehat{\mathcal{U}},\hat{\omega},\widehat{\mathcal{S}}^\varepsilon)\right)=(-1)^{|\hat{\omega}|}(\hat{f}|_{\partial X})_\ast(\partial X,\widehat{\mathcal{U}}|_{\partial X},\hat{\omega}|_{\partial X},\widehat{\mathcal{S}}^\varepsilon|_{\partial X})+(-1)^{|\hat{\omega}|+1}\hat{f}_\ast(X,\widehat{\mathcal{U}},d\hat{\omega},\widehat{\mathcal{S}}^\varepsilon),
\end{equation}
where $\partial X$ is the normalized boundary of $X$, which is itself a K-space.
	
As another variation, we can consider admissible maps $\hat{f}:(X,\widehat{\mathcal{U}})\rightarrow[a,b]\times\mathcal{L}_{k+1}$, in which case $\partial X=\partial_hX\sqcup\partial_vX$ is decomposed as horizontal and vertical boundaries, where $\partial_hX=\hat{f}^{-1}\left(\{a,b\}\times\mathcal{L}_{k+1}\right)$. Let $\partial_-X=\hat{f}^{-1}(\{a\}\times\mathcal{L}_{k+1})$ and $\partial_+X=\hat{f}^{-1}(\{b\}\times\mathcal{L}_{k+1})$. In this case, we require
\begin{equation}
\left(\mathit{pr}_{[a,b]}\circ\hat{f},\mathit{ev}_0\circ\mathit{pr}_{\mathcal{L}_{k+1}}\circ\hat{f}\right):(X,\widehat{\mathcal{U}})\rightarrow[a,b]\times L
\end{equation}
to be a corner-stratified strong submersion, and the pushforward defines a relative de Rham chain
\begin{equation}
\hat{f}_\ast(X,\widehat{\mathcal{U}},\hat{\omega},\widehat{\mathcal{S}}^\varepsilon)\in \overline{C}_{d-|\hat{\omega}|-1}^\mathit{dR}(\mathcal{L}_{k+1})
\end{equation}
for sufficiently small $\varepsilon>0$, which satisfies
\begin{equation}\label{eq:epm}
e_\pm\left(\hat{f}_\ast(X,\widehat{\mathcal{U}},\hat{\omega},\widehat{\mathcal{S}}^\varepsilon)\right)=(-1)^{d-1}(\hat{f}|_{\partial_\pm X})_\ast\left(\partial_\pm X,\widehat{\mathcal{U}}|_{\partial_\pm X},\hat{\omega}|_{\partial_\pm X},\widehat{\mathcal{S}}^\varepsilon|_{\partial_\pm X}\right)\in C_{d-|\hat{\omega}|-1}^\mathit{dR}(\mathcal{L}_{k+1}).
\end{equation}
See \cite{ki2}, Theorem 7.14. The Stokes' formula in this case is
\begin{equation}
	\begin{split}
	\partial\left(\hat{f}_\ast(X,\widehat{\mathcal{U}},\hat{\omega},\widehat{\mathcal{S}}^\varepsilon)\right)&=(-1)^{|\hat{\omega}|}(\hat{f}|_{\partial_h X})_\ast\left(\partial X,\widehat{\mathcal{U}}|_{\partial_h X},\hat{\omega}|_{\partial_h X},\widehat{\mathcal{S}}^\varepsilon|_{\partial_h X}\right)\\
	&+(-1)^{|\hat{\omega}|+1}\hat{f}_\ast(X,\widehat{\mathcal{U}},d\hat{\omega},\widehat{\mathcal{S}}^\varepsilon).
	\end{split}
\end{equation}
\end{remark}

For the purposes of this paper (cf. Remark \ref{remark:invariance}), we need an additional property of the de Rham chains defined by integration along the fiber. For any $a\in H_1(L;\mathbb{Z})$, and $x$ a 1-periodic orbit of the Hamiltonian vector field $X_{H_t}$, let $(X,\widehat{\mathcal{U}})$ be one of admissible K-spaces $\overline{\mathcal{R}}_{k+1}(L,\bar{a};P)$, $\overline{\mathcal{R}}_{k+1,\vartheta}(L,\bar{a};P)$, ${}_l\overline{\mathcal{R}}_{k+1}^1(x,L,\ring{\bar{a}};P)$, ${}_{l-1}\overline{\mathcal{R}}_{k+1}^{S^1}(x,L,\ring{\bar{a}};P)$ or ${}_l^{j,j+1}\overline{\mathcal{R}}_{k+1}^1(x,L,\ring{\bar{a}};P)$ considered in Section \ref{section:proof}. Cyclic permutations of the labels of the boundary marked points $z_0,\cdots,z_k$ defines a $\mathbb{Z}_{k+1}$-action on $X$, whose generator is a map $\kappa:X\rightarrow X$. It follows from Theorem \ref{theorem:moduli}, (vii) that $X$ can be equipped with a Kuranishi structure $\widehat{\mathcal{U}}$ and a CF-perturbation $\widehat{\mathcal{S}}^\varepsilon$ that are cyclic invariant, therefore the map $\kappa$ extends to a map
\begin{equation}
\kappa:(X,\widehat{\mathcal{U}},\hat{\omega},\widehat{\mathcal{S}}^\varepsilon)\rightarrow(X,\widehat{\mathcal{U}},\hat{\omega},\widehat{\mathcal{S}}^\varepsilon),
\end{equation}
if the differential form $\hat{\omega}$ is cyclic invariant (e.g. when it is constant). The following theorem is a straightforward consequence of the cyclic invariance of these data. For simplicity, we assume below that $P=\{m\}$. The statement for the $P=[m,m+1]$ case is similar, simply replace $R_k$ with $\overline{R}_k$.

\begin{theorem}[Cyclic permutation formula]\label{theorem:cyclic}
Let the quadruple $(X,\widehat{\mathcal{U}},\hat{\omega},\widehat{\mathcal{S}}^\varepsilon)$ be as above, and assume that $\varepsilon>0$ is sufficiently small. Then for $1\leq i\leq k$,
\begin{equation}
\hat{f}_\ast\left(\kappa^i(X,\widehat{\mathcal{U}},\hat{\omega},\widehat{\mathcal{S}}^\varepsilon)\right)=(-1)^{ki}(R_k)^i_\ast\left(\hat{f}_\ast(X,\widehat{\mathcal{U}},\hat{\omega},\widehat{\mathcal{S}}^\varepsilon)\right).
\end{equation}
Here, the admissible map $\hat{f}:(X,\widehat{\mathcal{U}})\rightarrow P\times\mathcal{L}_{k+1}$ is given by one of the maps (\ref{eq:ev1}), (\ref{eq:ev2}), (\ref{eq:ev3}), (\ref{eq:ev4}) or (\ref{eq:ev6}).
\end{theorem}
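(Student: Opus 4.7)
The plan is to reduce the identity to a purely formal statement about equivariance of integration along the fiber under a self-map that preserves all the Kuranishi data up to orientation, and then to extract the sign $(-1)^{ki}$ from the orientation conventions. By induction on $i$ it suffices to treat the case $i=1$; the general case then follows because $(-1)^{ki}$ and $(R_k)_\ast^i$ are multiplicative in $i$.

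The first step is a compatibility between $\kappa$ and the evaluation map. By our conventions, cyclically relabeling $(z_0,z_1,\ldots,z_k)\mapsto(z_1,\ldots,z_k,z_0)$ produces, at the level of Moore loops with marked points, precisely the rotation $R_k$ from (\ref{eq:rotation}); writing $\kappa$ for this relabeling on $X$ we therefore have the strict equality of admissible maps
\begin{equation}
\hat{f}\circ\kappa\;=\;(\mathit{id}_P\times R_k)\circ\hat{f},
\end{equation}
since no analytic data, only the book-keeping of boundary marked points, is altered. The second step is to observe that by Theorem \ref{theorem:moduli}(vii) together with Proposition \ref{proposition:extension} (applied via Remark \ref{remark:CF}) the Kuranishi structure $\widehat{\mathcal{U}}$, the CF-perturbation $\widehat{\mathcal{S}}^\varepsilon$, and the differential form $\hat{\omega}\equiv1$ are all $\mathbb{Z}_{k+1}$-invariant. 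Thus $\kappa$ lifts to a self-map of the tuple $(X,\widehat{\mathcal{U}},\hat{\omega},\widehat{\mathcal{S}}^\varepsilon)$ which is, after ignoring orientations, an isomorphism of admissible K-spaces over $\kappa$.

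The third and crucial step is the orientation comparison. The map $\kappa$ need not preserve the chosen orientation of $X$: cyclically relabeling the $k+1$ boundary marked points on each disc domain changes the orientation by a sign which, with our conventions for ordering boundary marked points (Appendix \ref{section:orientation}), equals $(-1)^k$ per single cyclic step. Consequently $\kappa:(X,\widehat{\mathcal{U}})\to(X,\widehat{\mathcal{U}})$ is an isomorphism of admissible K-spaces of degree $(-1)^k$ that intertwines the CF-perturbations and the differential form. Integration along the fiber (Theorem \ref{theorem:pushforward} and the admissible version of Remark \ref{remark:pushforward}) is functorial under such isomorphisms up to sign, so
\begin{equation}
\hat{f}_\ast\bigl(\kappa(X,\widehat{\mathcal{U}},\hat{\omega},\widehat{\mathcal{S}}^\varepsilon)\bigr)\;=\;(-1)^k(\hat{f}\circ\kappa)_\ast(X,\widehat{\mathcal{U}},\hat{\omega},\widehat{\mathcal{S}}^\varepsilon)
\;=\;(-1)^k(\mathit{id}_P\times R_k)_\ast\,\hat{f}_\ast(X,\widehat{\mathcal{U}},\hat{\omega},\widehat{\mathcal{S}}^\varepsilon),
\end{equation}
where in the second equality we used the identity of admissible maps established in the first step together with the fact that $(\mathit{id}_P\times R_k)_\ast$ on $C_\ast(k)$ is by definition $(R_k)_\ast$ applied componentwise in $P$. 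Iterating $i$ times yields the sign $(-1)^{ki}$ and the operator $(R_k)_\ast^i$, giving the stated formula.

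The main obstacle, and the only nontrivial input, is the orientation sign computation in step three. The claim that a single cyclic relabeling of the $k+1$ boundary marked points induces the sign $(-1)^k$ on the fiber product presentation of $X$ is a direct but careful check against the orientation conventions in Appendix \ref{section:orientation}; it is essentially the same computation that produced the sign $(-1)^{ki}\pi_{\vartheta,i}$ in Lemma \ref{lemma:erase} and the prefactor $(-1)^{|\bar{x}_{m,0}|+k(i-1)}$ appearing in Lemma \ref{lemma:vartheta}. Once this sign is in hand, the rest of the argument is formal manipulation with the pushforward construction.
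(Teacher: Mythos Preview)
Your proposal is correct and follows essentially the same approach as the paper: the paper's proof states that the identity is a ``straightforward consequence of the cyclic invariance of these data'' and then only justifies the sign, computing that a single cyclic permutation of $z_0,\ldots,z_k$ changes the orientation of the abstract moduli space (with $z_0$ fixed at $1$) from $-dz_1\wedge\cdots\wedge dz_k$ to $dz_2\wedge\cdots\wedge dz_k\wedge dz_1$, hence by $(-1)^k$. You have simply spelled out the formal steps (the equivariance $\hat f\circ\kappa=(\mathit{id}_P\times R_k)\circ\hat f$, the invariance of the Kuranishi data and CF-perturbation, and functoriality of the pushforward) that the paper leaves implicit.
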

\begin{proof}
We only need to justify the signs. Note that cyclic permutation of the boundary marked points from $z_0,z_1,\cdots,z_k$ to $z_k,z_0\cdots,z_{k-1}$ changes the orientations of the abstract moduli spaces from $-dz_1\wedge\cdots\wedge dz_k$ to $dz_2\wedge\cdots\wedge dz_k\wedge dz_1$ (with $z_0$ fixed at $1$, and neglecting the choices of orientations for the interior marked points $p_1,\cdots,p_l$, which all agree, cf. \cite{sg1}, Remark 46). Thus a sign $(-1)^k$ needs to be introduced for each such permutation.
\end{proof}

\section{Orientations of moduli spaces}\label{section:orientation}

In this appendix, we discuss the orientations of the moduli spaces considered in this paper and compute the signs $\varepsilon_1,\cdots,\varepsilon_{16}$ in Theorem \ref{theorem:moduli}, (iv). As before, we assume that $M$ is a Liouville manifold with $c_1(M)=0$ and $L\subset M$ is a Lagrangian submanifold which is equipped with some fixed choice of a $\mathit{Spin}$ structure relative to the $\mathbb{Z}_2$-gerbe $\alpha$.

The orientation of the moduli space $\overline{\mathcal{R}}_{k+1}(L,\beta)$ has been fixed in \cite{ki2}, Section 7.2.3. We shall orient $\overline{\mathcal{R}}_{k+1,\vartheta}(L,\beta)$ in the same way as $\overline{\mathcal{R}}_{k+1}(L,\beta)$. More precisely, let $\mathcal{R}(L,\beta)$ be the moduli space of pseudoholomorphic discs without boundary marked points, which carries its canonical orientation defined in \cite{fooo1}, Section 8.1. $\overline{\mathcal{R}}_{k+1,\vartheta}(L,\beta)$ is oriented so that the natural isomorphism
\begin{equation}
T\left(\overline{\mathcal{R}}_{k+1,\vartheta}(L,\beta)\right)\oplus T\left(\mathit{Aut}(D)\right)\cong T\left(\overline{\mathcal{R}}(L,\beta)\right)\oplus T(\partial D)^{\oplus k}
\end{equation}
is orientation-preserving, where $\partial D$ is equipped with the anticlockwise orientation, and $\mathit{Aut}(D)$ is oriented via the diffeomorphism $\mathit{Aut}(D)\cong(\partial D)^3$. It follows from the sign computation in the case of $\overline{\mathcal{R}}_k(L,\beta)$ that
\begin{equation}
\varepsilon_1=(k_1-i)(k_2-1)+n+k_1-1
\end{equation}
in (\ref{eq:bd1}).

To compute the signs $\varepsilon_2,\varepsilon_3,\varepsilon_{4,0},\cdots,\varepsilon_{4,l},\varepsilon_5,\varepsilon_6$ in (\ref{eq:bd2}), we first pick orientations for the abstract moduli spaces. On a representative of the moduli spaces ${}_l\mathcal{R}_{k+1}^1$ or ${}_{l-1}\mathcal{R}_{k+1}^{S^1}$ which fixes $z_0=1$ and $\zeta=0$, we pick the volume forms
\begin{equation}
-r_1\cdots r_ldz_1\wedge\cdots\wedge dz_k\wedge dr_l\wedge d\theta_l\wedge\cdots\wedge dr_1\wedge d\theta_1
\end{equation}
and
\begin{equation}
r_1\cdots r_ldz_1\wedge\cdots\wedge dz_k\wedge d\theta_1\wedge dr_l\wedge d\theta_l\wedge\cdots\wedge dr_2\wedge d\theta_2
\end{equation}
for ${}_l\mathcal{R}_{k+1}^1$ and ${}_{l-1}\mathcal{R}_{k+1}^{S^1}$, respectively, where $(r_j,\theta_j)$ is the polar coordinates for the auxiliary marked point $p_j$. For the moduli spaces ${}_l\mathcal{R}_{k+1,\tau_i}^1$, we shall equip them with orientations which are compatible with the auxiliary-rescaling map (\ref{eq:aux-res}). More precisely, for a representative with $z_0=1$ and $\zeta=0$, we pick the volume form
\begin{equation}
r_1\cdots r_ldz_1\wedge\cdots\wedge dz_k\wedge dz_f\wedge dr_l\wedge d\theta_l\wedge\cdots\wedge dr_1\wedge d\theta_1.
\end{equation}
The space of $l$-point angle decorated cylinders admits a canonical complex orientation, and we trivialize the $\mathbb{R}$-action on it by choosing $\partial_s$ as the vector field inducing the action. The quotient gives the moduli space ${}_l\mathcal{M}$. Using this convention, we have an isomorphism
\begin{equation}\label{eq:M}
\langle\partial_s\rangle\otimes\lambda^\mathit{top}\left(T{}_l\overline{\mathcal{M}}(x,y)\right)\cong o_x\otimes o_y^{-1},
\end{equation}
where $o_x$ and $o_y$ are orientation lines associated to the Hamiltonian orbits $x$ and $y$, respectively, and $\lambda^\mathit{top}$ stands for the top degree exterior power. As in the definition of the Floer differential, when counting rigid elements of ${}_l\overline{\mathcal{M}}(x,y)$ in the definition of the operations $\delta_l:\mathit{SC}^\ast(M)\rightarrow\mathit{SC}^{\ast+1-2l}(M)$, we twist by the sign $(-1)^{|y|}$.

The moduli space ${}_l\overline{\mathcal{R}}_{k+1}^1(x,L,\ring{\beta})$ is oriented by combining the ideas of \cite{ma}, Appendix A and \cite{fooo1}, Chapter 8. More precisely, let $D_u$ be the linearization of the perturbed Cauchy-Riemann operator, then we have an isomorphism of line bundles
\begin{equation}\label{eq:R}
\det(D_u)\otimes o_x\otimes\kappa_x^\alpha\cong\lambda^{\mathit{top}}(\mathit{TL}),
\end{equation}
where $\kappa_x^\alpha$ is the \textit{background line} determined by our choice of the relative $\mathit{Spin}$ structure on $L$, the orbit $x$, and the background class $[\alpha]\in H^2(M;\mathbb{Z}_2)$ (cf. \cite{ma}, Definition 3.1). Combining (\ref{eq:M}) and (\ref{eq:R}), it follows that on a boundary stratum of the form ${}_j\overline{\mathcal{M}}(x,y_j)\times{}_{l-j}\overline{\mathcal{R}}(y_j,L,\ring{\beta})$, we have a natural isomorphism
\begin{equation}\label{eq:product}
\lambda^\mathit{top}\left(T{}_{l-j}\overline{\mathcal{R}}(y_j,L,\ring{\beta})\right)\otimes\langle\partial_s\rangle\otimes\lambda^\mathit{top}\left(T{}_j\overline{\mathcal{M}}(x,y_j)\right)\cong\lambda^\mathit{top}(\mathit{TL})\otimes(o_x\otimes\kappa_x^\alpha)^{-1}.
\end{equation}
Comparing the orientation of ${}_j\overline{\mathcal{M}}(x,y_j)\times{}_{l-j}\overline{\mathcal{R}}(y_j,L,\ring{\beta})\subset\partial{}_l\overline{\mathcal{R}}_{k+1}^1(x,L,\ring{\beta})$ with (\ref{eq:product}), we obtain
\begin{equation}
\varepsilon_{4,j}=n+|y_j|.
\end{equation}
For the boundary strata ${}_l^{j,j+1}\overline{\mathcal{R}}_{k+1}^1(x,L,\ring{\beta})$ and ${}_{l-1}\overline{\mathcal{R}}_{k+1}^{S^1}(x,L,\ring{\beta})$, we may orient them so that the natural maps
\begin{equation}
{}_l^{j,j+1}\overline{\mathcal{R}}_{k+1}^1(x,L,\ring{\beta})\rightarrow S^1\times{}_{l-1}\overline{\mathcal{R}}_{k+1}^1(x,L,\ring{\beta})
\end{equation}
and
\begin{equation}
{}_{l-1}\overline{\mathcal{R}}_{k+1}^{S^1}(x,L,\ring{\beta})\rightarrow S^1\times{}_{l-1}\overline{\mathcal{R}}_{k+1}^1(x,L,\ring{\beta})
\end{equation}
induced from the forgetful maps (\ref{eq:fj}) and (\ref{eq:f-S1}) are oriented diffeomorphisms. This provides us with an inductive way to orient the moduli spaces so that
\begin{equation}
\varepsilon_5=\varepsilon_6=0.
\end{equation}

It remains to determine the signs $\varepsilon_2$ and $\varepsilon_3$. For this purpose, we introduce the moduli spaces ${}_l\widetilde{\mathcal{R}}^1(x,L,\ring{\beta})$ of pairs $\left((S,p_1,\cdots,p_l;\ell),u\right)$, where $u:S\rightarrow M$ satisfies (\ref{eq:CR}) and $[u]=\ring{\beta}$, but now we do not quotient by $\mathit{Aut}(S)\cong S^1$. Similarly, define $\widetilde{\mathcal{R}}(L,\beta)$ to be the space of pseudoholomorphic maps $u:(D,\partial D)\rightarrow(M,L)$ with $[u]=\beta$, but without modding out $\mathit{Aut}(D)\cong\mathit{PSL}(2,\mathbb{R})$. Consider the gluing map
\begin{equation}\label{eq:gl}
\mathit{gl}:{}_l\widetilde{\mathcal{R}}^1(x,L,\ring{\beta}_1)\textrm{ }{{}_1\times_{-1}}\textrm{ }\widetilde{\mathcal{R}}(L,\beta_2)\rightarrow{}_l\widetilde{\mathcal{R}}^1(x,L,\ring{\beta}),\textrm{ }\ring{\beta}_1+\beta_2=\ring{\beta},
\end{equation}
where the notation ${}_1\times_{-1}$ means the fiber product on the left-hand side is taken with respect to the evaluation maps at $1\in\partial S$ on the first component, and at $-1\in\partial D$ on the second component. The argument of \cite{fooo1}, Lemma 8.3.10 applies to our case and implies the following:

\begin{lemma}
The map (\ref{eq:gl}) is orientation-preserving.
\end{lemma}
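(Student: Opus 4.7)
The plan is to adapt the sign computation of \cite{fooo1}, Lemma 8.3.10 to our setting. The advantage of working with the unreduced moduli spaces ${}_l\widetilde{\mathcal{R}}^1(x,L,\ring{\beta})$ and $\widetilde{\mathcal{R}}(L,\beta)$ is that no automorphism quotients intervene, so the orientations on each factor are given directly by the determinant lines of the linearized Cauchy-Riemann operators, tensored with the background lines determined by the relative Spin structure (together with the canonical orientation on the moduli of auxiliary marked points $p_1,\dots,p_l$, fixed earlier in this appendix).

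First I would set up a standard pre-gluing near the boundary node $(1,-1)\in\partial S\times\partial D$: choose cutoff functions supported in neighborhoods of $1\in\partial S$ and $-1\in\partial D$, and for a gluing parameter $r$ in some $(R_0,\infty)$ form the connected sum domain $S\#_r D$ together with an approximate solution $u\#_r v$. At the level of Fredholm operators this produces a canonical Sobolev splitting isomorphism
\begin{equation*}
\det(D_{u\#_r v})\;\cong\;\det(D_u)\otimes\det(D_v)\otimes\bigl(T_{u(1)}L\bigr)^{-1},
\end{equation*}
the last factor encoding the coincidence condition at the node. This is exactly the formula recorded in \cite{fooo1}, Proposition 8.1.16, and it is insensitive to the presence of the interior puncture $\zeta$ on the first component since the gluing is localized away from it.

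Second, on each factor I would apply the isomorphism (\ref{eq:R}) to the punctured piece and its unpunctured analogue to the disc piece, obtaining trivializations of $\det(D_u)$ and $\det(D_v)$ in terms of $\lambda^{\mathit{top}}(\mathit{TL})$, the orbit orientation line $o_x$, and the background lines $\kappa_x^\alpha$ and $\kappa_{-1,L}^\alpha$. The crucial input is the functoriality of the background line under boundary connected sum, which follows from $L$ being relatively Spin with respect to $\alpha$: the two background lines on the summands combine with the expected sign into the background line on the pre-glued configuration. Substituting, the induced orientation on $\det(D_{u\#_r v})$ matches the one coming directly from (\ref{eq:R}) applied to the glued curve, with no extra signs.

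Finally I would compare this pre-gluing orientation on the target with the fiber product orientation on the source of $\mathit{gl}$. The evaluation maps $\mathit{ev}_1$ and $\mathit{ev}_{-1}$ are corner-stratified weak submersions to $L$, so the fiber product orientation is fixed by the standard convention $\lambda^{\mathit{top}}(A{{}_{\mathit{ev}_1}\times_{\mathit{ev}_{-1}}}B)\otimes\lambda^{\mathit{top}}(\mathit{TL})\cong\lambda^{\mathit{top}}(\mathit{TA})\otimes\lambda^{\mathit{top}}(\mathit{TB})$, and the cancellation of $T_{u(1)}L$ from the Sobolev splitting above is precisely what makes this convention consistent with the pre-gluing orientation. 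The main obstacle is the careful bookkeeping of the background lines $\kappa_x^\alpha$ and $\kappa_{-1,L}^\alpha$ across the node and the resulting identification with $\kappa_x^\alpha$ on the glued curve; once this is handled by relative Spin-ness, the remaining check is local near the node and reduces verbatim to FOOO's argument in the exact Lagrangian case, yielding the result with no residual sign.
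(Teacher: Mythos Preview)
Your proposal is correct and follows essentially the same approach as the paper: the paper's proof consists solely of the remark that the argument of \cite{fooo1}, Lemma 8.3.10 applies verbatim to the present setting, and what you have written is a faithful expansion of that argument (determinant-line splitting under boundary gluing, trivialization via the relative $\mathit{Spin}$ background line, and comparison with the fiber-product orientation convention).
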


Using this lemma, the signs $\varepsilon_2$ and $\varepsilon_3$ can be computed in the same way as \cite{fooo1}, Section 8.3. We have
\begin{equation}
\varepsilon_2=(k_1-i)(k_2-1)+n+k\textrm{ and }\varepsilon_3=(k_1-i)(k_2-1)+n+1.
\end{equation}

The remaining signs $\varepsilon_7,\cdots,\varepsilon_{13},\varepsilon_{14,0},\cdots,\varepsilon_{14,l},\varepsilon_{15},\varepsilon_{16}$ are determined from the signs $\varepsilon_1,\varepsilon_2,\varepsilon_3,\varepsilon_{4,0},\cdots,\varepsilon_{4,l},\varepsilon_5,\varepsilon_6$ using the formulae
\begin{equation}
\partial([m,m+1]\times X)=\{m+1\}\times X\sqcup(-1)\{m\}\times X\sqcup(-1)[m,m+1]\times\partial X,
\end{equation}
\begin{equation}
[m,m+1]\times(X\times_LY)=([m,m+1]\times X)\times_{[m,m+1]\times L}([m,m+1]\times Y),
\end{equation}
where $X$ and $Y$ are admissible K-spaces.

\Addresses
	
\end{document}